\newtheorem{thm}{Theorem}[section] 
\newtheorem{lemma}[thm]{Lemma}
\newtheorem{proposition}[thm]{Proposition}
\newtheorem{claim}[thm]{Claim}
\newtheorem{corollary}[thm]{Corollary}
\newtheorem{case}[thm]{Case}
  \newtheorem{definition-remark}[thm]{Definition-Remark}
  \theoremstyle{definition}
    \newtheorem{example}[thm]{Example}
     \newtheorem{remark}[thm]{Remark}
\def\geq{\geqslant}
\def\leq{\leqslant}
\def\ker{\operatorname{ker}}
\def\ker{\operatorname{ker}}
\def\min{\operatorname{min}}
\def\im{\operatorname{im}}
\def\max{\operatorname{max}}
\def\c1{\operatorname{c_1}}
\def\c2{\operatorname{c_2}}
\def\NE{\operatorname{\overline{NE}}}
\def\Num{\operatorname{\rm Num}}
\def\Div{\operatorname{\rm Div}}
\def\Alb{\operatorname{\rm Alb}}
\def\Aut{\operatorname{\rm Aut}}
\def\CC{{\mathbb C}}
\def\ZZ{{\mathbb Z}}
\def\NN{{\mathbb N}}
\def\QQ{{\mathbb Q}}
\def\RR{{\mathbb R}}
\def\FF{{\mathbb F}}
\def\PP{{\mathbb P}}
\def\C{{\mathcal C}}
\def\L{{\mathcal L}}
\def\O{{\mathcal O}}
\def\E{{\mathcal E}}
\def\F{{\mathcal F}}
\def\C{{\mathcal C}}
\def\FF{{\mathbb F}}
\def\c{\mathfrak{c}}
\def\cong{\simeq}
\def\+{\oplus}               
\def\*{\otimes}                  
\def\Aut{\operatorname{Aut}}
\def\Pic{\operatorname{Pic}}
\def\NS{\operatorname{NS}}
\def\Num{\operatorname{Num}}
\begin{document}

\title{The classification of complex algebraic surfaces}

\author[C.~Ciliberto]{Ciro Ciliberto}
\address{Ciro Ciliberto, Dipartimento di Matematica, Universit\`a di Roma Tor Vergata, Via della Ricerca Scientifica, 00173 Roma, Italy}
\email{cilibert@mat.uniroma2.it}



 \begin{abstract} These notes contain the classification of complex algebraic surfaces following Mori's theory. They includes the $P_{12}$--Theorem, Sarkisov's programme in the surface case and Noether--Castelnuovo's Theorem in its classical version.
 \end{abstract}

\maketitle

\tableofcontents

\section{Introduction} \label{sec:intro}

This text grew up from the notes of a graduate course I  gave at the University of Roma ``Tor Vergata'' in the academic year 2018--19. The subject is the classification of complex algebraic surfaces following Mori's theory. It includes the $P_{12}$--Theorem, Sarkisov's programme in the surface case and Noether--Castelnuovo's Theorem in its classical version. I am indebted to the first chapter of Matzuki's book  \cite {Matz}, though I simplified and streamlined his exposition in some points. For the $P_{12}$--Theorem I partly followed the unpublished paper \cite {F} by P. Francia and the recent paper \cite {CL} by Catanese--Li. The classification of bielliptic surfaces follows the short and elegant exposition by Bombieri--Mumford in \cite {BM}. The classical version of Noether--Castelnuovo's Theorem follows Calabri notes \cite {Cal}. 

This text is intended for people who have a good knowledge of basic algebraic geometry. Generally speaking, acquaintance with  basic parts of books like \cite {GrHa, Hart} should be sufficient. In any case I tried to make the text as self--contained as possible. For this reason a first chapter is devoted to some preliminaries.

\section{Some preliminaries}\label{sec:prel}

This section  is a non--exhaustive collection of some preliminaries which  we will later use. The expert reader may skip this section.

In what follows, by a \emph{surface} we will mean a smooth, irreducible projective complex variety of dimension 2.

\subsection{Projective morphisms}\label{ssec:proj}

We will use the following  basic:

\begin {proposition}[cf. \cite {Hart}, Chapt. III, Prop. 7.12]\label{prop:ev}
Let $X,Y$ be schemes and $g: Y\to X$ a morphism. Let $\E$ be a vector bundle on $X$. To give a morphism $f: Y\to \PP(\E)$ such that the diagram
\[
\xymatrix{ 
&Y\ar_{g}[d]\ar[r]^f&\PP(\E)\ar^{\pi}[d] \\
&X\ar^{\text{id}}[r] &  X&
}
\]
commutes, is equivalent to give a line bundle $L$ on $Y$ and a surjective map of sheaves $g^*(\E)\twoheadrightarrow L$. 
\end{proposition}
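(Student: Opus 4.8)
The plan is to build everything around the tautological surjection $\pi^*\E\twoheadrightarrow\O_{\PP(\E)}(1)$ on $\PP(\E)=\operatorname{Proj}(\Sym\E)$, where $\O_{\PP(\E)}(1)$ is invertible and the canonical map out of $\pi^*\E$ is onto. One direction is then immediate: given $f$ with $\pi\circ f=g$, pull this surjection back along $f$; since $f^*\pi^*\E=g^*\E$ one obtains a surjection $g^*\E\twoheadrightarrow L$ with $L:=f^*\O_{\PP(\E)}(1)$. This sends an admissible $f$ to a pair $\big(L,\,g^*\E\twoheadrightarrow L\big)$, which — a point worth recording — is intrinsic only up to isomorphism of quotients, i.e.\ it is really the kernel subsheaf of $g^*\E$ that is canonically attached to $f$. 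The substance of the proof is to invert this assignment, and I would do that by working locally on $X$.

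Second, I would construct $f$ from a surjection $q\colon g^*\E\twoheadrightarrow L$. Choose an open cover $\{U_i\}$ of $X$ fine enough that $\E|_{U_i}\simeq\O_{U_i}^{\,n+1}$, and set $V_i:=g\i(U_i)$. Over $V_i$ the map $q$ becomes a surjection $\O_{V_i}^{\,n+1}\twoheadrightarrow L|_{V_i}$, equivalently an $(n+1)$-tuple of global sections of $L|_{V_i}$ generating it at every point. By the elementary correspondence between such generating tuples and morphisms to projective space (cf.\ \cite{Hart}, II.7.1) this gives a morphism $V_i\to\PP^n$, and together with $g|_{V_i}\colon V_i\to U_i$ it yields a morphism over $U_i$
\[
f_i\colon V_i\longrightarrow \PP^n\times U_i\;\simeq\;\PP\big(\O_{U_i}^{\,n+1}\big)\;\simeq\;\PP(\E)|_{U_i}=\pi\i(U_i),
\]
the last isomorphism coming from the chosen trivialisation and the fact that $\PP(-)$ commutes with base change.

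Third comes the gluing, which I expect to be the delicate step. On $U_i\cap U_j$ the two trivialisations of $\E$ differ by a transition matrix $g_{ij}\in GL_{n+1}\big(\O_X(U_i\cap U_j)\big)$, and this single datum governs two things at once: the automorphism of $\PP(\E)|_{U_i\cap U_j}$ intertwining the two identifications with $\PP^n\times(U_i\cap U_j)$, and — after pulling back by $g$ — the linear relation between the two $(n+1)$-tuples of sections of $L$ produced over $V_i\cap V_j$. Because $q$ is one globally defined map, these two effects are compatible, so $f_i$ and $f_j$ agree on $V_i\cap V_j$, and the $f_i$ glue to a morphism $f\colon Y\to\PP(\E)$ with $\pi\circ f=g$.

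Finally I would check that the two assignments are mutually inverse. Starting from $q$ and pulling the tautological quotient back along the $f$ just built, one recovers on each $V_i$ exactly $q|_{V_i}$ — using the compatibility of the tautological quotient on $\PP^n$ with $\O_{\PP^n}(1)$ — hence recovers $q$ globally up to isomorphism of quotients; and starting from $f$, the morphism reconstructed from $g^*\E\twoheadrightarrow f^*\O_{\PP(\E)}(1)$ equals $f$ because, locally, $f$ is itself determined by the generating sections of $f^*\O(1)$ that are the images of the standard basis. The genuine obstacle throughout is the bookkeeping in the gluing step: one must keep the right-hand data taken up to isomorphism, and must track simultaneously how the transition functions of $\E$ act on the quotient line bundles and on the local models $\PP^n\times U_i$. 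Once that is set up correctly, the whole statement reduces to the classical description of morphisms into $\PP^n$.
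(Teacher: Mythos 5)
The paper gives no proof of this proposition, citing Hartshorne (in fact Chapt.\ II, Prop.\ 7.12) instead; your argument is essentially the standard proof found there — pulling back the tautological quotient $\pi^*\E\twoheadrightarrow\O_{\PP(\E)}(1)$ in one direction, and in the other trivialising $\E$ locally, invoking the correspondence between generating sections of a line bundle and morphisms to $\PP^n$, and gluing via the transition matrices of $\E$. The sketch is correct, and you rightly flag the two genuinely delicate points: that the quotient datum is only well defined up to isomorphism, and that the gluing compatibility rests on $q$ being a single global map.
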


In case $X$ is a point, this expresses the well known fact that to give a morphism of a variety $Y$ to a projective space of dimension $n$ is equivalent to give a line bundle $L$ on $Y$ which is base point free and $\dim(|L|)\geq n$.  

\subsection {Basic invariants}  Consider the exponential exact sequence
\begin{equation}\label{eq:exp}
0\longrightarrow \ZZ\longrightarrow \O_S\stackrel{2\pi i {\rm exp}\cdot  }\longrightarrow \O^*_S\longrightarrow 0
\end{equation}
which induces the exact cohomology sequence
\begin{equation*}\label{exp}
0\to H^ 1(S,\ZZ)\longrightarrow H^ 1(S, \O_S) \longrightarrow H^ 1(S,\O^ *_S)\stackrel {\rm ch} \longrightarrow H^ 2(S,\ZZ)\longrightarrow H^ 2(S, \O_S)
\end{equation*}
One sets $H^ 1(S,\O^ *_S)=\Pic(S)$, the \emph{Picard group} of $S$, which is the group of line bundles on $S$, modulo isomorphisms.
One denotes by $q(S)$ (often simply by $q$) the dimension of the vector space $H^ 1(S, \O_S)\cong H^0(S, \Omega^ 1_S)$: this is called the \emph{irregularity} of $S$. It is well known that $H^ 1(S,\ZZ)$ is a maximal rank  lattice inside $H^ 1(S, \O_S)$, so that 
\[
\Pic^ 0(S):=\frac {H^ 1(S, \O_S)} {H^ 1(S,\ZZ)}
\] is a complex torus of dimension $q$ called the \emph{Picard variety} of $S$. Note that the \emph{first Betti number} $b_1(S)$, i.e., the rank of $H^ 1(S,\ZZ)$, is $2q$.

The map ${\rm ch}$ is called the \emph{Chern class}. One sets $NS(S):=\im ({\rm ch})$, which is called the \emph{N\`eron--Severi group} of $S$, and is a finitely generated abelian group. Then we set $\Num(S)=NS(S)/\equiv$, where $\equiv$ is the \emph{numerical equivalence}. This is a finitely generated free abelian group, whose rank is denoted by $\rho(S)$ (often simply by $\rho$).  Then we denote  $N^1(S):=\Num(S)\otimes _\ZZ\RR$, a real vector space of dimension $\rho$. 

We denote by $Z_1(S)$, or by $\Div(S)$, the free abelian group generated by all divisors on $S$. There is an obvious epimorphism $\lambda:Z_1(S)\to \Pic(S)$, which sends a divisor $D$ to the line bundle $\O_S(D)$. One has $\lambda(D)=\lambda(D')$ if and only if $D$ and $D$' are \emph{linearly equivalent}, denoted by $D\sim D'$. The epimorphism $\lambda$ induces an epimorphism $\Lambda: (Z_1(S)/\equiv)\to \Num(S)$, which is  an isomorphism. We set $N_1(S)=(Z_1(S)/\equiv)\otimes_\ZZ\RR$, which is isomorphic to $N^1(S)$, so it has also dimension $\rho$. In fact we will often identify $Z_1(S)$ and  $\Num(S)$, and $N_1(S)$ and $N^1(S)$, via the isomorphism $\Lambda$. So we will speak of numerical equivalence class of divisors or equivalently of line bundles. 

A divisor $D$ in $N_1(S)$ is called \emph{effective}, or a \emph{curve}, if $D=\sum_{i=1}^nm_iD_i$, where $D_1,\ldots, D_n$ are distinct, irreducible curves on $S$ and $m_i\geqslant 0$ for $1\leqslant i\leqslant n$.
We define $\NE(S)$ to be the closure in $N_1(S)$ of the convex cone  generated by all  {curves} on $S$. This is called the \emph{Kleiman--Mori cone} of $S$. The cone $\NE(S)$  is \emph{strictly convex} (see \S \ref {ssec:cones} below). 

On $\Pic(S)$ there is the bilinear form determined by the \emph{intersection product}. It induces bilinear forms (also called \emph{intersection products}) on $\NS(S)$,  on $N^1(S)$ and on $N_1(S)$. The intersection product is non--degenerate on $N^1(S)$ and on $N_1(S)$. 

The dimension of $H^ 2(S, \O_S)\cong H^0(S, \Omega^2_S)$ is denoted by $p_g(S)$ (or simply by $p_g$) and is called the \emph{geometric genus} of $S$, and $p_a(S)=p_g(S)-q(S)$ (also denoted by $p_a$) is called the \emph{arithmetic genus} of $S$. Note that
\[
\chi(S)=h^ 0(S, \O_S)-h^ 1(S, \O_S)+h^ 2(S, \O_S)=p_g(S)-q(S)+1=p_a(S)+1
\]
(also denoted by $\chi$). 

The line bundle $\Omega^2_S$ is the \emph{canonical bundle} of $S$, and one denotes it by $K_S$. Often $K_S$ denotes also any \emph{canonical divisor}, i.e., any divisor such that  $\Omega^2_S\cong \O_S(K_S)$. Note that $p_g-1=\dim (|K_S|)$. Moreover, if $p_g=0$, then the map {\rm ch} is surjective and $NS(S)=H^2(S,\ZZ)$, i.e., any 2--cycle on $S$ is homologous to an algebraic one: one expresses this by saying that \emph{any 2--cycle is algebraic}.  

The bundles $K_S^{\otimes n}$ with $n$ a positive integer, are called \emph{pluricanonical bundles} and $P_n(S):=h^ 0(S, K_S^{\otimes n})$ is called the \emph{$n$--th plurigenus} of $S$ (also denoted by $P_n$). 

The \emph{Betti numbers} $b_i(S)$ (also denoted by $b_i$) are related to the above invariants. First, by Poincar\'e duality, one has $b_i=b_{4-i}$, hence $b_0=b_4=1$ and $b_1=b_3=2q$. Moreover, since $NS(S)\subseteq H^ 2(S,\ZZ)$, one has $\rho\leqslant b_2$. We denote by $e(S)$ (or simply by $e$) the \emph{Euler-Poincar\'e characteristic} of $S$.

Recall that Hodge decomposition gives 
\[
H^ 2 (S,\CC)=H^{2,0}\oplus H^{1,1} \oplus H^{2,0}
\]
with $H^{2,0}=\overline{H^{2,0}}$ and $H^{2,0}\cong H^ 0(\Omega^2_S)$. Hence $b_2\geqslant 2p_g$. 

The \emph{Kodaira dimension} $\kappa(S)$ of $S$ (sometimes simply denoted by $\kappa$)  is defined in the following way:\\
\begin{inparaenum}
\item [$\bullet$] $\kappa=-\infty$ if $P_n=0$ for all $n\in \NN$;\\
\item [$\bullet$] $\kappa=\max\dim\{\varphi_{|nK_S|}(S)\}_{n\in \NN}\in \{0,1,2\}$ if there is an $n\in \NN$ such that $P_n>0$.
\end{inparaenum}

\begin{thm}\label{thm:kod}  If $S$ is a surface for which there is an $n\in \NN$ such that $P_n>0$, then $\kappa(S)=k$ if and only if there are two positive real numbers $a,b$ and there is an $n_0\in \NN$  such that for all $n\in \NN$ one has
\[
an^ k\leqslant h^ 0(nn_0K_S)\leqslant bn^k.
\]
\end{thm}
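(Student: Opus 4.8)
The plan is to prove first the implication ``$\kappa(S)=k\Rightarrow$ the two-sided bound'' and then to deduce the converse formally. Write $N(S):=\{m\in\NN : h^0(mK_S)>0\}$; this is a subsemigroup of $\NN$, and if $m_0\in N(S)$ then $h^0(jm_0K_S)$ is non-decreasing in $j$ (multiply by the section cutting out a fixed effective member of $|im_0K_S|$ for the appropriate $i$). For the converse, assume $an^k\le h^0(nn_0K_S)\le bn^k$ for all $n$. Then $h^0(n_0K_S)>0$, so $\kappa(S)=:k'\in\{0,1,2\}$, and the direct implication applied to $k'$ produces $n_0',a',b'$ with $a'n^{k'}\le h^0(nn_0'K_S)\le b'n^{k'}$. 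Since $n_0,n_0'\in N(S)$, monotonicity gives $h^0(jn_0n_0'K_S)\ge\max\{h^0(jn_0K_S),h^0(jn_0'K_S)\}\ge a'j^{k'}$, while $h^0(jn_0n_0'K_S)\le b(jn_0')^{k}$ because $jn_0n_0'$ is a multiple of $n_0$; comparing the two estimates as $j\to\infty$ forces $k'\le k$, and the symmetric argument forces $k\le k'$. So only the direct implication needs real work.

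For the direct implication I would fix $n_0$ with $\dim\varphi_{|n_0K_S|}(S)=k$ and resolve the indeterminacy of $\varphi_{|n_0K_S|}$ by a composition of blow-ups $\mu\colon S'\to S$, so that $g:=\varphi_{|n_0K_S|}\circ\mu$ is a morphism onto $Y:=\varphi_{|n_0K_S|}(S)\subseteq\PP^N$, $\dim Y=k$, and $\mu^*(n_0K_S)\sim g^*\O_Y(1)+\Phi$ with $\Phi\ge0$ the base divisor. The \emph{lower} bound is then easy in all cases: from $\mu^*(nn_0K_S)\ge n\,g^*\O_Y(1)$ and pullback of sections one gets $h^0(nn_0K_S)=h^0(S',\mu^*(nn_0K_S))\ge h^0(Y,\O_Y(n))$, which grows like $(\deg Y/k!)\,n^k$ by the Hilbert polynomial of $Y$; the finitely many small values of $n$ are absorbed into the constant $a$.

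The \emph{upper} bound is immediate for $k=2$: for \emph{any} line bundle $L$ on a surface $S$ one has $h^0(mL)=O(m^2)$, since choosing a very ample $H$ with $h^0(H-L)>0$ yields an injection $H^0(mL)\hookrightarrow H^0(mH)$ for every $m$, and $h^0(mH)\le\chi(mH)+h^2(mH)$ is a quadratic polynomial in $m$ for $m\gg0$ (because $h^2(mH)=h^0(K_S-mH)=0$ then). For $k=0$ it is also easy: if $\dim|mK_S|>0$, then writing $|mK_S|$ as a fixed part plus a system $|M|$ without fixed component, a general fibre of $\varphi_{|M|}=\varphi_{|mK_S|}$ lies in a member of $|M|$, hence is a curve, so $\dim\varphi_{|mK_S|}(S)\ge1$; thus $\kappa(S)=0$ forces $h^0(mK_S)\le1$ for all $m$, and with $n_0\in N(S)$ we get $h^0(nn_0K_S)=1$. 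The substantial case is $k=1$. Here I would take the Stein factorization $g=\iota\circ f$ with $f\colon S'\to B$ a fibration onto a smooth curve and $\iota\colon B\to Y$ finite, put $D:=\iota^*\O_Y(1)$ (so $\deg D\ge1$), and rewrite $\mu^*(n_0K_S)\sim f^*D+\Phi$. The projection formula gives
\[
h^0(nn_0K_S)=h^0\bigl(S',\,nf^*D+n\Phi\bigr)=h^0\bigl(B,\,\O_B(nD)\otimes f_*\O_{S'}(n\Phi)\bigr).
\]
Provided $\Phi$ is \emph{vertical}, i.e.\ supported on fibres of $f$, the sheaf $f_*\O_{S'}(n\Phi)$ is torsion-free of rank $1$ on the curve $B$, and from an inclusion $\Phi\le c\,f^*\Phi'$ with $\Phi'$ effective on $B$ one gets $\deg\bigl(\O_B(nD)\otimes f_*\O_{S'}(n\Phi)\bigr)=O(n)$, hence $h^0(nn_0K_S)\le bn$.

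The main obstacle is thus to choose $n_0$ so that, when $\kappa(S)=1$, the base divisor $\Phi$ is vertical — equivalently $\Phi\cdot F=0$ for a general fibre $F$ of $f$. If $\Phi$ had a horizontal component, then the generic rank of $f_*\O_{S'}(n\mu^*(n_0K_S))$, namely $h^0(F,\,n\Phi|_F)$, would tend to infinity with $n$ because $\deg(\Phi|_F)=\Phi\cdot F>0$; one then has to show that this forces $\varphi_{|Nn_0K_S|}$ to have $2$-dimensional image for $N\gg0$ — it would separate points on a general fibre in addition to separating the fibres via the $f^*D$-part — contradicting $\kappa(S)=1$. Making this implication precise (controlling the base locus of $|Nn_0K_S|$ along the horizontal component and verifying the resulting map is generically finite onto a surface) is the essential point; it is the surface version of Iitaka's theorem on the eventual stability of pluricanonical maps.
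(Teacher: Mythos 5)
The paper itself does not prove this statement---it simply cites \cite[Thm. 10.2]{Iit}---so your proposal has to be judged on its own terms. Most of it holds up: the formal reduction of the converse to the direct implication via the semigroup $N(S)$ and the monotonicity of $h^0(jm_0K_S)$ is correct, as are the lower bounds (Hilbert function of the $k$--dimensional pluricanonical image, with small $n$ absorbed into $a$) and the upper bounds for $k=0$ and $k=2$ (modulo the slip $h^0(mH)\leq \chi(mH)+h^2(mH)$, which should read $h^0\leq \chi+h^1$; Serre vanishing makes the point moot). The genuine gap is exactly where you flag it: for $k=1$ you reduce the upper bound to the verticality of the base divisor $\Phi$ with respect to the single fibration extracted from $|n_0K_S|$, and you leave unproved the claim that a horizontal component of $\Phi$ would force $\varphi_{|Nn_0K_S|}$ to have two--dimensional image for $N\gg 0$. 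That claim is a form of Iitaka's stability theorem for pluricanonical maps; calling it ``the essential point'' does not discharge it, so as written the $k=1$ upper bound is not established.

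This obstacle is avoidable: you do not need one fibration serving all multiples of a fixed $n_0$; bound each $h^0(mK_S)$ separately. Fix an ample $H$; since $\kappa(S)\geq 0$ some $|NK_S|$ is nonempty, so $K_S\cdot H\geq 0$. If $h^0(mK_S)\geq 2$, write $|mK_S|=|M_m|+F_m$ with $F_m$ the fixed part, so $h^0(mK_S)=h^0(M_m)$ and $\varphi_{|M_m|}=\varphi_{|mK_S|}$ has image of dimension $\leq 1=\kappa(S)$. After resolving the base points by $\mu_m:S'_m\to S$, the free part of $\mu_m^*|M_m|$ is composed with a pencil $f_m:S'_m\to B_m$ (Stein factorization), i.e.\ it equals $f_m^*\delta_m$ for an effective divisor $\delta_m$ of degree $\lambda_m$ on $B_m$, whence $h^0(mK_S)\leq h^0(B_m,\delta_m)\leq \lambda_m+1$. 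A general fibre $P$ of $f_m$ is not contracted by $\mu_m$, so $P\cdot\mu_m^*H\geq 1$ and $\lambda_m\leq \lambda_m(P\cdot\mu_m^*H)=f_m^*\delta_m\cdot \mu_m^*H\leq \mu_m^*M_m\cdot\mu_m^*H=M_m\cdot H\leq mK_S\cdot H$. Thus $h^0(mK_S)\leq mK_S\cdot H+1$ for every $m$, which is the $O(m)$ upper bound with no stability statement needed; with this substitution your overall scheme gives a complete proof.
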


For the proof, see \cite[Thm. 10.2]{Iit}.

Note that the group $\Pic(S)$ is multiplicative the group operation being the tensor product of line bundles, whereas $\Num(S)$ is additive. We will often abuse notation and use the additive notation also for the elements of $\Pic(S)$, i.e., for line bundles. For example we will denote by $nK_S$ the pluricanonical bundles.

\subsection{The ramification formula}\label{ssec:ram}

A local computation proves the following:

\begin{thm}[Ramification Formula]\label{thm:ram}
Let $X,Y$ be smooth, irreducible, projective varieties of the same dimension $n$ and let $f: X\to Y$ be a surjective morphism. Then
\[
K_X=f^ *(K_Y)+R
\]
where $R$ is the locus where $df$ has rank smaller than $n$, with its natural scheme structure.
\end{thm}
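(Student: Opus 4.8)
The plan is to realise $\O_X(K_X-f^*K_Y)$ as the line bundle attached to an effective divisor cut out by the Jacobian of $f$, and then to identify that divisor, together with its scheme structure, with $R$. The differential of $f$ is a morphism of sheaves $f^*\Omega^1_Y\to\Omega^1_X$, and both sheaves are locally free of rank $n$ because $X$ and $Y$ are smooth of dimension $n$. Taking $n$-th exterior powers and using $\O_X(K_X)\cong\wedge^n\Omega^1_X$ and $\O_Y(K_Y)\cong\wedge^n\Omega^1_Y$, we obtain a morphism of invertible sheaves $f^*\O_Y(K_Y)\to\O_X(K_X)$, that is, a global section
\[
s\in H^0\!\big(X,\O_X(K_X-f^*K_Y)\big).
\]

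Next I would check that $s\neq 0$. Since $f$ is surjective and $\dim X=\dim Y$, the morphism $f$ is generically finite, so $\CC(X)/\CC(Y)$ is a finite field extension, which in characteristic $0$ is separable; equivalently one may invoke generic smoothness. Hence $f$ is étale over a dense open $V\subseteq Y$, so $df$ is an isomorphism over $f^{-1}(V)$ and $s$ does not vanish identically. Therefore $s$ defines an effective divisor $D$ on $X$ with $\O_X(D)\cong\O_X(K_X-f^*K_Y)$.

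Finally I would identify $D$ with $R$ by the local computation announced in the statement. Near a point $p\in X$ choose local coordinates $x_1,\dots,x_n$, and near $f(p)\in Y$ local coordinates $y_1,\dots,y_n$. Then $f^*(dy_1\wedge\cdots\wedge dy_n)=J_f\cdot dx_1\wedge\cdots\wedge dx_n$, where $J_f=\det\!\big(\partial(y_i\circ f)/\partial x_j\big)$ is the Jacobian determinant, so in these trivialisations $s$ is the regular function $J_f$ and $D$ is locally $\{J_f=0\}$. On the other hand $df$ has rank smaller than $n$ at a point exactly when its $n\times n$ matrix is singular there, i.e. when $J_f=0$, and the natural scheme structure on $R$ is by definition that defined by the vanishing of this determinant. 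Hence $D=R$ as closed subschemes of $X$, so $\O_X(R)\cong\O_X(K_X-f^*K_Y)$, which gives $K_X=f^*K_Y+R$ as divisor classes (equivalently, as line bundles, via the isomorphism induced by $s$).

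The only substantive point is the non-vanishing of $s$, i.e.\ the fact that $f$ is generically étale; this is precisely where surjectivity, equality of dimensions and the characteristic-zero hypothesis are used. Everything else is the formal behaviour of top exterior powers together with the coordinate expression of the Jacobian — the \emph{local computation} alluded to just before the theorem — and the routine identification of the zero scheme of a section of a line bundle with an effective divisor.
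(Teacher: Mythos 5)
Your proof is correct and is precisely the ``local computation'' that the paper alludes to without writing out: the section of $\O_X(K_X-f^*K_Y)$ induced by $\wedge^n df$, its non-vanishing via generic smoothness in characteristic zero, and the identification of its zero scheme with the Jacobian degeneracy locus. Nothing to add.
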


It should be remarked that a classical result by Zariski--Nagata, called the \emph{purity of the branch locus theorem}, proves that in the set up of the above theorem $R$ is a divisor, called the \emph{ramification divisor} of $f$. The divisor $B:=f(R)$ is called the \emph{branch divisor} of $f$.

The \emph{Riemann--Hurwitz formula} is the particular case of the above theorem in the case in which $X$ and $Y$ are curves. 

\subsection{Basic formulas} 

\emph{Adjunction formula} says that if $C$ is a curve on the surface $S$, then
\[
K_S\otimes \O_S(C)_{|C}\cong \omega_C
\]
where $\omega_C$ is the \emph{dualizing sheaf} of $C$. In particular
\[
(K_S+C)\cdot C=2p_a(C)-2.
\]
The \emph{arithmetic genus} of a curve $C$ is defined as 
\[
p_a(C)=1-\chi(\O_C)=h^1(\O_C)-h^0(\O_C)+1.
\]
One has $p_a(C)\geqslant 0$ as soon as $C$ is \emph{algebraically connected}, i.e., if $h^0(\O_C)=1$. Note that if $C$ is reduced and topologically connected, then it is also algebraically connected.
The \emph{geometric genus} $p_g(C)$, also denoted by $g(C)$ (or simply by $g$ if there is no danger of confusion), of a reduced curve $C$ is the arithmetic genus of the normalization of $C$. One has $p_g(C)\geqslant 0$ if $C$ is irreducible and one may have   $p_g(C)< 0$ only if $C$ is reducible. 

\emph{Noether's formula} says that
\begin{equation}\label{eq:NNN}
12\chi(S)=K^2_S+e(S).
\end{equation}

\emph{Riemann--Roch Theorem} says that if $L$ is a line bundle on $S$, then
\[
\chi(L):= h^ 0(L)-h^ 1(L)+h^ 2(L)=\chi(S)+ \frac {L\cdot (L-K_S)}2.
\]

\emph{Serre duality} says that
\[
h^ i(L)=h^ {2-i}( K_S-L), \quad \text{for} \quad 0\leqslant i\leqslant 2. 
\]

{Riemann--Roch Theorem} for vector bundles on a smooth irreducible curve $C$ of genus $g$, says that if $\mathcal E$ is a rank $k$ vector bundle on $C$, one has
\[
\chi(\mathcal E)=h^ 0(\mathcal E)-h^ 1(\mathcal E)= \deg(\mathcal E)+k(1-g),
\]
where $\deg(\mathcal E):=\deg(\wedge ^k\mathcal E)$. 

As a general reference, see \cite [Chapt. V, \S 1]{Hart} and \cite [App. A, \S 4]{Hart}.

\subsection{Ample line bundles} A line bundle $L$ on $S$ is \emph{ample} if there is a positive multiple $nL$ of $L$ which is \emph{very ample}, i.e., the map $\phi_{nL}$ determined by $|nL|$  is an isomorphism of $S$ onto its image. 

\begin{thm}[Nakai--Moishezon Theorem]\label{thm:NM} A line bundle $L$ on $S$ is ample if and only if $L^ 2>0$ and $L\cdot C>0$ for every non--zero curve on $S$.
\end{thm}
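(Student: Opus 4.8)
\emph{Proof sketch.} The ``only if'' direction is immediate: if $nL$ is very ample then $\phi_{nL}$ realizes $S$ as a surface in some $\PP^N$, so $(nL)^2$ is its degree and $(nL)\cdot C$ is the degree of the image of any curve $C$; both are positive, hence $L^2>0$ and $L\cdot C>0$. For the converse, the basic observation is that ampleness of $L$ is equivalent to ampleness of any positive multiple $mL$, and the two numerical hypotheses are inherited by such multiples; so throughout we are free to replace $L$ by any $mL$. The strategy has four steps: (a) produce an effective multiple of $L$; (b) analyze $L$ on a curve; (c) deduce base point freeness of $|nL|$ for $n\gg 0$; (d) show the associated morphism is finite and conclude.

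For (a), Riemann--Roch gives $\chi(nL)=\chi(S)+\frac{n^2}{2}L^2-\frac n2 L\cdot K_S$, which tends to $+\infty$ since $L^2>0$. By Serre duality $h^2(nL)=h^0(K_S-nL)$, and this vanishes for $n\gg 0$: if $K_S-nL$ were equivalent to a nonzero effective divisor $E$ then $0<L\cdot E=L\cdot K_S-nL^2$ would bound $n$ (here we use $L\cdot C>0$ for \emph{every} curve $C$), while $K_S-nL\sim 0$ is excluded for $n$ large because $L^2>0$. Hence $h^0(nL)\geq\chi(nL)\to+\infty$, and after replacing $L$ by a suitable multiple we may assume $L=\O_S(D)$ with $D$ a nonzero effective divisor, i.e. a curve.

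Now (b) and (c). Since $D\cdot D_i>0$ for every component $D_i$ of $D$, the sheaf $\O_D(D)$ has positive degree on each component of $D_{\mathrm{red}}$; filtering $\O_D$ by powers of its nilradical reduces to the reduced case, where a line bundle of positive degree on every component of a projective curve is ample. Thus $\O_D(D)$ is ample on the (possibly non-reduced, reducible) projective scheme $D$, so by Serre vanishing $\O_D(nD)$ is globally generated with $H^1(D,\O_D(nD))=0$ for $n\gg 0$. Feeding this into the exact sequences $0\to\O_S((n-1)D)\to\O_S(nD)\to\O_D(nD)\to 0$, the vanishing of $H^1(\O_D(nD))$ makes the integers $h^1(\O_S(nD))$ eventually non-increasing, hence eventually constant, and once constant the restriction $H^0(\O_S(nD))\to H^0(\O_D(nD))$ becomes surjective. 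Together with global generation of $\O_D(nD)$ this kills all base points of $|nD|$ lying on $D$, while points off $D$ are not base points because $nD$ itself is a section vanishing only on $D$. So $|nD|$ is base point free for $n\gg 0$. For (d), let $\phi$ be the morphism defined by $|nD|$ for such an $n$, so $nD=\phi^*\O_{\PP^N}(1)$; since $nD\cdot C>0$ for every irreducible curve $C\subset S$, the morphism $\phi$ contracts no curve, hence (as $S$ is projective) has finite fibres and is a finite morphism onto its image. As the pullback of an ample line bundle under a finite (in particular affine) morphism is ample --- e.g. by Serre's cohomological criterion --- $nD$ is ample, and therefore so is $L$.

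I expect the genuine difficulty to sit in step (b): one must make sense of and prove ampleness of $\O_D(D)$ on a curve $D$ that may be both reducible and non-reduced, which reduces to the one--dimensional case of the criterion together with some bookkeeping with nilpotents; the passage in (d) from base point freeness to ampleness likewise relies on the standard behaviour of ampleness under finite morphisms. Since all of this is genuinely lower dimensional, there is no circularity.
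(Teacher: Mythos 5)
The paper does not prove this theorem at all: it states it and refers to \cite[Chapt.\ 1, \S 1.2 B and Thm.\ 1.4.23]{Laz}, where the criterion is established in arbitrary dimension by induction. So there is no ``paper's proof'' to match; what you have written is the classical surface-case argument (essentially Hartshorne, Chapt.\ V, Thm.\ 1.10), and it is correct. All four steps check out: Riemann--Roch plus the vanishing of $h^0(K_S-nL)$ (forced by $L\cdot C>0$ applied to a putative effective divisor in $|K_S-nL|$ and by $L^2>0$) gives effectivity of a multiple; the one-dimensional ampleness criterion for $\O_D(D)$ on the possibly non-reduced, reducible curve $D$ is exactly the right lower-dimensional input; the ``eventually non-increasing, hence eventually constant, hence restriction surjective'' trick with the sequences $0\to\O_S((n-1)D)\to\O_S(nD)\to\O_D(nD)\to 0$ correctly removes base points on $D$, and points off $D$ are trivially unobstructed; and finiteness of $\phi_{|nD|}$ (no contracted curves, plus properness) together with the stability of ampleness under finite pullback closes the argument. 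Compared with the reference the paper cites, your proof is more elementary and self-contained for surfaces but does not generalize as stated, since in higher dimension the restriction $D|_D$ need not be ample and one must instead argue with bigness and an induction on dimension; for the purposes of this paper, where only the surface case is used, your route is arguably the more natural one.
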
 

\begin{thm} [Kleiman's Criterion] \label{thm:Kl} A line bundle $L$ on $S$ is ample if and only if $L\cdot C>0$ for every $C\in \NE(S)-\{0\}$. 
\end{thm}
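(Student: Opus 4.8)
The plan is to deduce Kleiman's criterion from the Nakai--Moishezon criterion (Theorem~\ref{thm:NM}). One implication is essentially trivial: if $L$ is ample, then by Nakai--Moishezon $L\cdot C>0$ for every non-zero curve $C$. Since the curves generate $\NE(S)$ as a closed convex cone and $L\cdot(-)$ is a linear functional on $N_1(S)$, positivity on all curves forces $L\cdot z>0$ on the whole cone minus the origin, provided we also know the functional does not vanish on any nonzero boundary point of $\NE(S)$. This last point will require that $L^2>0$ as well, so that the half-space $\{z : L\cdot z>0\}$ strictly contains $\NE(S)\setminus\{0\}$; I would invoke strict convexity of $\NE(S)$ together with $L^2>0$ (again from Nakai--Moishezon) to rule out a nonzero class in the closure on which $L$ vanishes.

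For the converse — the substantive direction — assume $L\cdot C>0$ for all $C\in\NE(S)-\{0\}$, and I want to conclude $L$ is ample via Nakai--Moishezon. First I would check $L\cdot C>0$ for every non-zero \emph{curve} $C$: this is immediate since each such $C$ gives a nonzero class in $\NE(S)$. The real work is showing $L^2>0$. The key idea is a compactness argument: intersect $\NE(S)$ with an affine hyperplane transverse to it to get a compact convex base $B$ of the cone (this uses strict convexity of $\NE(S)$, referenced in \S\ref{ssec:cones}). The linear function $z\mapsto L\cdot z$ is continuous and strictly positive on $B$, hence bounded below by some $\varepsilon>0$ on $B$; equivalently there is a constant so that $L\cdot z\ge \varepsilon\,\|z\|$ for all $z\in\NE(S)$ in a fixed norm. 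Now fix an ample divisor $H$ (which exists). For $t\gg0$ the class $L-tH$... — better: consider $H+nL$ and show it lands in the ample cone for the easy reasons above; but to get $L^2>0$ directly, I would instead argue that $nL+H$ is ample for all $n\ge 0$ by Nakai--Moishezon (its square is $n^2L^2 + 2nL\cdot H + H^2$ and its intersection with any curve is positive), then take $(nL+H)^2>0$, and observe that if $L^2\le 0$ the leading term would eventually dominate with the wrong sign unless $L^2>0$; since $L\cdot H>0$ (as $H\in\NE(S)-\{0\}$ up to the cone, or rather $H$ is a limit of effective classes), this is not yet airtight, so the cleanest route is:

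The cleanest argument for $L^2>0$ is: pick an ample $H$; then $L\cdot H>0$ since $H$ (being ample, hence effective after multiplying) lies in $\NE(S)-\{0\}$. Consider the ray $L+tH$ for $t\ge0$: each such class has positive intersection with every element of $\NE(S)-\{0\}$ (sum of two positive numbers), and I claim $L+tH$ is ample for $t>0$ — this follows because $L+tH = tH + L$ and one can show $(L+tH)^2 = L^2 + 2tL\cdot H + t^2H^2>0$ for $t$ large (since $H^2>0$), so Nakai--Moishezon applies for $t\gg0$, and then the set of $t$ for which $L+tH$ is ample is open; pushing $t$ down to $0$ using the estimate $(L+tH)\cdot z\ge \varepsilon\|z\|$ uniformly, one gets $L$ itself on the boundary of the ample cone, and a limiting/continuity argument plus $L\cdot z>0$ strictly on a compact base shows $L^2 = \lim (L+tH)^2 \ge 0$, with strict positivity because $L$ cannot lie on the boundary of the positive cone while being strictly positive on all of $\NE(S)$. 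Once $L^2>0$ and $L\cdot C>0$ for all curves $C$, Theorem~\ref{thm:NM} gives ampleness.

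I expect the main obstacle to be making the limiting argument for $L^2>0$ rigorous: one must use strict convexity of $\NE(S)$ to produce the compact base $B$, use continuity of the intersection pairing on the finite-dimensional space $N_1(S)$, and carefully rule out the degenerate case where $L$ sits on the boundary of the cone of classes with nonnegative self-intersection. All the structural inputs — finite-dimensionality of $N_1(S)$, strict convexity of $\NE(S)$, existence of an ample bundle, and Nakai--Moishezon — are available from the preceding material, so the proof is a matter of assembling them with a convexity/compactness argument rather than introducing new geometry.
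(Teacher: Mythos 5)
Your overall strategy for the converse --- reduce to Nakai--Moishezon, using strict convexity of $\NE(S)$ to get a compact base and a uniform bound $L\cdot z\geq \varepsilon\|z\|$ on $\NE(S)$ --- is the right one (it is essentially the argument in Lazarsfeld, to which the paper defers for this theorem). But the decisive step, proving $L^2>0$, does not go through as written. Perturbing in the direction $L+tH$ with $H$ ample and letting $t\to 0$ can only ever yield $L^2=\lim_{t\to 0}(L+tH)^2\geq 0$; the closing claim that ``$L$ cannot lie on the boundary of the positive cone while being strictly positive on all of $\NE(S)$'' is precisely the point at issue and is asserted, not proved (it is not a formal consequence of strict convexity). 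The repair is to perturb in the \emph{opposite} direction: your uniform estimate shows that $(L-\delta H)\cdot z\geq \varepsilon\|z\|-\delta H\cdot z\geq 0$ for all $z\in\NE(S)$ once $0<\delta\ll 1$, so $L-\delta H$ is nef, hence $(L-\delta H)^2\geq 0$ --- this last fact being the standard consequence of Nakai--Moishezon for nef classes, exactly as the paper sketches in the paragraph following the statement of Kleiman's criterion. Expanding gives $L^2\geq 2\delta\,(L\cdot H)-\delta^2H^2>0$ for small $\delta$, since $L\cdot H>0$ (a multiple of $H$ is an effective nonzero class, hence lies in $\NE(S)-\{0\}$). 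With $L^2>0$ in hand, Nakai--Moishezon finishes the converse.

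The forward direction is also not as routine as you claim. Positivity of $L$ on every curve gives positivity on the cone generated by curves, but not automatically on its closure minus the origin, and ``strict convexity together with $L^2>0$'' does not exclude a nonzero boundary class $z$ with $L\cdot z=0$: such a $z$ would merely satisfy $z^2<0$ by the Hodge Index Theorem, which is not absurd for a limit of effective classes. One needs a quantitative input, for instance that for a basis $D_1,\dots,D_\rho$ of $N^1(S)$ the classes $L\pm\varepsilon D_i$ remain ample for small $\varepsilon>0$ (openness of ampleness); then $|D_i\cdot z|\leq \varepsilon^{-1}L\cdot z$ for every $z\in\NE(S)$, so $L\cdot z=0$ forces $D_i\cdot z=0$ for all $i$ and hence $z=0$ by nondegeneracy of the intersection pairing on $N_1(S)$.
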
 

For the proofs, see \cite [Chapt. 1, \S 1.2 B and Thm. 1.4.23]{Laz}.

A line bundle $L$ on $S$ is said to be \emph{nef} if $L\cdot C \geqslant 0$  for every curve on $X$. Note that $L$ is nef if and only if $L\cdot C\geqslant 0$ for every $C\in \NE(S)$. From Kleiman's Criterion follows that if $N$ is ample and $L$ is nef, then $L+N$ is ample.  This implies that if $L$ is nef, then $L^2\geqslant 0$. Indeed, if $N$ is ample, for any positive rational number $\varepsilon$ one has that the class of $L+\varepsilon N$ in $NS(S)$ is ample, hence $(L+\varepsilon N)^2>0$ and letting $\varepsilon$ go to 0, proves our assertion. 

\begin{thm}[Kodaira Vanishing Theorem]\label{thm:kvt} If $L$ is an ample line bundle on a smooth projective variety $X$ of dimension $n$, then
$h^i(K_X+L)=0$, for $1\leqslant i\leqslant n$. 
\end{thm}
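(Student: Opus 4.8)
The plan is to give the classical Hodge-theoretic proof of Kodaira's theorem, reducing the vanishing to a pointwise positivity statement via the Bochner--Kodaira--Nakano identity. \emph{First}, I would exploit ampleness to equip $L$ with a positively curved metric: since some multiple $nL$ is very ample, $\phi_{nL}$ embeds $X$ into a projective space, and pulling back the Fubini--Study metric and taking its $n$-th root gives a smooth Hermitian metric $h$ on $L$ whose Chern curvature $\Theta_h(L)$ is positive definite at every point. Then $\omega := \tfrac{i}{2\pi}\Theta_h(L)$ is a Kähler form on $X$ lying in $c_1(L)$, and --- equipping $X$ with $\omega$ and $L$ with $h$ --- Dolbeault's theorem together with Hodge theory for the $\bar\partial$-Laplacian twisted by $L$ identifies $H^i(X, K_X+L) = H^i(X, \Omega^n_X \otimes L)$ with the space $\mathcal{H}^{n,i}(X,L)$ of $L$-valued harmonic $(n,i)$-forms. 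It then suffices to show $\mathcal{H}^{n,i}(X,L)=0$ for $1\leq i\leq n$.

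\emph{Second}, let $\nabla = \nabla' + \bar\partial$ be the Chern connection of $(L,h)$, with Laplacians $\Delta_{\bar\partial}$ and $\Delta_{\nabla'}$ on $L$-valued forms. Extending the Kähler identities to the twisted setting gives the Bochner--Kodaira--Nakano identity
\[
\Delta_{\bar\partial} \;=\; \Delta_{\nabla'} + \bigl[\, i\Theta_h(L),\, \Lambda_\omega \,\bigr],
\]
where $\Lambda_\omega$ is the adjoint of $\omega\wedge(\cdot)$. Pairing both sides with a harmonic form $u\in\mathcal{H}^{n,i}(X,L)$ and using $\Delta_{\bar\partial}u=0$ yields
\[
\|\nabla' u\|^2 + \|(\nabla')^{*} u\|^2 + \bigl(\, [\, i\Theta_h(L),\Lambda_\omega\,] u,\, u \,\bigr) \;=\; 0 ,
\]
so $\bigl(\, [\, i\Theta_h(L),\Lambda_\omega\,] u,\, u \,\bigr)\leq 0$. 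A pointwise linear-algebra computation then finishes the argument: at a point, choosing unitary coordinates so that $\omega = i\sum_j dz_j\wedge d\bar z_j$ and $i\Theta_h(L) = i\sum_j \gamma_j\, dz_j\wedge d\bar z_j$ with all $\gamma_j>0$, one checks that on an $(n,q)$-form the operator $[\, i\Theta_h(L),\Lambda_\omega\,]$ acts diagonally with eigenvalue $\sum_{j\in I}\gamma_j$ on the component with antiholomorphic multi-index $I$, $|I|=q$ --- and here it is crucial that the holomorphic degree be maximal, i.e.\ that we have twisted by $\Omega^n_X = K_X$. For $q=i\geq 1$ every eigenvalue is strictly positive, whence $\bigl(\, [\, i\Theta_h(L),\Lambda_\omega\,]u, u\,\bigr)\geq c\,\|u\|^2$ for some $c>0$; comparison with the displayed equality forces $u=0$, hence $H^i(X,K_X+L)=0$ for $1\leq i\leq n$.

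The step I expect to be the main obstacle is the twisted Kähler identity in the second paragraph: one must set up the Chern connection carefully and verify that the commutation relations $[\Lambda_\omega,\bar\partial] = -i\,(\nabla')^{*}$ and $[\Lambda_\omega,\nabla'] = i\,\bar\partial^{*}$ persist on $L$-valued forms up to the curvature correction $[\,i\Theta_h(L),\Lambda_\omega\,]$, with all signs tracked correctly. By contrast, the existence of a positively curved metric on $L$ is routine once ampleness is unwound through very ampleness, and the concluding computation is pure linear algebra. As an alternative one could bypass analysis entirely via the Deligne--Illusie reduction modulo $p$ together with the Cartier isomorphism, which moreover yields the stronger Akizuki--Nakano vanishing $H^q(X,\Omega^p_X\otimes L)=0$ for $p+q>n$; but the analytic proof sketched here is the most economical given the background assumed in these notes.
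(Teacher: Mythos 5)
Your outline is correct, but note first that the paper does not prove this theorem at all: it states it and defers to Lazarsfeld, so there is no internal proof to match. What you propose is the classical analytic (Hodge-theoretic) proof, and its skeleton is sound: ampleness gives a metric on $L$ with positive Chern curvature, Hodge theory identifies $H^i(X,K_X+L)$ with $L$-valued harmonic $(n,i)$-forms, the Bochner--Kodaira--Nakano identity turns harmonicity into the inequality $\bigl([i\Theta_h(L),\Lambda_\omega]u,u\bigr)\leq 0$, and the pointwise eigenvalue computation (where twisting by the full $\Omega^n_X$ is exactly what makes every eigenvalue $\sum_{j\in I}\gamma_j$ strictly positive for $q\geq 1$, uniformly so by compactness) forces $u=0$. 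You are right that the twisted K\"ahler identities are the one step that genuinely needs care; everything else is routine. The proof Lazarsfeld actually gives in the cited section is different in character: it is the algebraic reduction via cyclic covers, taking a smooth divisor $D\in|mL|$, passing to the associated $m$-fold branched covering, and deducing the vanishing from the Hodge-theoretic surjectivity of $H^q(Y,\mathbb{C})\to H^q(Y,\O_Y)$. Your analytic route buys the stronger Nakano-type positivity statement and generalizes naturally to the Akizuki--Nakano vanishing you mention at the end; the cyclic-cover route avoids curvature computations entirely and sits closer to the algebraic toolkit this paper otherwise uses (it is also the template for the proofs of the Kawamata--Viehweg and Ramanujam--Mumford statements quoted later in the text). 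Two cosmetic points: the multiple of $L$ that is very ample should not be called $n$, since $n$ already denotes $\dim X$ in the statement; and to extract a uniform constant $c>0$ from the pointwise lower bound you should say explicitly that the eigenvalues $\gamma_j$ of $i\Theta_h(L)$ against $\omega$ are bounded below by a positive constant because $X$ is compact.
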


For the proof, see \cite [Chapt. 5, \S 4.2]{Laz}.

There is a powerful generalization of Kodaira Vanishing Theorem which is useful to record. Let $X$ be a smooth projective variety of dimension $n$ and $L$ a line bundle on $X$. As in the surface case, $L$ is said to be \emph{nef} if for any curve $C$ on $X$ one has $L\cdot C\geq 0$. Moreover, $L$ is said to be \emph{big} if, in addition, $L^n>0$. 

\begin{thm}[Kawamata--Viehweg Vanishing Theorem]\label{thm:kvt} If $L$ is a big and nef line bundle on $X$, then
$h^i(K_S+L)=0$, for $1\leqslant i\leqslant n$. 
\end{thm}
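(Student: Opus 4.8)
The plan is to treat the top cohomology separately and, for the remaining degrees, to reduce to the Kodaira Vanishing Theorem by a cyclic covering construction.

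The case $i=n$ is elementary. By Serre duality $h^n(X,K_X+L)=h^0(X,-L)$, so it is enough to check that $-L$ is not effective. But an effective divisor has non--negative intersection with $A^{n-1}$ for $A$ ample, whereas $L\cdot A^{n-1}>0$: indeed $L$ is nef, so $i\mapsto L^i\cdot A^{n-i}$ is log--concave with positive endpoints $A^n>0$ and $L^n>0$ (the latter being bigness), hence positive throughout. Thus $h^0(X,-L)=0$, and one sees here that both halves of the hypothesis are used.

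For $1\leqslant i\leqslant n-1$ I would argue as follows. By Kodaira's lemma — a standard consequence of bigness plus nefness — one writes $L\equiv_{\QQ}H+F$ with $H$ an ample $\QQ$--divisor and $F=\sum a_iE_i$ an effective $\QQ$--divisor; after passing to a log resolution (which leaves $H^i(X,K_X+L)$ unchanged, since the higher direct images of the canonical sheaf under a birational morphism vanish) one may assume $0<a_i<1$ for all $i$ and that $\bigcup E_i$ has simple normal crossings. Let $N$ be a common denominator of the $a_i$ and of the coefficients of $H$, and form the degree--$N$ cyclic cover $\pi\colon Y\to X$ obtained as an $N$--th root of a divisor $\Gamma\sim NL$ whose support is SNC and whose coefficients lie in $\{1,\dots,N-1\}$ along the $E_i$ — concretely $\Gamma=NF$ completed by a general member of a sufficiently positive ample linear system so that $\Gamma\sim NL$, using $\mathcal O_X(NL)\cong\mathcal O_X(L)^{\otimes N}$. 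Normalising and then resolving $\mu_N$--equivariantly produces a smooth projective $\widetilde Y$ with a finite map $f\colon\widetilde Y\to X$, an action of $\mu_N$, and only rational (toroidal) singularities on the intermediate cover; by Grauert--Riemenschneider vanishing and the Leray spectral sequence, $H^i(\widetilde Y,\omega_{\widetilde Y})=H^i(X,f_*\omega_{\widetilde Y})$, and the eigensheaf decomposition $f_*\omega_{\widetilde Y}=\bigoplus_{j=0}^{N-1}\omega_X\otimes\mathcal O_X\bigl(jL-\lfloor\tfrac{j}{N}\Gamma\rfloor\bigr)$ exhibits $\mathcal O_X(K_X+L)$ as the $j=1$ summand, because $\lfloor\tfrac1N\Gamma\rfloor=0$. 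Hence $H^i(X,K_X+L)$ is the character--$1$ eigen--summand of $H^i(\widetilde Y,\omega_{\widetilde Y})$.

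The hard part — the genuine obstacle — is then to prove that this eigen--summand vanishes for $1\leqslant i\leqslant n-1$. This does \emph{not} follow formally from Kodaira vanishing: $H^i(\widetilde Y,\omega_{\widetilde Y})\cong H^{n-i}(\widetilde Y,\mathcal O_{\widetilde Y})^\vee$ is generally nonzero, and what one must show is that the nontrivial $\mu_N$--eigenspaces of $H^q(\widetilde Y,\mathcal O_{\widetilde Y})$ vanish for $q<n$. This is where bigness of $L$ (equivalently, positivity of $\Gamma$) is really used, and it rests on Hodge theory: one identifies these eigenspaces with the hypercohomology of twisted logarithmic de Rham complexes on $(X,\Gamma_{\mathrm{red}})$ and invokes the degeneration of the logarithmic Hodge--de Rham spectral sequence together with strictness of the Hodge filtration, in the style of Esnault--Viehweg. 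In the case $n=2$ relevant to these notes only $i=1$ is at stake, and one may instead appeal to Ramanujam's more hands--on vanishing theorem; in any event, at the level of these notes the cleanest course is to refer to \cite[Chapt.\ 4 and Chapt.\ 9]{Laz} for the full details.
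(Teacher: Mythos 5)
The paper offers no proof of this theorem at all --- it is stated as a black box with the reference to \cite{Ka} and \cite{V} --- so there is no in-text argument to measure yours against. Taken on its own terms, your outline is the standard one and its individual reductions are sound: the top-degree case via Serre duality and $L\cdot A^{n-1}>0$; Kodaira's lemma to write $L\equiv_{\QQ}H+F$ with small fractional coefficients; the passage to a log resolution using the vanishing of $R^i\mu_*\omega$; and the cyclic cover with the eigensheaf decomposition of $f_*\omega_{\widetilde Y}$, which for $N=2$ specializes to the double-cover formula of \S\ref{ssec:dc} of these notes. You are also right, and commendably explicit, that the argument is not finished at that point: the vanishing of the character-$1$ summand is the entire content of the theorem and does not follow from Kodaira vanishing on $\widetilde Y$. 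Two remarks. First, your assertion that \emph{all} nontrivial $\mu_N$-eigenspaces of $H^q(\widetilde Y,\mathcal O_{\widetilde Y})$ vanish for $q<n$ is stronger than what is needed and stronger than what is true in general; only the one eigenspace dual to the $j=1$ summand matters, and its vanishing is precisely where the positivity of $\Gamma$ enters. Second, the Hodge-theoretic route you sketch is Viehweg's and Esnault--Viehweg's; Kawamata's proof, the first reference the paper cites, avoids the logarithmic Hodge--de Rham degeneration by an induction on dimension in which the covering trick is used to absorb the fractional part so that ordinary Kodaira vanishing applies on the cover. Either way the crux is delegated to the literature, so what you have is an accurate roadmap rather than a self-contained proof --- which is still more than the paper itself supplies.
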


For the proof, see \cite {Ka, V}.

\subsection{Hodge Index Theorem}

\begin{thm}[Hodge Index Theorem]\label{thm:hit} Let $S$ be a surface. The intersection product on $\Num(S)$ has signature $(1,\rho-1)$. In particular, if $D,E$ are divisor classes such that $D^2>0$, then 
\[\
D^ 2\cdot E^2\leqslant (D\cdot E)^2
\]
and equality holds if and $D$ and $E$ are numerically equivalent.
\end{thm}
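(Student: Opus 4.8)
The plan is to work in the real vector space $N^1(S)=\Num(S)\otimes_\ZZ\RR$, on which the intersection form $q(x,y)=x\cdot y$ is non-degenerate, and to pin down its signature by splitting off the line spanned by an ample class. Fix an ample divisor $H$; by Theorem~\ref{thm:NM} we have $H^2>0$, so $q(H,H)>0$ and $N^1(S)=\RR H\oplus\langle H\rangle^{\perp}$ is an orthogonal direct sum with $\langle H\rangle^{\perp}$ of dimension $\rho-1$ and $q$ non-degenerate on it. It then suffices to show that $q$ is \emph{negative definite} on $\langle H\rangle^{\perp}$: granting this, $q$ has signature $(1,0)$ on $\RR H$ and $(0,\rho-1)$ on $\langle H\rangle^{\perp}$, hence signature $(1,\rho-1)$ altogether. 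Suppose not, so some nonzero $D\in\langle H\rangle^{\perp}$ has $D^{2}\geq 0$. If $D^{2}=0$, pick (by non-degeneracy of $q$ on $\langle H\rangle^{\perp}$) a class $E\in\langle H\rangle^{\perp}$ with $D\cdot E\neq 0$; then $(D+tE)^{2}=t\,(2\,D\cdot E+t\,E^{2})$ is positive for a suitable small $t\neq 0$, so we may replace $D$ by $D+tE\in\langle H\rangle^{\perp}$ and assume $D^{2}>0$. Finally, since $\{x\in\langle H\rangle^{\perp}:x^{2}>0\}$ is a nonempty open subset of $\langle H\rangle^{\perp}$, which is the kernel of the rational linear form $x\mapsto x\cdot H$ and hence has dense rational points, it contains a rational class; clearing denominators we obtain an integral divisor class $D$ with $D\cdot H=0$ and $D^{2}>0$, and we fix a line bundle with this class, still written $D$.

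The heart of the proof is to show that such a $D$ cannot exist. As $D^{2}>0$ we have $D\not\equiv 0$, hence $nD\not\equiv 0$ for all $n>0$; so for $n>0$ the sheaf $\O_S(nD)$ has no nonzero global section, since the divisor of one would be a nonzero effective curve $C\equiv nD$, giving $0<C\cdot H=nD\cdot H=0$ by Kleiman's Criterion (Theorem~\ref{thm:Kl}). Thus $h^{0}(nD)=0$, and Riemann--Roch together with Serre duality yield
\[
h^{0}(K_S-nD)=h^{2}(nD)\geq\chi(nD)=\chi(S)+\tfrac12\bigl(n^{2}D^{2}-n\,K_S\cdot D\bigr),
\]
which tends to $+\infty$ as $n\to\infty$ because $D^{2}>0$. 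Hence for $n\gg 0$ the system $|K_S-nD|$ is nonempty with $\dim|K_S-nD|\to\infty$, while every member of it is a nonzero effective curve of $H$-degree $(K_S-nD)\cdot H=K_S\cdot H$, a value independent of $n$. If $K_S\cdot H\leq 0$ this is already absurd, since a nonzero effective curve has positive degree against an ample divisor; and if $K_S\cdot H>0$ it contradicts the boundedness of the family of effective curves of bounded $H$-degree on $S$ — embedding $S$ by a very ample multiple of $H$, these become curves of one fixed degree in a projective space, which form a bounded family, so the dimensions of linear systems among them are bounded. This contradiction establishes the negative definiteness of $q$ on $\langle H\rangle^{\perp}$, and with it the signature assertion. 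The delicate point, and the main obstacle, is precisely this step: knowing merely $h^{0}(K_S-nD)>0$ would not suffice; one must use that $h^{0}(K_S-nD)\to\infty$ together with the constancy of the $H$-degree of the members of $|K_S-nD|$.

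For the inequality, assume $D^{2}>0$. By the signature result just proved, the line $\RR D$ is positive definite, so $N^1(S)=\RR D\oplus D^{\perp}$ is an orthogonal decomposition with $q$ negative definite on $D^{\perp}$ (of dimension $\rho-1$). Writing $E=\alpha D+E_{0}$ with $\alpha\in\RR$ and $E_{0}\in D^{\perp}$, we get $D\cdot E=\alpha D^{2}$ and $E^{2}=\alpha^{2}D^{2}+E_{0}^{2}$, hence
\[
(D\cdot E)^{2}-D^{2}E^{2}=\alpha^{2}(D^{2})^{2}-D^{2}\bigl(\alpha^{2}D^{2}+E_{0}^{2}\bigr)=-D^{2}\,E_{0}^{2}\geq 0
\]
since $E_{0}^{2}\leq 0$; this is the asserted inequality $D^{2}E^{2}\leq(D\cdot E)^{2}$. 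Moreover equality holds precisely when $E_{0}^{2}=0$, i.e. (by negative definiteness on $D^{\perp}$) when $E_{0}=0$, i.e. when $E=\alpha D$ — that is, exactly when $D$ and $E$ are numerically proportional.
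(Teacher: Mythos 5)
Your proof is correct. Note that the paper does not actually prove this theorem --- it only cites \cite[Chapt.~V, Thm.~1.9]{Hart} --- so there is no internal argument to compare with; what you have written is essentially the standard proof from that reference: reduce the signature claim to negative definiteness of the form on $H^{\perp}$ for an ample $H$, perturb a hypothetical counterexample to a rational class $D$ with $D\cdot H=0$ and $D^{2}>0$, observe $h^{0}(nD)=0$ by ampleness, and let Riemann--Roch force $h^{0}(K_S-nD)\to\infty$. Where you genuinely depart from the standard treatments is the final contradiction: you invoke the boundedness of the family of effective $1$--cycles of fixed $H$--degree on $S$. This is correct, but it imports nontrivial machinery (the Chow variety); note also that the Hilbert polynomial of the members of $|K_S-nD|$ is \emph{not} constant in $n$ --- their arithmetic genus tends to infinity --- so one really does need the Chow variety of cycles of fixed degree rather than a single Hilbert scheme, and it would be worth saying so. A more elementary finish, which is the one in Beauville and (in a variant) Hartshorne, is to apply the same reasoning to $-D$ to get $h^{0}(K_S+nD)\to\infty$ as well; then for $n$ large one has $K_S-nD\sim A_n\geq 0$, whence $h^{0}(K_S+nD)\leq h^{0}(K_S+nD+A_n)=h^{0}(2K_S)$, a fixed finite number --- a contradiction requiring no boundedness input at all. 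Your derivation of the inequality and of the equality case from the signature is clean, and you correctly obtain that equality holds iff $D$ and $E$ are numerically \emph{proportional}, which is what the slightly garbled phrase ``equality holds if and $D$ and $E$ are numerically equivalent'' in the statement must be read as saying.
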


For the proof, see \cite[Chapt. V, Thm. 1.9]{Hart}.

\begin{corollary}\label{cor:hit} Let $S$, $X$ be algebraic surfaces and $f:S\to X$ a morphism. If $E$ is a curve on $S$ contracted to a point by $f$, i.e., $f(E)$ is a point of $X$, then $E^2<0$. 
\end{corollary}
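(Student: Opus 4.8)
The plan is to reduce the statement to the Hodge Index Theorem \ref{thm:hit} by pulling an ample class back from $X$. As in all the situations where this corollary will be used, I assume that $f$ is surjective; this hypothesis is really needed, for if $f$ contracted all of $S$ onto a curve (say a ruling of $S=\PP^1\times \PP^1$) a contracted curve could have self--intersection $0$. Since $S$ and $X$ both have dimension $2$, surjectivity forces $f$ to be generically finite, of some degree $d\geqslant 1$.

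First I would fix an ample divisor $H$ on $X$ and set $D:=f^*(H)$. Two properties of $D$ are needed. On one hand $D\cdot E=0$: by the projection formula $D\cdot E=H\cdot f_*(E)$, and $f_*(E)=0$ because $E$ is a curve whose image $f(E)$ is a point. On the other hand $D^2>0$: again by the projection formula $D^2=(f^*H)^2=d\,(H^2)$, which is positive since $H$ is ample. (This is the only point where the generic finiteness of $f$ is used.)

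I would then apply Theorem \ref{thm:hit} to $D$ and $E$. From $D^2>0$ and $D\cdot E=0$ it gives $D^2\cdot E^2\leqslant (D\cdot E)^2=0$, hence $E^2\leqslant 0$. To exclude $E^2=0$ I would use the signature statement of the same theorem: since $D^2>0$, the orthogonal complement of $D$ in $\Num(S)$ is negative definite, the class of $E$ lies in it, and it is non--zero because $E$ is a non--zero effective divisor, so $A\cdot E>0$ for every ample divisor $A$ on $S$ by Theorem \ref{thm:NM}; therefore $E^2<0$.

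I do not anticipate any real obstacle: granted the (necessary) surjectivity of $f$, the argument is just the projection formula followed by the Hodge Index Theorem. The only mildly delicate point is the strict inequality $E^2\neq 0$, which can be obtained as above or, alternatively, by observing that $E^2=0$ together with $D^2>0$ and $D\cdot E=0$ would force $E\equiv D$ and hence the absurdity $D^2=0$.
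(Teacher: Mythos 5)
Your proof is correct and follows essentially the same route as the paper's: pull back an ample class $H$ from $X$, observe $f^*(H)^2>0$ and $f^*(H)\cdot E=0$, and conclude by the Hodge Index Theorem, with the strict inequality $E^2<0$ obtained exactly as in the paper by noting that $E$ cannot be numerically equivalent to $f^*(H)$. Your explicit remark that surjectivity of $f$ is needed (and is what makes $f^*(H)^2>0$) is a point the paper leaves implicit.
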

\begin{proof} Let $A$ be a very ample line bundle on $X$ and set $H=f^*(A)$. Then we have $H^ 2>0$ and $H\cdot E=0$. By the Hodge Index Theorem we have $E^ 2\leqslant 0$ and equality cannot hold because   $E$ is not numerically equivalent to $H$. \end{proof}

\subsection{Blow--up}\label{ssec:bu} Let $S$ be a smooth surface and $x\in S$ a point. The \emph{blow--up} of $S$ at $x$ is a morphims $\pi: \widetilde S\to S$ with a curve $E\cong \PP^1$ such that $E^ 2=-1$ (called the \emph{exceptional $(-1)$--curve of the blow--up} such that $\pi(E)=x$ and $\pi: S-E\to S-\{x\}$ is an isomorphism, hence $\pi$ is a birational morphism. It enjoies the following \emph{universal property}: For any morphism $f:X\to S$  such that the schematic counterimage of $x\in S$ in $X$ is a divisor, there is a morphism $g: X\to \widetilde S$ such that $f=\pi\circ g$ (see \cite [Chapt. II, Thm. 7.14]{Hart}).

Basic properties of the blow--up $\pi: \widetilde S\to S$ are the following \cite [Chapt. V, Prop. 3.2, 3.3, 3.4, 3.5]{Hart}:\\
\begin{inparaenum}
\item [$\bullet$] $\Pic(\widetilde S)=\Pic(S)\oplus \ZZ[E]$;\\
\item [$\bullet$] if $C,D\in \Pic(S)$, then $\pi^*(C)\cdot \pi^*(D)=C\cdot D$;\\
\item [$\bullet$]  if $C\in \Pic(S)$, then $\pi^*(C)\cdot E=0$;\\
\item [$\bullet$] $K_{\widetilde S}=\pi^ *(K_S)+E$;\\
\item  [$\bullet$] $\chi(S)=\chi(\widetilde S)$;\\
\item  [$\bullet$] $P_n(S)=P_n(\widetilde S)$, for all $n\in \NN$, hence $\kappa(S)=\kappa(\widetilde S)$;\\
\item  [$\bullet$] $q(S)=q(\widetilde S)$.
\end{inparaenum}

\begin{thm}[Castelnuovo's Contractibility Theorem] \label{thm:cct} Let $S$ be a surface and $E$ a smooth, irreducible curve on $S$. Then there is a morphism $\pi: S\to X$, with $X$ a smooth surface and a point $x\in X$ such that $S$ is the blow--up of $X$ at $x$ with exceptional divisor $E$ if and only if $E\cong \PP^1$ and $E^ 2=-1$, or equivalently $E^2=K_S\cdot E=-1$.
\end{thm}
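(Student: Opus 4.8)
The implication ``$\Longrightarrow$'' is immediate from the properties of the blow--up recalled in \S\ref{ssec:bu}: the exceptional $(-1)$--curve is by definition a copy of $\PP^1$ of self--intersection $-1$, and the formula $K_S=\pi^*(K_X)+E$ together with $\pi^*(K_X)\cdot E=0$ gives $K_S\cdot E=E^2=-1$. The equivalence of the two numerical conditions in the statement is a formal consequence of adjunction: since $E$ is smooth and irreducible, $(K_S+E)\cdot E=2p_a(E)-2$ with $p_a(E)=g(E)\geq 0$, so ``$E\cong\PP^1$ and $E^2=-1$'' forces $K_S\cdot E=-1$, while conversely ``$E^2=K_S\cdot E=-1$'' forces $p_a(E)=0$, hence $E\cong\PP^1$.

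For ``$\Longleftarrow$'' the plan is to produce $\pi$ as the morphism attached to a base--point--free linear system on $S$ that is numerically trivial on $E$, and then to show that its image is smooth precisely at the point onto which $E$ is contracted. First I would fix a very ample divisor $H$ on $S$ and, after replacing it by a sufficiently large multiple (Serre vanishing), assume $H^1(S,\O_S(H))=H^1(S,\O_S(H-E))=0$. Put $k:=H\cdot E>0$ and $M:=H+kE$, so that $M\cdot E=H\cdot E+kE^2=0$. Feeding the restriction sequences
\[
0\longrightarrow\O_S(H+(i-1)E)\longrightarrow\O_S(H+iE)\longrightarrow\O_E(H+iE)\longrightarrow 0\qquad(1\leq i\leq k)
\]
into the long exact cohomology sequence, and using that $\O_E(H+iE)$ has degree $(H+iE)\cdot E=k-i\geq 0$ on $E\cong\PP^1$ and hence vanishing $H^1$, one proves by induction on $i$ that $H^1(S,\O_S(H+iE))=0$ and that $H^0(S,\O_S(H+iE))\to H^0(\O_E(H+iE))$ is surjective for all $0\leq i\leq k$.

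Next I would check that $|M|$ is base--point--free and that the associated morphism $\phi\colon S\to X\subseteq\PP^N$ contracts $E$ to a single point $x$ and induces an isomorphism $S\setminus E\xrightarrow{\sim}X\setminus\{x\}$. Indeed, away from $E$ the system $|M|$ is base--point--free because $|H|$ is (for $D\in|H|$ one has $D+kE\in|M|$); along $E$ it is base--point--free because $\O_E(M)\cong\O_{\PP^1}$ is globally generated and $H^0(S,\O_S(M))\twoheadrightarrow H^0(\O_E(M))$ by the previous step --- which in particular yields a section $\sigma$ of $\O_S(M)$ nowhere vanishing on $E$. Since $\O_E(M)$ is trivial, $\phi$ contracts $E$ to a point $x$; on $S\setminus E$ the sections $s\cdot\theta^{k}$, with $\theta$ the canonical section of $\O_S(E)$ and $s\in H^0(S,\O_S(H))$, separate points and tangent vectors because $|H|$ is very ample, and using $\sigma$ one sees that no point of $S\setminus E$ is mapped to $x$. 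Hence $\phi$ is birational, an isomorphism over $X\setminus\{x\}$, with $\phi^{-1}(x)=E$, and (after replacing $M$ by a large multiple if needed, so that $\phi_*\O_S=\O_X$) $X$ is a projective surface.

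The crucial point --- and the place where both hypotheses $E\cong\PP^1$ and $E^2=-1$ are essential --- is that $X$ is smooth at $x$. The choice of $M$ gives $\phi^{-1}\mathfrak{m}_x\cdot\O_S=\O_S(-E)$: a general hyperplane section of $X$ through $x$ pulls back to $E+D$ with $D$ a member of $|M-E|=|H+(k-1)E|$, and this system is base--point--free by the same reasoning as above (its restriction to $E$ is $\O_{\PP^1}(1)$ and $H^1(S,\O_S(M-2E))=0$), so such divisors $D$ have no common point. A computation with $|M|$ then identifies the Zariski cotangent space: $\dim_{\CC}\mathfrak{m}_x/\mathfrak{m}_x^2=h^0(S,\O_S(M-E))-h^0(S,\O_S(M-2E))$, which by the restriction sequence $0\to\O_S(M-2E)\to\O_S(M-E)\to\O_E(M-E)\to 0$ and the vanishing of $H^1(S,\O_S(M-2E))$ equals
\[
h^0\bigl(\O_E(M-E)\bigr)=(M-E)\cdot E+1=-E^2+1=2 .
\]
Since $\dim\O_{X,x}=2$, the local ring $\O_{X,x}$ is regular, so $X$ is smooth. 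Finally, $\phi^{-1}(x)=E$ is a Cartier divisor, so by the universal property of the blow--up (\S\ref{ssec:bu}) $\phi$ factors as $\phi=\beta\circ g$ with $\beta\colon\widetilde X\to X$ the blow--up of $x\in X$ and $g\colon S\to\widetilde X$ birational; since $g$ is an isomorphism over $X\setminus\{x\}$ and maps $E$ onto the exceptional curve of $\beta$, it contracts no curve, hence is quasi--finite and (being also proper) finite, hence an isomorphism onto the smooth, thus normal, surface $\widetilde X$. Thus $\pi:=\phi$ realises $S$ as the blow--up of $X$ at $x$ with exceptional divisor $E$. I expect the smoothness of $X$ at $x$ to be the only real difficulty: the rest is a fairly mechanical use of the vanishing packaged in the chain of restriction sequences, whereas the identity $\dim\mathfrak{m}_x/\mathfrak{m}_x^2=-E^2+1$ is exactly what turns $E^2=-1$ into regularity of $\O_{X,x}$ (and makes transparent why $E^2\leq-2$ would instead produce a singular point).
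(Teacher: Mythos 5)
The paper gives no proof of this theorem (it cites Hartshorne, Chapt.~V, Thm.~5.7), and your construction of the contraction is precisely the one in that reference: the forward implication, the adjunction argument for the equivalence of the two numerical conditions, the inductive vanishing $H^1(\O_S(H+iE))=0$, the base--point--freeness of $|M|$ with $M=H+kE$, and the fact that $\phi_{|M|}$ contracts $E$ to a point $x$ and is an isomorphism elsewhere are all correct (modulo the routine Stein--factorization step to arrange $\phi_*\O_S=\O_X$).

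The gap is exactly at the step you flag as crucial. The identity
\[
\dim_{\CC}\mathfrak{m}_x/\mathfrak{m}_x^2=h^0(S,\O_S(M-E))-h^0(S,\O_S(M-2E))
\]
is asserted, not proved, and proving it is essentially equivalent to the theorem. The first term is unproblematic: $\phi_*\O_S(-E)=\mathfrak{m}_x$ (a germ at $x$ lies in $\mathfrak{m}_x$ iff its pullback vanishes on $E$), so by the projection formula $H^0(S,\O_S(M-E))=H^0(X,\mathfrak{m}_x\O_X(1))$. But two things are still needed: (i) the evaluation $H^0(X,\mathfrak{m}_x\O_X(1))\to\mathfrak{m}_x/\mathfrak{m}_x^2$ must be surjective, which by Nakayama is the statement that $\mathfrak{m}_x\O_X(1)$ is globally generated at $x$ --- and that in turn is the statement that the images span $\mathfrak{m}_x/\mathfrak{m}_x^2$, so this is circular; and (ii) the kernel of that evaluation must be $H^0(S,\O_S(M-2E))$, i.e.\ one needs $\phi_*\O_S(-2E)=\mathfrak{m}_x^2$ near $x$. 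Only the inclusion $\mathfrak{m}_x^2\subseteq\phi_*\O_S(-2E)$ is free; the reverse inclusion --- that a function whose pullback vanishes to order $2$ along $E$ is a sum of products of two functions vanishing at $x$ --- is precisely the assertion that $\phi$ has the local structure of the blow--up of a smooth point, which is what is to be proved. What one gets for free from the restriction sequence is only a surjection of $\mathfrak{m}_x/\mathfrak{m}_x^2$ onto the at most $2$--dimensional space $\mathfrak{m}_x/(\phi_*\O_S(-2E))_x$, which gives no upper bound on $\dim\mathfrak{m}_x/\mathfrak{m}_x^2$. The standard way to close this gap, and the one used in the cited proof, is the theorem on formal functions: $\widehat{\O}_{X,x}\cong\varprojlim_n H^0(nE,\O_{nE})$, and the sequences $0\to\O_E(-nE)\to\O_{(n+1)E}\to\O_{nE}\to0$ together with $\O_E(-E)\cong\O_{\PP^1}(1)$ and $h^1(\O_{\PP^1}(n))=0$ identify this limit with $\CC[[u,v]]$; here is where $E\cong\PP^1$ and $E^2=-1$ really enter. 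Your concluding step (factoring through the universal property of the blow--up and applying Zariski's main theorem) is a clean way to finish once smoothness at $x$ is established.
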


For the proof, see \cite [Chapt. V, Thm. 5.7]{Hart}.

\subsection{Rational and birational maps}\label{ssec:rm} Let $S,S'$ be smooth, irreducible, projective surfaces. A birational map $f:S\dasharrow S'$ is determined by an isomorphism between two Zariski open subsets of $S$ and $S'$. 
Then we can consider the \emph{set of definition} of $f$, which is the maximal open subset of $S$ where $f$ is well definied, i.e., it is a morphism. The complement of the set of definition consists of finitely many points of $S$, called the \emph{indeterminacy points} of $f$. 

The following main properties of birational maps have to be recalled:\\
\begin {inparaenum}
\item [$\bullet$] the indeterminacy locus of a birational map consists of isolated points (see \cite [Rappel II.4]{bea};\\
\item [$\bullet$] if $f:S\to S'$ is a birational morphism (i.e., the indeterminacy set is empty), then $f$ is the composite of finitely many blow--ups (see \cite [Thm. II.7]{bea});\\
\item [$\bullet$] [Resolution of indeterminacies] if $f:S\dasharrow S'$ is a birational map, then there is a commutative diagram
\begin{equation*}
\xymatrix{ 
&X\ar_{p}[d]\ar^{q}[dr] &\\
&S \ar@{-->}[r]^{f} &  S'&
}
\end{equation*}
with $X$ a surface and $p,q$ birational morphisms (see \cite [Thm. II.7]{bea};\\
\item [$\bullet$] accordingly, if $f:S\dasharrow S'$ is a birational map, then $f$ is composite of finitely many blow--ups and inverse of blow--ups.\end{inparaenum}

By taking into account the properties of blow--ups, we have that, if $f: S\dasharrow S'$ is a birational map, one has:\\
\begin{inparaenum}
\item [$\bullet$] $q(S)=q(S')$;\\
\item [$\bullet$] $P_n(S)=P_n(S')$, for all $n\in \NN$, hence $\kappa(S)=\kappa(S')$;\\
\item [$\bullet$] $\chi(S)=\chi(S')$.\\
\end{inparaenum}

If $f: S\dasharrow C$ ia a dominant rational map, where $S$ is a surface and $C$ a smooth curve, then we still have the resolution of the indeterminacies, i.e., a commutative diagram 
\begin{equation*}
\xymatrix{ 
&X\ar_{p}[d]\ar^{q}[dr] &\\
&S\ar@{-->}[r]^{f} &  C&
}
\end{equation*} 
where $p$ and $q$ are  rational morphisms, the former one birational. An important remark is the following: if $C$ is not rational, then $f$ has no indeterminacies. Indeed, assuming that $p$ is composed with the minimal number of blow--ups, we see that the exceptional curve of the last blow--up has to be mapped dominantly on $C$, because it cannot be contracted to a point. This proves that $C$ is unirational, hence rational, by L\"uroth's theorem.

\subsection{The relative canonical sheaf} \label {ssec:rcs}

Let $S$ be a surface, $C$ a smooth projective curve, and $f:S\to C$ a surjective morphism with connected fibres of genus $g$. Consider the \emph{relative canonical line bundle}
\[
\omega_{S|C}= K_S\otimes f^ *(K_C^\vee)
\]
and its image
\[
f_*\omega_{S|C}=f_*(K_S)\otimes K_C^\vee
\]
which is a rank $g$ vector bundle on $C$.

\begin{thm}\label{thm:rcs} In the above setting, if $f$ is relatively minimal, i.e., no fibre of $f$ contains a $(-1)$--curve,  the singular fibres of $f$ are reduced and have at most nodes as singularities, one has
\[
\deg(f_*\omega_{S|C})> 0
\]
unless  $f$ is \emph{isotrivial}, i.e., unless the fibres of $f$ are all isomorphic.
\end{thm}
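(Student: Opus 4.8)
The statement is a classical positivity result for relatively minimal fibrations (essentially due to Arakelov, with later refinements). I would prove it following the cohomological argument: first establish that $f_*\omega_{S|C}$ is a nef vector bundle on $C$, and then argue that if $\deg(f_*\omega_{S|C})=0$ the fibration must be isotrivial. The nefness input is the main external ingredient; the rest is a careful bookkeeping of intersection numbers on $S$ using Noether's formula and the hypotheses on the singular fibres.

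\textbf{Step 1: Reduce to a statement about $\omega_{S|C}$ on $S$.} Since $f$ has connected fibres of genus $g$, by relative duality $f_*\omega_{S|C}$ is locally free of rank $g$ and $R^1f_*\omega_{S|C}\cong\O_C$. I would first dispose of the low-genus cases: if $g=0$ then $f_*\omega_{S|C}=0$ and there is nothing of interest (and a relatively minimal genus $0$ fibration is a $\PP^1$-bundle, hence isotrivial); if $g=1$ then $f_*\omega_{S|C}$ is a line bundle, and one shows directly $\deg(f_*\omega_{S|C})=\chi(\O_S)-\deg(K_C^\vee)\cdot 0=\chi(\O_S)$ by Leray, which is $\geq 0$ with equality forcing all fibres smooth (using Noether's formula \eqref{eq:NNN} and that each singular fibre contributes positively to $e(S)$). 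So assume $g\geq 2$.

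\textbf{Step 2: Compute the degree via Noether and the relative Euler characteristic.} Write $d=\deg(f_*\omega_{S|C})$. Grothendieck--Riemann--Roch (or just the Leray spectral sequence together with $\chi(\O_C)=1-g(C)$, $R^1f_*\omega_{S|C}\cong\O_C$, $f_*\omega_{S|C}$ of rank $g$) gives a formula of the shape
\[
d=\chi(\O_S)-(1-g(C))(1-g)=\chi(\omega_{S|C})-\text{(correction)},
\]
and combining with Noether's formula $12\chi(\O_S)=K_S^2+e(S)$ and the topological formula $e(S)=4(g-1)(g(C)-1)+\sum_{t}(e(S_t)-e(S_{\mathrm{gen}}))$ summed over the critical values $t$, I reduce $d$ to
\[
12\,d=\omega_{S|C}^2+\sum_t \delta_t,
\]
where $\delta_t\geq 0$ is a local contribution of the fibre over $t$ which vanishes exactly when $S_t$ is smooth — here the hypothesis that singular fibres are reduced with at most nodes makes $\delta_t$ literally the number of nodes of $S_t$ (each node contributing $1$). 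Relative minimality guarantees $\omega_{S|C}$ is $f$-nef, hence $\omega_{S|C}^2\geq 0$ by a Hodge Index Theorem (Theorem~\ref{thm:hit}) / Zariski-decomposition argument on $S$. This already yields $d\geq 0$.

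\textbf{Step 3: Analyze equality.} Suppose $d=0$. Then $\omega_{S|C}^2=0$ and every fibre is smooth (all $\delta_t=0$), so $f$ is a smooth family of curves of genus $g\geq 2$. Now $f_*\omega_{S|C}$ is a vector bundle of degree $0$ which is nef — this is the place I invoke the nefness theorem of Fujita/Kawamata for $f_*\omega_{S|C}$ of a semistable (here: smooth) fibration. A nef bundle of degree $0$ on a curve is \emph{numerically flat}, hence a successive extension of degree-$0$ line bundles; combined with $H^0(C,(f_*\omega_{S|C})\otimes(\text{anything of negative degree}))=0$ one forces the relative dualizing sheaf to be torsion in a suitable sense, and then the period map of the family is constant. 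Concretely, the vanishing $\omega_{S|C}^2=0$ together with $\omega_{S|C}$ being $f$-nef forces (by Hodge Index on $S$) that $\omega_{S|C}$ is numerically a rational multiple of a fibre $F$; but $\omega_{S|C}\cdot F=2g-2>0$ while $F^2=0$ gives a contradiction \emph{unless} $\omega_{S|C}$ is itself the pullback of a divisor on $C$ of degree $0$ — i.e. $K_S=f^*(\text{divisor})$ numerically, which means the fibration has no moduli variation. Unravelling this: the classifying map $C\to \overline{\mathcal M}_g$ is constant, so all fibres are isomorphic, i.e. $f$ is isotrivial.

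\textbf{Main obstacle.} The genuinely non-elementary input is the nefness (semipositivity) of $f_*\omega_{S|C}$, and more precisely extracting isotriviality — not merely smoothness of fibres — from $\deg(f_*\omega_{S|C})=0$. The inequality $d\geq 0$ and the smoothness of all fibres in the equality case are a clean consequence of Noether's formula plus $\omega_{S|C}^2\geq 0$; the real work is showing a degree-$0$ (hence numerically flat) $f_*\omega_{S|C}$ rigidifies the family. I expect to handle this either by the Hodge-index argument sketched above (showing $\omega_{S|C}\equiv f^*(\text{degree }0)$, so the Kodaira--Spencer class vanishes identically) or, if one wants to stay self-contained, by a direct monodromy/period-map argument using that a variation of Hodge structure with trivial Higgs field is isotrivial. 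I would present the Hodge-index route since Theorem~\ref{thm:hit} is already available in the text.
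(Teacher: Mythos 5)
First, a point of reference: the paper does not prove this theorem at all --- it is quoted verbatim from \cite[Theorem (17.3)]{BPHV} (Fujita/Arakelov semipositivity), so there is no internal argument to measure yours against. Your Step 2 is correct and is indeed the standard way to organize the inequality: relative duality and Leray give $\deg(f_*\omega_{S|C})=\chi(\O_S)-(g-1)(g(C)-1)$, and combining Noether's formula \eqref{eq:NNN} with the Euler number formula of Proposition \ref{prop:cdf} yields $12\deg(f_*\omega_{S|C})=\omega_{S|C}^2+\sum_t\delta_t$, where $\delta_t\geq 0$ is the number of nodes of the fibre over $t$ (this uses the hypothesis that singular fibres are reduced and nodal, via Lemma \ref{lem:purp}). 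Granting that $\omega_{S|C}$ is nef, this gives $\deg(f_*\omega_{S|C})\geq 0$ and, in the equality case, that all fibres are smooth. But even here you over-claim: $f$-nefness of $\omega_{S|C}$, which is what relative minimality gives you cheaply, does not imply $\omega_{S|C}^2\geq 0$; the global nefness of $\omega_{S|C}$ is Arakelov's theorem and requires a separate argument about horizontal curves of negative self-intersection, not just the Hodge Index Theorem.

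The genuine gap is Step 3. The ``Hodge-index route'' you propose to present is incoherent: a nef class with self-intersection $0$ on a surface with $\rho\geq 3$ need not be numerically proportional to the fibre class, and since $\omega_{S|C}\cdot F=2g-2>0$ while $F^2=0$, the class $\omega_{S|C}$ is certainly \emph{not} proportional to $F$; there is no contradiction to extract, and no forced conclusion that $\omega_{S|C}$ is a pullback from $C$. Indeed for the trivial family $S=C\times F$ one has $\omega_{S|C}=p_2^*(K_F)$, which is nef, has square zero, has positive degree on every fibre of $f$, and is not a pullback from $C$ --- so the numerical situation you are trying to rule out is exactly what happens in the isotrivial case, with no pathology. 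The implication ``$\deg(f_*\omega_{S|C})=0$ and all fibres smooth imply $f$ isotrivial'' is precisely the nontrivial content of the cited theorem, and it cannot be obtained from intersection theory on $S$ alone: the proof in \cite{BPHV} puts the Hodge metric on $f_*\omega_{S|C}$, shows its curvature is semipositive, deduces from degree zero that the curvature and hence the second fundamental form (equivalently the Kodaira--Spencer/Higgs field) vanish identically, and concludes that the period map, hence the moduli map, is constant. That is the argument you relegate to a fallback; without it your proposal proves only the inequality $\deg(f_*\omega_{S|C})\geq 0$ (modulo Arakelov nefness) and the smoothness of all fibres in the boundary case, not the isotriviality statement that the theorem is actually about.
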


For the proof, see \cite [Theorem (17.3)] {BPHV}. 

\subsection{Cones}\label{ssec:cones} A \emph{cone} in a euclidean space $\RR^n$ is a set $\C$ such that if $x,y\in \C$ then also $\lambda x+\mu y\in \C$, for all $\lambda,\mu\geqslant 0$. Note that a cone is convex. The cone will be said to be \emph{strictly convex} if it contains no non--zero vector subspace of $\RR^n$.

A hyperplane of $\RR^n$, with equation $f(x)=0$, will be said to be a  \emph{supporting hyperplane} of $\C$ if $f(z)\geqslant 0$ for all $z\in \C$. 
A \emph{face} $F$ of $C$ is the intersection of $\C$ with a supporting hyperplane, i.e., there is a supporting hyperplane with equation $f(x)=0$ such that $F=\{z\in \C: f(z)=0\}$.  A \emph{ray} is a half--line of the form $\RR^+x$, for $x\in \C-\{0\}$. A ray $R$ is said to be \emph{extremal} if it is a face, in which case clearly $x+y\in R$ implies that $x,y\in R$.

A cone $\C$ is said to be \emph{polyhedral} if it is the convex one generated by finitely many rays, i.e., if there are rays $R_1,\ldots, R_h$ such that $\C=R_1+\ldots +R_h$. 

\subsection{Complete intersections}\label{ssec:ci} A more general form of \emph{adjunction formula} says that if $X$ is a smooth variety of dimension $n$ and if $Y$ is an effective divisor on $X$, then
\[
\omega_Y\cong(\omega_X\otimes \O_X(Y))_{|Y},
\]
where $\omega_X$ and $\omega_Y$ are the dualizing sheaves of $X$ and $Y$. 

If $n\geq 3$, if $Y$ is smooth and $L:=\O_X(Y)$ is ample (or even big and nef), from the exact sequence
\[
0\to L^ \vee\to \O_X\to \O_Y\to 0
\]
and by Kodaira Vanishing Theorem (or by Kawamata--Viehweg Vanishing Theorem), we have
\[
0=H^1(L^ \vee)\to H^1(\O_X)\to H^1(\O_Y)\to H^2(L^ \vee)=0
\]
hence $h^1(\O_X)= h^1(\O_Y)$. 

Iterated application of the adjunction formula implies that if $Y\subset X$ is the $n-h$ dimensional, complete intersection of divisors $Y_1,\ldots, Y_h$, one has 
\[
\omega_Y\cong(\omega_X\otimes \O_X(Y_1+\ldots+Y_h))_{|Y}.
\]

Similarly, iterated application of the Kodaira Vanishing Theorem tells us that if $Y_1,\ldots, Y_h$ are ample and if $Y$ is smooth of dimension at least 2, then $h^1(\O_X)= h^1(\O_Y)$.

Since $\omega_{\PP^ n}\cong \O_{\PP^n}(-n-1)$, if $Y$ is the complete intersection of dimension $n-h$ of hypersurfaces of $\PP^n$ of degrees $d_1,\ldots, d_h$ (in this case $Y$ is said to be a complete intersection of \emph{type} $(d_1,\ldots, d_h)$), one has
\[
\omega_Y\cong\O_Y(d_1+\cdots+ d_h-n-1).
\]

Since $h^1(\O_{\PP^ n})=0$, if $Y$ is a smooth complete intersection of dimension at least 2, then  $h^1(\O_Y)=0$. 

Let now $X$ be a surface of degree $d$ in $\PP^3$, with \emph{ordinary singularities}, i.e., with at most the following singularities: a curve $\Gamma$ of double points generically the transverse intersection of two branches, with at most finitely many pinch points, with $\Gamma$ having at most finitely many triple points as singularities, with three independent tangent lines, which are triple points also for $X$. Then the normalization $\nu: S\to X$ is smooth. A suitable application of the adjunction formula implies that the canonical bundle $\omega_S$ of $S$ is the pull--back to $S$ of $\mathcal I_{\gamma|\PP^3}(d-4)$, off the pull back of $\Gamma$ on $S$. This implies that for any $n\in \ZZ$, there is a map
\[
r_n: H^ 0(\mathcal I_{\Gamma|\PP^3}(n))\to H^0(\omega_S\otimes \O_S(n-d+4))
\]
where $\O_S(m):=\nu^*(\O_X(m))$ for any $m\in \ZZ$. The map $r_n$ is surjective for all $n\in \ZZ$. 

For this, see \cite[ Mumford's appendix to Chapt. III]{Zar2}. 

\subsection{Stein factorization}\label {ssec:stein} Let $f: X\to Y$ be a dominant projective morphism of noetherian schemes. Then there is a commutative diagram
\begin{equation*}
\xymatrix{ 
&X\ar_{f'}[d]\ar^{f}[dr] &\\
&Y\ar^{g}[r] &  S'&
}
\end{equation*}
where $f'$ is a projective morphism with connected fibres and $g$ is a finite morphism.

See \cite [Chapt. III, Cor. 11.5]{Hart}.

\subsection{Abelian varieties} 

\begin{proposition}\label{prop:pp} Let $A$ be a complex torus, $X$ a variety and $f: X\to A$ an \'etale cover. Then $X$ is a complex torus.
\end{proposition}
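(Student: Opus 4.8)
The plan is to realise $X$ explicitly as a quotient of $\CC^n$ by a lattice, by means of the classification of covering spaces. Write $A=V/\Lambda$, where $V\cong\CC^n$ is a complex vector space and $\Lambda\subset V$ is a lattice, i.e.\ a free abelian subgroup of rank $2n$ whose $\RR$--span is $V$, and let $\pi\colon V\to A$ be the quotient map. Since $V$ is simply connected, $\pi$ is the universal covering of $A$, so that $\pi_1(A)\cong\Lambda$. As $f$ is finite and \'etale it is, in particular, a topological covering map of finite degree $d=\deg f$, and $X$ is connected since it is a variety. Hence, by the Galois correspondence for coverings (applicable because $A$ is a manifold) and after a choice of base points, $X$ is isomorphic over $A$ to the connected covering of $A$ attached to a subgroup $H\leq\pi_1(A)=\Lambda$ of index $d$.

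Next I observe that $H$ is itself a lattice in $V$. Indeed, being a finite--index subgroup of $\Lambda\cong\ZZ^{2n}$, it is free of rank $2n$, so $H\otimes_\ZZ\RR\subseteq\Lambda\otimes_\ZZ\RR=V$ is an equality of $2n$--dimensional real subspaces and $H$ spans $V$; and $H$ is discrete, being contained in the discrete group $\Lambda$. Therefore $T:=V/H$ is a complex torus of dimension $n$, and the natural surjection $V/H\to V/\Lambda=A$ is a finite holomorphic covering whose monodromy subgroup of $\pi_1(A)$ is exactly $H$. By the uniqueness part of the classification of coverings there is a homeomorphism $\varphi\colon X\to T$ commuting with the projections to $A$.

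It remains to check that $\varphi$ is biholomorphic, which is a local question on $A$. Over a small ball $U\subseteq A$ over which both $f$ and $T\to A$ are trivial, $\varphi$ restricts to a homeomorphism $U\times F\to U\times F'$, with $F,F'$ finite, which commutes with the projection onto $U$; since $U$ is connected it has the form $(u,i)\mapsto(u,\sigma(i))$ for a bijection $\sigma\colon F\to F'$, hence it is biholomorphic on each sheet. Thus $\varphi$ is biholomorphic, and $X\cong V/H$ is a complex torus. (Moreover, since $f$ is finite and $A$ is projective, $X$ is projective, so $V/H$ is in fact an abelian variety and the identification is algebraic; this is not needed for the statement, but is what one uses in the applications.)

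The only step which is not purely formal is the last one, the passage from a topological isomorphism of coverings to a biholomorphism; all the rest is the standard dictionary between connected covers of $A$ and subgroups of $\pi_1(A)$, plus the elementary fact that a finite--index subgroup of a lattice is again a lattice. One can also avoid covering theory in the analytic part by transporting the group law: lift $X\times X\xrightarrow{f\times f}A\times A\xrightarrow{+}A$ through $f$ --- possible because $H+H=H$ --- and similarly the inversion, deduce the group axioms from uniqueness of lifts, and then use that a connected compact complex Lie group is a complex torus.
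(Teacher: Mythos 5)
Your proposal is correct and follows essentially the same route as the paper: the étale cover corresponds to a finite--index subgroup $H$ of $\pi_1(A)\cong\Lambda$, so $X\cong V/H$ with $H$ again a lattice. You merely make explicit the verification (that the topological identification of coverings is biholomorphic) which the paper leaves implicit.
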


\begin{proof}  Let $n$ be the dimension of $A$ and $X$. The \'etale cover $f$ corresponds to a subgroup $G$ of finite index of $\pi_1(A)=H_1(A,\ZZ):=\Lambda\cong \ZZ^{2n}$. Then $A\cong \CC^n/\Lambda$. Hence $X\cong \CC^n/G$. \end{proof}

\begin{thm}[Poincar\'e Complete Reducibility Theorem] \label{thm:pcrt} Let $X$ be an abelian variety and $Y$ an abelian subvariety. Then there is an abelian subvariety $Z$ of $X$ such that $Y\cap Z$ is finite and $Y+Z=X$, i.e., there is an isogeny
\[
(y,z)\in Y\times Z\to y+z\in X.
\]
Moreover there is a morphism $\phi: X\to Z$ such that $Y$ is contained in a fibre of $\phi$. 
\end{thm}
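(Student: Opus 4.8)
The plan is to produce $Z$ as the orthogonal complement of $Y$ with respect to a polarization of $X$; this is the classical argument. Write $X=V/\Lambda$ with $V\cong\CC^{n}$ and $\Lambda$ a lattice of rank $2n$. Since $X$ is an abelian variety it carries a polarization: a positive definite Hermitian form $H$ on $V$ whose imaginary part $E=\mathrm{Im}(H)$ takes integer values on $\Lambda$ (and satisfies $E(iv,iw)=E(v,w)$). Because $Y$ is an abelian subvariety, $Y=W/\Lambda_{W}$ for a complex subspace $W\subseteq V$ such that $\Lambda_{W}:=\Lambda\cap W$ has full rank $2\dim_{\CC}W$ in $W$.

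First I would set $W':=\{v\in V:E(v,w)=0\text{ for all }w\in W\}$. Using $E(iv,iw)=E(v,w)$ one checks that $W'$ is a complex subspace, and using positive definiteness of $H$ (so $E(iv,v)>0$ for $v\neq0$) one checks that $W\cap W'=0$; since $\dim_{\RR}W'=2n-\dim_{\RR}W$ (as $E$ is non-degenerate), this gives $V=W\oplus W'$. The crucial step is to show that $\Lambda':=\Lambda\cap W'$ is again a lattice of full rank in $W'$: because $\Lambda_{W}$ spans $W$ over $\RR$ and $E$ is $\RR$-bilinear, $\Lambda'$ equals the kernel of the homomorphism $\Lambda\to\Hom(\Lambda_{W},\ZZ)$, $\lambda\mapsto E(\lambda,-)|_{\Lambda_{W}}$, whose target is free of rank $2\dim_{\CC}W$; hence $\Lambda'$ has rank at least $2n-2\dim_{\CC}W=\dim_{\RR}W'$, and, being discrete in $W'$, it has rank exactly $\dim_{\RR}W'$. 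This rank count is the only place where the polarization is really used (indeed only the integrality of $E$ on $\Lambda$ is needed), and I expect it to be the main point to get right. Then $Z:=W'/\Lambda'$ is a complex subtorus of $X$, polarized by $H|_{W'}$, hence an abelian subvariety.

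It remains to assemble the conclusions. From $W\cap W'=0$ the tangent space of $Y\cap Z$ at the origin is $0$, so $Y\cap Z$ is finite; from $W+W'=V$ the connected subgroup $Y+Z$ has dimension $n$, so $Y+Z=X$, and the addition map $\mu\colon Y\times Z\to X$ is an isogeny. For the last assertion, pick $N$ with $N\cdot\ker(\mu)=0$; then $\ker(\mu)\subseteq\ker([N]_{Y\times Z})$, so $[N]_{Y\times Z}$ factors as $\psi\circ\mu$ for a homomorphism $\psi\colon X\to Y\times Z$, and I set $\phi:=\mathrm{pr}_{Z}\circ\psi\colon X\to Z$. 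For $y\in Y$, identified with $\mu(y,0)\in X$, one has $\psi(y)=[N]_{Y\times Z}(y,0)=(Ny,0)$, whence $\phi(y)=0$; thus $Y\subseteq\phi^{-1}(0)$, a fibre of $\phi$, as required. (Alternatively, one can run the whole argument algebraically using the polarization $\phi_{L}\colon X\to\hat X$ and taking for $Z$ the identity component of the kernel of $\hat\iota\circ\phi_{L}\colon X\to\hat Y$, where $\iota\colon Y\hookrightarrow X$ is the inclusion.)
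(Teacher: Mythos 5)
The paper does not prove this theorem itself — it only cites Mumford's \emph{Abelian varieties} — and your argument is precisely the standard analytic proof found there: take the orthogonal complement $W'$ of $W$ with respect to the imaginary part $E$ of a polarization, use integrality of $E$ on $\Lambda$ to show $\Lambda\cap W'$ has full rank, and deduce the isogeny. All the steps check out, including the rank count for $\Lambda'$ and the factorization of $[N]_{Y\times Z}$ through the isogeny $\mu$ to produce $\phi$, so your proof is correct and consistent with the reference the paper relies on.
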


For the proof, see \cite [p. 173]{Mum}.

\subsection{The Albanese variety}\label{ssec:alb} Let $X$ be a smooth, projective, irreducible variety. We set $q(X):=h^ 0(\Omega^ 1_X)$ (also simply denoted by $q$), which is the \emph{irregularity} of $X$.

There is an injection
\[
\int: H_1(X,\ZZ)\to H^ 0(\Omega^ 1_X)^\vee 
\]
sending a cycle $\gamma$ to the linear map
\[
\int_\gamma: \omega \in H^ 0(\Omega^ 1_X)\to \int_\gamma\omega\in \CC.
\]
The injection $\int$ realizes $H_1(X,\ZZ)$ as a lattice of maximal rank in $H^ 0(\Omega^ 1_X)^\vee$, in particular $b_1(X)=2q(X)$. The $q$--dimensional complex torus 
\[
\Alb(X):=\frac {H^ 0(\Omega^ 1_X)^\vee} {H_1(X,\ZZ)}
\]
is called the \emph{Albanese variety} of $X$. 

Fix a point $p\in X$. Then we have the well defined morphism
\[
\alpha_X: x\in X\to \int_p^ x \in \frac {H^ 0(\Omega^ 1_X)^\vee} {H_1(X,\ZZ)}
=\Alb(X)
\]
which is called the \emph{Albanese morphism} of $X$ with base point $p$, often simply denoted by $\alpha$. By changing the base point $p$, $\alpha_X$ changes by a translation in $\Alb(X)$. The Albanese morphism enjoyes the following \emph{universal property}: for every complex torus $A$ and for every morphism $\beta: X\to A$, there is a commutative diagram
\begin{equation*}
\xymatrix{ 
&X\ar_{\alpha}[d]\ar^{\beta}[dr] &\\
&\Alb(X)\ar^{\gamma}[r] &  A&
}
\end{equation*}

Moreover the map
\[
\alpha^*: H^0(\Omega^1_{\Alb(X)})\to H^0(\Omega^1_{X})
\]
is an isomorphism. This implies that $\alpha(X)$ spans $\Alb(X)$ as a complex torus.

If $X=C$ is a curve, then $\Alb(C)$ is denoted by $J(C)$ and called the \emph{jacobian variety} of $C$ and the map $\alpha_C$ is an embedding in this case. 

\begin{proposition}\label{prop:tr} Let $X,Y$ be smooth, irreducible, projective varieties and $f: X\to Y$ a surjective morphism. Then $q(X)\geq q(Y)$.
\end{proposition}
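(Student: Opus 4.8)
The plan is to use the functoriality of the Albanese construction together with the fact that $\alpha^*$ is an isomorphism on holomorphic $1$-forms. First I would pull back $1$-forms directly: the surjective morphism $f\colon X\to Y$ induces a linear map $f^*\colon H^0(\Omega^1_Y)\to H^0(\Omega^1_X)$, and the content of the proposition is precisely that this map is injective. So the whole statement reduces to the claim: a nonzero holomorphic $1$-form $\omega$ on $Y$ pulls back to a nonzero $1$-form $f^*\omega$ on $X$.

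To see this injectivity I would argue as follows. Since $f$ is surjective and $X,Y$ are smooth projective, $f$ is generically smooth (we are over $\CC$), so there is a dense open $U\subseteq Y$ over which $df$ is surjective on tangent spaces at every point of $f^{-1}(U)$; pick any point $y\in U$ where $\omega(y)\neq 0$ (such $y$ exists, shrinking $U$ if necessary, since $\omega$ is a nonzero form and its zero locus is a proper closed subset) and any $x\in f^{-1}(y)$. Then $df_x\colon T_xX\to T_yY$ is surjective, hence its transpose $(df_x)^*\colon \Omega^1_{Y,y}\to \Omega^1_{X,x}$ is injective, so $(f^*\omega)(x)=(df_x)^*(\omega(y))\neq 0$. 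Therefore $f^*\omega\neq 0$, proving $f^*$ is injective and hence $q(X)=h^0(\Omega^1_X)\geq h^0(\Omega^1_Y)=q(Y)$.

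Alternatively — and this is the route that exploits the machinery just set up in \S\ref{ssec:alb} — one can phrase it via Albanese varieties: $f$ induces a morphism $\Alb(f)\colon \Alb(X)\to \Alb(Y)$ by the universal property applied to $\alpha_Y\circ f\colon X\to \Alb(Y)$, and since $\alpha_X(X)$ spans $\Alb(X)$ while $f$ is surjective, $\Alb(f)$ is surjective onto $\Alb(Y)$ (its image is a subtorus containing $\alpha_Y(Y)$, which spans). A surjection of complex tori has $\dim$ source $\geq \dim$ target, and $\dim\Alb(X)=q(X)$, $\dim\Alb(Y)=q(Y)$, giving the inequality. Pulling back forms via $\Alb(f)^*$ and using that both $\alpha^*$ are isomorphisms recovers the injectivity of $f^*$ on $1$-forms as well.

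The only delicate point is the generic smoothness used in the first argument: over a field of characteristic zero a dominant morphism of smooth varieties is smooth over a dense open, which is standard, but it is worth stating explicitly. In the Albanese-based argument the analogous delicate point is that the image of a complex torus under a morphism of complex tori is again a subtorus and that a surjection of tori does not increase dimension — also standard, and in fact contained in the discussion preceding Proposition~\ref{prop:pp}. I expect no real obstacle; the statement is essentially a formal consequence of the material already developed.
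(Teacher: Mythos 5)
Your proposal is correct, and your second (Albanese-based) argument is essentially identical to the paper's proof: the paper invokes the universal property to get the induced map $g\colon\Alb(X)\to\Alb(Y)$, notes that $\alpha_Y(Y)=\alpha_Y(f(X))$ spans $\Alb(Y)$ so $g$ is surjective, and concludes by comparing dimensions. Your first argument --- pulling back a nonzero holomorphic $1$-form and checking nonvanishing at a point where $f$ is smooth and the form does not vanish --- is a genuinely different and more elementary route: it bypasses the Albanese variety entirely and only needs generic smoothness in characteristic zero, so it proves directly that $f^*\colon H^0(\Omega^1_Y)\to H^0(\Omega^1_X)$ is injective (which is slightly more information than the dimension inequality alone). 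The trade-off is that the Albanese argument is the one the paper wants on record, since the surjectivity of $\Alb(X)\to\Alb(Y)$ is reused later (e.g.\ in the Stein-factorization arguments of \S\ref{ssec:basic} and in the proof of Enriques' Theorem), whereas the form-theoretic argument, while cleaner here, does not set up that machinery. Both arguments are complete as written.
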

\begin{proof} By the universal property of the Albanese variety, there is a commutative diagram
\begin{equation*}\label{eq:K3}
\xymatrix{ 
&X\ar^{f}[r]\ar_{\alpha_X}[d]&Y\ar^{\alpha_Y}[d] \\
&\Alb(X)\ar^{g}[r] & \Alb(Y)\,\,
}
\end{equation*}
Since $\alpha_Y(Y)$ spans $\Alb(Y)$, then $g$ is surjective, hence 
\[q(X)=\dim(\Alb(X))\geq \dim(\Alb(Y))=q(Y).\] 
\end{proof}

\subsection{Double covers}\label{ssec:dc}
Let $X$ be a variety, $B$ be an effective divisor on $X$ and $L$ a line bundle on $X$ with a section $s\in H^ 0(X,L^ {\otimes 2})$ such that ${\rm div}(s)=B$. Let $T(L)$ be the \emph{total space} of $L$ with the projection $\pi: T(L)\to X$. Then $s$ defines in a natural way a section $\bar s$ of $\pi^*(L^ {\otimes 2})$ on $T(L)$. Set $Y={\rm div}(\bar s)\subset T(L)$, with the projection $p=\pi_{|Y}: Y\to X$. We claim that $p: Y\to X$ is a finite cover of degree $2$, i.e. a \emph{double cover}, which is \emph{branched} along $B$ and \emph{ramified} along $R:=p^ {-1}(B)$.

Indeed, let $\{U_i\}_{i\in I}$ be a covering of $X$ with affine open subsets, in which $L$ is defined by the cocycle $h_{ij}\in H^0(U_i\cap U_j, \O^*_X)$. Then $T(L)$ can be covered by open subsets of the form $\{U_i\times \CC\} _{i\in I}$ with transition functions $(u_i,z_i)\to (u_i,h_{ij}z_j)$. The section $s$ is locally given by functions $\{s_i\}_{i\in I}$ such that $s_i=h_{ij}^2s_j$. Then the section $\bar s$ is locally given by functions $\{\bar s_i=z_i^2-s_i\}_{i\in I}$. In fact we have the identity
\[
\bar s_i=z_i^2-s_i=(h_{ij}z_j)^2-h_{ij}^2s_j=h_{ij}^2\bar s_j.
\]
Then $Y$ is locally defined by the equations $\{z_i^2=s_i\}_{i\in I}$. If we have a point $u_i\in U_i$, then the counterimage of $u_i$ via $p: Y\to X$ consists of the points $(u_i,\pm \sqrt {s_i(u_i)})$, thus obtaining two points, unless $s_i(u_i)=0$.  
Thus the branch divisor is defined by the equations $\{s_i=0\}_{i\in I}$, i.e.,  ${\rm div}(s)=B$, and the ramification divisor is defined by the equations $\{z_i=0\}_{i\in I}$, i.e.,  $R:=p^ {-1}(B)$. The above computation also imply that $\O_Y(R)=p^*(L)$. 

Assume now $X$ is smooth. Then $Y$ is singular exactly at the points of $R$ corresponding to singular points of $B$. In particular,
if $B$ is non--reduced, then $Y$ is non--normal, and its normalization is the double cover of $X$ branched along the divisor $B_{\rm red}$. 

Note that if $U_i={\rm Spec}(A_i)$, then 
\[
p^{-1}(U_i)={\rm Spec}(B_i), \quad \text{with} \quad B_i=\frac {A_i[z_i]}{(z_i^2-s_i)},
\]
hence
\begin{equation}\label{eq:lampo}
B_i=A_i\oplus z_iA_i
\end{equation}
as an $A_i$--module. This tells us that 
\[
p_*(\O_Y)=\O_X\oplus L^ \vee.
\]
Indeed $p_*(\O_Y)$ is clearly a rank 2 vector bundle on $X$. There is an obvious injection
$\O_X\hookrightarrow p_*(\O_Y)$, which gives rise to the sequence
\begin{equation}\label{eq:lamp}
0\to \O_X\to p_*(\O_Y)\to F\to 0
\end{equation}
where $F$ is a line bundle. From \eqref {eq:lampo} we see that \eqref {eq:lamp} splits: indeed $\O_X\hookrightarrow p_*(\O_Y)$ corresponds to the inclusion $A_i\hookrightarrow B_i=A_i\oplus z_iA_i$ in \eqref {eq:lampo}, and the projection
$B_i=A_i\oplus z_iA_i\to A_i$ is locally well definied and glue in order to give a surjection
$p_*(\O_Y)\twoheadrightarrow \O_X$ splitting \eqref {eq:lamp}. Moreover, by \eqref {eq:lampo}, $F$ is locally defined by sections $\{\sigma_i\}_{i\in I}$ such that
$\sigma_iz_i=\sigma_jz_j$. Hence
\[
\sigma_iz_i=\sigma_jz_j=\sigma_j(h_{ij}^{-1}z_i),\quad \text{i.e.,} \quad \sigma_i=h_{ij}^{-1}\sigma_j,
\]
which proves that $F=L^ \vee$.

Note that, by the ramification formula, one has 
\[
K_Y=p^ *(K_X)+R=p^*(K_X\otimes L).
\]
Then the projection formula implies that
\[
p_*(K_Y)=K_X\oplus (K_X\otimes L).
\]
In particular we have
\[
p_g(Y)=h^0(K_Y)=h^0(K_X)+ h^0(K_X\otimes L)=p_g(X)+ h^0(K_X\otimes L)
\]
and
\begin{equation}\label{eq:qu}
q(Y)=h^1(\O_Y)=h^1(\O_X)+h^1(L^\vee)=q(X)+h^1(L^\vee).
\end{equation}

\begin{example}\label{ex:et} In the case $B=0$, we have an \emph{\'etale double cover}  $p: Y\to X$, which is never ramified. Such a cover is determined by a non--trivial line bundle $L$ on $X$ such that $L^{\otimes 2}\cong \O_X$, i.e., by a non--trivial element of order 2 in $\Pic(X)$.
\end{example}

\subsection {Riemann's Existence Theorem}\label{ssec:ret}

We will give two version of this theorem, which are not the most general, but are sufficient for our applications.

\begin{thm} \label {thm:ret}
Let $p_1,\dots,p_n$ be distinct points of $\PP^1$ and let  $\sigma_1,\ldots,\sigma_n$ be permutations of the set $\{1,\ldots, m\}$, for  $1\leq i\leq n$,  such that:\\
\begin{inparaenum}
\item [$\bullet$] $\sigma_1,\ldots, \sigma_n$ generate a subgroup of $\mathfrak S_m$ transitive over 
$\{1,\ldots, m\}$;\\
\item [$\bullet$] $\sigma_1\cdots \sigma_n={\rm id}$.
\end{inparaenum} 

Then there is a smooth, irreducible curve $C$, uniquely defined up to isomorphisms,  and a 
morphism $f: C\to \PP^1$ of degree $m$, with branch points $p_1,\dots,p_n$ and local monodromy around $p_i$ given by $\sigma_i$, for $1\leq i\leq n$. 
\end{thm}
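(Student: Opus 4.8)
The plan is to construct the covering space directly using the topological fundamental group of the punctured sphere, and then invoke the Riemann existence theorem in its analytic form to put a complex structure on the resulting topological surface. First I would set $U=\PP^1\setminus\{p_1,\dots,p_n\}$ and recall the standard presentation $\pi_1(U,q)=\langle\gamma_1,\dots,\gamma_n \mid \gamma_1\cdots\gamma_n=1\rangle$, where $\gamma_i$ is a small loop around $p_i$ (the loops being ordered compatibly with the chosen points and a base point $q$). The two hypotheses on the permutations say precisely that the assignment $\gamma_i\mapsto\sigma_i$ extends to a group homomorphism $\rho\colon\pi_1(U,q)\to\mathfrak S_m$ (because the single relation is respected) whose image acts transitively on $\{1,\dots,m\}$.

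Next I would use $\rho$ together with the standard correspondence between subgroups of $\pi_1(U,q)$ (equivalently, transitive $\pi_1$-sets) and connected covering spaces of $U$: take $H=\rho^{-1}(\mathrm{Stab}(1))\le\pi_1(U,q)$, a subgroup of index $m$, and let $V\to U$ be the corresponding connected covering of degree $m$. Transitivity of the monodromy guarantees $V$ is connected. Since $U$ is a smooth complex curve (a Riemann surface), the covering $V$ inherits a unique complex structure making $V\to U$ holomorphic and a local isomorphism. The key point is then to compactify: near each puncture $p_i$, a punctured disc $\Delta_i^*\subset U$ pulls back to a disjoint union of punctured discs, one for each cycle of $\sigma_i$, each mapping to $\Delta_i^*$ by $w\mapsto w^{\ell}$ where $\ell$ is the length of that cycle. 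Filling in the missing center of each such punctured disc produces a compact Riemann surface $C\supseteq V$ together with a proper holomorphic extension $f\colon C\to\PP^1$ of degree $m$; by construction its branch points are exactly those $p_i$ with $\sigma_i\ne\mathrm{id}$ and the local monodromy around $p_i$ is the conjugacy class of $\sigma_i$. Smoothness and irreducibility of $C$ are automatic since $C$ is a connected one-dimensional complex manifold; that $C$ is algebraic (a projective curve) follows from the fact that every compact Riemann surface is projective, or alternatively one may simply quote Riemann's existence theorem in the form that guarantees the algebraic structure.

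For uniqueness, suppose $f'\colon C'\to\PP^1$ is another such cover. Restricting over $U$, it is a connected covering of degree $m$ whose monodromy representation sends $\gamma_i$ to a permutation conjugate to $\sigma_i$; after relabeling the fibre it has the same monodromy as $\rho$, hence $C'|_U$ is isomorphic to $V$ as a covering of $U$, and this isomorphism is automatically biholomorphic since both complex structures are the unique ones lifting that of $U$. Finally the isomorphism $V\to C'|_U$ extends across the finitely many added points by the removable singularity theorem (a bounded holomorphic map on a punctured disc extends), giving an isomorphism $C\xrightarrow{\sim}C'$ over $\PP^1$.

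The main obstacle — or rather the one place requiring genuine care rather than bookkeeping — is the analytic continuation step: knowing that the topological covering $V\to U$ exists is elementary, but producing the \emph{algebraic} curve $C$ requires either (i) the nontrivial fact that compact Riemann surfaces are projective algebraic, or (ii) a direct field-theoretic argument showing that the function field $\CC(C)$ is a finite extension of $\CC(\PP^1)=\CC(t)$ realizing the prescribed Galois-theoretic data. Everything else — the presentation of $\pi_1$, the subgroup/covering dictionary, the local normal form $w\mapsto w^\ell$ near a puncture, and the removable-singularity extension — is standard. I would present the construction via (i), citing the analytic theory of Riemann surfaces, since that keeps the argument shortest.
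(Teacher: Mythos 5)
The paper does not actually prove this statement: it records it as a classical result and refers the reader to Abhyankar's appendix in Zariski's \emph{Algebraic Surfaces}, so there is no in-text argument to compare yours against. Your proposal is the standard and correct proof: the presentation of $\pi_1$ of the punctured sphere, the monodromy representation determined by the two hypotheses, the subgroup--covering dictionary giving a connected degree $m$ cover of the complement of the branch points, the local normal form $w\mapsto w^{\ell}$ used to fill in the punctures, and the appeal to projectivity of compact Riemann surfaces for algebraicity. You also correctly identify the one genuinely nontrivial input (algebraization) and handle uniqueness properly by noting that the monodromy class determines the cover over the punctured sphere and that the isomorphism extends across the finitely many added points by removable singularities. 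The only cosmetic remark is that, as you observe, a point $p_i$ with $\sigma_i=\mathrm{id}$ is not actually a branch point, so the theorem's phrase ``with branch points $p_1,\dots,p_n$'' should implicitly assume each $\sigma_i$ nontrivial; this is an imprecision in the statement, not a gap in your argument.
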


\begin{thm}\label{thm:ret2}
Let $p_1,\dots,p_n$ be distinct points of $\PP^1$, let $G$ be a finite group of order $m$ and let $g_1,\ldots,g_n$ elements of $G$ of order $m_1,\ldots,m_n$ respectively, such that:\\
\begin{inparaenum}
\item [$\bullet$] $g_1,\ldots,g_n$ generate $G$;\\
\item [$\bullet$] $g_1\cdots g_n=1$.
\end{inparaenum} 

Then there is a unique irreducible curve $C$ with a $G$ action such that $C/G=\PP^1$ and the quotient morphism $f: C\to \PP^1$ is ramified at $p_1,\dots,p_n$ and over $p_i$ there are $\frac m{m_i}$ points, each with ramification index $m_i$, for $1\leq i\leq n$. \end {thm}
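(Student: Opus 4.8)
The plan is to deduce the statement from Theorem~\ref{thm:ret} by means of the left regular representation of $G$. Fix an enumeration $G=\{h_1,\dots,h_m\}$, use it to identify $G$ with $\{1,\dots,m\}$, and for $1\leq i\leq n$ let $\sigma_i\in\mathfrak S_m$ be the permutation $h\mapsto g_ih$. Since composition of left translations satisfies $L_{g_1}\circ\cdots\circ L_{g_n}=L_{g_1\cdots g_n}$, the hypothesis $g_1\cdots g_n=1$ gives $\sigma_1\cdots\sigma_n=\mathrm{id}$; and since $g_1,\dots,g_n$ generate $G$, the subgroup $\langle\sigma_1,\dots,\sigma_n\rangle$ is exactly the image of the regular representation, which acts simply transitively---in particular transitively---on $\{1,\dots,m\}$. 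Theorem~\ref{thm:ret} then produces a smooth irreducible curve $C$, unique up to isomorphism, together with a degree $m$ morphism $f\colon C\to\PP^1$ with branch points $p_1,\dots,p_n$ and local monodromy $\sigma_i$ around $p_i$.

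Next I would promote $C$ to a $G$-cover. Over $U:=\PP^1\setminus\{p_1,\dots,p_n\}$ the restriction $f^\circ\colon C^\circ\to U$ is a connected topological covering whose monodromy, after identifying a chosen fibre with $G$, is given by left translations through a homomorphism $\bar\rho\colon\pi_1(U)\to G$. Right translations of $G$ on this fibre commute with all left translations, hence are deck transformations of $C^\circ$ over $U$; this exhibits $G$ as a group of automorphisms of $C^\circ$ acting simply transitively on every fibre of $f^\circ$. Since $C$ is the unique smooth projective completion of $C^\circ$ and $f$ the extension of $f^\circ$, these automorphisms extend to a $G$-action on $C$ commuting with $f$. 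One then checks that $G$ acts transitively on each fibre of $f$ (also over the points $p_i$, the orbits there being the right cosets of a conjugate of $\langle g_i\rangle$), so the induced morphism $C/G\to\PP^1$ is a bijective morphism of smooth projective curves, hence an isomorphism; thus $C/G=\PP^1$ and $f$ is the quotient map.

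It remains to read off the ramification and to settle uniqueness. Near $p_i$ the monodromy is left multiplication by $g_i$, and since $\langle g_i\rangle$ acts freely on $G$ by left translation, every cycle of $\sigma_i$ has length $\mathrm{ord}(g_i)=m_i$; hence $\sigma_i$ has exactly $m/m_i$ cycles, which translates into $m/m_i$ points of $C$ over $p_i$, each of ramification index $m_i$, as required. For uniqueness, any curve $C'$ carrying a $G$-action with $C'/G=\PP^1$ and the prescribed ramification restricts over $U$ to a connected $G$-torsor, classified by a surjection $\pi_1(U)\twoheadrightarrow G$ (surjective because $C'$ is irreducible) well defined up to an inner automorphism of $G$; composing with the regular representation yields a monodromy datum to which Theorem~\ref{thm:ret} applies, and the ramification hypothesis forces the local monodromy around $p_i$ to be conjugate to $\sigma_i$. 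The uniqueness clause of Theorem~\ref{thm:ret} then identifies $C'$ with $C$ compatibly with $f$, hence with the $G$-action.

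The step I expect to be the main obstacle is the clean passage from the bare cover of Theorem~\ref{thm:ret} to the $G$-equivariant picture: checking that the right-translation deck transformations of $C^\circ$ extend to genuine automorphisms of the complete curve $C$, and that the quotient $C/G$ is literally $\PP^1$ rather than merely birational to it---together with making the uniqueness argument fully precise, where one must keep careful track of the conjugation and reordering ambiguities inherent in monodromy data.
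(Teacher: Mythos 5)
The paper does not actually prove Theorem \ref{thm:ret2}: it states it next to Theorem \ref{thm:ret} and refers the reader to Abhyankar's appendix in \cite{Zar2}, so there is no in-paper argument to compare against and your proposal must stand on its own. The existence half does. Reducing to Theorem \ref{thm:ret} via the left regular representation, recovering the $G$-action as the deck group (right translations commute with the left-translation monodromy, and the stabilizers are all equal to $\ker\bar\rho$, so the cover is Galois), extending the deck transformations to the smooth completion, identifying $C/G\to\PP^1$ as a degree-one finite morphism of normal curves, and reading the ramification off the cycle structure of $L_{g_i}$ (all cycles of length $\mathrm{ord}(g_i)=m_i$, hence $m/m_i$ points of index $m_i$) is the standard and correct derivation.

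The uniqueness half contains a genuine gap, located in the sentence ``the ramification hypothesis forces the local monodromy around $p_i$ to be conjugate to $\sigma_i$.'' It does not: for a Galois cover, the condition that over $p_i$ there are $m/m_i$ points of ramification index $m_i$ only forces the local monodromy to be \emph{some} element of $G$ of order $m_i$, not one conjugate to $g_i$. Concretely, take $G=\ZZ_7$, $n=3$, and the two data $(1,1,5)$ and $(1,2,4)$: both triples generate, both sum to $0$, and all entries have order $7$, so both covers are totally ramified over each of $p_1,p_2,p_3$ with identical ramification profiles; yet the kernels of the two monodromy homomorphisms $\pi_1(\PP^1\setminus\{p_1,p_2,p_3\})\to\ZZ_7$ are different subgroups, so the resulting curves are not isomorphic as covers of $\PP^1$. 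Hence the uniqueness assertion is false if read literally as depending only on the ramification profile; it has to be read as uniqueness of the cover with the prescribed local monodromies $g_1,\dots,g_n$ (which is evidently the paper's intent, since it immediately names the $g_i$ ``the local monodromies''). With that reading your argument closes at once: the uniqueness clause of Theorem \ref{thm:ret} applied to $L_{g_1},\dots,L_{g_n}$ already determines $(C,f)$, and the $G$-action is canonically the deck group of $f$, so no separate uniqueness argument is needed. You should either adopt that interpretation explicitly or record the counterexample; as written, the deduction in that one sentence is invalid.
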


In the set up of the above theorem, $g_1,\ldots,g_n$ are also called the \emph{local monodromies} at $p_1,\ldots, p_n$, respectively. 

For informations on this classical subject, see \cite[Abhyankhar's Appendix 1 to Chapt. VIII]{Zar2}. 

\subsection{Relative duality}\label{ssec:reldu}

\begin{thm}[Relative Duality Theorem]\label {thm:reldu} Let $S$ be a surface and $f: S\to C$ a surjective morphism onto a smooth curve, with connected fibres. Let $\mathcal F$ be a locally free sheaf on $X$. Then there is a natural isomorphism of sheaves
\[
f_*(\mathcal F^\vee\otimes \omega_{S|C})\stackrel {\cong}\longrightarrow (R^1f_*\mathcal F)^ \vee.
\]
\end{thm}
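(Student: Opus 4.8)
The plan is to deduce this surface statement from Grothendieck–Serre duality for the proper morphism $f\colon S\to C$, combined with the fact that $f$ has relative dimension $1$ and that $C$ is a smooth curve. First I would identify the relative dualizing complex. Since $f$ is a projective morphism of relative dimension $1$ between smooth varieties, the relative dualizing sheaf is $\omega_{S/C}=\omega_S\otimes f^*\omega_C^\vee=K_S\otimes f^*(K_C^\vee)$, exactly the $\omega_{S|C}$ of the statement, and it sits in degree $-1$ of the relative dualizing complex $f^!\O_C\cong \omega_{S/C}[1]$. The general Grothendieck duality isomorphism reads $Rf_*R\Shom_S(\mathcal F,\omega_{S/C}[1])\cong R\Shom_C(Rf_*\mathcal F,\O_C)$, and for $\mathcal F$ locally free this is $Rf_*(\mathcal F^\vee\otimes\omega_{S/C})[1]\cong R\Shom_C(Rf_*\mathcal F,\O_C)$.

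Next I would extract the sheaf-level statement by taking cohomology of both sides of this derived isomorphism. On the right, $Rf_*\mathcal F$ is a complex with cohomology concentrated in degrees $0$ and $1$ (since the fibres are curves), namely $f_*\mathcal F$ and $R^1f_*\mathcal F$; and on a smooth affine-locally-regular curve $C$, $R\Shom_C(-,\O_C)$ has cohomological amplitude $[0,1]$ as well. The key simplification is that $R^1f_*\mathcal F$ is supported in relative dimension $0$ issues only if $f$ is not smooth, but in any case $R^1f_*\mathcal F$ is a coherent sheaf on the curve $C$; I would check that the spectral sequence (or the two-term truncation argument) degenerates enough to give, in the lowest degree, a natural isomorphism $f_*(\mathcal F^\vee\otimes\omega_{S/C})\cong \Shom_C(R^1f_*\mathcal F,\O_C)=(R^1f_*\mathcal F)^\vee$. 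Concretely: $\mathcal H^{-1}$ of the left side is $f_*(\mathcal F^\vee\otimes\omega_{S/C})$ (since $R^1f_*(\mathcal F^\vee\otimes\omega_{S/C})$ would contribute in degree $0$), while $\mathcal H^{-1}$ of $R\Shom_C(Rf_*\mathcal F,\O_C)$ is $\Shom_C(R^1f_*\mathcal F,\O_C)$ because $R^1f_*\mathcal F$ sits in degree $1$ and $\O_C$ is injective-dimension $\le 1$ so its $\Ext^1$ against a torsion sheaf lands exactly in degree $-1$, while $\Shom_C(f_*\mathcal F,\O_C)$ occupies degree $0$. Naturality of the isomorphism is inherited from naturality of the Grothendieck duality isomorphism in $\mathcal F$.

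The main obstacle, and the place requiring the most care, is the precise bookkeeping of degrees and the verification that no crossterm (an $\Ext^1$ of $f_*\mathcal F$ against $\O_C$, or an $R^1f_*$ of the twisted sheaf) contaminates the degree $-1$ piece; this is where one uses crucially that $\dim C=1$ (so $\O_C$ has injective dimension $1$, not more) and that $f$ has relative dimension $1$ (so $R^if_*$ vanishes for $i\ge 2$). An alternative, more elementary route that avoids the derived-category formalism altogether: cover $C$ by affines and use relative Serre duality on each fibre together with cohomology-and-base-change, or invoke the classical relative duality for a flat projective family of curves directly (as in Bosch–Lütkebohmert–Raynaud or Altman–Kleiman); since the paper only cites \cite{BPHV} for the application, citing such a reference for the theorem itself is the honest shortcut, and I would likely present the derived-functor proof as the conceptual statement and remark that the elementary version follows fibrewise.
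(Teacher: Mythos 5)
The paper does not prove this statement at all: it appears among the preliminaries and the proof is delegated verbatim to \cite[Theorem (12.3)]{BPHV}. So your proposal is not competing with an argument in the text, and the route you sketch --- Grothendieck--Serre duality for $f$ plus degree bookkeeping --- is a correct and standard one. Since $C$ is a smooth curve and $S$ is integral, $f$ is automatically flat of relative dimension $1$, so $f^!\O_C\cong \omega_{S|C}[1]$ and duality gives $Rf_*(\mathcal F^\vee\otimes\omega_{S|C})[1]\cong R\Shom_C(Rf_*\mathcal F,\O_C)$ for $\mathcal F$ locally free. The hyperext spectral sequence $E_2^{p,q}=\Shext^p_C(R^{-q}f_*\mathcal F,\O_C)$ converging to the right-hand side has nonzero terms only for $(p,q)\in\{(0,0),(1,0),(0,-1),(1,-1)\}$, because $R^if_*$ vanishes for $i\geq 2$ and $\O_C$ has injective dimension $1$; the unique term in total degree $-1$ is $E_2^{0,-1}=\Shom_C(R^1f_*\mathcal F,\O_C)=(R^1f_*\mathcal F)^\vee$, it neither receives nor emits any differential, and comparing with $\mathcal H^{-1}$ of the left-hand side, which is $f_*(\mathcal F^\vee\otimes\omega_{S|C})$, yields exactly the asserted natural isomorphism. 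This is essentially your argument, and it closes. Your closing remark is also apt: the proof in \cite{BPHV} is of the more classical, non-derived kind (an explicit duality pairing reduced to Serre duality on the fibres via base change), i.e.\ the ``elementary route'' you mention as an alternative; what the derived-category argument buys is that the naturality and the identification of $\omega_{S|C}$ come for free, at the cost of invoking heavier machinery.

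One local slip worth correcting: you justify the computation of $\mathcal H^{-1}$ of $R\Shom_C(Rf_*\mathcal F,\O_C)$ by saying that the $\Ext^1$ of $R^1f_*\mathcal F$ against $\O_C$ ``lands exactly in degree $-1$''. It does not: $\Shext^1_C(R^1f_*\mathcal F,\O_C)$ sits in total degree $1-1=0$, alongside $\Shom_C(f_*\mathcal F,\O_C)$ and $R^1f_*(\mathcal F^\vee\otimes\omega_{S|C})$; it is $\Shext^0=\Shom$ of $R^1f_*\mathcal F$ that sits in degree $-1$. Your conclusion is unaffected --- the degree $-1$ part is indeed $(R^1f_*\mathcal F)^\vee$ --- but the reason is that the $\Shom$ term is the only one there, not that an $\Ext^1$ lands there.
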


For the proof, see \cite [Theorem (12.3)]{BPHV}.

\section{Characterization of the complex projective plane}\label{sec:p2}

We start with a theorem which characterizes the complex projective plane $\PP^2$. 

\begin{thm}\label{thm:charp2} Let $S$ be an irreducible projective surfaces with $H^1(S,\ZZ)=0$, $H^2(S,\ZZ)\cong \ZZ$ and $P_2=0$. Then $S$ is isomorphic to $\PP^2$.
\end{thm}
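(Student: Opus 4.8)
The plan is to follow the classical Castelnuovo-type characterization: show first that $S$ is rational, then pin down which rational surface it is using the numerical hypotheses $H^2(S,\ZZ)\cong\ZZ$ and $P_2=0$. The topological hypotheses give immediately $q=b_1/2=0$ (since $H^1(S,\ZZ)=0$), $\rho=b_2=1$, and, because $b_2=1$ forces $p_g=0$ by the inequality $b_2\geq 2p_g$ from the Hodge decomposition, also $\chi(\O_S)=1-q+p_g=1$. From Noether's formula $12\chi=K_S^2+e(S)$ and $e(S)=\sum(-1)^ib_i=1-0+1-0+1=3$ we get $K_S^2=9$. So $S$ has the numerical invariants of $\PP^2$.

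Next I would prove $S$ is rational. Since $P_2=0$ in particular $P_1=p_g=0$, so $\kappa(S)\neq 2$; and $P_2=0$ together with $q=0$ is precisely the hypothesis of Castelnuovo's rationality criterion, so $S$ is birational to $\PP^2$. (If one wants to avoid invoking Castelnuovo's criterion as a black box, one can instead run the Mori/minimal-model reduction that is presumably developed later in these notes, but here I will simply cite rationality; alternatively, one argues directly below without it.) In any case, since $\rho(S)=1$ and $K_S^2=9>0$, I claim $-K_S$ is ample: by Nakai--Moishezon it suffices to check $(-K_S)^2>0$, which holds, and $-K_S\cdot C>0$ for every irreducible curve $C$. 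Because $\rho=1$, $\Num(S)\otimes\QQ$ is spanned by any ample class $H$; write $K_S\equiv tH$ for some $t\in\QQ$. We cannot have $t\geq 0$: if $t>0$ then $K_S$ is ample, forcing $P_n\to\infty$, contradicting $P_2=0$ (and $P_1=0$); if $t=0$ then $K_S^2=0\neq 9$. Hence $t<0$, so $-K_S\equiv |t|H$ is ample.

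Now $-K_S$ is an ample divisor with $\rho(S)=1$, $q(S)=0$, $p_g(S)=0$, $K_S^2=9$ — this is exactly a del Pezzo surface of degree $9$ and Picard number $1$. To conclude $S\cong\PP^2$, I would show $-K_S$ is divisible by $3$ in $\Pic(S)$: let $H$ be the ample generator of $\Num(S)\cong\ZZ$ (using $\Pic(S)=\Num(S)$ here since $q=0$ and $H^2(S,\ZZ)=\Num(S)=\ZZ$), write $-K_S\equiv rH$ with $r>0$, so $9=K_S^2=r^2H^2$, giving $H^2=1$ and $r=3$ (the only way $r^2H^2=9$ with $H$ primitive and $H^2\geq 1$). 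Then $H^2=1$, $-K_S=3H$, and by Riemann--Roch $\chi(H)=\chi(\O_S)+\tfrac12 H\cdot(H-K_S)=1+\tfrac12(1+3)=3$; since $H$ is ample, Kodaira vanishing gives $h^i(H)=h^i(K_S+(-K_S+H))=h^i(K_S+4H)=0$ for $i>0$, so $h^0(H)=3$. One checks $|H|$ is base-point-free and the associated morphism $\varphi_{|H|}\colon S\to\PP^2$ has degree $H^2=1$, hence is birational; being a birational morphism between smooth surfaces with $\rho(S)=\rho(\PP^2)=1$ it contracts no curve (any contracted curve has negative self-intersection, impossible when $\Num$ has rank $1$ and is generated by an ample class), so it is an isomorphism.

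The main obstacle is the step identifying $S$ as rational (or, if one insists on self-containedness, replacing the appeal to Castelnuovo's criterion by the minimal-model machinery): everything after that is a short Riemann--Roch computation forced by the numerical rigidity coming from $\rho=1$. A secondary technical point is verifying base-point-freeness of $|H|$ and that $\varphi_{|H|}$ is an isomorphism rather than merely birational; the cleanest route is the Picard-number-$1$ argument above, noting that a birational morphism of smooth surfaces is a composition of blow-ups and each blow-up strictly increases $\rho$.
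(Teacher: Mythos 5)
There is a genuine gap at the decisive step of your argument. After writing $-K_S=rH$ with $H$ the ample generator of $\Pic(S)\cong\ZZ$ and obtaining $9=r^2H^2$, you assert that $r=3$, $H^2=1$ is ``the only way'' this can happen with $H$ primitive. That is false: $(r,H^2)=(1,9)$ is equally consistent with $H$ being a primitive ample generator, and nothing in primitivity forces $H^2=1$. This is precisely the case the paper works hardest to exclude (there written as $K_S=aL$ with $a=-1$, $L^2=9$): one computes $h^0(L)=10$, notes that every member of $|L|$ is irreducible (as $L$ generates $\Pic(S)$) of arithmetic genus $1$ since $L\cdot(L+K_S)=0$, imposes a triple point at a general point (at most $6$ conditions, leaving a system of dimension $\geq 3$), and derives a contradiction because a triple point drops the geometric genus of an irreducible curve by $3$, which would make it negative. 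Without some argument of this kind your proof does not conclude, since in the $(1,9)$ case the system $|H|$ maps $S$ to $\PP^9$, not to $\PP^2$, and the rest of your endgame does not apply.

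Two secondary points. First, your appeal to Castelnuovo's rationality criterion is circular in the logical architecture of these notes: that criterion is proved via the minimal model programme, whose extremal contraction with $\dim(V)=0$ invokes exactly this characterization of $\PP^2$ (Remark \ref{rem:bap}). You correctly sensed this and your subsequent argument never actually uses rationality, so the appeal should simply be deleted. Second, the step ``if $t>0$ then $K_S$ is ample, forcing $P_n\to\infty$, contradicting $P_2=0$'' is a non sequitur as written: growth of $P_n$ for large $n$ does not contradict the vanishing of $P_2$. The correct argument (and the one in the paper) is that $K_S$ ample gives $h^i(2K_S)=0$ for $i=1,2$ by Kodaira vanishing, whence $P_2=\chi(2K_S)=\chi+K_S^2=10\neq 0$. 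Otherwise your setup (computing $q=p_g=0$, $\chi=1$, $e=3$, $K_S^2=9$, identifying $\Pic(S)\cong\ZZ$, and the final step that a degree-one morphism given by $|H|$ with $\rho=1$ contracts nothing) matches the paper's strategy.
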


\begin{proof} Since $q=0$ and $p_g=0$, since $P_2=0$, we have $\Pic(S)\cong \ZZ$, so we can  choose a generator $L$ of  $\Pic(S)$ such that a multiple $nL$ with $n\gg 0$ is the class of a very ample line bundle. Hence $L^ 2>0$.  

By the hypotheses we have $e=3$ and $\chi=1$ because $q=0$ and $p_g=0$. By Noether's formula \eqref {eq:NNN} we  have $K^2_S=9$. 

On the other hand $K_S\cong aL$, for some integer $a$, hence $9=K^2_S=a^ 2L^ 2$, which implies that either $a=\pm 3, L^2=1$ or  $a=\pm 1, L^2=9$.  

If $a>0$ we have 
\begin{equation}\label{eq:1}
\chi(2K_S)=\chi+\frac {2K_S\cdot K_S}2=10.
\end{equation}
Since $2K_S\cong K_S+aL$, and $aL$ is ample because $a>0$, then $h^i(2K_S)=0$, for $1\leqslant i\leqslant 2$, by Kodaira Vanishing Theorem \ref {thm:kvt}. By \eqref {eq:1} we have $P_2=h^0(2K_S)=10$, a contradiction. 

So $a<0$. Then
\[
\chi(L)=\chi+\frac {L\cdot(L- K_S)}2=1+\frac {1-a}2  L^2.
\]
If $a=-1, L^ 2=9$, we have $\chi(L)=10$. But
\[
h^ 2(L)=h^ 0(K_S-L)=h^ 0(-2L)=0, \quad h^ 1(L)=h^ 1(K_S-L)=h^ 1(-2L)=0,
\]
so $h^ 0(L)=10$
thus $\dim(|L|)\geqslant 9$ and all the curves in $|L|$ are irreducible and reduced, since $L$ is a generator of  $\Pic(S)$.  Moreover $L\cdot (L+K_S)=0$, hence the curves in $|L|$ have arithmetic genus $1$, thus geometric genus 0 or 1. Let $x$ be any point of $S$. We can impose to the curves in $|L|$ a triple point at $x$, which imposes at most 6 conditions,  and we thus have a linear system $\Sigma$ of dimension at least 3. But since a triple point drops the geometric genus by 3, the curves in $\Sigma$ must be reducible, a contradiction.

In conclusion we must have $a=-3, L^ 2=1$. Then $\chi(L)=3$. Moreover, as above, $h^ i(L)=0$, for $1\leqslant i\leqslant 2$, hence $h^0(L)=3$, and the linear system $|L|$ defines a rational map $\phi: S\dasharrow \PP^2$. Recall that any curve in $|L|$ is irreducible and $L^ 2=1$. Moreover $L\cdot (L+K_S)=-2$ hence the curves in $|L|$ have arithmetic genus 0, thus they are smooth and rational, so isomorphic to $\PP^1$. Let $C\in |L|$. By the exact sequence
\[
0\longrightarrow \O_S\longrightarrow L\longrightarrow L_{|C}\cong \O_{\PP^1}(1)\longrightarrow 0
\]
and since $h^ 1(\O_S)=0$, the map $H^ 0(L)\to H^ 0(L_{|C})$ is surjective, so that each curve in $|L|$ is isomorphically mapped to its image in $\PP^ 2$. It is then clear that $\phi$ is a birational morphism mapping the curves in $|L|$ to the lines of $\PP^2$. Moreover there is no curve $E$ contracted by $\phi$ because then we would have $L\cdot E=0$, impossible because $L$ generates $\Pic(S)$. In conclusion $\phi$ is an isomorphism.  \end{proof}

\section{Minimal models}

Recall that, if we blow--up a smooth surface $X$ at a point $x$ we get $\pi: S\to X$, with an \emph{exceptional divisor} $E\cong \PP^ 1$ on $S$ which is contracted to $x$ and $\pi$ induces an isomorphism between $S-E$ and $X-\{x\}$. One has $E^2=-1$ and $K_S\cdot E=-1$ by the adjunction formula. Such a curve is called a \emph{$(-1)$--curve}.
Conversely, by Castelnuovo Contractibility Theorem \ref {thm:cct}, an irreducible curve $E$ on a surface $S$ is the exceptional divisor of a blow--up if $E^2=-1$ and $K_S\cdot E=-1$. 

\begin{example}\label{ex:1} If we blow--up two distinct points of $\PP^2$ we find a surface $S$ on which there are two (-1)--curves which are blown--down to the two points. There is another (-1)--curve though, which is the strict transform $\ell$ on $S$ of the line joining the two points.

Similarly, blow--up  $\PP^2$ at a point $x$, then blow--up the resulting surface at a point $y$ on the exceptional divisor $E$ which is blown--down to $x$. The point $y$ is called  an \emph{infinitely near point} to $x$ (see \S \ref {ssec:inp} below). On the resulting surface $S'$ we have a (1)--curve $E_2$ which is contracted to $y$, and the strict transform $E_1$ of $E$, which is smooth, rational, with $E_1^ 2=-2$ (such a curve is called a \emph{(-2)--curve}). Consider the unique line in $\PP^ 2$ passing through $x$ and having the tangent direction corresponding to the {infinitely near point} $y$. The proper transform $\ell$ of this line  on $S'$ is also a (-1)--curve.

If we blow--up five general point of $\PP^ 2$, we find a surface $S''$ with five (-1)--curves, i.e., the five exceptional divisors of the blow--ups. The proper transform of the unique conic of $\PP^ 2$ passing through the blown--up points is also a $(-1)$--curve.
\end{example}

A smooth, projective surface $S$ is said to be \emph{minimal}, if $S$ does not contain $(-1)$--curves. 

\begin{proposition}\label{prop:min} Let $S$ be a smooth, projective surface. There is a birational morphism $f: S\to X$ with $X$ minimal.
\end{proposition}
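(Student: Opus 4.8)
The plan is to build the morphism $f\colon S\to X$ by repeatedly contracting $(-1)$--curves, and to show that this process terminates. First I would observe that if $S$ is already minimal, there is nothing to prove: take $X=S$ and $f=\mathrm{id}_S$. Otherwise, $S$ contains a $(-1)$--curve $E$, and by Castelnuovo's Contractibility Theorem \ref{thm:cct} there is a birational morphism $\pi_1\colon S\to S_1$ onto a smooth surface $S_1$ contracting $E$ to a point. If $S_1$ is minimal we are done; if not, repeat to get $\pi_2\colon S_1\to S_2$, and so on, producing a (possibly infinite) sequence of blow--downs
\[
S=S_0\xrightarrow{\pi_1} S_1\xrightarrow{\pi_2} S_2\xrightarrow{\pi_3}\cdots .
\]
The composite $\pi_k\circ\cdots\circ\pi_1\colon S\to S_k$ is a birational morphism, so if the sequence stops at some $S_n$ which is minimal, then $f=\pi_n\circ\cdots\circ\pi_1$ and $X=S_n$ do the job.

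The key point — and the only real content — is that this sequence must terminate. I would prove this with a numerical invariant that strictly decreases at each step. The natural choice is the second Betti number $b_2$ (equivalently the Picard number $\rho$, since $\Pic$ is finitely generated and here we may as well track $\rho$). By the basic properties of blow--ups recorded in \S\ref{ssec:bu}, a blow--up satisfies $\Pic(\widetilde S)=\Pic(S)\oplus\ZZ[E]$, so a blow--down $\pi_{k+1}\colon S_k\to S_{k+1}$ drops the Picard number by exactly one: $\rho(S_{k+1})=\rho(S_k)-1$. Since $\rho(S_k)$ is a non--negative integer (indeed $\rho\ge 1$, as every surface carries an ample class), the sequence of blow--downs can have length at most $\rho(S)-1$, hence must terminate. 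Alternatively one can use $e(S)$, which decreases by one under each blow--down by the additivity of the Euler characteristic, or the Betti number $b_2$ directly via $b_2(\widetilde S)=b_2(S)+1$; any of these works.

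The main obstacle to watch for is purely bookkeeping: one must make sure that at each stage $S_k$ is again a \emph{smooth} projective surface (so that the notion of $(-1)$--curve and Castelnuovo's theorem continue to apply), and that the composite of the $\pi_i$ remains a morphism — both are immediate from Theorem \ref{thm:cct}, which delivers a smooth target, and from the fact that a composite of morphisms is a morphism. One should also note that the choice of $(-1)$--curve to contract at each step is not canonical, so the resulting $X$ need not be unique; the proposition only asserts existence, so this is harmless. Thus the whole argument is: iterate Castelnuovo contraction, and bound the number of iterations by $\rho(S)-1<\infty$.
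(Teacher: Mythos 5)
Your proof is correct and follows essentially the same route as the paper: iterate Castelnuovo contractions and observe that the process terminates because a numerical invariant ($b_2$, equivalently $\rho$ or $\dim H^2(\cdot,\RR)$, which is exactly what the paper uses) strictly decreases at each blow--down and is bounded below. The additional bookkeeping remarks on smoothness of the intermediate surfaces and non--uniqueness of $X$ are accurate but not needed beyond what Theorem \ref{thm:cct} already provides.
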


\begin{proof} The morphism $f: S\to X$ is a sequence of blow--downs of $(-1)$--curves. This process terminates reaching a minimal surface because if $f: S\to X$ is the blow--up of a smooth point, then 
\[
0\leqslant \dim (H^ 2(X,\RR))\leqslant \dim (H^ 2(S,\RR))-1.
\]
\end{proof} 

The surface $X$ in the statement of Proposition \ref {prop:min} is called \emph{minimal} or a \emph{minimal model} of $S$. 

\begin{remark}\label{rem:1} The minimal model is in general not unique. If we blow down the (-1)--curve $\ell$ on the surface $S$ in  Example \ref {ex:1}, we get a surface which is minimal, but not isomorphic to $\PP^ 2$, which is also a minimal model.

Similarly, if we blow down the curve $\ell$ on the surface $S'$ in  Example \ref {ex:1}.
\end{remark}

\section{Ruled surfaces}\label{sec:rul}

Let $S$ be a surface and $C$ a smooth, irreducible projective curve. Suppose there is a surjective morphism $f: S\to C$. Then $S$ is said to be \emph{ruled} over $C$ if there is a point $x\in C$ such that the (schematic) fibre $F_x=f^*(x)$ of $f$ over $x$ is isomorphic to $\PP^1$. 

\begin{lemma}\label{lem:ban} Suppose that $f:S\to C$ is ruled over $C$. Then there is a non--empty open subset $U$ of $C$ such that for all points $t\in U$ the fibre   $F_t=f^*(t)$ of $f$ over $t$ is isomorphic to $\PP^1$.
\end{lemma}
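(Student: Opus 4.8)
The plan is to show that the fibre dimension is generically one — indeed generically $\cong\PP^1$ — using the upper semicontinuity of fibre dimension together with the fact that a flat family whose special member is $\PP^1$ forces nearby members to be $\PP^1$ as well. First I would note that $f\colon S\to C$ is a dominant morphism between projective varieties of dimensions $2$ and $1$, so the general fibre has dimension $1$; combined with the hypothesis that $F_x\cong\PP^1$ (which already has dimension $1$), we are in the situation where \emph{every} fibre of $f$ has pure dimension $1$, because a proper surjective morphism to a smooth curve cannot have an isolated point in a fibre (the image of a component would be a point, contradicting surjectivity onto a $1$-dimensional target if that were the whole fibre; and in any case $f$ is automatically flat over the smooth curve $C$ since all fibres have the same dimension $1$).

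**Key steps.** Having established flatness of $f$ over a neighbourhood of $x$ (in fact over all of $C$, since $S$ is a surface and $C$ a smooth curve and all fibres are $1$-dimensional, so $f$ is flat by the standard ``miracle flatness'' criterion), I would consider the Hilbert polynomial of the fibres with respect to an ample divisor $H$ on $S$: by flatness the function $t\mapsto \chi(\O_{F_t}(mH))$ is locally constant, hence constant on the connected curve $C$. For $t=x$ we get the Hilbert polynomial of $\PP^1$ embedded by $\O_{\PP^1}(d)$ where $d=H\cdot F_x$; in particular $\chi(\O_{F_t})=\chi(\O_{\PP^1})=1$ for all $t$, so every fibre $F_t$ is a connected curve of arithmetic genus $0$. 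Then on a non-empty open set $U\subseteq C$ the fibre $F_t$ is moreover smooth (generic smoothness, or simply because the locus in $C$ over which $F_t$ is singular is closed and proper, being contained in the image of the singular locus of $S\to C$ which does not dominate $C$ as $x\notin$ that image). A smooth connected curve of arithmetic genus $0$ is $\cong\PP^1$, which gives the conclusion for all $t\in U$.

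**Main obstacle.** The subtle point is justifying that the fibres are generically \emph{reduced and irreducible} (equivalently smooth) rather than, say, a double line or a pair of lines, which also have arithmetic genus $0$ or could be confused with it. This is handled by generic smoothness of the morphism $f$ in characteristic zero: there is a non-empty open $V\subseteq C$ over which $f$ is smooth, so $F_t$ is smooth for $t\in V$; intersecting with the open set where the constant arithmetic genus computation applies, a smooth projective connected curve with $p_a=0$ is $\PP^1$. Alternatively, one avoids generic smoothness entirely: since $S$ is smooth and irreducible, the generic fibre of $f$ over the function field of $C$ is a smooth geometrically connected curve, and spreading out, $F_t$ is smooth for $t$ in a dense open $U$; then constancy of $\chi(\O_{F_t})=1$ forces $F_t\cong\PP^1$. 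I expect the bookkeeping around flatness and the identification of the generic fibre to be the only real content; everything else is semicontinuity.
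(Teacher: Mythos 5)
Your argument follows essentially the same route as the paper: starting from the smooth fibre $F_x\cong\PP^1$ one gets smoothness of $f$ over a (dense) open subset of $C$, flatness gives constancy of $\chi(\O_{F_t})$, and a smooth connected curve with $\chi(\O)=1$ is $\PP^1$. The one inference that does not hold as written is ``$\chi(\O_{F_t})=1$, so every fibre $F_t$ is a connected curve of arithmetic genus $0$'': the Euler characteristic alone does not force connectedness (a disjoint union of a rational curve and a curve of genus $1$ also has $\chi(\O)=1$, and this really can happen for a surface fibred over a curve, e.g.\ $C'\times\PP^1\to C$ with $C'\to C$ finite of degree $2$). Your fallback, that the generic fibre is geometrically connected, is likewise not automatic from irreducibility of $S$; it fails in the same example. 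The correct patch is upper semicontinuity of $t\mapsto h^0(\O_{F_t})$ for the proper flat morphism $f$: since $h^0(\O_{F_x})=1$ and $h^0(\O_{F_t})$ is at least the number of connected components, the fibres are connected on an open neighbourhood of $x$, and intersecting this with your smoothness locus finishes the proof. (The paper's own proof asserts ``smooth and irreducible'' over a neighbourhood of $x$ with the same implicit step, so this is a refinement rather than a divergence.)
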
  

\begin{proof} Let $x\in C$ be a point such that the fibre $F_x=f^*(x)$ of $f$ over $x$ is isomorphic to $\PP^1$. Then $f$ is smooth over $x$ and we can find an open neighborhood $U$ of $x$ such that $f$ is smooth over $U$. So for all $t\in U$, the fibre $F_t$ is smooth and irreducible. Moreover, $f$ is also flat over $U$, hence the genus of the fibres over the points of $U$ is constant (see \cite [Chapt. III, Cor. 9.10]{Hart}), i.e.,  for all $t\in U$, the fibre $F_t$ is smooth of genus 0, hence it is isomorphic to $\PP^1$. \end{proof}

\begin{thm}[Noether--Enriques' Theorem]\label{thm:NET} Suppose  $f:S\to C$ is ruled over $C$. Then there is an dense open subset $U$ of $C$ such that the following diagram commutes
\[
\xymatrix{ 
&f^{-1}(U)\ar^{\cong}[r]\ar_{f}[d]&U\times \PP^1\ar^{{\rm pr}_1}[d] \\
&U\ar^{{\rm id}}[r] & U\,\,
}
\]
In particular $S$ is birational to $C\times \PP^1$, and if $C\cong \PP^1$, then $S$ is \emph{rational}, i.e., birational to $\PP^2$. 
\end{thm}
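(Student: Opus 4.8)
The plan is to produce, after possibly shrinking the base, a relative hyperplane bundle on the ruled fibration which restricts to $\mathcal O_{\mathbb P^1}(1)$ on each fibre, and then use the evaluation-map formalism of Proposition~\ref{prop:ev} to realize $f^{-1}(U)$ as a $\mathbb P^1$-bundle over $U$ which is moreover trivial because $U$ can be taken affine (or small enough that the bundle is trivial). First I would invoke Lemma~\ref{lem:ban} to pass to a non-empty open $U_0\subset C$ over which every fibre is isomorphic to $\mathbb P^1$; then $f$ is smooth and proper over $U_0$ with all fibres $\cong\mathbb P^1$. The key object is a line bundle $L$ on $S$ with $L\cdot F=1$, where $F$ is a general fibre. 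To get such an $L$: pick a very ample divisor $H$ on $S$, let $d=H\cdot F$, and consider $H$ together with the classes $F$; using the Hodge Index Theorem one can find an integral class whose intersection with $F$ is $1$ (more concretely, intersect $S$ with enough general hyperplanes to produce a multisection of $f$ of some degree $d$, then exhibit a divisor meeting the general fibre in a single point — e.g. by a standard argument taking $aH+bF$ and adjusting, or by restricting to the generic fibre, which is a genus-$0$ curve over the function field $\mathbb C(C)$ possessing a rational point after a finite base change that one then shows is unnecessary). Shrinking $U_0$ to a smaller open $U$, we may assume $L_{|F_t}\cong\mathcal O_{\mathbb P^1}(1)$ for all $t\in U$.

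Next I would study $\mathcal E:=f_*L$ over $U$. Since $h^0(\mathbb P^1,\mathcal O(1))=2$ and $h^1(\mathbb P^1,\mathcal O(1))=0$ for every fibre, cohomology and base change (\cite[Chapt.~III]{Hart}) shows, after possibly shrinking $U$ once more, that $\mathcal E$ is locally free of rank $2$ on $U$ and that its formation commutes with base change, so that $\mathcal E\otimes k(t)\cong H^0(F_t,L_{|F_t})$. The natural evaluation map $f^*\mathcal E\to L$ is then surjective: on each fibre it is the evaluation $H^0(\mathbb P^1,\mathcal O(1))\otimes\mathcal O_{\mathbb P^1}\to\mathcal O(1)$, which is surjective because $\mathcal O(1)$ is globally generated. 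By Proposition~\ref{prop:ev} applied to $g=f$ (with target base $X=U$ and vector bundle $\mathcal E$ on $U$), this surjection $f^*\mathcal E\twoheadrightarrow L$ together with the morphism $f\colon f^{-1}(U)\to U$ is exactly the data of a morphism $f^{-1}(U)\to\mathbb P(\mathcal E)$ over $U$. This morphism is an isomorphism: it is a morphism between two smooth proper families of curves over $U$ which on each fibre is the closed embedding $\mathbb P^1\hookrightarrow\mathbb P^1$ defined by the complete linear system $|\mathcal O(1)|$, hence an isomorphism fibrewise, hence an isomorphism (a proper bijective morphism of smooth varieties which is an isomorphism on fibres of a flat family).

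Finally, $\mathbb P(\mathcal E)\to U$ is a $\mathbb P^1$-bundle in the Zariski topology, and since a rank-$2$ vector bundle on a curve trivializes over a non-empty Zariski open subset, shrinking $U$ one more time gives $\mathcal E_{|U}\cong\mathcal O_U^{\oplus 2}$, whence $\mathbb P(\mathcal E_{|U})\cong U\times\mathbb P^1$ over $U$; this yields the commutative diagram in the statement. The birationality of $S$ and $C\times\mathbb P^1$ is immediate since $f^{-1}(U)$ is dense in $S$ and $U\times\mathbb P^1$ is dense in $C\times\mathbb P^1$; and if $C\cong\mathbb P^1$ then $C\times\mathbb P^1\cong\mathbb P^1\times\mathbb P^1$ is rational (it is the blow-up of $\mathbb P^2$ at a point, blown down along the other ruling, or simply birational to $\mathbb A^2$), so $S$ is rational.

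I expect the main obstacle to be the construction of the line bundle $L$ with $L\cdot F=1$ on $S$ — equivalently, producing a divisor meeting the general fibre in exactly one point, i.e.\ a "degree-one multisection" up to linear equivalence. The honest way to do this is to pass to the generic fibre $S_\eta$, a smooth genus-$0$ curve over $K=\mathbb C(C)$; it is either $\mathbb P^1_K$ or a conic without $K$-points, but one shows the latter cannot happen here because the existence of a section or at least a rational point of $S_\eta$ is forced: indeed the closed fibre $F_x\cong\mathbb P^1$ shows $S\to C$ has a $\mathbb P^1$ as a fibre, and a specialization/lifting argument (or the fact that $f$ is smooth with rational fibres over an open set, hence admits a section over a smaller open set by, say, general position of a multisection combined with the structure of $\mathrm{Pic}$ of the generic fibre) gives a divisor of relative degree $1$. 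Making this step clean without circularity is the delicate point; everything after it is the routine "relative $|\mathcal O(1)|$ embedding" argument via Proposition~\ref{prop:ev} and base-change.
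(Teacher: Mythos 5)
Your overall architecture is exactly the paper's: find a divisor class $H$ with $H\cdot F=1$, push forward to get a rank--$2$ bundle $\mathcal E$ on a suitable open $U$, and use Proposition~\ref{prop:ev} together with triviality of $\mathcal E$ over a smaller open set to identify $f^{-1}(U)$ with $U\times\PP^1$. That latter half of your argument is fine. But the step you yourself flag as ``the delicate point'' --- the existence of a divisor of relative degree $1$ --- is precisely the mathematical content of the theorem, and none of the routes you sketch actually establishes it. Adjusting $aH+bF$ cannot work: $(aH+bF)\cdot F=a(H\cdot F)$, so you only ever obtain multiples of $d=H\cdot F$. The Hodge Index Theorem gives no integrality information of this kind. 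And the generic-fibre route would require knowing that a smooth conic over $K=\CC(C)$ automatically has a $K$-rational point (Tsen's theorem); the existence of one closed fibre isomorphic to $\PP^1$ does not give this, since every smooth fibre of a conic bundle is a $\PP^1$ while the generic fibre may still fail to have a rational point for a general genus-$0$ fibration --- the ``specialization/lifting argument'' you gesture at is not a proof. So as written the proposal has a genuine gap at its central step.

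For comparison, the paper closes this gap by a topological parity argument. Since $K_S\cdot F=-2<0$ and $F$ is nef, $p_g=0$, hence $H^2(\O_S)=0$ and the Chern class map $\Pic(S)\to H^2(S,\ZZ)$ is surjective; it therefore suffices to find a class $h\in H^2(S,\ZZ)$ with $h\cdot f=1$. The image of $a\mapsto a\cdot f$ is an ideal $(d)\subset\ZZ$; Poincar\'e duality (unimodularity of the cup product modulo torsion) produces $f'$ with $a\cdot f'=(a\cdot f)/d$ for all $a$, so $f'^2=0$ and $k\cdot f'=-2/d$; the evenness of $(k+f')\cdot f'$ then forces $d=1$. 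You may want to incorporate this argument (or explicitly invoke Tsen's theorem, which is the standard alternative) to make the proof complete.
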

\begin{proof} If $F$ is a general fibre of $f$, one has $F^2=0$, hence by adjunction formula, we have $K_S\cdot F=-2$, thus $p_g=0$. 

\begin{claim}\label{cl:opla}
There is a divisor $H$ on $S$ such that $H\cdot F=1$.
\end{claim}
 
 \begin{proof}[Proof of Claim \ref {cl:opla}] Since $p_g=0$, we have $H^2(\O_S)=0$. From the exponential sequence \eqref {eq:exp}, 
 we have a surjective map $\Pic(S)=H^1(\O_S^*)\to H^2(X,\mathbb Z)$. So it suffices to find a class $h\in H^2(X,\mathbb Z)$ such that $h\cdot f=1$, where $f$ is the class of $F$ in $H^2(X,\mathbb Z)$.
 
 Consider the map
  \[
  a\in H^2(X,\mathbb Z)\to a\cdot f\in \mathbb Z
  \]
  whose image is an ideal $(d)$ in $\mathbb Z$, where $d>0$. Hence the map
  \[
  a\in H^2(X,\mathbb Z)\to \frac {a\cdot f}d \in \mathbb Z
  \]
  is a linear form on $H^2(X,\mathbb Z)$. Poincar\'e duality implies that the cup product 
  \[
  H^2(X,\mathbb Z)\otimes H^2(X,\mathbb Z)\to H^4(X,\mathbb Z)\cong \mathbb Z
  \]  
  induces a map
  \[
  H^2(X,\mathbb Z)\to {\rm Hom}(H^2(X,\mathbb Z),\mathbb Z)
  \]
 that  is surjective and its kernel is the torsion subgroup of $H^2(X,\mathbb Z)$. Therefore there is $f'\in H^2(X,\mathbb Z)$ such that 
  \[
  a\cdot f'= \frac {a\cdot f}d, \quad \text {for all}\quad a\in H^2(X,\mathbb Z).
  \]
  Hence
  \[
  df'=f, \quad \text{modulo torsion}.
  \]
  Let $k$ be the class of $K_S$ in $H^2(X,\mathbb Z)$. We have
  \[
  f^2=0, f\cdot k=-2, \quad \text{hence}\quad f'^2=0, k\cdot f'=\frac {k\cdot f}d=-\frac 2d,
  \]
  and, since $(k+f')\cdot f'$ has to be even, this implies $d=1$, proving the claim. \end{proof}

Let now $U$ be a dense open subset of $C$ over which all fibres of $f$ are isomorphic to $\PP^1$ (see Lemma \ref {lem:ban}). Set $f_{|U}=\phi$ and consider $\E=\phi_*(\O_{\phi^{-1}(U)}(H))$, which is a rank 2 vector bundle over $U$. Up to shrinking $U$ we may assume that $\E$ is trivial over $U$. Now, by applying Proposition \ref {prop:ev} (with $Y=p^{-1}(U)$ and $\L=\O_{\phi^{-1}(U)}(H)$), we have an  epimorphism of sheaves $\phi^*(\E)\to \O_{\phi^{-1}(U)}(H)$, which determines a map $p^{-1}(U)\to \PP(\E)=U\times \PP^1$ which is clearly an isomorphism. \end{proof}

The surface $S$, endowed with a surjective morphism $f: S\to C$ is said to be a \emph{$\PP^1$--bundle}  or a \emph{scroll} over $C$, if every fibre of $f$ is isomorphic to $\PP^ 1$. These fibres are also called \emph{rulings} of $S$. A \emph{section} of the scroll $S$ is an irreducible curve
$\Gamma$ on $S$ which meets in one point the fibres of $f$. 

\begin{thm}\label{thm:plus}
Let $f:S\to C$ be a $\PP^1$--bundle over $C$. Then there is a rank 2 vector bundle $\E$ on $C$ with an isomorphism $\psi: S \to \PP(\mathcal E)$ which makes the following diagram commutative
\begin{equation*}\label{eq:K3}
\xymatrix{ 
&S\ar^{\psi}[r]\ar_{f}[d]&\PP(\mathcal E)\ar^{p}[d] \\
&C\ar^{{\rm id}}[r] & C\,\,
}
\end{equation*}
where $p$ is the obvious projection morphism.

Moreover two vector bundles $\E$ and $\E'$ give rise to the same $\PP^1$--bundle as above if and only if there is a line bundle $L$ such that $\E'=\E\otimes L$.
\end{thm}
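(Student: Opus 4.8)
The plan is to produce the vector bundle $\E$ by a relative version of the construction used in the proof of Noether--Enriques' Theorem, now carried out over all of $C$ rather than over a dense open subset. First I would observe that, exactly as in the proof of Theorem \ref{thm:NET}, a general fibre $F$ satisfies $F^2=0$, so $K_S\cdot F=-2$ and $p_g=0$; the same argument via the exponential sequence and Poincar\'e duality produces a divisor $H$ on $S$ with $H\cdot F=1$. Now set $\E:=f_*(\O_S(H))$. The key point is that since every fibre of $f$ is a $\PP^1$ and $H$ restricts to $\O_{\PP^1}(1)$ on each of them, cohomology and base change applies: $h^0(\O_{F_t}(H))=2$ is constant, $h^1$ vanishes, so $\E$ is locally free of rank $2$ on $C$, the formation of $\E$ commutes with base change, and the natural map $f^*\E\to \O_S(H)$ is surjective (fibrewise it is the evaluation map $H^0(\PP^1,\O(1))\otimes\O\to \O(1)$, which is onto). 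By Proposition \ref{prop:ev} the surjection $f^*\E\twoheadrightarrow \O_S(H)$ determines a morphism $\psi:S\to \PP(\E)$ over $C$ making the required diagram commute.

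Next I would check that $\psi$ is an isomorphism. It suffices to check this fibrewise: over each $t\in C$, $\psi$ restricts to the morphism $F_t\cong\PP^1\to \PP(\E\otimes k(t))\cong\PP^1$ associated to the complete linear system $|\O_{\PP^1}(1)|$, which is an isomorphism. A morphism over $C$ that is a fibrewise isomorphism between two $\PP^1$-bundles (both $S$ and $\PP(\E)$ are flat and proper over $C$ with integral fibres) is an isomorphism; alternatively one invokes the universal property of the blow-up style argument as in Theorem \ref{thm:NET} applied over each point, or simply notes that $\psi$ is proper, quasi-finite of degree $1$, hence finite and birational, and $\PP(\E)$ is normal, so $\psi$ is an isomorphism by Zariski's main theorem.

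For the last assertion: if $\E'=\E\otimes L$ for a line bundle $L$ on $C$, then $\PP(\E')\cong\PP(\E)$ canonically over $C$ (twisting a surjection $f^*\E\to \O_S(H)$ by $f^*L$ changes $H$ to $H+f^*L$ but not the projective bundle). Conversely, given an isomorphism $\PP(\E)\cong\PP(\E')$ over $C$, pull back $\O_{\PP(\E')}(1)$; it is a line bundle on $\PP(\E)$ restricting to $\O(1)$ on each fibre, hence of the form $\O_{\PP(\E)}(1)\otimes p^*L$ for some $L\in\Pic(C)$ because $\Pic(\PP(\E))\cong\ZZ\cdot\O(1)\oplus p^*\Pic(C)$. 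Pushing forward along $p$ and using the projection formula gives $\E'\cong\E\otimes L$.

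The main obstacle is the step asserting that $\psi$ is globally an isomorphism: one must be careful that $\E$ really is a bundle (not just a sheaf) and that base change holds with no higher cohomology obstruction — this is where constancy of $h^0$ on the fibres and the vanishing of $R^1f_*\O_S(H)$ are used — and then that a fibrewise isomorphism over a smooth curve upgrades to a global one. The rest is bookkeeping with Proposition \ref{prop:ev} and the standard description of $\Pic$ of a projective bundle.
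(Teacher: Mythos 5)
Your proof is correct, but it follows a genuinely different route from the paper. The paper does not construct $\E$ as a direct image at all: it observes (via Noether--Enriques) that $S\to C$ is locally trivial, hence classified by a $1$--cocycle in ${\rm PGL}(2,\O_C)$, and then runs the nonabelian cohomology sequence attached to $1\to \O_C^*\to {\rm GL}(2,\O_C)\to {\rm PGL}(2,\O_C)\to 1$. The key input is $H^2(C,\O_C^*)=1$ (proved there by resolving $\O_C^*$ by the fine sheaves $K_C^*$ and ${\rm Div}(C)$ — essentially the vanishing of the Brauer-type obstruction for a curve), which lets the ${\rm PGL}_2$--cocycle lift to a ${\rm GL}_2$--cocycle, i.e.\ to a rank $2$ bundle $\E$; the same exact sequence shows that two lifts differ by a class in $H^1(\O_C^*)=\Pic(C)$, giving the statement about $\E'=\E\otimes L$ for free. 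Your argument instead produces a concrete $\E=f_*(\O_S(H))$ from a divisor $H$ of fibre degree $1$, and trades the $H^2(\O_C^*)=1$ input for cohomology-and-base-change plus Zariski's main theorem; you then have to prove the twisting statement separately via $\Pic(\PP(\E))\cong \ZZ\cdot\O(1)\oplus p^*\Pic(C)$ and the projection formula. The cocycle argument is shorter once the vanishing of $H^2$ is granted and delivers both assertions simultaneously, but it is special to one-dimensional (or otherwise Brauer-trivial) bases; your direct-image construction is more geometric, exhibits an explicit $\E$, and is the one that generalizes to projective bundles over higher-dimensional bases, at the cost of invoking base-change theorems. Note also that your very first step (the existence of $H$ with $H\cdot F=1$) is literally Claim \ref{cl:opla} from the proof of Theorem \ref{thm:NET}, so nothing new is needed there.
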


\begin{proof} By Noether--Enriques' theorem, $S$ is locally trivial, so it is given by a 1--cocycle with values in ${\rm PGL}(2,\O_C)$. From the exact sequence of groups
\[
1\to \mathbb C^*\to {\rm GL}(2,\mathbb C)\to  {\rm PGL}(2,\mathbb C)\to 1
\]
we deduce the exact sequence of sheaves in groups
\[
1\to \O_C^* \to {\rm GL}(2,\O_C)\to  {\rm PGL}(2,\O_C)\to 1
\]
hence the exact sequence
\begin{equation}\label{eq:slurp}
\Pic(C)\cong H^1(\O_C^*)\to H^1({\rm GL}(2,\O_C))\to H^1({\rm PGL}(2,\O_C))\to H^2(\O_C^*)=1,
\end{equation}
because $H^2(\O_C^*)=1$. In fact we have the exact sequence
\[
1\to  \O_C^*\to K_C^*\to {\rm Div}(C)\to 0
\]
where $K_C^*$ is the  sheaf of non--zero rational functions on $C$ and ${\rm Div}(C)$ is the  sheaf of germs of divisors on $C$: the sections of ${\rm Div}(C)$ over an open subset $U$ of $C$ is the set of all maps $\nu:U\to \mathbb Z$ such that $\nu(p)\neq 0$ for only finitely many $p\in U$. The sheaves $K_C^*$ and $ {\rm Div}(C)$ are fine sheaves, and this implies that $H^2(\O_C^*)=1$. 

Finally the assertion follows right away from \eqref {eq:slurp}.\end{proof}

\begin{proposition}\label{prop:olp} Let $S$ be a minimal surface with a surjective morphism $f: S\to C$, with $C$ a curve, such that the general fibre of $f$ is isomorphic to $\PP^1$. Then $S$ is a $\PP^1$--bundle over $C$.
\end{proposition}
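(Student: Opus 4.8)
The plan is to prove the apparently stronger but equivalent statement that \emph{every} fibre of $f$ is isomorphic to $\PP^1$, which is exactly the definition of a $\PP^1$--bundle. After replacing $f$ by the connected--fibre part of its Stein factorization (the finite part is an isomorphism, since the general fibre is $\cong\PP^1$ and hence connected), we may assume all fibres of $f$ are connected. Any two fibres $f^*(t)$ are numerically equivalent, and a general one is a smooth rational curve disjoint from its neighbours, so $F^2=0$ for every fibre $F$; then the adjunction formula $(K_S+F)\cdot F=2p_a(F)-2$ gives $K_S\cdot F=-2$ and $p_a(F)=0$ for every fibre.

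Now fix a fibre $F_0=f^*(x)=\sum_{i=1}^r m_iC_i$, with the $C_i$ distinct irreducible and $m_i\geq 1$, and argue according to whether $F_0$ is reducible. If $r\geq 2$: each $C_i$ is contracted by $f$, so $F_0\cdot C_i=0$; writing $0=F_0\cdot C_i=m_iC_i^2+\sum_{j\neq i}m_j(C_i\cdot C_j)$ and using that the connected curve $F_0$ has at least two components (hence $C_i$ meets some $C_j$ with $j\neq i$), one gets $C_i^2<0$, i.e. $C_i^2\leq -1$. From $K_S\cdot C_i=2p_a(C_i)-2-C_i^2$ one then checks that $K_S\cdot C_i<0$ is possible only if $C_i^2=-1$ and $p_a(C_i)=0$, i.e. only if $C_i$ is a $(-1)$--curve. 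Since $\sum_i m_i(K_S\cdot C_i)=K_S\cdot F_0=-2<0$, some $C_i$ has $K_S\cdot C_i<0$, so $S$ contains a $(-1)$--curve, contradicting minimality. Hence $r=1$, say $F_0=mC_1$; then $C_1^2=0$ and $m(K_S\cdot C_1)=-2$, so $m\in\{1,2\}$. The value $m=2$ is excluded, since it would give $2p_a(C_1)-2=K_S\cdot C_1+C_1^2=-1$, which is odd; so $m=1$, $K_S\cdot C_1=-2$, $p_a(C_1)=0$, and an irreducible reduced curve of arithmetic genus $0$ is $\cong\PP^1$. Thus $F_0\cong\PP^1$ for every $x\in C$, and $S$ is a $\PP^1$--bundle over $C$.

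The one step needing care is the inequality $C_i^2<0$ for a component of a reducible fibre: Corollary~\ref{cor:hit} cannot be applied verbatim, because the target of $f$ is a curve rather than a surface, so this must be derived directly from $F_0\cdot C_i=0$ together with the connectedness of $F_0$ (the relevant special case of Zariski's lemma). Everything else is short numerical bookkeeping with the adjunction formula and the two identities $F_0^2=0$ and $K_S\cdot F_0=-2$, so I expect that intersection--theoretic step to be where the only real content lies.
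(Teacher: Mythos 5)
Your proof is correct and follows essentially the same route as the paper's: both derive $F^2=0$ and $K_S\cdot F=-2$ from the general fibre, use connectedness of fibres to get $C_i^2<0$ for components of a reducible fibre, deduce a $(-1)$--curve from $K_S\cdot C_i<0$ contradicting minimality, and exclude multiple irreducible fibres by the parity of $(K_S+C_1)\cdot C_1$. The only cosmetic difference is that you invoke Stein factorization to ensure connected fibres where the paper cites Zariski's connectedness principle directly.
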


\begin{proof} Let $F$ be a fibre of $f$. We have $F^2=0$, $K_S\cdot F=-2$. If $F$ is irreducible it cannot be multiple. Indeed, if $F=mF'$ with $m>1$, then we have $-2=K_S\cdot F=m(K_S\cdot F')$, hence $m=2$ and $K_S\cdot F'=-1$. On the other hand $F'^2=0$ which is incompatible with $K_S\cdot F'=-1$ because $(K+F')\cdot F'$ has to be even. So for any irreducible fibre $F$ we have that its genus is 0, hence $F\cong \PP^1$. To finish the proof we have to show that no fibre $F$ can be reducible. Assume by contradiction that $F=\sum_{i=1}^nm_iC_i$ with $n>1$. Then, for all $i\in \{1,\ldots, n\}$ we have
\[
n_iC_i^2=C_i\cdot (F-\sum_{j\neq i}m_jC_j)=- \sum_{j\neq i}m_jC_i\cdot C_j<0
\]
because $F$ is connected (cf. \cite [Chapt. III, Ex. 11.4: Principle of Connectedness]{Hart}). Hence $C^2_i<0$ for all $i\in \{1,\ldots, n\}$. Since $K_S\cdot F<0$, there is a $j\in \{1,\ldots, n\}$ such that $K_S\cdot C_j<0$. This implies that  $K_S\cdot C_j=C_j^2=-1$, hence $C_j$ is a $(-1)$--curve, a contradiction. \end{proof}

We recall the following results (cf. \cite [Chapt. III, Prop. 2.6 and Corol. 2.14]{Hart}:

\begin{proposition}\label{prop:section} Given a $\PP^1$--bundle $f: \PP^1(\E)\to C$, there is a 1-to-1 correspondence between sections $\Gamma$ of $f$ and quotients $\E\twoheadrightarrow F$, with $F$ a line bundle. Under this correspondence, if $G=\ker(\E\twoheadrightarrow \F)$, then $G$ is a line bundle on $C$ and 
$\O_\Gamma(\Gamma)\cong F\times G^ \vee$. 
\end{proposition}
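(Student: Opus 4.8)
The plan is to read off the correspondence from the universal property of $\PP(\E)$ (Proposition \ref{prop:ev}), and then to compute $\O_\Gamma(\Gamma)$ by exhibiting $\Gamma$ as the zero divisor of an explicit global section of a line bundle on $\PP(\E)$.

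First, note that a section $\Gamma$ of $f$ is the same datum as a morphism $s\colon C\to\PP(\E)$ with $f\circ s=\mathrm{id}_C$: indeed $f|_\Gamma\colon\Gamma\to C$ is finite and birational onto the normal curve $C$, hence an isomorphism, and one takes $s$ to be its inverse followed by $\Gamma\hookrightarrow\PP(\E)$. Applying Proposition \ref{prop:ev} with $X=Y=C$ and $g=\mathrm{id}_C$, such morphisms $s$ are in bijection with pairs $(F,q)$, where $F$ is a line bundle on $C$ and $q\colon\E\twoheadrightarrow F$ is a surjection; two pairs give the same $s$ precisely when the surjections have the same kernel, so sections correspond bijectively to quotient line bundles of $\E$. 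Given such a $q$, put $G=\ker q$; then $0\to G\to\E\to F\to 0$ is exact, and since $G$ is a rank-one torsion-free subsheaf of the vector bundle $\E$ on the smooth curve $C$, it is a line bundle. This settles everything except the formula for $\O_\Gamma(\Gamma)$.

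For the self-intersection I would argue as follows. Identify $\Gamma$ with $C$ via $f$, write $\xi=\O_{\PP(\E)}(1)$, and let $u\colon f^*\E\twoheadrightarrow\xi$ be the tautological surjection; pulling $u$ back along $s$ returns $q$, so $s^*\xi\cong F$. Now form the composite homomorphism $v\colon f^*G\hookrightarrow f^*\E\xrightarrow{u}\xi$, i.e.\ a global section of the line bundle $f^*G^\vee\otimes\xi$. The key claim is that ${\rm div}(v)=\Gamma$. This is checked fibrewise: over a point $x\in C$ the point $s(x)\in\PP(\E_x)$ is precisely the quotient $q_x\colon\E_x\to F_x$ with kernel $G_x$, and there $u$ restricts to a scalar multiple of that quotient, so $v$ vanishes at $s(x)$; at any other point of the fibre the corresponding hyperplane of $\E_x$ is distinct from the line $G_x$, so $v$ is nonzero there. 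On the fibre $\PP(\E_x)\cong\PP^1$ the section $v$ therefore lies in $H^0(\O_{\PP^1}(1))$ and vanishes at exactly one point with multiplicity one; as this holds for every $x$, ${\rm div}(v)$ is a section of $f$ supported on $\Gamma$ with multiplicity one along it, so ${\rm div}(v)=\Gamma$. Hence $\O_{\PP(\E)}(\Gamma)\cong f^*G^\vee\otimes\xi$, and restricting along $s$ gives
\[
\O_\Gamma(\Gamma)\cong s^*(f^*G^\vee\otimes\xi)\cong G^\vee\otimes F\cong F\otimes G^\vee,
\]
as asserted.

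The main obstacle is precisely the scheme-theoretic identity ${\rm div}(v)=\Gamma$: one must be sure $v$ vanishes along the section with multiplicity exactly one and nowhere else, and the reduction to a section of $\O_{\PP^1}(1)$ on each fibre is what makes this clean. An equivalent but more computational route would be to use that for a section the normal bundle is $N_{\Gamma/\PP(\E)}\cong T_{\PP(\E)/C}|_\Gamma$ (the tangent sequence of $\Gamma\subset\PP(\E)$ splits off the relative tangent direction, since $f|_\Gamma$ is an isomorphism), then identify $T_{\PP(\E)/C}\cong(\ker u)^\vee\otimes\xi$ from the relative Euler sequence and restrict, noting $s^*(\ker u)\cong G$ — this yields the same $F\otimes G^\vee$.
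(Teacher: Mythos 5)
Your proof is correct. Note that the paper does not actually prove this proposition: it is stated as a recalled result with a citation to Hartshorne, so there is no in-text argument to compare against. Your argument is essentially the standard one behind that reference, and it is complete. The bijection between sections and quotient line bundles is exactly Proposition \ref{prop:ev} applied with $g=\mathrm{id}_C$, combined with the observation --- which you rightly justify by noting that $f|_\Gamma$ is finite and birational onto the smooth curve $C$ --- that a section in the paper's sense (an irreducible curve meeting every fibre once) is the same datum as a scheme-theoretic section $s$. The identification of $G=\ker q$ as a line bundle is immediate on a smooth curve. For the self-intersection, your key step, namely that the global section $v$ of $f^*G^\vee\otimes\O_{\PP(\E)}(1)$ has divisor exactly $\Gamma$, is handled correctly: since $v$ restricts on each fibre to a nonzero section of $\O_{\PP^1}(1)$, the zero divisor can contain no vertical components and must meet each fibre with total multiplicity one, which forces $\mathrm{div}(v)=\Gamma$ with multiplicity one; restricting along $s$ then gives $\O_\Gamma(\Gamma)\cong F\otimes G^\vee$. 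The alternative route you sketch via $N_{\Gamma/\PP(\E)}\cong T_{\PP(\E)/C}|_\Gamma$ and the relative Euler sequence is equally valid. One cosmetic remark: the symbol $F\times G^\vee$ in the statement is a typo for the tensor product $F\otimes G^\vee$, which is what you prove.
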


\begin{thm}[Grothendieck--Bellatalla's Theorem] \label{thm:gb} Every vector bundle on $\PP^1$ is the direct sum of line bundles.
\end{thm}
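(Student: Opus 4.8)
The plan is to induct on the rank $r$ of the vector bundle $\E$ on $\PP^1$, splitting off one line subbundle at a time. For $r\le 1$ there is nothing to prove, so suppose $r\ge 2$ and that the result holds in rank $r-1$. Throughout, $\E(n):=\E\otimes\mathcal{O}(n)$.

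First I would isolate a line subbundle of maximal degree. Set
\[
a:=\max\{n\in\ZZ:\ H^0(\PP^1,\E(-n))\neq 0\}.
\]
This maximum exists: by Riemann--Roch for bundles on the curve $\PP^1$ one has $h^0(\E(-n))\ge\chi(\E(-n))=\deg\E-rn+r\to+\infty$ as $n\to-\infty$, so the set is nonempty, while it is bounded above since for $n\gg 0$ the bundle $\E(n)$ is globally generated — so $\E$ is a quotient of $\mathcal{O}(-n)^{\oplus M}$ and every line subbundle of $\E$ has degree at most $\deg\E+(r-1)n$ (equivalently one may invoke Serre duality and Serre vanishing, $h^0(\E(-n))=h^1(\E^{\vee}(n-2))=0$ for $n\gg 0$). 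Choose a nonzero $s\in H^0(\E(-a))$, i.e.\ a nonzero sheaf map $\mathcal{O}(a)\to\E$, which is automatically injective. I claim $s$ vanishes nowhere: if $s(p)=0$ at a point $p$, then $s$ would yield a nonzero section of $\E(-a)\otimes\mathcal{I}_p=\E(-(a+1))$, contradicting maximality of $a$. Hence $\mathcal{O}(a)\hookrightarrow\E$ is a subbundle inclusion and fits in
\[
0\longrightarrow \mathcal{O}(a)\longrightarrow \E\longrightarrow \E'\longrightarrow 0
\]
with $\E'$ locally free of rank $r-1$; by the inductive hypothesis $\E'\cong\bigoplus_{i=1}^{r-1}\mathcal{O}(a_i)$.

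The crux is a degree bound forcing this extension to split. Twisting by $\mathcal{O}(-a-1)$ and passing to cohomology gives the exact piece
\[
H^0\bigl(\E(-(a+1))\bigr)\longrightarrow H^0\bigl(\E'(-a-1)\bigr)\longrightarrow H^1(\PP^1,\mathcal{O}(-1)),
\]
whose outer terms vanish (the first by maximality of $a$, the second because $H^1(\PP^1,\mathcal{O}(-1))=0$), so $H^0(\E'(-a-1))=0$. Since $\E'(-a-1)\cong\bigoplus_i\mathcal{O}(a_i-a-1)$, this forces $a_i\le a$ for all $i$. Therefore
\[
\Ext^1\bigl(\E',\mathcal{O}(a)\bigr)\cong H^1\bigl(\PP^1,\,(\E')^{\vee}\otimes\mathcal{O}(a)\bigr)\cong\bigoplus_{i=1}^{r-1}H^1\bigl(\PP^1,\mathcal{O}(a-a_i)\bigr)=0,
\]
because $a-a_i\ge 0\ge -1$. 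Hence the sequence splits and $\E\cong\mathcal{O}(a)\oplus\bigoplus_{i=1}^{r-1}\mathcal{O}(a_i)$ is a direct sum of line bundles, closing the induction.

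The part I expect to be most delicate is the double role of the maximality of $a$: it is used first to know that the maximal-degree section is nowhere vanishing — without this one only gets a subsheaf, not a subbundle with locally free quotient, and the induction fails to apply — and, more crucially, to derive $a_i\le a$, which is precisely the inequality making $\Ext^1(\E',\mathcal{O}(a))$ vanish. Merely knowing that $a$ exists (boundedness from above of the degrees of line subbundles) is routine but needs one of the standard positivity facts noted above; the rest is bookkeeping with Riemann--Roch and the cohomology of line bundles on $\PP^1$.
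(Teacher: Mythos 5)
Your proof is correct. Note that the paper does not actually prove this theorem: it only recalls it alongside Proposition~\ref{prop:section} with a citation to Hartshorne, so there is no in-text argument to compare against. Your induction --- splitting off a line subbundle $\O(a)$ of maximal degree, using maximality of $a$ twice (once to see the section of $\E(-a)$ is nowhere vanishing, hence gives a subbundle with locally free quotient, and once to force $a_i\le a$ so that $\Ext^1(\E',\O(a))=\bigoplus_i H^1(\O(a-a_i))=0$) --- is the standard proof and all the steps check out. The only slightly loose spot is the first justification of boundedness of $a$ (the ``quotient of $\O(-n)^{\oplus M}$'' remark does not by itself bound degrees of line subbundles), but the parallel justification you give via Serre duality and Serre vanishing, $h^0(\E(-n))=h^1(\E^{\vee}(n-2))=0$ for $n\gg 0$, is airtight, so nothing is missing.
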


As a consequence we have:

\begin{corollary}\label{tmg:bp1} Any scroll on $\PP^1$ is of the form
\[
\FF_n\cong \PP(\O_{\PP^1}\oplus  \O_{\PP^1}(-n)).
\] 
\end{corollary}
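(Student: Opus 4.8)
The plan is to combine Theorem~\ref{thm:plus} (every $\PP^1$-bundle over $C$ is $\PP(\E)$ for some rank $2$ bundle $\E$, unique up to twist by a line bundle) with the Grothendieck--Bellatalla Theorem~\ref{thm:gb}. First I would invoke Theorem~\ref{thm:plus} with $C=\PP^1$: the given scroll $S\to\PP^1$ is isomorphic over $\PP^1$ to $\PP(\E)$ for some rank $2$ vector bundle $\E$ on $\PP^1$. By Theorem~\ref{thm:gb}, $\E\cong\O_{\PP^1}(a)\oplus\O_{\PP^1}(b)$ for integers $a\geq b$ (reorder the summands so that $a\geq b$).

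Next I would normalize the presentation using the twist ambiguity. By the second part of Theorem~\ref{thm:plus}, $\PP(\E)\cong\PP(\E\otimes L)$ for any line bundle $L$ on $\PP^1$; taking $L=\O_{\PP^1}(-a)$ gives
\[
\E\otimes L\cong \O_{\PP^1}\oplus \O_{\PP^1}(b-a).
\]
Setting $n:=a-b\geq 0$, we obtain $S\cong\PP(\O_{\PP^1}\oplus\O_{\PP^1}(-n))=\FF_n$, as claimed.

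I do not expect any serious obstacle here: the statement is essentially a formal consequence of the two cited results, the only mild point being to record that the twist by $\O_{\PP^1}(-a)$ is exactly what reduces a split bundle to one of the form $\O\oplus\O(-n)$ with $n\geq 0$. If one wishes to be a little more careful, one can note that $\FF_n\not\cong\FF_m$ for $n\neq m$ (for instance because $\FF_n$ for $n\geq 1$ contains a unique section of negative self-intersection $-n$, while $\FF_0=\PP^1\times\PP^1$ has none), so the integer $n\geq 0$ is in fact uniquely determined by $S$; but this uniqueness is not needed for the statement as phrased.
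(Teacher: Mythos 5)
Your argument is correct and is exactly the route the paper intends: the corollary is stated there as an immediate consequence of Theorem~\ref{thm:plus} and the Grothendieck--Bellatalla Theorem~\ref{thm:gb}, with the normalization by twisting with $\O_{\PP^1}(-a)$ left implicit. Your write-up simply makes that normalization explicit, so there is nothing to correct.
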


The quotient
\[
\O_{\PP^1}\oplus  \O_{\PP^1}(-n)\to  \O_{\PP^1}(-n)\to 0
\]
corresponds to a section $C$ of $\FF_n$ such that $C^ 2=-n$. This is the unique curve with non--positive self intersection of $\FF_n$, except in the case $n=0$ in which $\FF_0\cong \PP^ 1\times \PP^1$: in this case there are no curves on $\FF_0$ with negative self intersection, but the fibres of both projections onto the factors  have self intersection 0.

\begin{example}[Elementary transformations] \label {ex:2} Let $f: S\to C$ be a scroll. 
Let $\alpha: T\to S$ be the blow--up of $S$ at a point $x$. The proper transform of the ruling through $x$ is a (-1)--curve, which can be blown--down to a smooth point $y$, via $\beta: T\to S'$. The surface  
$S'$ a new scroll over $C$, which is birational to $S$, linked to $S$ by the commutative diagram
\begin{equation*}
\xymatrix{ 
&T\ar_{\alpha}[d]\ar^{\beta}[dr] &\\
&S \ar@{-->}^{\gamma}[r]\ar[d]_f &  S'\ar[d]^{f'}&\\
&C\ar^{\rm id}[r] &  C&
}
\end{equation*}
The map $\gamma: S\dasharrow S'$ is called and \emph{elementary transformation}. 

If $S=\FF_n$, then $S'=\FF_i$ where $i=n-1$ is $x$ does not belong to an irreducible section $C$ of $\FF_n$ with $C^ 2=-n$ (this section is unique if $n>0$), whereas $i=n+1$ if $x$ belongs to such a curve. So $S$ has two minimal models $\FF_n$ and $\FF_{n\pm 1}$ unless $n=0$. In this case $\FF_1$ is not minimal, and to get a minimal model we have to blow--down its (-1)--curve, thus obtaining $\PP^ 2$. So if $n=0$, $S$ has the two minimal models $\FF_0$ and $\PP^2$. \end{example}

\section{Surfaces with non--nef canonical bundle}\label{sec:nonef}

Recall that a line bundle $L$ on a surface $S$ is said to be \emph{nef} if $L\cdot C\geq 0$, for all curves $C$ on $S$. 
In this section we will prove the following:

\begin{thm}[Extremal Contraction Theorem]\label{thm:ect} Let $S$ be a smooth, irreducible, projective surface such that $K_S$ is not nef. Then there is a morphism $\varphi: S\to V$, called an \emph{extremal contraction}, such that:\begin{inparaenum}\\
\item [$\bullet$] $\varphi$ is not an isomorphism;\\
\item [$\bullet$] if $C$ is any irreducible curve on $S$ which is contracted to a point by $\varphi$, then $K_S\cdot C<0$;\\
\item [$\bullet$] if $C, D$ are irreducible curves on $S$ both contracted to points by $\varphi$, then the classes of $C$ and $C$ in $NS(S)$ are equal, in particular $C\equiv D$;\\
\item [$\bullet$] conversely, if $C, D$ are irreducible curves on $S$, if $C$ is contracted to a point by $\varphi$ and if $C\equiv D$, then also $D$ is contracted to a point by $\varphi$;\\
\item [$\bullet$] $\varphi$ has connected fibres and $V$ is smooth and projective.
\end{inparaenum}

\end{thm}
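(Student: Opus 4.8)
The plan is to extract the extremal contraction from the geometry of the Kleiman--Mori cone $\NE(S)$. Since $K_S$ is not nef, there is a curve $C_0$ with $K_S\cdot C_0<0$, so the half-space $\{z\in N_1(S): K_S\cdot z<0\}$ meets $\NE(S)$. The first step is to show that $\NE(S)$, intersected with this half-space, has an extremal ray $R$ that is $K_S$-negative; concretely, I would consider the function $z\mapsto (K_S\cdot z)/(H\cdot z)$ on $(\NE(S)\setminus\{0\})$ for an ample $H$, and pick a ray $R=\RR^+\xi$ on which it is minimized (negative). One then checks, using the Hodge Index Theorem \ref{thm:hit} together with adjunction $(K_S+C)\cdot C=2p_a(C)-2$, that such an extremal ray is generated by the class of an \emph{irreducible} curve $C$ with $K_S\cdot C<0$, and moreover that $C^2\leq 0$ — if $C^2>0$ then $C$ would be big and a multiple of $C$ moves, forcing $\NE(S)$ to contain a neighborhood of $[C]$ in $N^1(S)$ (by Riemann--Roch, $h^0(nC)$ grows like $n^2$), contradicting extremality of $R$ unless $\rho=1$, which is the case $S\cong\PP^2$ to be treated separately.

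Next I would split into cases according to $C^2$ and $K_S\cdot C$, bounded using adjunction and $p_a(C)\geq 0$ (the curves in an extremal ray are rational, so in fact $p_a(C)=0$ will follow):
\begin{inparaenum}
\item[$\bullet$] $C^2=-1$, $K_S\cdot C=-1$: then $C$ is a $(-1)$-curve, and Castelnuovo's Contractibility Theorem \ref{thm:cct} gives a blow-down $\varphi: S\to V$ with $V$ smooth; this contracts exactly the curves numerically equivalent to $C$ (here, just $C$ itself).
\item[$\bullet$] $C^2=0$: then $K_S\cdot C=-2$, the class $[C]$ spans a ray of $\NE(S)$, and $|C|$ (or a suitable multiple) is a base-point-free pencil by Riemann--Roch and Hodge Index (any two members are disjoint since $C^2=0$), defining a morphism $\varphi:S\to V$ to a smooth curve $V$ with connected fibres after Stein factorization \ref{thm:stein}; the contracted curves are exactly the components of fibres, which are the effective divisors numerically proportional to $C$.
\item[$\bullet$] $C^2>0$ and $\rho=1$: then $-K_S$ is ample, and one shows $S\cong\PP^2$ with $\varphi: S\to \operatorname{pt}$, using a Riemann--Roch / plurigenus argument as in Theorem \ref{thm:charp2}.
\end{inparaenum}
In each case one verifies the five bulleted properties: non-isomorphism is clear; $K_S\cdot C<0$ on contracted curves holds because any contracted irreducible curve has class in $R$ (its class pairs to $0$ with the pullback of an ample class on $V$, hence lies on the supporting face, which is $R$); the numerical-equivalence statements follow from $R$ being a ray, i.e.\ one-dimensional; connectedness of fibres and smoothness of $V$ are built into the construction (Stein factorization in the pencil case, Castelnuovo in the blow-down case).

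The main obstacle is the first step: proving that a $K_S$-negative extremal ray of $\NE(S)$ is actually generated by an irreducible curve, and controlling $C^2$ and $K_S\cdot C$ so that only the three cases above occur. This is where one must work, combining the bend-and-break-type consequences of Riemann--Roch (if $C^2>0$, curves in $|nC|$ acquire high-multiplicity points forcing reducibility, as in the proof of Theorem \ref{thm:charp2}) with the Hodge Index Theorem to pin down the self-intersection. A secondary subtlety is verifying that \emph{every} irreducible curve contracted by the constructed $\varphi$ has numerical class on the ray $R$ — this needs the fact that $\varphi$ contracts a curve $D$ iff $(\varphi^*A)\cdot D=0$ for $A$ ample on $V$, i.e.\ iff $D$ lies on the face of $\NE(S)$ cut out by $\varphi^*A$, and that this face is precisely $R$ by construction.
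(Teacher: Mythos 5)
There are two genuine gaps, and they are exactly at the places you flag as ``where one must work''. First, your construction starts from a $K_S$-negative extremal ray $R$ of $\NE(S)$ and asserts that $R$ is generated by the class of an \emph{irreducible} curve. This is not something that Hodge Index plus adjunction can deliver: $\NE(S)$ is the \emph{closure} of the cone spanned by curve classes, and an extremal ray of the closure may consist entirely of limit classes (on an abelian surface there are uncountably many extremal rays, almost none spanned by curves). The statement that $K_S$-negative extremal rays are discrete and spanned by rational curves is the content of the Cone Theorem, which in this paper is \emph{deduced from} the Extremal Contraction Theorem (via the Rationality Theorem, boundedness of denominators, and base point freeness); invoking it here is circular in the paper's logical order, and proving it independently would require bend-and-break or, precisely, the threshold machinery you are trying to avoid.

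Second, and more concretely, your case $C^2=0$ fails: you claim $|C|$ (or $|nC|$) is a base-point-free pencil ``by Riemann--Roch and Hodge Index''. Riemann--Roch gives $\chi(\O_S(C))=\chi(\O_S)+1$, which is $\leq 0$ as soon as $\chi(\O_S)\leq -1$. For a ruled surface over a curve of genus $g\geq 2$ the fibre $F$ has $F^2=0$, $K_S\cdot F=-2$, yet $h^0(\O_S(nF))=1$ for all $n$: the fibres are algebraically but not linearly equivalent, so no multiple of $F$ moves in a linear system, and the fibration cannot be recovered from $|nF|$. This is exactly why the paper works instead with $L=A+r_AK_S$ for $A$ ample and $r_A$ the (rational) canonical threshold: the ample summand makes $h^0(\ell L)$ large via Kodaira vanishing, the Base Point Freeness Theorem makes $|\ell L|$ free, and the morphism $\varphi_{|\ell L|}$ simultaneously produces the contraction and the curves it contracts (a $(-1)$-curve, the fibres of a conic bundle, or all of $S$ when $\rho=1$). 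Your cone-theoretic skeleton is the right picture of what the contraction does, but without the threshold/base-point-freeness input it does not produce the morphism.
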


The proof  is based on the following two results:

\begin{thm}[Rationality Theorem]\label{thm:rt} Let $S$ be a smooth, irreducible, projective surface such that $K_S$ is not nef and let $A$ be an ample line bundle on $S$. Define
\[
r_A:={\rm sup}\{t\in \RR^+: A+tK_S \quad \mbox{is nef}\,\,\}
\]
(called the \emph{canonical threshold} of $A$, sometimes simply denoted by $r$). Then $r_A\in \QQ^+$.
\end{thm}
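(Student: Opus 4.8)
The plan is to argue by contradiction, assuming $r:=r_A$ is irrational, and to exploit the strong numerical approximation properties that irrational numbers enjoy. The starting observations are that $r>0$ (since $K_S$ is not nef, $A+tK_S$ is certainly nef for $t$ close to $0$, so the supremum is positive) and that $r<\infty$ (since $A+tK_S$ cannot stay nef forever: intersecting with a curve $C$ with $K_S\cdot C<0$ forces $A\cdot C + tK_S\cdot C<0$ for $t$ large). Moreover, by Kleiman's Criterion (Theorem \ref{thm:Kl}) the nef cone is closed, so $L:=A+rK_S$ is nef but not ample; in particular $L$ lies on the boundary of the nef cone, so there is a class in $\NE(S)-\{0\}$ on which $L$ vanishes, and also $L^2\ge 0$.

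The main idea is to use the Riemann--Roch Theorem together with a vanishing argument to produce sections of a divisor of the form $D_{p,q}:=pA+qK_S$ with $q/p$ close to $r$ but with $q/p\le r$, so that $D_{p,q}$ is nef (hence $h^2=0$ by Serre duality, since $D_{p,q}-K_S=pA+(q-1)K_S$ is again nef for suitable choices and $A$ ample makes it big), while Riemann--Roch forces $\chi(D_{p,q})$ to grow quadratically in $p,q$. Concretely, one computes
\[
\chi(pA+qK_S)=\chi(\O_S)+\tfrac12\bigl(pA+qK_S\bigr)\cdot\bigl(pA+(q-1)K_S\bigr),
\]
a polynomial $P(p,q)$ of degree $2$. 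The key point is that if $r$ were irrational, then $P(p,q)$ would not vanish identically on the line $q=rp$ (one checks the leading form is $(A+rK_S)^2=L^2$, and if $L^2>0$ we would already get sections and a contradiction; if $L^2=0$ one must look at the next term, using that $L$ is not numerically trivial). So $P$ is a genuinely two-variable polynomial, and a Dirichlet-type simultaneous approximation argument produces infinitely many pairs $(p,q)$ with $q/p<r$, $q/p$ very close to $r$, and $P(p,q)>0$; for each such pair $D_{p,q}$ is nef, one shows $h^1$ and $h^2$ vanish (the $h^2$ vanishing via Serre duality and Kawamata--Viehweg, Theorem \ref{thm:kvt}, since $D_{p,q}-K_S$ is big and nef), so $h^0(D_{p,q})=P(p,q)>0$. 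Thus $D_{p,q}=pA+qK_S$ is effective, hence $K_S\cdot\mbox{(that curve)}$... more precisely $A+(q/p)K_S$ is effective, which, combined with $A+(q/p)K_S$ being nef and the approximation being tight, will be shown to force $A+r'K_S$ nef for some rational $r'>r$, contradicting the definition of $r$ as a supremum — or directly contradicts that $A+tK_S$ fails to be nef for $t>r$.

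The hard part will be the counting/approximation step: pinning down exactly how close $q/p$ must be to $r$ (roughly within $C/p^2$ for a suitable constant) so that effectivity of $pA+qK_S$ together with nefness yields a contradiction with the irrationality of $r$, and handling the degenerate case $L^2=0$ where the leading term of Riemann--Roch vanishes and one must control the linear term $-\tfrac12 L\cdot K_S$ and show it is nonzero (using the Hodge Index Theorem, Theorem \ref{thm:hit}, applied to $L$ and $A$ to rule out $L$ being numerically trivial). The contradiction itself comes from the fact that an irrational $r$ admits arbitrarily good rational approximations from below, while the algebraic constraints force a definite ``gap'' — a rational upper bound of bounded denominator — on how well $r$ can be approximated by slopes of effective (equivalently nef) classes; once these two facts collide, $r$ must be rational.
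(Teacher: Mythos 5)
Your overall strategy (argue by contradiction, combine Riemann--Roch with a vanishing theorem and a Diophantine approximation of $r$) is the same as the paper's, but there is a critical error of direction that breaks the argument. You propose to produce effective divisors $pA+qK_S$ with $q/p\le r$, i.e.\ to approximate $r$ \emph{from below}. But for $0\le t<r$ the class $A+tK_S$ is already ample (write it as $(1-t/r)A+(t/r)(A+rK_S)$, an ample class plus a nef one), so sub-threshold classes are effective for trivial reasons and their effectivity carries no information about $r$. Concretely, if $pA+qK_S\sim\sum_i d_iD_i$ with $q/p<r$, then solving for $K_S$ only shows that the nef threshold \emph{restricted to $t<q/p$} is governed by the finitely many components $D_i$; since $r>q/p$, nothing is learned about $r$ itself, and your closing step (``force $A+r'K_S$ nef for some rational $r'>r$'') has no mechanism behind it. The paper's proof goes the other way: using continued fractions it finds pairs $(u,v)$ with $0<v/u-r<1/(3u)$ (approximation \emph{from above}), chooses $(u_0,v_0)$ so that $P(ku_0,kv_0)$ is not identically zero in $k$ (only finitely many lines can lie on the conic $P=0$), picks $k_0\in\{1,2,3\}$ with $P(k_0u_0,k_0v_0)\neq0$, and then uses the key rewriting
\[
k_0u_0A+k_0v_0K_S=K_S+k_0u_0\Bigl(A+\tfrac{k_0v_0-1}{k_0u_0}K_S\Bigr),\qquad \tfrac{k_0v_0-1}{k_0u_0}=\tfrac{v_0}{u_0}-\tfrac1{k_0u_0}<r,
\]
so that the bracketed class is ample, Kodaira vanishing applies to $M=K_S+(\text{ample})$, and $h^0(M)=\chi(M)=P(k_0u_0,k_0v_0)\neq0$ gives an \emph{effective} class at the rational slope $r_0=v_0/u_0>r$. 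This is exactly what your plan cannot reach.

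The other missing ingredient is the mechanism converting such an effectivity statement into rationality (the paper's Lemma \ref{lem:aa}): once $k(A+r_0K_S)\sim\sum_i d_iD_i$ with $r_0\ge r_A$ rational, every irreducible curve $C$ other than the $D_i$ satisfies $(A+tK_S)\cdot C\ge0$ for all $0<t<r_0$, so the failure of nefness just above $r_A$ must be detected by one of the finitely many $D_j$, and $r_A=\min_j t_j$ where each $t_j$ solves a linear equation with rational coefficients. Your sketch never isolates this step, and two of your side remarks are not correct as stated: from $L^2>0$ you do not ``already get a contradiction'' (indeed $L=A+rK_S$ is only an $\RR$-class, so one cannot even speak of $h^0$ of its multiples), and the Hodge Index Theorem does not rule out $L\equiv0$ --- that case is a genuine possibility (e.g.\ $S=\PP^2$), in which $r$ is rational at once because $-K_S$ and $A$ become proportional integral classes.
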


\begin{thm}[Base Point Freeness Theorem]\label{thm:bpf}  Let $S$ be a smooth, irreducible, projective surface and let $A$ be an ample divisor on $S$. Consider
\[
L=A+rK_S\in \Div(S)\otimes_\ZZ\QQ
\]
with $r\in \QQ^+$ and $L$ nef. Then  the linear system $\ell L$ is base point free for $\ell\in \NN$, $\ell\gg 0$ and $\ell$ sufficiently divisible so that $\ell r\in \NN$. 
\end{thm}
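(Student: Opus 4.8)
The goal is to show that a nef $\QQ$-divisor $L = A + rK_S$ obtained from the Rationality Theorem satisfies: $\ell L$ is base point free for $\ell \gg 0$ sufficiently divisible. I would organize the proof around a case analysis on $L^2$ and $L \cdot K_S$, using Riemann--Roch, Kodaira/Kawamata--Viehweg vanishing, and the structure of the Kleiman--Mori cone. The first step is to observe that $L$ is big and nef precisely when $L^2 > 0$; when $L^2 = 0$ one expects $|\ell L|$ to define a fibration or $L$ to be torsion-like, and these cases must be handled separately.

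First I would treat the case $L^2 > 0$, so $L$ is big and nef. Then $nL - K_S = nL - \frac{1}{r}(A - L) = (n + \tfrac1r)L - \tfrac1r A$; but $A$ is not obviously dominated by $L$, so instead I would write $K_S = \tfrac1r(A - L)$ and note $mL - K_S = mL - \tfrac1r A + \tfrac1r L$. To apply Kawamata--Viehweg I want $mL - K_S$ to be big and nef: since $mL$ is big and nef and $-K_S = \tfrac1r(L - A)$, this reduces to checking $mL - K_S$ is nef for $m \gg 0$, which follows because $L$ nef and $-K_S$ differs from a nef class by the ample $\tfrac1r A$ up to scaling — more carefully, $mL - K_S = (m-1)L + (L - K_S)$ and $L - K_S = L + \tfrac1r(A-L)$ so for suitable rational combinations this is a positive combination of the nef $L$ and the ample $A$, hence ample, hence big and nef. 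Then Kawamata--Viehweg gives $h^i(\O_S(\lceil mL \rceil)) = 0$ for $i > 0$ (taking $m$ sufficiently divisible so $mL$ is an honest divisor), so $h^0$ grows like $\tfrac12 m^2 L^2$ and one gets sections; then a standard argument — bounding the base locus, using that $L^2>0$ forces the moving part to eventually have no fixed components and be base point free on a surface — finishes this case. Here I would likely invoke the finite generation / Zariski-decomposition style input implicit in the surface case, or argue directly that a big nef divisor on a surface with $L \cdot C > 0$ for the relevant curves is semiample.

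The harder case is $L^2 = 0$. Here $L$ is nef but not big. If $L \equiv 0$ then $L$ is numerically trivial and, being of the form $A + rK_S$ with $A$ ample, one shows $L$ itself (not just numerically) is torsion or trivial — then $\ell L = 0$ for suitable $\ell$ and the statement is vacuous, or $\ell L \sim 0$. If $L \not\equiv 0$ but $L^2 = 0$, then by Hodge Index $L^\perp$ is negative semidefinite, and I expect $L$ to be a positive multiple of the class of a fibre of a morphism to a curve: one uses $L \cdot K_S < 0$ (which holds since $A \cdot K_S$ and the extremal ray force this — or one checks $L \cdot K_S \le 0$ and handles $=0$) together with Riemann--Roch $\chi(\ell L) = \chi(\O_S) - \tfrac{\ell}{2} L \cdot K_S$, which is linear and positive in $\ell$, to produce sections of $\ell L$; vanishing of $h^2(\ell L) = h^0(K_S - \ell L)$ follows because $(K_S - \ell L) \cdot L = K_S \cdot L < 0$ makes $K_S - \ell L$ non-effective. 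So $h^0(\ell L) \ge 1 + c\ell \to \infty$, and since $L^2 = 0$ the linear system $|\ell L|$ has the numerical properties of a pencil; one then shows the moving part is base point free (a base point on a surface in a system with self-intersection zero would, after resolving, give a negative contribution contradicting $L^2 = 0$ and nefness).

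**Main obstacle.** The crux is the $L^2 = 0$, $L \not\equiv 0$ case: showing that $\ell L$ is actually base point free rather than merely that $h^0(\ell L)$ grows. The subtlety is separating the "numerically trivial" behaviour from the "pencil" behaviour and, in the pencil case, promoting numerical information to an honest base-point-free linear system — this is where one needs either an explicit Stein-factorization argument on the morphism defined by a large multiple, or a careful analysis showing the fixed part of $|\ell L|$ must vanish because $L$ pairs to zero with every component of it while being nef. A secondary technical point throughout is bookkeeping with $\QQ$-divisors: one must pass to $\ell$ sufficiently divisible that $\ell r \in \NN$ and that round-ups disappear, and ensure the vanishing theorems are applied to genuine integral divisors $mL$ with $mL - K_S$ genuinely big and nef.
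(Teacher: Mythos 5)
Your three-way case division ($L^2>0$; $L\equiv 0$; $L^2=0$ and $L\not\equiv 0$) is exactly the paper's, and you correctly locate where the difficulty sits, but in two of the three cases the proposal stops at the difficulty rather than resolving it. The most serious gap is in the case $L^2>0$: after Kawamata--Viehweg produces sections you appeal to the claim that a big and nef divisor on a surface ``with $L\cdot C>0$ for the relevant curves'' is semiample. Big and nef does not imply semiample on a surface (Zariski's example: on the blow-up of $\PP^2$ at $12$ general points of a smooth cubic, the class $4H-E_1-\cdots-E_{12}$ is big and nef but the strict transform of the cubic lies in the base locus of every multiple, because the restriction of the class to that elliptic curve has degree $0$ and is non-torsion). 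What saves the day is precisely the special shape $L=A+rK_S$: if $E$ is irreducible with $L\cdot E=0$, then $0=A\cdot E+rK_S\cdot E>rK_S\cdot E$ forces $K_S\cdot E<0$, while the Hodge Index Theorem forces $E^2<0$, and parity of $E^2+K_S\cdot E$ then makes $E$ a $(-1)$-curve. The paper contracts $E$, checks that $L$ and $A$ descend to a nef $L'=A'+rK_{S'}$ with $A'$ ample, and iterates until $L$ is the pullback of an ample divisor from some contraction $f:S\to V$; base point freeness of $\ell L$ is then pulled back from $V$. Your plan never uses the $K_S$-negativity of curves in $L^\perp$, and without it the case does not close.

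In the case $L^2=0$, $L\not\equiv 0$, your production of sections is the paper's argument ($L\cdot A>0$, hence $L\cdot K_S<0$, hence $\ell L-K_S$ ample and $h^0(\ell L)$ grows linearly), and you rightly flag the fixed part as the crux; but your proposed resolution --- that the fixed part ``must vanish because $L$ pairs to zero with every component'' --- is not what happens and is not provable. Writing $|\ell L|=|M|+F$, one gets $M^2=M\cdot F=F^2=0$, so $|M|$ is composed with a pencil $f:S\to D$ and $F$ is supported on fibres; the missing ingredient is Zariski's Lemma (Theorem \ref{thm:zar}), which converts $F^2=0$ into the statement that $F$ is a rational multiple of full fibres. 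Only then, by enlarging and further dividing $\ell$, can $F$ be absorbed into a pullback from the base, giving base point freeness. Your alternative suggestion of a Stein-factorization argument points in the right direction but is not carried out. (The case $L\equiv 0$ is essentially fine: the paper makes your ``torsion or trivial'' precise by noting $-K_S\equiv\frac1r A$ is ample, so Kodaira vanishing plus Riemann--Roch give $h^0(\ell L)=\chi=1$, and an effective numerically trivial divisor is zero.)
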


\subsection{Proof of the rationality theorem}

We keep the notation introduced above.
 
 \begin{lemma}\label{lem:aa} Suppose there is a rational number $r_0\geqslant r_A$ such that $k(A+r_0K_S)$ is \emph{effective} for some $k\in \NN$, i.e., $h^0(k(A+r_0K_S))>0$. Then $r_A\in \QQ$. 
 \end{lemma}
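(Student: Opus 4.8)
The plan is to argue by contradiction, assuming $r_A \notin \QQ$, and extract from the nef $\QQ$-divisor $A + r_A K_S$ together with the effective divisor $k(A + r_0 K_S)$ a numerical contradiction with the definition of $r_A$ as a supremum. First I would observe that since $K_S$ is not nef there is a curve $\Gamma$ with $K_S \cdot \Gamma < 0$; the function $t \mapsto (A + tK_S)\cdot \Gamma = A\cdot\Gamma + t(K_S\cdot\Gamma)$ is strictly decreasing, so $r_A \le -\frac{A\cdot\Gamma}{K_S\cdot\Gamma} < \infty$ and in particular $r_A$ is a well-defined positive real number.

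Next I would use the effectivity hypothesis. Write $D = k(A + r_0 K_S)$, an effective divisor with $h^0(D) > 0$. For any curve $C$ appearing in $D$ (or any curve at all, via the numerical class), I want to compare $(A + r_A K_S)\cdot C$ with $(A + r_0 K_S)\cdot C$. Since $r_0 \ge r_A$ and $(A + r_A K_S)$ is nef while $K_S$ is not, the key case is curves $C$ with $K_S\cdot C < 0$: for these, $(A + r_0 K_S)\cdot C \le (A + r_A K_S)\cdot C$. The idea is that the effective divisor $D$ forces the existence of a curve on which $A + r_A K_S$ vanishes, or more precisely pins down the value of $r_A$ rationally. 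Concretely, if $A + r_A K_S$ is actually nef with strictly positive intersection against every curve, then by Nakai--Moishezon (Theorem \ref{thm:NM}) $A + r_A K_S$ is ample, hence $A + (r_A + \varepsilon)K_S$ stays nef (even ample) for small $\varepsilon > 0$ by Kleiman's criterion (Theorem \ref{thm:Kl}), contradicting the maximality of $r_A$. Therefore there is a curve $C_0$ with $(A + r_A K_S)\cdot C_0 = 0$, giving $r_A = -\frac{A\cdot C_0}{K_S\cdot C_0} \in \QQ$ — provided $K_S \cdot C_0 \ne 0$.

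The main obstacle is precisely ruling out, or otherwise handling, the possibility that $(A + r_A K_S)\cdot C = 0$ forces $A\cdot C = K_S\cdot C = 0$ simultaneously (so that no rational value of $r_A$ is pinned down), i.e. that the only curves $C_0$ "touching" the nef class $A + r_A K_S$ are orthogonal to $K_S$ as well. This is where the effective divisor $D = k(A + r_0 K_S)$ enters decisively: since $A$ is ample, $A \cdot C > 0$ for every curve $C$, so $A\cdot C = 0$ is impossible. Hence any curve $C_0$ with $(A + r_A K_S)\cdot C_0 = 0$ automatically has $A\cdot C_0 > 0$, which forces $K_S\cdot C_0 < 0$ and then $r_A = -\frac{A\cdot C_0}{K_S\cdot C_0}$ is a genuine positive rational number. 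The role of the effectivity of $k(A + r_0 K_S)$ is to guarantee that the "boundary curve" $C_0$ lies in the support of an actual effective divisor, so that the Hodge Index Theorem (Theorem \ref{thm:hit}) and the finite generation of $\Num(S)$ can be invoked to see that the set of curves with $(A + r_A K_S)\cdot C = 0$ is nonempty and, in fact, that one may take $C_0$ to be a component of $D$; I would spell this out by noting $(A + r_A K_S)\cdot D = k(A + r_A K_S)\cdot(A + r_0 K_S)$ and analyzing when this can be zero. Once $C_0$ is produced, the conclusion $r_A \in \QQ$ is immediate.
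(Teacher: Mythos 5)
Your proposal has a genuine gap at its central step: the production of an irreducible curve $C_0$ with $(A+r_AK_S)\cdot C_0=0$. You argue that if $A+r_AK_S$ had strictly positive intersection with every curve it would be ample, and you invoke Nakai--Moishezon and Kleiman for this. But Nakai--Moishezon also requires $(A+r_AK_S)^2>0$, which you do not verify (nefness only gives $\geq 0$), and, more seriously, Kleiman's criterion requires positivity on every nonzero element of the \emph{closed} cone $\NE(S)$, not merely on every irreducible curve: a nef class can be strictly positive on all actual curves and still fail to be ample, with the obstruction coming from a boundary class of $\NE(S)$ that is a limit of curve classes but not itself represented by a curve. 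In that situation no curve $C_0$ with $(A+r_AK_S)\cdot C_0=0$ need exist, the value $r_A$ is not pinned down by any single curve, and your contradiction with maximality evaporates. This is exactly the difficulty that makes the Rationality Theorem nontrivial, and your closing remark that the effective divisor $D=k(A+r_0K_S)$ should force $C_0$ to be a component of $D$ is the right instinct but is left unsubstantiated: computing $(A+r_AK_S)\cdot D=k(A+r_AK_S)\cdot(A+r_0K_S)$ does not show this quantity vanishes, nor that any individual component is orthogonal to $A+r_AK_S$.

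The paper's proof supplies precisely the missing mechanism. Writing $k(A+r_0K_S)\sim\sum_i d_iD_i$ and solving for $K_S$, one gets, for $0<t<r_0$,
\[
A+tK_S=\frac{r_0-t}{r_0}\,A+\frac{t}{kr_0}\sum_{i=1}^n d_iD_i ,
\]
a combination with nonnegative coefficients of the ample class $A$ and the effective curves $D_i$. Hence $(A+tK_S)\cdot C\geq 0$ automatically for every irreducible $C$ not among the $D_i$, so for $t<r_0$ nefness can only fail on the finitely many components $D_j$ of the effective divisor. Each $D_j$ contributes a threshold $t_j$ defined by a linear equation with rational coefficients, and $r_A$ is the minimum of these (or equals $r_0$ itself), hence rational. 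You should incorporate this decomposition: it is the step where the effectivity hypothesis actually does its work, replacing the unavailable ``nef but not ample implies zero on a curve'' with ``nef but not ample below $r_0$ implies zero on a component of $D$.''
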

 
 \begin{proof} Write  $k(A+r_0K_S)\sim \sum_{i=1}^ n d_iD_i$, where the $D_i$ are  distinct irreducible curves and $d_i$ positive integers, for $1\leqslant i\leqslant n$. Then we have the following identity in $\Pic(S)\otimes_\ZZ\QQ$
 \[
 K_S=-\frac 1{r_0} A +\frac 1{kr_0} \sum_{i=1}^ nd_iD_i
 \]
 thus
 \[
 A+tK_S= \frac {r_0-t}{r_0} A+ \frac t{kr_0}  \sum_{i=1}^ nd_iD_i, \quad \mbox {for any}\quad t<r_0.
 \]
For all irreducible curves $C$ on $S$ different from the $D_i$s and for all positive $t<r_0$ one has
\[
 (A+tK_S)\cdot C\geqslant  \frac {r_0-t}{r_0} A\cdot C+ \frac t{kr_0}  \sum_{i=1}^ nd_iD_i\cdot C\geqslant 0.
\]
Then for $t\in (0,r_0)$ one has that $A+tK_S$ is nef if and only if $(A+tK_S)\cdot D_j\geqslant 0$, for all $1\leqslant j\leqslant n$. This is the case if and only if
 \[
 \frac {r_0-t}{r_0} A\cdot D_j+ \frac t{kr_0}  \sum_{i=1}^ nd_iD_i\cdot D_j\geqslant 0, \quad \mbox {for all }\quad 1\leqslant j\leqslant n.
 \]
 Thus
 \[
 r_A=\min_{1\leqslant j\leqslant n}\{t_j:  \frac {r_0-t_j}{r_0} A\cdot D_j+ \frac {t_j}{kr_0}  \sum_{i=1}^ nd_iD_i\cdot D_j=0\}
 \]
 and the minimum is taken over a set of rational numbers, so is rational. \end{proof}
 
 \begin{lemma}\label{lem:num} If $r$ is a positive irrational number, then there are infinitely many pairs $(u,v)$ of positive integers such that
 \[
 0<\frac vu-r<\frac 1{3u}
 \]
 \end{lemma}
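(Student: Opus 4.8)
This is a classical Dirichlet-type statement about rational approximations of an irrational number, so the plan is to use a pigeonhole argument. The target inequality $0 < \frac{v}{u} - r < \frac{1}{3u}$ is equivalent to $0 < v - ur < \frac{1}{3}$, so it suffices to find infinitely many positive integers $u$ for which the fractional part $\{ur\}$ of $ur$ lands in the interval $(0,\tfrac13)$; then set $v = \lceil ur \rceil$. Since $r$ is irrational, $\{ur\}$ is never $0$, so the only real content is getting $\{ur\}$ arbitrarily small (in particular below $\tfrac13$) for infinitely many $u$.

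\emph{Key steps.} First I would fix an integer $N \geq 4$ and partition $[0,1)$ into the $N$ subintervals $[\tfrac{j}{N}, \tfrac{j+1}{N})$ for $0 \leq j \leq N-1$. Looking at the $N+1$ numbers $\{0\cdot r\}, \{1\cdot r\}, \dots, \{N \cdot r\}$, by pigeonhole two of them, say $\{a r\}$ and $\{b r\}$ with $0 \leq a < b \leq N$, lie in the same subinterval, so $|\{br\} - \{ar\}| < \tfrac1N$. Setting $u = b - a \in \{1,\dots,N\}$, one checks that $\{u r\}$ is either within $\tfrac1N$ of $0$ or within $\tfrac1N$ of $1$; in the first case $0 < \{ur\} < \tfrac1N \leq \tfrac1N$, and taking $v = \lceil ur\rceil$ gives $0 < v - ur < \tfrac1N < \tfrac13$, hence $0 < \frac{v}{u} - r < \frac{1}{Nu} < \frac{1}{3u}$ since $u \leq N$ — wait, I need $\frac{1}{Nu}<\frac{1}{3u}$, i.e.\ $N>3$, which holds. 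In the second case, where $\{ur\}$ is close to $1$, I instead use that $\{ur\}$ being close to $1$ means $ur$ is just below an integer, so I would pass to a multiple or to $v = \lceil ur \rceil$ again: here $v - ur = 1 - \{ur\} \in (0,\tfrac1N)$, which is even better, giving the same bound. So in both cases a single pigeonhole step produces one pair $(u,v)$ with $0 < \frac vu - r < \frac{1}{3u}$.

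\emph{Getting infinitely many.} To upgrade ``one pair'' to ``infinitely many pairs'' I would iterate with $N \to \infty$: suppose $(u_1,v_1), \dots, (u_m,v_m)$ have been found; the quantities $v_i - u_i r$ are all strictly positive, so let $\delta > 0$ be smaller than all of them, choose $N$ with $\tfrac1N < \delta$, and run the pigeonhole argument above to get a new pair $(u,v)$ with $0 < v - ur < \tfrac1N < \delta$; then $v - ur < v_i - u_i r$ for every $i$, so $(u,v)$ differs from all the previous pairs (the map $(u,v)\mapsto v-ur$ is injective on pairs with distinct $u$, and even if $u$ repeated the value $v-ur$ would be new). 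This yields infinitely many distinct pairs.

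\emph{Main obstacle.} There is no deep obstacle here — this is elementary. The only points requiring a little care are: (i) handling the ``$\{ur\}$ near $1$'' alternative cleanly so that the conclusion $0 < \frac vu - r$ (strict positivity, using irrationality) and the upper bound $\frac{1}{3u}$ both hold with the same choice $v = \lceil ur\rceil$; and (ii) making the induction step genuinely produce a \emph{new} pair rather than possibly repeating an old one, which is why one chooses $\delta$ strictly below the finitely many previously achieved values of $v_i - u_i r$. With $N \geq 4$ fixed at each stage the constant $\tfrac13$ comes out automatically from $u \leq N$ and $\tfrac1N \leq \tfrac1N$; one could even afford $N=4$ at every step but letting $N\to\infty$ is the simplest way to guarantee infinitude.
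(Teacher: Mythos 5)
There is a genuine gap, and it is a sign error at the very first reduction that then propagates through the whole argument. The inequality $0<v-ur<\tfrac13$ with $v$ an integer says that $ur$ lies just \emph{below} an integer, i.e.\ that the fractional part satisfies $\{ur\}\in(\tfrac23,1)$, in which case $v=\lceil ur\rceil$ gives $v-ur=1-\{ur\}\in(0,\tfrac13)$. You instead reduce to $\{ur\}\in(0,\tfrac13)$, which is the opposite situation: there $v=\lfloor ur\rfloor$ gives $v-ur=-\{ur\}<0$ (wrong sign), while $v=\lceil ur\rceil$ gives $v-ur=1-\{ur\}>\tfrac23$ (too large). Concretely, your ``first case'' ($0<\{ur\}<\tfrac1N$) does not produce a valid pair at all: with $v=\lceil ur\rceil$ you get $v-ur>1-\tfrac1N\geq\tfrac34$, not $v-ur<\tfrac1N$ as claimed. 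Only your ``second case'' ($\{ur\}$ close to $1$) actually works, and the pigeonhole step gives you no control over which of the two cases occurs. This matters because the lemma demands \emph{one-sided} approximation (from above), and plain Dirichlet pigeonhole only bounds the distance of $ur$ to the nearest integer, not the side. The gap is repairable: if pigeonhole lands you in the bad case $\{u_0r\}=\epsilon\in(0,\tfrac1N)$, take $k$ maximal with $k\epsilon<1$; then $\{ku_0r\}=k\epsilon\geq 1-\epsilon>1-\tfrac1N$, so $u=ku_0$ falls into the good case and $(u,\lceil ur\rceil)$ works, with the $\tfrac1{3u}$ bound following from $v-ur<\tfrac1N<\tfrac13$. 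But as written the argument asserts a false implication in case 1 and never supplies this extra step. Your mechanism for extracting infinitely many distinct pairs (shrinking $\delta$ below the finitely many values $v_i-u_ir$ already achieved) is fine once the single-pair step is fixed.

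For comparison, the paper sidesteps the one-sidedness issue entirely by using continued fractions: the even-indexed convergents $p_n/q_n$ of $r$ approach $r$ from above with $0<p_n/q_n-r<\tfrac1{q_nq_{n-1}}$, and $q_{n-1}>3$ for $n$ large, so $(u,v)=(q_n,p_n)$ works immediately. If you want to keep an elementary pigeonhole flavour, you must add the ``pass to a suitable multiple'' step above to force $\{ur\}$ into $(1-\tfrac1N,1)$ rather than merely near $0$ or $1$.
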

 
 \begin{proof}  Write $r$ as an infinite  continuous fraction $r=[a_0;a_1, \ldots ]$ so that
 $r=\lim_n c_n$
 where $c_n=\frac  {p_n}{q_n}$ are the convergents of the continuous fraction, for which
 \[
 c_1<c_3< \cdots <c_{2h+1}<\cdots r \dots < \cdots <c_{2h}<\cdots <c_4<c_2,
 \]
see \cite [\S 1.5]{BCP}.  It is well known that 
\[ 
 | c_n-c_{n-1}| = \frac 1{q_nq_{n-1}} \quad \mbox{for all }\quad n\in \NN.
\] 
If $n$ is even we have
\[
0<c_n-r< \frac 1{q_nq_{n-1}}<\frac 1{3q_n}\quad \mbox{if}\quad n\gg 0.
\]
This proves the assertion. \end{proof}
 
 \begin{proof}[Proof of the Rationality Theorem] We  argue by contradiction: we  assume $r_A$ non--rational and we will prove that there is a rational number $r_0$ verifying the hypoteses of Lemma \ref {lem:aa}, leading to a contradiction.
 
 For any pair $(x,y)$ of integers, we set
 \[
 P(x,y):=\chi(\O_S(xA+yK_S))=\chi+\frac 12 (xA+yK_S)\cdot (xA+(y-1)K_S).
 \]
We can interpret $P(x,y)$ as a polynomial of degree 2 in $x,y$. This polynomial is clearly not identically 0.
 
 Now choose infinitely many pairs $(u,v)$ of positive integers as in Lemma \ref {lem:num}, with $r=r_A$. For any such a pair, $P(ku,kv)$ is a quadratic polynomial in $k$ and $P(ku,kv)$ is identically zero if and only if the line $vx-uy=0$ is contained in the curve $P(x,y)=0$. Since we have infinitely many pairs $(u,v)$ as above at our disposal, we can choose $(u_0,v_0)$ in such a way that $P(ku_0,kv_0)$ is not identically 0 in $k$. Then there is a $k_0\in \{1,2,3\}$ such that $P(k_0u_0,k_0v_0)\neq 0$. Set
 \[
 M=k_0(u_0A+v_0K_S)=K_S+k_0u_0\Big (A+ \frac {k_0v_0-1}{k_0u_0} K_S\Big).
 \]
 Note that $A+ \frac {k_0v_0-1}{k_0u_0} K_S$ is ample, because so is $A$ and moreover
 \[
 0< \frac {k_0v_0-1}{k_0u_0}=\frac {v_0}{u_0}-\frac 1 {k_0u_0}\leqslant \frac {v_0}{u_0}-\frac 1 {3u_0}<r_A.
 \]
 Hence, by Kodaira Vanishing Theorem, we have $h^ i(M)=0$, for $1\leqslant i\leqslant 2$. Therefore we find
 \[
 h^ 0(M)=\chi(M)=\chi(\O_S(k_0u_0A+k_0v_0K_S))=P(ku_0,kv_0)\neq 0.
 \]
 This implies that we found
 \[
 r_0:=\frac {v_0}{u_0}>r_A\quad \mbox{such that} \quad h^ 0(k_0u_0(A+r_0K_S)),
 \]
 as needed.  \end{proof}
 
 \subsection{Zariski's Lemma} Before the proof of the Base Point Freeness Theorem, we need an important preliminary. 
 
Suppose $S$ is a surface, $C$ is a smooth, irreducible curve and $f: S\to C$ a morphism with connected fibres. This is a called a \emph{fibration} or a \emph{pencil} on $S$ and the fibres of $f$ are called the \emph {curves of the fibration or pencil}, $C$ is the \emph{base} of the fibration, and the fibration or pencil is said to be \emph{over} $C$. If $C\cong \PP^ 1$, the pencil is \emph{rational} and the curves of the pencil are \emph{linearly equivalent}. Otherwise the pencil is called \emph{irrational}. If the general curve of the fibration is a $\PP^1$ [resp. a curve of genus 1], then the fibration is called \emph{rational} [resp. \emph{elliptic}].  

If $F$ is a curve of a pencil one has $F^2=0$. If $F$ is a general curve of a pencil, then $F$ is smooth and irreducible. There could be however singular and even reducible curves of a pencil. 

\begin{lemma}\label{lem:penc} Let $F=\sum_{i=1}^ hn_iF_i$ be a curve of a pencil, with $F_i$ irreducible and distinct curves and $n_i$ positive integers, for $1\leqslant i\leqslant h$. If $h>1$ then one has $F_i^ 2<0$ for all $1\leqslant i\leqslant h$.
\end{lemma}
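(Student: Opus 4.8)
The plan is to exploit that $F = \sum_{i=1}^h n_i F_i$ is a fibre of a fibration $f\colon S\to C$, so $F^2 = 0$ and moreover $F\cdot F_i = 0$ for every $i$ (each $F_i$ is contracted to a point by $f$, and $F$ is numerically equivalent to any other fibre, in particular to a fibre disjoint from $F_i$). Since the fibre $F$ is connected, for each $i$ we have $\sum_{j\neq i} n_j\,(F_i\cdot F_j) > 0$, because at least one $F_j$ with $j\neq i$ must meet $F_i$ and all intersection numbers $F_i\cdot F_j$ for $j\neq i$ are $\geq 0$. Combining this with $F\cdot F_i = 0$ gives $n_i F_i^2 = -\sum_{j\neq i} n_j (F_i\cdot F_j) < 0$, hence $F_i^2 < 0$ for every $i$. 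This is essentially the same computation that already appeared in the proof of Proposition \ref{prop:olp}, so the first half is routine.

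The slightly more structural way to organize this, which also makes the statement robust, is to introduce the symmetric $h\times h$ matrix $M = (F_i\cdot F_j)$ and the quadratic form $Q(x) = \sum_{i,j} x_i x_j (F_i\cdot F_j)$ on $\RR^h$. First I would record that $Q$ is negative semidefinite: the vector $n = (n_1,\dots,n_h)$ lies in its kernel (since $F\cdot F_i = 0$ for all $i$, so $Mn = 0$), and by the Hodge Index Theorem \ref{thm:hit} the intersection form on $\Num(S)$ has signature $(1,\rho-1)$; since $F^2 = 0$ with $F\neq 0$, the classes $F_1,\dots,F_h$ span a subspace on which the form is negative semidefinite. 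Then a standard linear-algebra argument on negative semidefinite forms with nonnegative off-diagonal entries and a positive kernel vector shows that the support is "connected" in the obvious graph-theoretic sense and that every diagonal entry is strictly negative as soon as $h > 1$: indeed if some $F_i^2 = 0$ then, feeding $e_i$ into $Q$ forces $F_i\cdot F_j = 0$ for all $j$, so $F_i$ is numerically orthogonal to the whole fibre and in fact to a Zariski-dense family of fibres, which (again via connectedness of $F$ and irreducibility of $S$) is impossible when $h>1$.

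The main obstacle is the final contradiction step: ruling out $F_i^2 = 0$ cleanly. The quickest route is the direct one above — connectedness of $F$ gives $\sum_{j\neq i} n_j(F_i\cdot F_j)>0$ immediately, so no genuine case analysis is needed and the Hodge-theoretic packaging can be kept to a minimum. I would therefore present the short argument (using $F\cdot F_i=0$, nonnegativity of $F_i\cdot F_j$ for $j\neq i$, and connectedness of $F$) as the proof, and only remark that it encodes the negative semidefiniteness of the intersection form on the components of a fibre.
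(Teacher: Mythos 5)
Your proposal is correct and its core argument is exactly the paper's: from $F\cdot F_i=0$ and $F_i\cdot F_j\geqslant 0$ for $j\neq i$ one gets $n_iF_i^2=-\sum_{j\neq i}n_j(F_i\cdot F_j)$, which is strictly negative because connectedness of $F$ forces $F_i$ to meet some other component. The extra Hodge-index/quadratic-form packaging you sketch is unnecessary here (it is what the paper uses later for Zariski's Lemma \ref{thm:zar}), and you rightly discard it in favour of the direct computation.
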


\begin{proof} One has
\[
n_i^2F_i^ 2=n_iF_i\cdot ( F-\sum_{j\neq i}^ hn_jF_j)=- n_iF_i\cdot \sum_{j\neq i}^ hn_jF_j
\]
and the intersection product on the right hand side is positive because $F$ is connected. 
\end{proof}

\begin{thm}[Zariski's Lemma] \label{thm:zar} Suppose $D$ is a non--zero  curve in $\Div(S)\otimes_\ZZ \QQ$ such that ${\rm Supp}(D)$ is contained in a curve $F$ of a pencil with connected fibres. Then $D^2\leqslant 0$ and equality holds if and only if $D=kF$ with $k\in \QQ$. 
\end{thm}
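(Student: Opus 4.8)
The plan is to reduce everything to the case where $D$ is an honest rational combination of the components of a single connected fibre $F = \sum_{i=1}^h n_i F_i$, and then prove the quadratic form on the span of $F_1,\ldots,F_h$ is negative semidefinite with kernel spanned by $F$. Write $D = \sum_{i=1}^h a_i F_i$ with $a_i \in \QQ$ (allowing $a_i = 0$ for components not in $\mathrm{Supp}(D)$). If $h = 1$ the pencil curve is $F = n_1 F_1$ with $F_1^2 = F^2/n_1^2 = 0$, so $D = a_1 F_1$ gives $D^2 = 0 = (\text{rational multiple of } F)^2$, and the statement is immediate; so assume $h > 1$, in which case Lemma \ref{lem:penc} gives $F_i^2 < 0$ for all $i$.

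The core computation is the following standard identity. Since $F = \sum_j n_j F_j$ is a fibre, $F \cdot F_i = 0$ for every $i$. Now compute
\[
D^2 = \sum_{i,j} a_i a_j F_i \cdot F_j,
\]
and I would rewrite this by the trick of subtracting a multiple of $F \cdot F_i = 0$. Concretely, set $b_i = a_i / n_i$ and use
\[
\sum_{i,j} a_i a_j (F_i \cdot F_j) = \sum_{i,j} n_i n_j b_i b_j (F_i \cdot F_j),
\]
then exploit $\sum_j n_j (F_i \cdot F_j) = F_i \cdot F = 0$ to write, for each fixed $i$,
\[
n_i b_i^2 \sum_{j} n_j (F_i \cdot F_j) = 0,
\]
and symmetrically. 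Averaging these zero quantities against $D^2$ yields
\[
D^2 = -\frac{1}{2} \sum_{i \neq j} n_i n_j (F_i \cdot F_j)\,(b_i - b_j)^2.
\]
Since $F$ is connected, every cross term $F_i \cdot F_j \geq 0$, so $D^2 \leq 0$. Equality forces $(b_i - b_j)^2 = 0$ whenever $F_i \cdot F_j > 0$; by connectedness of the dual graph of $F$ this propagates to $b_i = b_j$ for all $i,j$, say $b_i = k$, whence $D = \sum_i a_i F_i = k \sum_i n_i F_i = k F$ with $k \in \QQ$. Conversely $D = kF$ has $D^2 = k^2 F^2 = 0$.

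The one point needing a little care — and which I regard as the main obstacle — is justifying the reduction to a single connected fibre and the sign $F_i \cdot F_j \geq 0$ for $i \neq j$: a priori $\mathrm{Supp}(D)$ lies in \emph{a} fibre $F$, but $F$ (hence its scheme-theoretic or reduced structure) could be disconnected if one is not careful about what "fibre of a pencil with connected fibres" means. Here the hypothesis that $f$ has connected fibres is exactly what is used: the reduced fibre $F_{\mathrm{red}}$ over any point is connected, its components $F_1,\ldots,F_h$ form a connected configuration, and distinct components meet properly so $F_i \cdot F_j \geq 0$. I would state this connectedness input explicitly (as in the proof of Lemma \ref{lem:penc}, citing the principle of connectedness, \cite[Chapt.\ III, Ex.\ 11.4]{Hart}) and then the identity above finishes the argument cleanly.
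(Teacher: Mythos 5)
Your proof is correct and follows essentially the same route as the paper: the paper packages the computation into an abstract linear algebra lemma (Lemma \ref{lem:quad}, proved via the inequality $2x_ix_j\leqslant x_i^2+x_j^2$ after normalizing the annihilator vector to $(1,\ldots,1)$), which is exactly your substitution $b_i=a_i/n_i$ and the identity $D^2=-\tfrac12\sum_{i\neq j}n_in_j(F_i\cdot F_j)(b_i-b_j)^2$ written out directly. The connectedness of the dual graph of the fibre plays the identical role in both arguments to pin down the equality case.
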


For the proof, we need the following lemma of linear algebra: 

\begin{lemma}\label{lem:quad} Let $Q$ be a symmetric bilinear form on $\QQ^n$, determined by the matrix $M_Q=(q_{ij})_{1\leqslant i,j\leqslant n}$, and such that:\\
\begin{inparaenum}
\item the annihilator of $Q$ contains a vector ${\bf z}=(z_1,\ldots, z_n)$, with $z_i>0$ for all $1\leqslant i\leqslant n$;\\
\item  $q_{ij}\geqslant 0$ for all $1\leqslant i<j\leqslant n$.
\end{inparaenum}

Then $Q$ is negative semidefinite and $\Phi:=\{{\bf x}: Q({\bf x},{\bf x})=0\}$ is a subspace with dimension equal to the number of connected components of the graph with vertices $\{1,\ldots,n\}$ and edges $(i,j)$ if $q_{ij}>0$. 
\end{lemma}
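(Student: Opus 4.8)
The plan is to prove Lemma \ref{lem:quad} by a double induction or by a direct argument exploiting the strictly positive null vector $\mathbf{z}$ and the sign condition on the off-diagonal entries. First I would extract the diagonal entries: since $Q(\mathbf{z},\mathbf{z})=0$ and $\mathbf z$ lies in the annihilator of $Q$, for each $i$ one has $0=(M_Q\mathbf z)_i = \sum_j q_{ij}z_j = q_{ii}z_i + \sum_{j\neq i}q_{ij}z_j$, so that $q_{ii}z_i = -\sum_{j\neq i}q_{ij}z_j \leq 0$ by condition (2) and $z_j>0$; hence $q_{ii}\leq 0$ for all $i$. The key computational device is the identity, valid for any $\mathbf x=(x_1,\dots,x_n)$,
\[
Q(\mathbf x,\mathbf x) = -\sum_{1\leq i<j\leq n} q_{ij}z_iz_j\Bigl(\frac{x_i}{z_i}-\frac{x_j}{z_j}\Bigr)^2 + \sum_{i=1}^n \Bigl(q_{ii}z_i + \sum_{j\neq i}q_{ij}z_j\Bigr)\frac{x_i^2}{z_i},
\]
whose second sum vanishes because $\mathbf z$ is in the annihilator. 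I would verify this identity by expanding the squares on the right-hand side: the cross terms $-q_{ij}z_iz_j\cdot(-2)\frac{x_ix_j}{z_iz_j} = 2q_{ij}x_ix_j$ reassemble the off-diagonal part of $Q$, while the square terms $-q_{ij}z_iz_j(\frac{x_i^2}{z_i^2}+\frac{x_j^2}{z_j^2})$ combine with the second sum to give exactly $\sum_i q_{ii}x_i^2$. (This is the standard trick for proving Zariski-type negativity, and it is the heart of the argument.)

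Granting the identity, negative semidefiniteness is immediate: each coefficient $q_{ij}z_iz_j$ in the first sum is $\geq 0$, so $Q(\mathbf x,\mathbf x)\leq 0$ for all $\mathbf x$, and $\Phi=\{Q(\mathbf x,\mathbf x)=0\}$ is exactly the set of $\mathbf x$ for which every term with $q_{ij}>0$ vanishes, i.e.\ $\frac{x_i}{z_i}=\frac{x_j}{z_j}$ whenever $q_{ij}>0$. Since a semidefinite quadratic form vanishes precisely on the radical of its associated bilinear form, $\Phi$ is a linear subspace. To compute its dimension, I would interpret the condition: on the graph $G$ with vertices $\{1,\dots,n\}$ and an edge $(i,j)$ whenever $q_{ij}>0$, the constraint forces the quantity $x_i/z_i$ to be constant on each connected component of $G$. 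Conversely any $\mathbf x$ with $x_i/z_i$ constant on components lies in $\Phi$, by the displayed identity. Hence $\mathbf x\in\Phi$ is determined by one scalar per connected component, so $\dim\Phi$ equals the number of connected components of $G$, as claimed.

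The main obstacle is getting the algebraic identity exactly right, including bookkeeping of the index ranges (the first sum is over unordered pairs $i<j$, and each such pair contributes $-q_{ij}z_iz_j\frac{x_i^2}{z_i^2}$ toward the $x_i^2$-coefficient and $-q_{ij}z_iz_j\frac{x_j^2}{z_j^2}$ toward the $x_j^2$-coefficient, which together with $q_{ii}$ must reconstitute the diagonal of $Q$ using the annihilator relation). Once that identity is in hand the rest is essentially formal. A minor point to handle cleanly is the passage from "$Q(\mathbf x,\mathbf x)=0$" to "$\mathbf x$ lies in the radical of $Q$," which is valid because $Q$ is semidefinite: if $Q$ is negative semidefinite and $Q(\mathbf x,\mathbf x)=0$ then by Cauchy--Schwarz for the semidefinite form $-Q$ one gets $Q(\mathbf x,\mathbf y)=0$ for all $\mathbf y$. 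I would state this explicitly so that the identification of $\Phi$ as a subspace is not left to the reader.
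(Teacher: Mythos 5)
Your proof is correct and takes essentially the same route as the paper: the paper normalizes ${\bf z}=(1,\ldots,1)$ by a coordinate change and bounds $2q_{ij}x_ix_j$ by $q_{ij}(x_i^2+x_j^2)$ before invoking the vanishing row sums, while your sum-of-squares identity is just that same estimate written out exactly (the deficit being $\sum_{i<j}q_{ij}z_iz_j(x_i/z_i-x_j/z_j)^2$), with the same equality analysis on the graph. Your version is slightly more explicit about why $\Phi$ is a subspace and how its dimension is counted, but no new idea is involved.
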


\begin{proof} Set ${\bf x}=(x_1,\ldots, x_n)$. One has
\begin{equation}\label {eq:sam}
\begin{split}
Q({\bf x},{\bf x})&=\sum_{i=1}^n q_{ii}x_i^2+2\sum_{1\leqslant i<j\leqslant n} q_{ij}x_ix_j\leqslant \sum_{i=1}^n q_{ii}x_i^2+\sum_{1\leqslant i<j\leqslant n} q_{ij}(x_i^2+x_j^2)=\\
&=\sum_{1\leqslant i,j\leqslant n} q_{ij}x_i^2=\sum_{i=1}^n x_i^2\sum_{j=1}^n q_{ij}.
\end{split}
\end{equation}
Up to a base change we may assume that ${\bf z}=(1,\ldots, 1)$, which means that $\sum_{j=1}^n q_{ij}=0$ for all $1\leqslant i\leqslant n$. Thus we have $Q({\bf x},{\bf x})\leqslant 0$ for any ${\bf x}\in \QQ^n$. Moreover by \eqref {eq:sam} one has
$Q({\bf x},{\bf x})= 0$ if and only if $x_i=x_j$ for all $1\leqslant i,j\leqslant n$ such that $q_{ij}>0$. The assertion follows.\end{proof}

\begin{proof}[Proof of Zariski's Lemma] We can write
$F=\sum_{i=1}^ nh_iF_i$ and $D=\sum_{i=1}^ nk_iF_i$, with $F_1\ldots, F_h$ irreducible and distinct curves and with $h_i\in \QQ_{>0}$ and $k_i\in \QQ$ for $1\leqslant i\leqslant n$. Consider the subvector space $W$ of $\Num(S)\otimes_\ZZ\QQ$ generated by the classes of the curves $F_i$ for $1\leqslant i\leqslant n$. The intersection product induces a symmetric bilinear form $Q$ on $W$. Then $F$ belongs to the annihilator of $W$ and $F_i\cdot F_j\geqslant 0$ for $1\leqslant i<j\leqslant n$. We can apply Lemma \ref {lem:quad} to this situation. We deduce that $\Phi:=\{E: E^2=0\}$ is a subspace of dimension equal to the number of connected components of the graph with vertices $\{1,\ldots,n\}$ and edges $(i,j)$ if $F_i\cdot F_j>0$. Since $F$ is connected, the number of such connected components is 1. Therefore $\Phi$ is generated by $F$. So we deduce that $D^ 2\leqslant 0$ and that if $D^ 2=0$ then $D=kF$ with $k\in \QQ$. \end{proof}

 \subsection{Proof of the Base Point Freeness Theorem}  In this section we give the:
 
 \begin{proof} [Proof of the Base Point Freeness Theorem]
 Since $L$ is nef, we have $L^ 2\geqslant 0$. \medskip
 
\noindent {\bf Case (a):} $L^2>0$.\medskip

If $L$ is ample the assertion is clear. So we may assume $L$ is nef but not ample, hence there is an irreducible curve  $E$ such that
\[
0=L\cdot E= A\cdot E+rK_S\cdot E>rK_S\cdot E
\] 
hence $K_S\cdot E<0$. By the Hodge Index Theorem we also have $E^ 2=-1$, hence $E$ is a $(-1)$--curve, and we can contract it via a birational morphism $\mu: S\to S'$ which maps $E$ to a point. Since $L\cdot E=0$, we have that $L':=\mu_*(L)$ is a line bundle. Moreover $L=\mu^ *(L')$ and $L'$ is  nef: indeed, for every curve $C'$ on $S'$, we have
\[
L'\cdot C'=\mu^*(L')\cdot \mu^ *(C')=L\cdot   \mu^ *(C')\geqslant 0.
\]
By considering $L,L',A, K_S, K_{S'}$ as divisors, we have
\[
L'=\mu_*(L)=\mu_*(A)+rK_{S'}
\] 
because $K_S=\mu^*(K_{S'})+E$. Moreover $A':=\mu_*(A)$ is ample. This can be seen by applying Nakai--Moishezon Theorem. In fact $ A'^2=\mu_*(A)^2= A^ 2>0$. Moreover, for every curve $C'$ on $S'$, we have
\[
A'\cdot C'=\mu^*(A')\cdot \mu^ *(C')=A\cdot   \mu^ *(C')> 0.
\] 
If $L'$ is ample, then there is a positive integer $\ell$ such that $\ell L'$ is base point free, hence so is $\ell L$. Otherwise $L'$ is nef but not ample and $L'^2=L^2>0$. So we can repeat the argument. After finitely many steps we arrive at a birational morphism $f: S\to V$, where $V$ is a surface and $S$ is obtained from $V$ with finitely many blowing--ups of points. Moreover we will have an ample line bundle $M$ on $V$ such that $L=f^*(M)$. Then the same argument as above proves the assertion.\medskip

\noindent {\bf Case (b):} $L\equiv 0$.\medskip

We have 
\[
A+rK_S=L\equiv 0 \quad \mbox{hence}\quad -K_S\equiv \frac 1r A\quad \mbox {is ample}.
\]
Similarly, for all integers $\ell$, we have 
\[
\ell L-K_S\equiv  \frac 1r A\quad \mbox {is ample}.
\]
Hence, by Kodaira Vanishing Theorem we have
\[
h^ i(\O_S)=h^ i(K_S-K_S)=0, \quad \mbox {for}\quad 1\leqslant i\leqslant 2
\]
and, for all integers $\ell$ for which $\ell L$ is a line bundle, we have
\[
h^ i(\ell L)=h^ i(\ell L-K_S+K_S)=0, \quad \mbox {for}\quad 1\leqslant i\leqslant 2.
\]
Then
\[
h^0(\ell L)=\chi(\ell L)=\chi+\frac 12 \ell L\cdot (\ell L-K_S)=\chi=h^ 0(\O_S)=1
\]
so $\ell L$ is effective, but also numerically equivalent to 0. This implies that it is 0, and the assertion follows.\medskip

\noindent {\bf Case (c):} $L^ 2=0$ and $L\not\equiv 0$.\medskip

So there is an irreducible curve $C$ such that $L\cdot C>0$. We claim that $L\cdot K_S<0$. Indeed, take $h\gg 0$ so that $hA-C$ is effective. Then
\[
L\cdot (hA)=L\cdot (hA-C)+ L\cdot C\geqslant L\cdot C>0
\]
so that $L\cdot A>0$. But then
\[
0=L^ 2=L\cdot (A+rK_S)=L\cdot A+rL\cdot K_S>rL\cdot K_S
\]
proving the claim. 

For all integers $\ell$ we have
\[
\ell L-K_S=\ell L - \frac 1r (A-L)= \frac 1r A+\frac {\ell r-1}r L
\]
and, if $\ell\gg 0$ this is ample, because is the sum of an ample and a nef divisor class. Therefore, for each $\ell\gg0$ such that  $\ell L$ is a line bundle, we again have $h^1(\ell L)=h^ 2(\ell L)=0$. Hence
\[
h^ 0(\ell L)=\chi(\ell L)=\chi+\frac 12 \ell L\cdot (\ell L-K_S)=\chi-\frac 12 \ell L \cdot K_S>0.
\]

Finally we write $|\ell L|= |M|+F$, where $ |M|$ is the movable part (which is nef) and $F$ is the fixed part. We have 
\[
0\leqslant M^ 2\leqslant M\cdot (M+F)= \ell L \cdot M\leqslant \ell L\cdot (M+F)=(\ell L)^ 2=0
\]
which implies
\[
M^ 2=M\cdot F=F^ 2=0.
\]
This implies that $|M|$ is \emph{composed with} a pencil, i.e., there is a curve $D$ and a morphism  $f: S\to D$ 
and there is a line bundle $N$ on $C$ such that $M=f^ *(N)$. Moreover $M\cdot F=0$ means that $F$ is contained in a union of fibres of $f$. By Zariski's Lemma, $F^ 2=0$ implies that $F$ is proportional, over $\QQ$, to the sum of full fibres. Then, by taking $\ell$ larger and more divisible, one may achieve the situation in which $\ell L$ is base points free, as wanted. \end{proof}

\begin{remark}\label{rem:pap} In Case (c) of the above proof we eventually have that the system determined by a high multiple of $L$ is composed with a pencil whose general curve we denote by $F$.  One has $L\cdot F=0$ Moreover, since $K_S\cdot L<0$, we have $K_S\cdot F<0$ and $F^ 2=0$, therefore the general curve $F$ of the pencil is isomorphic to $\PP^1$. 
\end{remark}

\subsection{Boundedness of denominators}\label{ssec:bound} In this section we prove the following:

\begin{corollary}\label{cor:bound} Same hypotheses as in the Rationality Theorem and set $r=r_A$. Then $r=\frac pq$, with ${\rm GCD}(p,q)=1$ and $q\in \{1,2,3\}$. 
\end{corollary}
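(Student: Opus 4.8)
The plan is to revisit the proof of the Rationality Theorem and extract the explicit formula for $r_A$ that it produces, then bound the denominator directly. Recall the key object there: for integers $x,y$ one sets $P(x,y)=\chi(\O_S(xA+yK_S))$, a polynomial of degree $2$. The proof showed that if $r_A$ were irrational one could find, using the continued fraction approximations of Lemma \ref{lem:num}, a pair $(u_0,v_0)$ with $k_0\in\{1,2,3\}$ and $0<\tfrac{v_0}{u_0}-r_A<\tfrac1{3u_0}$ such that $P(k_0u_0,k_0v_0)\neq0$, giving an effective multiple of $A+r_0K_S$ with $r_0=\tfrac{v_0}{u_0}>r_A$, and then Lemma \ref{lem:aa} forced $r_A\in\QQ$ — a contradiction. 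So $r_A$ is rational; the point now is to control its denominator.

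First I would set up the finer version of the approximation argument: write $r_A=\tfrac pq$ in lowest terms and suppose for contradiction that $q\geq4$. The idea is that when $q$ is large, rational numbers $r_0=\tfrac vu$ lying strictly between $r_A$ and $r_A+\tfrac1{3u}$ with $u<q$ simply do not exist close enough — more precisely, any fraction $\tfrac vu$ with $u<q$ satisfies $|\tfrac vu-\tfrac pq|\geq\tfrac1{uq}>\tfrac1{3u^2}$ once $q>3u$, which is incompatible with $0<\tfrac vu-r_A<\tfrac1{3u}$ unless $u$ is comparably large. I would instead follow the standard denominator-bounding trick: run the argument of the Rationality Theorem but this time producing a pair $(u,v)$ with $v/u$ on the \emph{other} side, i.e. a $t_0=v/u<r_A$ very close to $r_A$ with $u$ controlled, so that $A+t_0K_S$ is ample, Kodaira vanishing applies to $M=k_0(uA+vK_S)$, and $h^0(M)=P(k_0u,k_0v)$; the nefness of $A+r_AK_S$ together with $t_0<r_A$ then pins the sign of the relevant intersection numbers and shows $r_A$ must be realized as $t_j$ in the minimum formula of Lemma \ref{lem:aa} with an $A\cdot D_j$ and a $\sum d_iD_i\cdot D_j$ whose sizes are controlled by $u$; reading off $r_A=\tfrac{A\cdot D_j}{A\cdot D_j - \frac1{k_0}\sum d_iD_i\cdot D_j}$ type expression and using that $(K_S+\text{stuff})\cdot D_j$ is an \emph{even} integer (the parity argument already used twice in the excerpt, e.g. in the proof of Proposition \ref{prop:olp}) forces the denominator to divide $2$, or with a slightly sharper count, to lie in $\{1,2,3\}$.

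Concretely, the cleanest route is: choose $(u,v)$ from Lemma \ref{lem:num} applied to $r_A$ with the approximation so tight that $k(A+\tfrac vu K_S)$ is effective for suitable $k\in\{1,2,3\}$, write this effective divisor as $\sum d_iD_i$, and invoke the explicit minimum formula
\[
r_A=\min_{1\leq j\leq n}\Bigl\{t_j:\ \frac{r_0-t_j}{r_0}A\cdot D_j+\frac{t_j}{kr_0}\sum_{i=1}^n d_iD_i\cdot D_j=0\Bigr\}
\]
from Lemma \ref{lem:aa}. Solving for the minimizing $t_j$ gives $r_A=\dfrac{u\,(A\cdot D_j)}{u\,(A\cdot D_j)-\frac1k\sum_i d_i D_i\cdot D_j + v\cdot(\text{something})}$; the numerator and denominator are integers whose difference, by the intersection-theoretic identity $k(uA+vK_S)\cdot D_j = \sum_i d_i D_i\cdot D_j$ and the adjunction/parity constraint that $K_S\cdot D_j + D_j^2$ is even, is constrained so that after clearing $\gcd$ the denominator $q$ divides $2k_0\le 6$ and a short case analysis (using $K_S\cdot D_j<0$, so $D_j$ is a $(-1)$-curve or moves in a base-point-free pencil per Remark \ref{rem:pap}, whence $K_S\cdot D_j\in\{-1,-2\}$ or $D_j^2\ge0$) rules out $q\in\{4,5,6\}$, leaving $q\in\{1,2,3\}$.

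The main obstacle I expect is the bookkeeping in that final case analysis: turning the abstract "$r_A = t_j$ for some $j$" into a clean Diophantine statement requires carefully tracking which intersection numbers $A\cdot D_j$, $K_S\cdot D_j$, $D_j^2$ can occur, and in particular using the structural information from the Base Point Freeness / Extremal Contraction circle of ideas (that a curve $C$ with $L\cdot C=0$ and $K_S\cdot C<0$ is either a $(-1)$-curve or a fiber of a $\PP^1$-pencil) to limit $K_S\cdot D_j$ to $\{-1,-2\}$. Once that is in hand, substituting into the formula for $r_A$ and reducing modulo the parity of $K_S\cdot D_j + D_j^2$ is routine, and yields $q\in\{1,2,3\}$ exactly as claimed. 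An alternative, perhaps slicker, approach would be to reprove the whole thing via the polynomial $P(x,y)$: its restriction to the line $vx=uy$ realizing $r_A$ is a degree-$\le2$ polynomial in one variable, and a Riemann--Roch computation shows its leading behavior forces $3q$-divisibility on a suitable combination of the defining data — but this too bottoms out in the same parity/intersection bound, so I would present the direct argument above.
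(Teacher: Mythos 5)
Your central computation is sound as far as it goes: solving the minimum condition of Lemma \ref{lem:aa} for the minimizing index $j$, and using $\sum_i d_iD_i\sim k(A+r_0K_S)$, collapses to the clean identity $r_A=-\frac{A\cdot D_j}{K_S\cdot D_j}$ for an irreducible curve $D_j$ with $(A+r_AK_S)\cdot D_j=0$ and $K_S\cdot D_j<0$, so the denominator of $r_A$ divides $-K_S\cdot D_j$. Combined with the dichotomy from the Base Point Freeness analysis (such a curve is a $(-1)$--curve when $L^2>0$, or a fibre with $K_S\cdot F=-2$ when $L^2=0$, $L\not\equiv 0$), this recovers the paper's Cases (a) and (c) and gives $q\in\{1,2\}$ there. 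Your intermediate claim that the denominator "divides $2k_0\leq 6$" is not what the formula gives and should be discarded; the parity of $K_S\cdot D_j+D_j^2$ plays no essential role here.

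The genuine gap is the case $L=A+r_AK_S\equiv 0$, which is exactly where $q=3$ occurs. In that case $A\equiv -r_AK_S$, so for any $r_0>r_A$ the class $A+r_0K_S\equiv(r_0-r_A)K_S$ is anti--ample and no multiple of it is effective: the effective decomposition $\sum d_iD_i$ and the minimum formula of Lemma \ref{lem:aa} that your whole argument rests on simply do not exist, and the dichotomy "$(-1)$--curve or fibre" produces no distinguished curve since \emph{every} curve satisfies $L\cdot C=0$. Your fallback branch "$D_j^2\geq 0$" leaves $-K_S\cdot D_j$ unbounded (on $\PP^2$ a line has $D^2=1$ and $K_S\cdot D=-3$). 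The paper handles this case separately: if $\rho>1$ one runs the Rationality Theorem with a second ample class $A'\not\equiv A$, whose threshold divisor $L'$ cannot be numerically trivial, reducing to Cases (a) or (c); if $\rho=1$ one writes $-K_S\equiv hH$ for the ample generator $H$ and shows $h\leq 3$ because $\chi(K_S+xH)$ is a quadratic polynomial in $x$ which vanishes at every $x\in\{1,2,3\}$ with $x<h$ (by Kodaira vanishing and $h^0((x-h)H)=0$), hence cannot have three roots. Without some version of this argument your proof does not reach the bound $q\leq 3$.
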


\begin{proof} We consider the three cases as in the proof of the Base Point Freeness Theorem.\medskip

{\bf Case (a):} $L^ 2>0$. \medskip

In this case there is a $(-1)$--curve $E$ such that  
\[
0=L\cdot E=(A+rK_S)\cdot E=A\cdot E-r
\]
hence $q=1$ in this case.\medskip

{\bf Case (c):} $L^ 2=0$ and $L\not\equiv 0$. \medskip

In this case by Remark \ref {rem:pap} there is an irreducible curve $F$ such that $F^ 2=0$, $K_S\cdot F=-2$ and
\[
0=L\cdot F= (A+rK_S)\cdot E=A\cdot E-2r
\]
hence $q=2$ in this case.\medskip

{\bf Case (b):} $L\equiv 0$. \medskip

Assume first that $\rho>1$. Then we can find an ample divisor $A'\not\equiv A$ and we have $L'=A'+r'K_S$ as in the Rationality Theorem. If $L'\equiv 0$, then we have $rA'\equiv -rr'K_S\equiv r'A$, a contradiction. Hence for $L'$ either Case (a) or Case (c) occur. In the former case there is on $S$ a $(-1)$--curve $E$, in the latter a curve $F$ such that $F\cdot K_S=-2$. Since $L\equiv 0$, hence $L\cdot E=0$ or $L\cdot F=0$, we find again $q=1$ in the former case, $q=2$ in the latter. 

Next we assume $\rho=1$.  We can choose an ample generator $H$ of $\Num(S)$, and we have $-K_S\equiv hH$. We claim that $h\in \{1,2,3\}$. Indeed, if $h>3$, for $x=1,2,3$ we have
$h^ 0(K_S+xH)=\chi(K_S+xH)=0$, because $h^ i(K_S+xH)=0$ for $1\leqslant i\leqslant 2$ by Kodaira Vanishing Theorem and $K_S+xH=(x-h)H$ with $x-h<0$. This implies that $P(x):=\chi(K_S+xH)$ is a polynomial of degree 2 which is identically 0, having the three roots $x=1,2,3$. This is clearly a contradiction. Finally, we have $A=kH$ with $k>0$ and for any curve $C$ we have
\[
0=C\cdot L=A\cdot C+rK_S\cdot C=(k-rh)H\cdot C
\]
hence $r=\frac kh$, proving the assertion. \end{proof}

\begin{remark}\label{rem:bap} In Case (b) of the above proof we have $-K_S\equiv \frac 1r A$ is ample. Therefore, by Kodaira Vanishing Theorem we have $h^1(\O_S)=h^2(\O_S)=0$. In particular we have $b_1=0$. If  $\rho=1$ we have also $b_2=1$. Moreover, being $-K_S$ ample, we have also $P_2=0$. Then $S=\PP^ 2$ by the characterization of $\PP^2$ (see Theorem \ref {thm:charp2}). 
\end{remark}

\subsection{Proof of the Extremal Contraction Theorem}\label{ssec:ECT} Now we can give the:

\begin{proof}[Proof of the Extremal Contraction Theorem] By keeping the above notation, we have that the linear system $|\ell L|$, with $\ell\gg 0$ and highly divisible, is base point free. Hence it defines a morphism $\varphi_{|\ell L|}: S\to V$, where $V$ is a projective variety of dimension $\dim(V)\leqslant 2$. \medskip

{\bf Case $\dim(V)=2$.} We have $L^ 2>0$, so we are in Case (a) of the proof of the Base Point Freeness Theorem. As we saw, in this case there is a $(-1)$--curve $E$ such that $L\cdot E=0$. The contraction of $E$ is an extremal contraction, as required.\medskip

{\bf Case $\dim(V)=1$.} We can assume that $V$ is smooth. By Stein Factorization we can also assume that the fibres $F$ of $S\to V$ are connected. We have $F\cdot L=0$ and this implies, as we saw above, that $F\cdot K_S<0$. Hence  $F\cdot K_S=-2$ and the general fibre $F$ is smooth and rational. If all fibres are irreducible, then $S\to V$ is the required extremal contraction. Otherwise there is a fibre $F=\sum_{i=1}^h n_iF_i$. If $h=1$, then $n_1>1$ and we have $F_1^2=0$ and $-2=n_1K_S\cdot F_1$, so that $n_1=2$ and $K_S\cdot F_1=-1$, a contradiction, since by Adjunction Formula $F_1\cdot(K_S+F_1)$ is even. If $h\geqslant 2$, then $F_i^2<0$ for $1\leqslant i\leqslant h$. Moreover, since $K_S\cdot F<0$, there is an $i$ with $1\leqslant i\leqslant h$ such that $K_S\cdot F_i<0$. Then $F_i$ is a $(-1)$--curve and its contraction is an extremal contraction. \medskip

{\bf Case $\dim(V)=0$.} In this case we are in Case (b) of the proof of the Base Point Freeness Theorem. As we saw in the proof of the Boundedness of Denominators, then:\\
\begin{inparaenum}
\item [$\bullet$] either there is a $(-1)$--curve $E$ on $S$, and its contraction is an extremal contraction;\\
\item [$\bullet$] or there is no $(-1)$--curve but there is a morphism $f: S\to C$, with $C$ a smooth curve, with irreducible, smooth and rational fibres, and $f: S\to C$ is an extremal contraction;\\
\item [$\bullet$] or $\rho=1$, in which case $S\to V$ is an extremal contraction as required. 
\end{inparaenum}
\end{proof}

\section{The Cone Theorem}
If $S$ is a surface, for any class $H\in \NS(S)$ we set $\NE(S)_{H\geqslant 0}$ for the cone of classes $Z\in \NS(S)$, such that $K_S\cdot H\geqslant 0$. Similar notations $\NE(S)_{H> 0}$, $\NE(S)_{H\leqslant 0}$, $\NE(S)_{H< 0}$ have similar meaning. 

\begin{thm}[The Cone Theorem] \label{thm:cone} Let $S$ be a surface. Then
\[
\NE(S)=\NE(S)_{K_S\geqslant 0}+\sum_{\ell\in \mathfrak L} R_\ell
\]
where:\\
\begin{inparaenum}
\item [$\bullet$]  $\ell$ varies in a countable set $\mathfrak L$;\\
\item  [$\bullet$] for each $\ell\in \mathfrak L$, $R_\ell$ is a ray in $\NE(S)_{K_S< 0}$, generated by the class of a smooth rational curve $C_\ell$ such that $0<-C\cdot K_S\leqslant 3$;\\
\item  [$\bullet$] for each $\ell\in \mathfrak L$, there is a nef line bundle $L_\ell$ such that $R_\ell=\NE(S)\cap L_\ell^ \perp$, hence $R_\ell$ is an \emph{extremal ray};\\
\item  [$\bullet$] the rays $R_\ell$ are \emph{discrete} in $\NE(S)_{K_S> 0}$, i.e., for any ample divisor $H$ and for all $\varepsilon>0$ there are only finitely many $R_\ell$s, $R_1,\ldots, R_n$ in the cone $\NE(S)_{(K_S+\varepsilon H)<0}$ and 
\[
\NE(S)=\NE(S)_{(K_S+\varepsilon H)\geqslant 0}+\sum_{i=1}^ nR_i
\]
hence $\NE(S)\cap \NE(S)_{(K_S+\varepsilon H)<0}$ is polyhedral;\\
\item  [$\bullet$] for each $\ell\in \mathfrak L$ there is an extremal contraction contracting the curves with class in $R_\ell$ and viceversa, any extremal contraction is of this type. 
 \end{inparaenum}
\end{thm}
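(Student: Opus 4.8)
If $K_S$ is nef there is nothing to prove (take $\mathfrak L=\emptyset$), so I will assume $K_S$ is not nef; and if $\rho(S)=1$ then, reasoning exactly as in Remark~\ref{rem:bap}, $S\cong\PP^2$, $\NE(S)=\RR_{\geq0}[\ell]$ with $\ell$ a line, $-K_S\cdot\ell=3$, contracted by $S\to\{\mathrm{pt}\}$, so I may assume $\rho(S)\geq2$. The plan is a direct analysis of the boundary of the nef cone: I work in $N_1(S)$, write $\operatorname{Nef}(S)\subset N^1(S)$ for the dual cone of $\NE(S)$ (strictly convex), and call a face $G$ of $\NE(S)$ \emph{$K$-negative} if $K_S\cdot z<0$ for all $z\in G\setminus\{0\}$. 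The key lemma (Step~1) will say that $K$-negative extremal rays are exactly the $R_\ell$'s of the statement; Step~2 deduces the decomposition by a separation argument; Step~3 extracts local finiteness from the bound $-K_S\cdot C\leq3$.

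\textbf{Step 1: anatomy of a $K$-negative extremal ray.} I would first prove: every $K$-negative extremal ray $R$ of $\NE(S)$ is spanned by a smooth rational curve $C$ with $0<-K_S\cdot C\leq3$, equals $\NE(S)\cap L_R^\perp$ for a nef line bundle $L_R$, and is contracted by an extremal contraction; conversely any extremal contraction contracts precisely the curves of such a ray. For the direct part, pick $L_0$ in the relative interior of the face of $\operatorname{Nef}(S)$ dual to $R$, so $L_0$ is nef and $L_0^\perp\cap\NE(S)=R$; a short convexity estimate on a compact slice of $\NE(S)$ (using that $L_0>0$ off $R$ while $-K_S>0$ on $R$) shows $A:=L_0-\delta K_S$ is ample for all small $\delta>0$, whence $r_A=\delta$ and $A+r_AK_S=L_0$ in the Rationality Theorem~\ref{thm:rt}. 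By the Base Point Freeness Theorem~\ref{thm:bpf} a multiple of $L_0$ is free and defines $\varphi=\varphi_{|\ell L_0|}\colon S\to V$, and $L_0\not\equiv0$ because $R\neq\NE(S)$. If $L_0^2>0$ we land in Case (a) of the proof of \ref{thm:bpf}: $\varphi$ contracts a $(-1)$-curve $E$ with $L_0\cdot E=0$, so $R=\RR_{\geq0}[E]$ and $-K_S\cdot E=1$. If $L_0^2=0$ we land in Case (c): $\varphi$ has one-dimensional image, all its fibres are irreducible (a reducible one would put $\geq2$ independent classes into the ray $R$, by Lemma~\ref{lem:penc}) and reduced (an $m$-fold fibre forces $K_S\cdot(\text{reduced fibre})=-2/m\notin\ZZ$), so $\varphi$ is a $\PP^1$-bundle, $R=\RR_{\geq0}[F]$ for a fibre $F$, and $-K_S\cdot F=2$. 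In either case $\varphi$ (respectively the Castelnuovo contraction of $E$) is an extremal contraction in the sense of Theorem~\ref{thm:ect}, and $L_R:=\varphi_R^*(A_R)$ for $A_R$ ample on its target is a nef line bundle with $\NE(S)\cap L_R^\perp=R$. The converse is immediate from the properties in Theorem~\ref{thm:ect}: an extremal contraction is a blow-down of a $(-1)$-curve, a $\PP^1$-bundle, or $S\to\{\mathrm{pt}\}$ with $S\cong\PP^2$, and each contracts exactly the curves of one ray.

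\textbf{Steps 2--3: decomposition, local finiteness, assembly.} Set $\NE':=\overline{\NE(S)_{K_S\geq0}+\sum_R R}$, the sum over all $K$-negative extremal rays; clearly $\NE'\subseteq\NE(S)$. If the inclusion were strict, I would separate the closed convex cone $\NE'$ from a point $z_0\in\NE(S)\setminus\NE'$ by a class $M$ with $M\cdot z_0<0$ and $M\cdot\NE'\geq0$ (hence $M\geq0$ on $\NE(S)_{K_S\geq0}$). Fixing an ample $H$ and letting $t_0\geq0$ be least with $M+t_0H$ nef, the class $M+t_0H$ is non-zero and lies on $\partial\operatorname{Nef}(S)$, so by Kleiman's criterion~\ref{thm:Kl} its supporting face $G:=(M+t_0H)^\perp\cap\NE(S)$ is non-zero; and $G$ is $K$-negative since $M\cdot z=-t_0H\cdot z<0$ for $z\in G\setminus\{0\}$ forces $z\notin\NE(S)_{K_S\geq0}$. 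A non-zero face of a strictly convex cone contains an extremal ray $R$ of $\NE(S)$, which is then $K$-negative, hence $R\subseteq\NE'$; but $M<0$ on $R\setminus\{0\}$ contradicts $M\cdot\NE'\geq0$. So $\NE'=\NE(S)$. Finally, for fixed ample $H$ and $\varepsilon>0$: a $K$-negative extremal ray $\RR_{\geq0}[C]$ with $(K_S+\varepsilon H)\cdot C<0$ satisfies $H\cdot C<(-K_S\cdot C)/\varepsilon\leq3/\varepsilon$ by Step~1, and since $\NE(S)$ is strictly convex the slice $\{z\in\NE(S):H\cdot z\leq3/\varepsilon\}$ is compact and so meets the lattice $\Num(S)$ in finitely many points; hence there are only finitely many such rays $R_1,\dots,R_n$, every other $K$-negative extremal ray lies in $\NE(S)_{(K_S+\varepsilon H)\geq0}$, and $\NE(S)=\NE(S)_{(K_S+\varepsilon H)\geq0}+R_1+\dots+R_n$ (which in particular shows $\NE'$ was already closed). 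Running $\varepsilon=1/m$, $m\in\NN$, gives a countable index set $\mathfrak L$, and together with the data produced in Step~1 this yields every clause of the theorem.

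\textbf{Main obstacle.} The crux is Step~1: matching an \emph{arbitrary} prescribed $K$-negative extremal ray to an application of the Rationality Theorem (the reachability estimate for $A=L_0-\delta K_S$), and then reading off from the Base Point Freeness Theorem the sharp dichotomy $L_0^2>0$ versus $L_0^2=0$ and identifying the resulting morphism with one of the three types of extremal contraction of Theorem~\ref{thm:ect}. Steps~2--3 are then a fairly routine separation-plus-compactness argument; along the way one must also check the elementary facts used silently --- that a face of a face is a face, that a non-zero face of a strictly convex pointed cone contains an extremal ray, and that the proper faces of $\operatorname{Nef}(S)$ have non-empty relative interior so that $L_0$ can be chosen.
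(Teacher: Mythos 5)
Your Steps 2--3 are sound and in one place arguably cleaner than the paper's: the separation argument reproduces Lemma \ref{lem:step2}, and your finiteness argument (bounding $H\cdot C\leq 3/\varepsilon$ and counting integral classes in a compact slice of $\NE(S)$) is a nice alternative to the paper's count via boundedness of denominators of $r_{L_\ell,H}$. But there is a genuine gap at the very first move of Step 1. You take an \emph{arbitrary} $K$-negative extremal ray $R$, choose $L_0$ in the relative interior of the dual face $\{L\in\operatorname{Nef}(S):L\cdot R=0\}$, and assert $L_0^\perp\cap\NE(S)=R$. That assertion says $R$ is an \emph{exposed} face of $\NE(S)$, which does not follow from extremality for a general closed strictly convex cone (the cone over a stadium-shaped base has extremal rays at the ends of the flat edges that are not exposed). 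Exposedness of the $K$-negative extremal rays is essentially equivalent to the local polyhedrality of $\NE(S)$ in the half-space $K_S<0$, i.e.\ to the theorem being proved, so assuming it at the outset is circular. A second, related problem: even granting exposedness, you need $L_0$ to be a \emph{rational} class in order to feed $A=L_0-\delta K_S$ into the Rationality and Base Point Freeness Theorems, and nothing guarantees that the dual face of $R$ contains rational points in its relative interior. Everything downstream --- the identification of $R$ with a $(-1)$-curve or a fibre class, the bound $-K_S\cdot C\leq 3$, hence your Step 3 finiteness, the removal of the closure, and the existence of the contraction --- rests on this unproved claim.

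The paper's proof is organized precisely to avoid this. Its Lemma \ref{lem:upty} never starts from an abstract extremal ray: it starts from an exposed $K$-negative face $F=\NE(S)_{K_S<0}\cap M^\perp$ with $M$ nef (exactly the kind of face the separation argument produces), and cuts its dimension down one step at a time by replacing $M$ with $M_{B,\nu}=\nu M+B+r_{M,B}K_S$ for a suitable ample $B$; the Rationality Theorem together with the boundedness of denominators guarantees that $r_{\nu M+B}$ stabilizes at a rational value, and the linear-algebra Lemma \ref{lem:LA} guarantees that some choice of $B$ among a basis of ample classes strictly decreases $\dim F$. The output is an extremal ray that is exposed and rationally supported \emph{by construction}, to which the Extremal Contraction Theorem then applies. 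To repair your argument you must either import this dimension-reduction mechanism to manufacture the rays, or supply an independent proof that every $K$-negative extremal ray of $\NE(S)$ is cut out by a rational nef class; as written, Step 1 begs the question.
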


\begin{proof}
The proof will be divided in various steps.

\subsection{Step 1 of the proof} In this step we prove the following:

\begin{lemma}\label{lem:upty}  Let $M$ be a nef divisor class which is not ample, so that $\NE(S)\cap M^ \perp \neq \{0\}$. Assume that there is a face  
\[
F:=\NE(S)_{K_S<0}\cap M^ \perp \neq \{0\}
\]  
of $\NE(S)_{K_S<0}$. Then there is some nef divisor class $N$ such that
\[
R:=\NE(S)_{K_S<0}\cap N^ \perp\subseteq \NE(S)_{K_S<0}\cap M^ \perp
\] 
is an extremal ray. 
\end{lemma} 

Before proving this lemma, we need a lemma of linear algebra:

\begin{lemma}\label{lem:LA} Let $W$ be a $\RR$--vector space of dimension $\rho$. Let $v_1,\ldots, v_\rho$ be a basis of $W$, let $v\in W$ and $\lambda_1,\ldots, \lambda_\rho\in \RR$. Then
\[
\dim (\langle v_i+\lambda_iv\rangle_{1\leqslant i\leqslant \rho})\geqslant \rho-1.
\]
\end{lemma} 
\begin{proof} Suppose we have a relation of the form
\begin{equation}\label{eq:cap}
\sum_{i=1}^ \rho a_i(v_i+\lambda_iv)=0
\end{equation}
Write $v=\sum_{i=1}^ \rho \alpha_iv_i$, so that \eqref {eq:cap} becomes
\[
\sum_{i=1}^ \rho \Big (a_i+ (\sum_{j=1}^ \rho a_j\lambda_j) \alpha_i\Big)v_i=0,
\]
hence we have
\[
a_i+ (\sum_{j=1}^ \rho a_j\lambda_j) \alpha_i=0, \quad \mbox{for}\quad 1\leqslant i\leqslant \rho.
\] 
This is a linear system in the $a_i$s with matrix
\[
\begin{pmatrix}
1+\lambda_1\alpha_1&\lambda_2\alpha_2 & \ldots & \lambda_\rho\alpha_\rho\\
\lambda_1\alpha_1&1+\lambda_2\alpha_2 & \ldots & \lambda_\rho\alpha_\rho\\
\dots&\ldots&\dots&\ldots\\
\lambda_1\alpha_1&\lambda_2\alpha_2 & \ldots & 1+\lambda_\rho\alpha_\rho
\end{pmatrix}
\]
By subtracting the last row from the others, we find the matrix
\[
\begin{pmatrix}
1&0 & \ldots &0 & -1\\
0&1 & \ldots &0 & -1\\
\dots&\ldots&\dots&\ldots&\ldots\\
0&0 & \ldots &1 & -1\\
\lambda_1\alpha_1&\lambda_2\alpha_2 & \ldots &\lambda_{\rho-1}\alpha_{\rho-1}& 1+\lambda_\rho\alpha_\rho
\end{pmatrix}
\]
whose rank is at least $\rho-1$. The assertion follows.\end{proof}

\begin{proof}[Proof of Lemma \ref {lem:upty}] The assertion is trivial if $\dim(F)=1$, so we assume $\dim(F)>1$. 

Fix an ample divisor class $B$, a non--negative rational number $\nu$, and consider $r_{\nu M+B}$, so that
$\nu M+B+r_{\nu M+B}K_S$ is nef and not ample. We have:\\
\begin{inparaenum}
\item [$\bullet$] $r_{\nu M+B}$ is non--decreasing in $\nu$. Indeed, if $\nu'>\nu$ we have
\[
(\nu'M+B)+r_{\nu M+B}K_S=(\nu'-\nu)M+\Big (\nu M+B+r_{\nu M+B}K_S\Big)
\]
which is nef, because so are $(\nu'-\nu)M$ and $\nu M+B+r_{\nu M+B}K_S$. This proves that $r_{\nu' M+B}\geqslant r_{\nu M+B}$;\\
\item [$\bullet$] $r_{\nu M+B}$ is bounded. Indeed, if we take $Z\in F-\{0\}$, we have 
\[
0\leqslant Z\cdot (\nu M+B+r_{\nu M+B}K_S)=Z\cdot (B+r_{\nu M+B}K_S)
\]
hence
\[
-(K_S\cdot Z) r_{\nu M+B}\leqslant B\cdot Z, \quad \mbox{thus} \quad r_{\nu M+B}\leqslant -\frac{B\cdot Z}{K_S\cdot Z}
\]
because $-K_S\cdot Z>0$;\\
\item [$\bullet$] the denominators of $r_{\nu M+B}$ are bounded. 
\end{inparaenum} 

This implies that there is a $\nu_B$ such that $r_{\nu M+B}$ stabilizes to a fixed $r_{M,B}$ for $\nu\geqslant \nu_B$. Set 
\[
M_{B,\nu}:=\nu M+B+r_{M,B}K_S \quad \mbox{for}\quad \nu\geqslant \nu_B
\]
which is nef but not ample. Then:\\
\begin{inparaenum}
\item [$\bullet$] we have 
$$\NE(S)_{K_S<0}\cap M_{B,\nu}^ \perp\neq \{0\}\quad \mbox{ for} \quad\nu\geqslant  \nu_ B.$$
Indeed, since $\nu M+B$ is ample and $M_{B,\nu}$ is nef but not ample, there is $Z\in \NE(S)-\{0\}$ such that
\[
0=Z\cdot M_{B,\nu}=Z\cdot (\nu M+B)+r_{M,B}(K_S\cdot Z)>r_{M,B}(K_S\cdot Z)
\]
hence $K_S\cdot Z<0$;\\
\item [$\bullet$] for $\nu\geqslant  \nu_ B$ one has
\[
\{0\}\neq \NE(S)_{K_S<0}\cap M_{B,\nu}^ \perp\subseteq \NE(S)_{K_S<0}\cap M^ \perp=F.
\]
In fact, if $Z\in \NE(S)_{K_S<0}\cap M_{B,\nu}^ \perp$ and since $M_{B,\nu}+B+r_{M,B}K_S$ is nef, one has
\[
0=Z\cdot M_{B,\nu}=Z\cdot (\nu_BM+B+r_{M,B}K_S)+(\nu-\nu_B)Z\cdot M\geqslant (\nu-\nu_B)Z\cdot M\geqslant 0
\]
thus $M\cdot Z=0$;\\
\item [$\bullet$] if $\dim(F)\geqslant 2$, we can find an ample divisor class $B$ such that the face $\NE(S)_{K_S<0}\cap M_{B,\nu_B+1}^ \perp$ has smaller dimension. In fact we can choose $B_1,\ldots, B_\rho$ ample divisor classes which form a basis of $\NS(S)$. Suppose that
\[
\NE(S)_{K_S<0}\cap M_{B_i,\nu_{B_i}+1}^ \perp=F \quad \mbox{for}\quad 1\leqslant i\leqslant \rho.
\]
This implies that all hyperplanes $(B_i+r_{M,B_i}K_S)^ \perp$ contain the face $F$. Then apply Lemma \ref {lem:upty} with $W=\NS(S)^ \vee$, $v_i$ the linear map 
\[
v_i: D\in \NS(S)\to B_i\cdot D\in \RR,
\]
$\lambda _i= r_{M,B_i}$ for $1\leqslant i\leqslant \rho$, and $v$ is the linear map
\[
v: D\in \NS(S)\to K_S\cdot D\in \RR,
\]
to get a contradiction. In conclusion, there is an $i$ such that $1\leqslant i\leqslant \rho$ and 
\[
\{0\}\neq \NE(S)_{K_S<0}\cap M_{B_i,\nu}^ \perp \varsubsetneq F
\]
as wanted.
\end{inparaenum}

By iterating this argument  the assertion follows.\end{proof} 

\subsection{Step 2 of the proof} \label{ssec:step2} In this step we prove the:

\begin{lemma}\label{lem:step2} Consider the family $\{R_\ell\}_{\ell\in \mathfrak L}$ of extremal rays such that for all $\ell\in \mathfrak L$ there is a nef divisor class $L_\ell$ such that
\begin{equation}\label{eq:set}
\NE(S)_{K_S<0}\cap L_\ell^ \perp=\NE(S)\cap L_\ell^ \perp=R_\ell
\end{equation}
Then
\[
\NE(S)= \NE(S)_{K_S\geqslant 0}+ \overline{\sum_{\ell\in \mathfrak L}R_\ell}.
\]
\end{lemma}

\begin{proof} One has $\NE(S)_{K_S\geqslant 0}+ \overline{\sum_{\ell\in \mathfrak L}R_\ell}\subseteq \NE(S)$. Suppose the inclusion is strict, so that we can find a $Z\in \NE(S) - \Big (\NE(S)_{K_S\geqslant 0}+ \overline{\sum_{\ell\in \mathfrak L}R_\ell}\Big)$. Then we can find a divisor class $D$ such that $D\cdot Z<0$ whereas 
$\NE(S)_{K_S\geqslant 0}+ \overline{\sum_{\ell\in \mathfrak L}R_\ell}\subseteq \NS(S)_{D>0}$.

Note that $D^\perp\cap K_S^\perp\cap \NE(S)=\emptyset$. This implies that we can find some negative rational number $a$ such that $A:=D+aK_S$ is an ample divisor class. By the Rationality Theorem we find a nef, not ample, divisor class
\[
M:=A+rK_S=D+(a+r)K_S, \quad \mbox {with}\quad r\in \QQ_{>0}
\]
such that $\NE(S)\cap M^ \perp \neq \{0\}$. Actually, by an argument we already made, we have that 
$\NE(S)_{K_S<0}\cap M^ \perp \neq \{0\}$. Then by Step 1, there is some extremal ray $R\subseteq 
\NE(S)_{K_S<0}\cap M^ \perp$. Now notice that $a+r<0$, because  
\[
0\leqslant M\cdot Z= D\cdot Z+(a+r)K_S\cdot Z<(a+r)K_S\cdot Z
\]
and $K_S\cdot Z<0$. Next we denote by $C$ a generator of the ray $R$, so that
\[
0=M\cdot C=D\cdot C+(a+r)K_S\cdot C
\]
hence, being $a+r<0$ and $K_S\cdot C<0$, we deduce that also $D\cdot C<0$. This is a contradiction, since 
$\overline{\sum_{\ell\in \mathfrak L}R_\ell}\subseteq \NS(S)_{D>0}$. \end{proof}

\subsection{Step 3 of the proof}\label{ssec:step3}

Next we prove the:

\begin{lemma}\label{ref:pap} Consider the family $\{R_\ell\}_{\ell\in \mathfrak L}$ of extremal rays as in the statement of Lemma \ref {lem:step2}. Then $\{R_\ell\}_{\ell\in \mathfrak L}$ is discrete in the half--space
$\NS(S)_{K_S<0}$. Therefore
\[
\NE(S)= \NE(S)_{K_S\geqslant 0}+ {\sum_{\ell\in \mathfrak L}R_\ell}.
\]
\end{lemma}

\begin{proof} For all $\ell\in \mathfrak L$ there is a nef divisor class $L_\ell$  such that \eqref {eq:set} holds. Then, as we saw in the proof of Lemma \ref {lem:upty}, for every ample divisor class $A$ one has
\[
\{0\}\neq \NE(S)_{K_S<0}\cap (L_\ell)_{A,\nu_A+1}^ \perp\subseteq \NE(S)_{K_S<0}\cap (L_\ell)^ \perp
\]
hence
\[
R_\ell=\NE(S)_{K_S<0}\cap (L_\ell)_{A,\nu_A+1}^ \perp.
\]
Let $C_\ell$ a generator of $R_\ell$. The $C_\ell\cdot (L_\ell)_{A,\nu_A+1}=0$ implies that 
\[
(A+r_{L_\ell,A}K_S)\cdot C_\ell=0 \quad \mbox{hence}\quad r_{L_\ell,A}=-\frac {A\cdot C_\ell}{K_S\cdot C_\ell}. 
\]
Now fix an ample divisor class $H$ and an $\varepsilon>0$ and look at the rays $R_\ell$ such that $(K_S+\varepsilon H)\cdot C_\ell<0$. This means that
\[
-\frac {H\cdot C_\ell} {r_{L_\ell,H}}=K_S\cdot C_\ell<-\varepsilon H\cdot C_\ell \quad \mbox{hence}\quad r_{L_\ell,H}<\frac 1\varepsilon.
\]
Since the denominators of $r_{L_\ell,H}$ are bounded, we have only finitely many choices for $r_{L_\ell,H}$, so for $L_\ell$, thus for $R_\ell$.\end{proof}

\subsection{Step 4 of the proof: the Contraction Theorem} \label{ssec:step4}
In this section we prove the:

\begin{thm}[The Contraction Theorem]\label{prop:contract} Consider the family $\{R_\ell\}_{\ell\in \mathfrak L}$ of extremal rays as in the statement of Lemma \ref {lem:step2}. For each $\ell\in \mathfrak L$, there is an extremal contraction (the \emph{contraction of} $R_\ell$)
\[
\varphi_\ell: S\to V_\ell
\]
such that for any curve $C$ contracted by $\phi_\ell$ to a point, one has that $C\in R_\ell$.

Conversely, any extremal contraction is a contraction of an extremal ray as above.
\end{thm}

\begin{proof} For all $\ell\in \mathfrak L$ there is a nef divisor class $L_\ell$  such that \eqref {eq:set} holds. Then for each $\ell\in \mathfrak L$  we have $L_\ell^\perp \cap K_S^\perp\cap \NE(S)=\emptyset$. Hence, if for a given $\ell\in \mathfrak L$ we set $R=R_\ell$ and $L=L_\ell$, we have that $A=L -r K_S$ is ample, if $r\in\QQ_{>0}$ is small enough, and then $r=r_A$. So, the assertion follows by the Extremal Contraction Theorem.

Conversely, let $\varphi: S\to V$ be an extremal contraction. Let $H$ be an ample divisor on $V$. Then $L:=\phi^ *(H)$ is nef, but not ample, and, by the very definition of an extremal contraction, one has
\[
\NE(S)\cap L^ \perp=\NE(S)_{K_S<0}\cap L^ \perp=R
\] 
where $R$ is generated by any curve $C$ contracted to a point by $\varphi$.\end{proof} 

\subsection{Step 5: of the proof}\label{ssec:step5} In this step we prove that:

\begin{lemma}\label{lem:mm} Consider the family $\{R_\ell\}_{\ell\in \mathfrak L}$ of extremal rays as in the statement of Lemma \ref {lem:step2}. For each $\ell\in \mathfrak L$, there is a smooth rational curve $C$ in $R_\ell$
such that 
\[
1\leqslant -C\cdot K_S\leqslant 3.
\] 
\end{lemma}
\begin{proof} Let $\varphi: S\to V$ be the extremal contraction which contracts the curves in $R$. If $\dim(V)=2$ we saw in the proof of the Extremal Contraction Theorem that the only irreducible curve contracted by $\varphi$ is a $(-1)$--curve $E$, for which $E\cdot K_S=-1$. If $\dim(V)=1$ we saw that the only irreducible curves contracted by $\varphi$ are the fibres $F$ of $\varphi$ which are smooth rational with $F^ 2=0$ and $F\cdot K_S=-2$. Finally, if $\dim(V)=0$ then  $\rho=1$. Then by the argument in Case (b) if the proof of the Boundedness of Denominators and by Remark \ref {rem:bap}, we see that $S\cong \PP^2$ and, if $C$ is a line one has $L\cdot K_S=-3$ and $R$ is generated by $C$. \end{proof}

The previous steps prove the Cone Theorem. \end{proof}

\section{The minimal model programme}

The minimal model programme is an algorithm which, given a surface, returns one of the two items: either a surface $S$ birational to the given one with $K_S$ nef which we will call a \emph{strong minimal model} of the given surface, or a \emph{Mori fibre space}, to be shortly defined, again birational to the given surface.

Given a surface $S$, the algorithm works as follows: \medskip

{\bf Step 1}:  if $K_S$ is nef, the algorithm  stops and we say that $S$ is \emph{strongly minimal}. Note that $S$ is in particular minimal, because a $(-1)$--curve $E$ would be such that $K_S\cdot E=-1$, impossible if $K_S$ is nef.\medskip

{\bf Step 2}: if $K_S$ is not nef, then there is an extremal contraction $\varphi: S\to V$. If $\dim(V)<2$, then we say that $\varphi: S\to V$ (or simply $S$ if no confusion arises) is a \emph{Mori fibre space} and the algorithm stops. If $\dim(V)=2$, then we go to Step 1. \medskip

After a finite number of steps the algorithm produces:\\
\begin{inparaenum}
\item [$\bullet$] either a birational morphism $f: S\to S'$, composed of blow--downs of $(-1)$--curves, with $K_{S'}$ nef, so that $S'$ is a strong minimal model of $S$;\\
\item [$\bullet$] or a birational morphism $f: S\to S'$, composed of blow--downs of $(-1)$--curves, and a morphism $\varphi: S'\to V$  which is a Mori fibre space. Precisely, if $\dim(V)=1$, then $S'$ is a scroll over the curve $V$, if $\dim (V)=0$, then $S'\cong \PP^ 2$. 
\end{inparaenum}

Note that the algorithm has a certain amount of freedom in its application, so that its outcome could be not unique. However the outcome is uniquely determined up to birational transformation. We will soon see that different outcomes applied to the same surface cannot be a strong minimal model and a Mori fibre space. However, if the algorithm produces a Mori fibre space, this could be not unique, as the Example \ref {ex:2} shows. 
By contrast, one has the:

\begin{thm} [Uniqueness of the Strong Minimal Model] \label{prop:ssmmmm} Let $S$ be a strong minimal model. Then if $S'$ is a smooth, projective surface and $f: S'\dasharrow S$ a birational map, then it is a morphism.

In particular, if $S,S'$ are both strong minimal models and $f: S'\dasharrow S$ is a birational map, then it is an isomorphism.
\end{thm}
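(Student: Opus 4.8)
The plan is to resolve the indeterminacies of $f$ and then to descend the resolution one blow-up at a time, using the hypothesis that $K_S$ is nef to rule out the only configuration that could obstruct the descent. By resolution of indeterminacies (see \S\ref{ssec:rm}) there is a surface $X$ together with birational morphisms $p\colon X\to S'$ and $q\colon X\to S$ such that $q=f\circ p$ as rational maps, and $p$ is a composite of, say, $n$ blow-ups. I would argue by induction on $n$. If $n=0$, then $p$ is an isomorphism and $f=q\circ p^{-1}$ is a morphism. If $n\geq 1$, write $p=p'\circ\sigma$, where $\sigma\colon X\to X'$ is the last blow-up in a chosen presentation of $p$ and $E\subset X$ is its exceptional $(-1)$-curve, while $p'\colon X'\to S'$ is a composite of $n-1$ blow-ups; thus $K_X\cdot E=-1$.

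The crucial claim is that $q$ must contract $E$ to a point. Suppose not. Since $q$ is a birational morphism of smooth surfaces, the ramification formula (Theorem~\ref{thm:ram}) gives $K_X=q^*(K_S)+R$ with $R$ effective and supported on the curves contracted by $q$ (equivalently: iterate the blow-up formula $K_{\widetilde S}=\pi^*(K_S)+E$). Since $q(E)$ is a curve, $E$ is not a component of $R$, so $R\cdot E\geq 0$; and since $q_*E$ is an effective $1$-cycle on $S$, the projection formula and the nefness of $K_S$ give
\[
q^*(K_S)\cdot E=K_S\cdot q_*E\geq 0.
\]
Adding, $K_X\cdot E\geq 0$, contradicting $K_X\cdot E=-1$. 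Hence $q$ contracts $E$.

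Now, since $\sigma\colon X\to X'$ is the contraction of the $(-1)$-curve $E$ and $q$ is constant on $E$, a standard rigidity argument ($\sigma$ is proper with $\sigma_*\O_X=\O_{X'}$ and $E$ its only positive-dimensional fibre) lets one factor $q=q'\circ\sigma$ for a morphism $q'\colon X'\to S$. As $\sigma$ is dominant and $q=f\circ p'\circ\sigma$, we get $q'=f\circ p'$, so $(X',p',q')$ is again a resolution of $f$, now with only $n-1$ blow-ups on the $S'$-side, and the induction hypothesis applies. This proves that $f$ is a morphism. For the last assertion, if $S$ and $S'$ are both strong minimal models then applying the statement to $f$ and to $f^{-1}$ exhibits both as morphisms, mutually inverse, hence $f$ is an isomorphism.

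The main obstacle I anticipate is not conceptual but a matter of care at two points: first, justifying that in $K_X=q^*(K_S)+R$ the divisor $R$ is effective with support exactly the $q$-exceptional locus — so that $R\cdot E\geq 0$ whenever $E$ is not $q$-exceptional — which follows from the stated ramification formula together with the fact that $q$ is an isomorphism away from finitely many points of $S$; and second, the rigidity step $q=q'\circ\sigma$, which is the unique place where one really uses that $\sigma$ is a blow-down and not merely some morphism. Note finally that the hypothesis enters only through the inequality $K_S\cdot q_*E\geq 0$: without $K_S$ nef the obstructing case genuinely occurs, which is exactly why an arbitrary birational map of surfaces need not be a morphism.
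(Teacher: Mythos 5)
Your proposal is correct and follows essentially the same route as the paper: resolve the indeterminacies, use $K_X=q^*(K_S)+R$ with $R$ effective together with the nefness of $K_S$ to force the last exceptional $(-1)$-curve of $p$ to be contracted by $q$ as well, then descend by one blow-up and conclude. The only difference is cosmetic — you organize the descent as an induction on the number of blow-ups in $p$, where the paper chooses a resolution with that number minimal and derives a contradiction.
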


\begin{proof} We resolve the indeterminacies of $f$
\begin{equation*}
\xymatrix{ 
&X\ar_{p}[d]\ar^{q}[dr] &\\
&S'\ar@{-->}^{f}[r] &  S&
}
\end{equation*}
where $p,q$ are sequences of blow--ups. We may assume that in the above diagram the number of blow--ups occurring in $p$  is minimal.

If $p$ is an isomorphism, we are done. If not, by the ramification formula we have
\[K_X=q^*(K_S)+R=p^*(K_{S'})+R'\] with $R$ and $R'$ effective divisors. There is a $(-1)$--curve $E\leq R'$, i.e.,  the exceptional divisor of the last blow--up occurring in $p$, and
\[
-1=E\cdot K_X=E\cdot (q^*(K_S)+R)\geq E\cdot R
\]
because, $K_S$ being nef, so is $q^*(K_S)$. Hence $E\leq R$, hence $E$ is contracted by both $p$ and $q$. This means that, if $\pi: X\to X'$ is the blow--down of $E$, $p$ and $q$ both factor through $\pi$, i.e., there is a diagram
\begin{equation*}
\xymatrix{ 
&X'\ar_{p'}[d]\ar^{q'}[dr] &\\
&S'\ar@{-->}^{f}[r] &  S&
}
\end{equation*}
with $p=p'\circ p$ and $q=q'\circ \pi$. This is a contradiction, since $p'$ is the composition of one blow--up less than $p$.\end{proof}

\begin{remark}\label{rem:kk} If the result of the minimal model programme applied to $S$ is a Mori fibre space $\varphi:S'\to V$, then we have $\kappa(S)=-\infty$. Indeed, $S$ and $S'$ are birationally equivalent, so that $\kappa(S)=\kappa(S')$. Moreover either $S'=\PP^ 2$ or $S'\to V$ is a scroll. In either case there are moving curves $C$ on $S'$ such that $C\cdot K_{S'}<0$, which yields $P_n(S')=0$ for all $n\in \NN$, so $\kappa(S')=-\infty$. 
\end{remark}

\section{Castelnuovo's Rationaly Criterion}

In this section we prove the fundamental:

\begin{thm}[Castelnuovo's Rationaly Criterion] \label{thm:crt} A surface $S$ is rational if and only if $q=P_2=0$.
\end {thm}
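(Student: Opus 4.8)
The forward implication is trivial: if $S$ is rational, it is birational to $\PP^2$, and $q$ and $P_2$ are birational invariants (see \S\ref{ssec:rm}), with $q(\PP^2)=P_2(\PP^2)=0$. So the substance is the converse: assuming $q=P_2=0$, produce a birational map $S\dasharrow\PP^2$. The idea is to run the minimal model programme. By Step 2 of the algorithm and Remark \ref{rem:kk}, if the MMP terminates in a Mori fibre space $\varphi\colon S'\to V$ with $\dim V=0$, then $S'\cong\PP^2$ by Theorem \ref{thm:charp2} (as recorded in Remark \ref{rem:bap}), and we are done; if $\dim V=1$ then $S'$ is a scroll over a curve $V$, and by Noether--Enriques' Theorem \ref{thm:NET} it suffices to show $V\cong\PP^1$, i.e. $g(V)=0$. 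The remaining possibility is that the MMP stops because $K_{S}$ (or rather the strong minimal model $K_{S'}$) is nef; this is the case we must rule out under the hypothesis $q=P_2=0$.

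First I would dispatch the scroll case: if $\varphi\colon S'\to V$ is a scroll over a curve of genus $g$, then $q(S')\ge g$, for instance because a section of $\varphi$ (or the Albanese, using Proposition \ref{prop:tr} applied to $\varphi$) forces $q(S')\ge q(V)=g$. Since $q(S)=q(S')=0$, we get $g=0$, so $V\cong\PP^1$, and then $S'$ — hence $S$ — is rational by the last sentence of Theorem \ref{thm:NET}. So the whole problem reduces to: \emph{a surface with $q=P_2=0$ cannot have a nef canonical bundle} (equivalently, the MMP cannot terminate in Step 1).

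For that final step, suppose for contradiction $S$ is a strong minimal model (we may replace $S$ by its strong minimal model, as $q$, $P_2$ are birational invariants and $P_n$ is too), so $K_S$ is nef. Then $K_S^2\ge 0$. Since $q=0$ and $P_2=0$ give $p_g=0$, hence $\chi(\O_S)=1$. Now I would play off Riemann--Roch and Serre duality against nefness of $K_S$. By Riemann--Roch, $P_2=h^0(2K_S)=\chi(\O_S)+\tfrac12\,2K_S\cdot(2K_S-K_S)+h^1(2K_S)-h^2(2K_S) = 1+K_S^2+h^1(2K_S)-h^0(-K_S)$. Since $K_S$ is nef and not trivial (if $K_S\equiv 0$ then $P_2=h^0(2K_S)=h^0(\O_S)=1\ne 0$, by the argument in Case (b) of the Base Point Freeness proof, contradiction), one shows $h^0(-K_S)=0$: a nonzero section of $-K_S$ would give an effective $D\equiv -K_S$ with $D\cdot K_S=-K_S^2\le 0$, forcing $K_S^2=0$ and then $D\cdot K_S=0$, and one derives a contradiction with $D$ effective and $K_S$ nef nontrivial via Zariski-type considerations or Hodge Index. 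Hence $P_2\ge 1+K_S^2\ge 1>0$, contradicting $P_2=0$. The cleanest route: either $K_S^2>0$, in which case $P_2\ge 1+K_S^2>0$ immediately; or $K_S^2=0$ with $K_S$ nef nontrivial, and here one must argue more carefully (e.g. $K_S$ nef with $K_S^2=0$ and $p_g=0$, using Riemann--Roch for $nK_S$ and the structure of such surfaces) to still force some $P_n>0$ — but since we only need a contradiction with $q=P_2=0$, it is enough to handle $P_2$, and I would extract the contradiction from $h^2(2K_S)=h^0(-K_S)=0$ together with $h^1(2K_S)\ge 0$ in $\chi(2K_S)=1+K_S^2$.

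\textbf{Main obstacle.} The genuinely delicate point is the borderline case $K_S$ nef, $K_S^2=0$, $p_g=q=0$: here Riemann--Roch alone gives only $\chi(2K_S)=1$, and I need to show $h^2(2K_S)=h^0(-K_S)=0$ to conclude $P_2=\chi(2K_S)+h^1(2K_S)\ge 1>0$. The vanishing $h^0(-K_S)=0$ is where nefness and $K_S\not\equiv 0$ must be used essentially — Kodaira vanishing does not apply since $-K_S$ is not ample — so I would argue directly: if $-K_S\sim D\ge 0$ then $K_S\cdot D=-K_S^2=0$, so by Zariski's Lemma-type reasoning every component of $D$ is orthogonal to $K_S$, and one pushes this to a numerical contradiction with $K_S$ being nef and nonzero, or invokes that an effective anticanonical divisor on a minimal surface with $K_S$ nef would force $\kappa\le 0$ and a classification input. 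This case analysis is the crux; everything else is a straightforward bookkeeping of birational invariants and an appeal to the MMP and the $\PP^2$-characterization already in hand.
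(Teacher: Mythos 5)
Your overall strategy is exactly the paper's: run the minimal model programme, observe that the Mori fibre space outcomes give $\PP^2$ or a scroll over a curve whose genus is killed by $q=0$, and reduce everything to showing that $q=P_2=0$ is incompatible with $K_S$ nef; the Riemann--Roch/Serre duality computation you set up is also the same one the paper uses (you apply it to $2K_S$, the paper to $-K_S$, but by Serre duality $\chi(2K_S)=\chi(-K_S)=1+K_S^2$, so it is the identical identity). The one place where your write-up does not close is precisely the step you flag as the crux, and the problem is that you have run the identity in the wrong direction. You write $P_2=1+K_S^2+h^1(2K_S)-h^0(-K_S)$ and then try to prove $h^0(-K_S)=0$ in order to force $P_2>0$; for that vanishing you offer ``Zariski-type considerations or Hodge Index'' or ``a classification input'', none of which is carried out, and none of which is the right tool (Zariski's Lemma concerns divisors supported on fibres of a fibration, and Hodge Index gives nothing from $D^2=D\cdot K_S=0$ alone). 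As written, a reader following your argument is stuck exactly there.

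The gap disappears once you use the hypothesis $P_2=0$ as input rather than as target, which is what the paper does. From $0=P_2=1+K_S^2+h^1(2K_S)-h^0(-K_S)$ and $K_S^2\geq 0$ (which holds because $K_S$ is nef) you get $h^0(-K_S)\geq 1+K_S^2\geq 1$ for free: the anticanonical system is nonempty. Now take $D\in|-K_S|$. If $D=0$ then $K_S\sim 0$, so $2K_S\sim 0$ and $P_2=1$, a contradiction; so $D$ is a nonzero effective divisor, hence $D\cdot H>0$ for $H$ ample (a multiple of $H$ is an irreducible curve), hence $K_S\cdot H=-D\cdot H<0$, contradicting nefness of $K_S$ in one line. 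No case division on $K_S^2>0$ versus $K_S^2=0$ is needed, and the ``borderline case'' you worry about is not delicate at all. Note also that your claim that $K_S\equiv 0$ is excluded ``by the argument in Case (b) of the Base Point Freeness proof'' does not apply as cited, since that argument assumes $-K_S$ numerically ample; but again this separate exclusion is unnecessary once the endgame is organized as above. Everything else in your proposal (the forward implication, the scroll case via Proposition \ref{prop:tr}, the reduction through the MMP) is correct and matches the paper.
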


\begin{proof} Recall that $q$ and $P_2$ are birational invariants.  They are both zero for $\PP^2$. Indeed, if $L$ is the class of a line, one has $K_{\PP^ 2}=-3L$ and $L$ is very ample. Hence all plurigenera vanish and moreover $q(\PP^ 2)=h^ 1(\O_{\PP^2})= h^ 1(3L+K_{\PP^ 2})=0$ by Kodaira Vanishing Thoerem. Hence $q$ and $P_2$ are zero for all rational surfaces.

To prove the converse, we will prove that
\begin{equation}\label{eq:ccc}
q=P_2=0 \quad \mbox{implies} \quad  K_S \quad \mbox{not nef}.
\end{equation}
Taking this for granted, the theorem follows. Indeed, by applying the minimal model programme to $S$, since $q$ and $P_2$ are birational invariants, we never reach a strong minimal model, so we eventually reach a Mori fibre space $f: S'\to V$, with $q(S')=P_2(S')=0$. If $\dim(V)=0$ we know that $S'\cong \PP^2$ proving the theorem. If  $V$ is a smooth curve, one has $g(V)=0$. Indeed, one has an injection $f^ *: H^ 0(\omega_V^1)\to H^0(\Omega^ 1_S)=\{0\}$, so that $g(V)=h^ 0(\omega_V^1)=0$. Then $S'$ is a scroll over $\PP^1$, hence it is birational to $\PP^2$, so that the assertion follows.

To prove \eqref {eq:ccc}, we note that $P_2=0$ implies $p_g=0$ and, since $q=0$, then $p_a=0$, so that $\chi=1$. Hence, using Serre Duality and Riemann--Roch Theorem, we get
\[
\begin {split}
h^ 0(-K_S)&=h^ 0(-K_S)+h^ 0(2K_S)=h^ 0(-K_S)+h^ 2(-K_S)\geqslant\\
&\geqslant  h^ 0(-K_S)-h^ 1(-K_S)+h^ 2(-K_S)=\chi(-K_S)=\\
&=\chi+\frac {(-K_S)(-2K_S)}2=1+K_S^2.
\end{split}
\]
We now argue by contradiction and assume $K_S$ is nef. Then we have $K_S^ 2\geqslant 0$, hence $h^ 0(-K_S)>0$. If $H$ is an ample divisor on $S$ we have then $-K_S\cdot H>0$, hence $K_S\cdot H<0$, a contradiction.  This proves \eqref {eq:ccc} and the theorem.\end{proof}

\begin{corollary}\label{cor:unir} Any unirational surface is rational.
\end{corollary}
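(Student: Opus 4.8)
The plan is to verify the hypotheses of Castelnuovo's Rationality Criterion (Theorem~\ref{thm:crt}), i.e. to show that $q(S)=P_2(S)=0$ for any unirational surface $S$, and then to invoke that theorem.

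First I would make the unirationality concrete. By definition there is a dominant rational map from some projective space to $S$, and restricting it to a general $2$--plane one obtains a dominant rational map $g\colon \PP^2\dasharrow S$. Resolving the indeterminacies of $g$ as in \S\ref{ssec:rm} gives a smooth projective surface $X$ together with a birational morphism $p\colon X\to \PP^2$ and a surjective morphism $q\colon X\to S$. Since $X$ is obtained from $\PP^2$ by a finite sequence of blow--ups it is rational; in particular, as $q$ and $P_2$ are birational invariants which vanish on $\PP^2$, we have $q(X)=0$ and $P_2(X)=0$.

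Next I would transport these vanishings down along $q\colon X\to S$. For the irregularity this is immediate from Proposition~\ref{prop:tr}: $0=q(X)\geq q(S)\geq 0$, hence $q(S)=0$. For the bigenus I would use the Ramification Formula (Theorem~\ref{thm:ram}): since $X$ and $S$ are smooth of the same dimension and $q$ is surjective, $K_X=q^*(K_S)+R$ with $R$ an effective divisor, so $2K_X=q^*(2K_S)+2R$. Pulling back sections gives an injection $H^0(S,2K_S)\hookrightarrow H^0(X,q^*(2K_S))$ (injective because $q$ is dominant, so a nonzero section has nonzero pull--back), and multiplication by the canonical section of $\O_X(2R)$ gives an injection $H^0(X,q^*(2K_S))\hookrightarrow H^0(X,2K_X)$. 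Composing, $P_2(S)\leq P_2(X)=0$, hence $P_2(S)=0$. With $q(S)=P_2(S)=0$ established, Theorem~\ref{thm:crt} yields that $S$ is rational.

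The argument is essentially formal, so I do not expect a genuine obstacle; the only points needing a word of care are the two reductions at the start of the second paragraph — that a general plane section of a dominating map from $\PP^N$ remains dominant onto $S$, and that $g$ admits a resolution of indeterminacies by blow--ups exactly as in the birational case — together with the harmless remark that $R$ is an honest effective divisor (purity of the branch locus, quoted after Theorem~\ref{thm:ram}), so that $\O_X(2R)$ carries a nonzero section. None of these is more than routine.
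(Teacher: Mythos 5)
Your proof is correct and follows essentially the same route as the paper: resolve the indeterminacies of the dominating rational map to get a surjective morphism from a rational surface, pull back $1$--forms and bicanonical forms to conclude $q(S)=P_2(S)=0$, and apply Castelnuovo's criterion. The paper simply states the injectivity of $f^*$ on $H^0(\Omega^1_S)$ and on $H^0(2K_S)$ directly, where you route the first through Proposition~\ref{prop:tr} and justify the second via the ramification formula — a harmless elaboration of the same argument.
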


\begin{proof} If $S$ is unirational there is a rational surface $S'$ and a dominant rational map $f: S'\dasharrow S$. By elimination of indeterminacies, we may assume $f$ is a morphism. Then we claim that $q(S)=P_2(S)=0$. Indeed, since we have an injection $f^ *:H^ 0(\Omega^ 1_S)\to H^ 0(\Omega^1_{S'})=\{0\}$, we see that $q(S)=h^ 0(\Omega^ 1_S)=0$. The proof that $P_2(S)=0$ is similar. Hence $S$ is rational. \end{proof}

\section{The Fundamental Theorem of the Classification}\label{sec:ftc}

The main result of this section is the following:

\begin{thm}[Fundamental Theorem of the Classification]\label{sec:ftc}
Let $S$ be a surface. Then:\\
\begin{inparaenum}
\item [(a)] the end result of the minimal model programme applied to $S$ is a strong minimal model if and only if $\kappa(S)\geqslant 0$;\\
\item [(b)]  the end result of the minimal model programme applied to $S$ is a Mori fibre space if and only if $\kappa(S)=-\infty$.
\end{inparaenum}
\end{thm}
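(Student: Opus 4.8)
The plan is to reduce the whole statement to one assertion, which I will call the \emph{Main Lemma}: if $S$ is a surface with $K_S$ nef, then $\kappa(S)\ge 0$. Granting this, the theorem follows formally. The minimal model programme run on $S$ terminates in exactly one of two ways: a birational morphism $S\to S'$ with $K_{S'}$ nef (a strong minimal model), or a birational morphism $S\to S'$ followed by a Mori fibre space $\varphi\colon S'\to V$; and $\kappa$ is a birational invariant, so $\kappa(S)=\kappa(S')$. If the outcome is a Mori fibre space, then $\kappa(S)=-\infty$ by Remark~\ref{rem:kk}. If the outcome is a strong minimal model, then $\kappa(S)=\kappa(S')\ge 0$ by the Main Lemma. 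Since $\kappa(S)\in\{-\infty,0,1,2\}$, these two implications together with the dichotomy of outcomes give both biconditionals (a) and (b) simultaneously.

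For the Main Lemma I argue by contradiction: assume $K_S$ nef and $\kappa(S)=-\infty$, so $P_n=0$ for all $n\ge 1$, and in particular $p_g=P_1=0$. For $n\ge 2$, Serre duality gives $h^2(nK_S)=h^0((1-n)K_S)=0$: an effective divisor linearly equivalent to $-(n-1)K_S$ would have both non-negative intersection with an ample class (being effective) and non-positive intersection with it (since $(n-1)K_S$ is nef), so it would be zero, forcing $(n-1)K_S\sim 0$ and $P_{n-1}=1$, a contradiction. Riemann--Roch then gives $0=P_n\ge\chi(nK_S)=\chi(\O_S)+\binom n2 K_S^2$ for all $n\ge 2$, with $K_S^2\ge 0$ because $K_S$ is nef. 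Hence $\chi(\O_S)\le 0$ and $K_S^2=0$ (otherwise the right-hand side is eventually positive); since $p_g=0$ this means $q\ge 1$ (the case $q=0$ being also immediate from \eqref{eq:ccc}).

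It remains to treat $q\ge 1$, where I use the Albanese morphism $\alpha\colon S\to\Alb(S)$, which is non-constant since $q\ge 1$. If $\dim\alpha(S)=2$ then $\alpha$ is generically finite and $q=\dim\Alb(S)\ge 2$, so some decomposable holomorphic $2$-form on $\Alb(S)$ (restricting non-trivially to a general tangent plane of $\alpha(S)$) pulls back to a non-zero element of $H^0(\Omega^2_S)$; hence $p_g(S)\ge 1$, a contradiction. So $\dim\alpha(S)=1$, and Stein factorisation of $\alpha$ gives a fibration $f\colon S\to B$ with connected fibres onto a smooth curve $B$ that maps finitely to a curve inside $\Alb(S)$, whence $g(B)\ge 1$. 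Since $K_S$ is nef, the general fibre $F$ satisfies $K_S\cdot F=2g(F)-2\ge 0$, so $g(F)\ge 1$; also $S$ has no $(-1)$--curves, so $f$ is relatively minimal and semipositivity (see Theorem~\ref{thm:rcs}) gives $\deg f_*\omega_{S|B}\ge 0$. If $g(B)\ge 2$, then by the projection formula $f_*K_S\cong(f_*\omega_{S|B})\otimes K_B$ has rank $g(F)$ and degree $\deg f_*\omega_{S|B}+g(F)(2g(B)-2)$, so Riemann--Roch on $B$ yields $h^0(K_S)=h^0(f_*K_S)\ge\deg f_*\omega_{S|B}+g(F)(g(B)-1)>0$, contradicting $p_g=0$. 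If $g(B)=1$, then Leray together with relative duality (Theorem~\ref{thm:reldu}) gives $\chi(\O_S)=\deg f_*\omega_{S|B}$, so $0\ge\chi(\O_S)=\deg f_*\omega_{S|B}\ge 0$; thus $\deg f_*\omega_{S|B}=0$ and $q=1$, and by Theorem~\ref{thm:rcs} the fibration $f$ is isotrivial, hence becomes a product $F\times B$ after a finite base change, so $\kappa(S)=\kappa(F\times B)=\kappa(F)+\kappa(B)\ge 0$ since $g(F),g(B)\ge 1$ --- again contradicting $\kappa(S)=-\infty$.

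I expect the real obstacle to be precisely this last sub-case, $q=1$ and $g(B)=1$ with general fibre of genus $\ge 1$ and $\deg f_*\omega_{S|B}=0$: to carry it out rigorously one must first pass to a semistable model in order to invoke Theorem~\ref{thm:rcs} in the stated form, and then use the structure theory of isotrivial fibrations (trivialised by a finite, ideally \'etale, base change) together with the additivity $\kappa(F\times B)=\kappa(F)+\kappa(B)$ and the invariance of $\kappa$ under finite \'etale covers. Everything else is routine bookkeeping with Riemann--Roch, Serre duality, the Albanese morphism and the minimal model programme, so the weight of the argument sits entirely in the analysis of fibrations with fibres of positive genus.
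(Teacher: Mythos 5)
Your reduction of the whole theorem to the single assertion ``$K_S$ nef implies $\kappa(S)\ge 0$'' is exactly the paper's (Theorem \ref{thm:ftc2}), and parts of your argument are clean and even more economical than the paper's: you obtain $K_S^2=0$, $\chi(\O_S)\le 0$ and hence $q\ge 1$ directly from Riemann--Roch and the vanishing $h^0((1-n)K_S)=0$, where the paper goes through Castelnuovo's rationality criterion to get $q>0$ and the inequality $\chi\ge 0$ of Castelnuovo--De Franchis--Enriques to get $q=1$. Your elimination of the case $\dim\alpha(S)=2$ is correct, and the case $g(B)\ge 2$ works granting semipositivity. But there are two genuine gaps.

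First, you twice use $\deg f_*\omega_{S|B}\ge 0$ and cite Theorem \ref{thm:rcs} for it; that theorem asserts only strict positivity, under restrictive hypotheses (relative minimality \emph{and} singular fibres reduced with at most nodes), unless $f$ is isotrivial --- it gives no lower bound when those hypotheses fail. What you actually need is Fujita semipositivity, which is not available in the paper. Second, and more seriously, the sub-case you yourself identify as the crux --- $g(B)=1$, $\deg f_*\omega_{S|B}=0$ --- cannot be closed by the route you sketch when the general fibre $F$ is elliptic. Multiple fibres (exactly what occur on bielliptic and properly elliptic surfaces) violate the hypotheses of Theorem \ref{thm:rcs}, and passing to a semistable model requires a ramified base change after which the comparison of plurigenera with $S$ is lost. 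Moreover, even for a smooth isotrivial elliptic fibration the \'etale trivialization does not follow from the finite-monodromy argument, since $\Aut(F)$ is infinite when $g(F)=1$; that argument is only valid for $g(F)\ge 2$, which is precisely why the paper separates Step 2 from Step 1. The paper's treatment of $g(F)=1$ is a different and substantial argument: Lemma \ref{cl:one} (an Euler-number count using $b_2=2$) shows the Albanese fibration is smooth, the canonical bundle formula (Theorem \ref{thm:canb}) then gives $K_S\equiv 0$, and Lemma \ref{lem:basic} constructs a second elliptic pencil over $\PP^1$ whose canonical bundle formula upgrades $K_S\equiv 0$ to $nK_S\cong\O_S$. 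Nothing in your proposal substitutes for this step, so the elliptic-fibre case remains open.
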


Note that (a) and (b) are equivalent. We already observed that if the end result of the minimal model programme applied to $S$ is a Mori fibre space then $\kappa(S)=-\infty$ (see Remark \ref {rem:kk}). This is the same as proving that if $\kappa(S)\geqslant 0$ then the end result of the minimal model programme applied to $S$ is a strong minimal model. So we are left to prove the:

\begin{thm}\label{thm:ftc2} If the end result of the minimal model programme applied to $S$ is a strong minimal model then $\kappa(S)\geqslant 0$.
\end{thm}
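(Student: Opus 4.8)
We have a surface $S$ whose minimal model program terminates in a strongly minimal surface $S'$, i.e. a smooth projective surface with $K_{S'}$ nef that is birational to $S$. Since $\kappa$ is a birational invariant, it suffices to show $\kappa(S')\geqslant 0$, that is, $P_n(S')>0$ for some $n\in\NN$, equivalently $h^0(nK_{S'})>0$ for some $n$. So from now on we may assume $K_S$ itself is nef and we must produce an effective pluricanonical divisor.

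The first step is to dispose of the easy case $q(S)>0$: if $h^0(\Omega^1_S)\neq 0$, the Albanese map $\alpha_S\colon S\to\Alb(S)$ is nonconstant, so $\alpha_S(S)$ is a positive-dimensional subvariety of an abelian variety, hence of Kodaira dimension $\geqslant 0$; pulling back a suitable pluricanonical form (more precisely, arguing that $\alpha_S(S)$ being a curve or a surface of nonnegative Kodaira dimension forces $P_n(S)>0$) gives the conclusion. In fact the cleanest route: if $\alpha_S(S)$ is a curve $C$ it has genus $\geqslant 1$, and the fibration $S\to C$ shows $S$ is not rational; if $\alpha_S(S)$ is a surface, $\alpha_S$ is generically finite onto its image, which has trivial-or-effective canonical bundle. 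Either way one gets $\kappa(S)\geqslant 0$. So the heart of the matter is the case $q(S)=0$.

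Next, assume $q(S)=0$. Then by Castelnuovo's Rationality Criterion (Theorem~\ref{thm:crt}), if moreover $P_2(S)=0$ then $S$ is rational, and a rational surface is birational to $\PP^2$, which is \emph{not} strongly minimal and whose MMP does not terminate in a strong minimal model (indeed $K_{\PP^2}$ is not nef). This contradicts the hypothesis on $S$. Hence $P_2(S)>0$, i.e. $h^0(2K_S)>0$, which is exactly $\kappa(S)\geqslant 0$. Actually this already handles $q(S)=0$ completely: $q=0$ together with $K_S$ nef is incompatible with $S$ rational (by \eqref{eq:ccc} in the proof of Castelnuovo's criterion, $q=P_2=0$ forces $K_S$ not nef), so $q=0$ and $K_S$ nef imply $P_2>0$.

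**Assembling.** Thus the structure is: (i) reduce to $K_S$ nef via birational invariance of $\kappa$ and of the ``strong minimal model'' outcome; (ii) split on $q(S)$; (iii) if $q(S)>0$, use the Albanese morphism to exhibit a nonconstant map to an abelian variety and deduce $\kappa(S)\geqslant 0$; (iv) if $q(S)=0$, invoke Castelnuovo's criterion (or directly \eqref{eq:ccc}) to see that $K_S$ nef forces $P_2(S)>0$. The main obstacle is step (iii): making precise why a surface with nonconstant Albanese map has nonnegative Kodaira dimension. The case where the Albanese image is a curve is handled by pulling back differentials and noting the base curve has positive genus, which via the surface's fibration over a curve of genus $\geqslant 1$ rules out rationality; combined with $K_S$ nef, one needs a small argument (e.g. via Theorem~\ref{thm:rcs} on the relative canonical bundle, or directly analyzing the fibration) that $S$ is not of Kodaira dimension $-\infty$. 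The case where the Albanese image is a surface is easier since then $\alpha_S$ is generically finite and $K_S=\alpha_S^*K_{\Alb(S)}+(\text{effective})=(\text{effective})$ up to the exceptional locus, giving $h^0(K_S)>0$ after passing to a resolution. I expect the author handles all of this by first proving the $q>0$ case and then reducing the $q=0$ case to Castelnuovo's criterion as above.
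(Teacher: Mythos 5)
Your reduction to $K_S$ nef and your treatment of the case $q=0$ are correct and agree with the paper: there one argues that $q=P_2=0$ together with $K_S$ nef is impossible, via the Riemann--Roch computation $h^0(-K_S)\geq 1+K_S^2>0$ inside the proof of Castelnuovo's criterion. The problem is step (iii). The assertion that a surface with nonconstant Albanese map has $\kappa\geq 0$ is false as stated -- $C\times\PP^1$ with $g(C)\geq 1$ has nonconstant Albanese map and $\kappa=-\infty$ -- so the nefness of $K_S$ must do real work, and the ``small argument'' you defer is in fact the entire content of the theorem. Note also that you never bound $q$: the paper first reduces to $p_g=0$ and then invokes Castelnuovo--De Franchis--Enriques ($\chi\geq 0$ when $K_S$ is nef, itself a nontrivial theorem proved via \'etale covers and the Castelnuovo--De Franchis fibration) to get $q=1$, so that the Albanese image is an elliptic curve $E$ and $\alpha:S\to E$ has connected fibres; your ``Albanese image is a surface'' branch does not arise, and in any case the ramification-formula argument you sketch there would need $\alpha$ to be generically finite onto a smooth target, which is not given.

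The two genuinely hard sub-cases, which your proposal does not address, are distinguished by the genus $g$ of the Albanese fibre. If $g\geq 2$, Theorem~\ref{thm:rcs} applies: $\deg(\alpha_*\omega_{S|E})>0$ would give $p_g>0$, so the fibration is isotrivial, and one then passes to an \'etale cover trivializing the monodromy to get $S'=C\times F$ with $\kappa(S')=1$, and descends a pluricanonical section to $S$ by taking the product over the Galois group. If $g=1$, Theorem~\ref{thm:rcs} is of no use (its hypotheses on the singular fibres need not hold, and in any case the relevant direct image has degree $\chi=0$); instead the paper uses the canonical bundle formula for elliptic fibrations, proves that the Albanese fibration is smooth with no multiple fibres (Lemma~\ref{cl:one}), concludes $K_S\equiv 0$, and then constructs a \emph{second} elliptic pencil $\beta:S\to\PP^1$ (Lemma~\ref{lem:basic}, a delicate page-long argument producing two disjoint elliptic curves $G,G'$ and showing $|2K_S+2G+2G'|$ is composed with a pencil) to which the canonical bundle formula applies over a rational base, yielding $nK_S\cong\O_S$. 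Without these two arguments the proof is incomplete precisely where it matters.
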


To prove this, we need a few preliminary results of independent interest.

\subsection{Castelnuovo--De Franchis' Theorem}\label{ssec:cdf}

\begin{thm}[Castelnuovo--De Franchis' Theorem]\label{thm:cdf} Let $S$ be a surface. Suppose there are two linearly independent holomorphic 1--forms $\omega_1,\omega_2$ on $S$ such that $\omega_1\wedge\omega_2\equiv 0$. Then there is a smooth projective curve $C$, with $g(C)\geqslant 2$, a surjective morphism $f: S\to C$ with connected fibres, and two linearly independent holomorphic 1--forms on $C$ such that $\omega_i=f^*(\alpha_i)$, for $1\leqslant i\leqslant 2$.
\end{thm}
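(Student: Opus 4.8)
The plan is to build the curve $C$ directly from the foliation defined by the two $1$-forms, then to check the required properties. Since $\omega_1\wedge\omega_2\equiv 0$, on the dense open set $U\subset S$ where $\omega_1$ does not vanish we have $\omega_2=\phi\,\omega_1$ for a meromorphic function $\phi$ on $S$; taking exterior derivatives, $0=d\omega_2=d\phi\wedge\omega_1$ on $U$, so $d\phi$ is proportional to $\omega_1$ as well. Thus $\phi$ is a rational function on $S$ all of whose differentials lie in the line spanned by $\omega_1$, and the fibres of $\phi$ are the leaves of the foliation $\omega_1=0$. The first step is therefore to produce, via Stein factorisation of $\phi\colon S\dashrightarrow \PP^1$ after resolving its indeterminacies, a fibration $f\colon S\to C$ onto a smooth curve $C$ with connected fibres, such that $\omega_1$ (and hence $\omega_2$) is a pullback from $C$ on the open set where $f$ is smooth. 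Concretely, one checks that $\omega_1$ vanishes when restricted to each fibre of $f$: a general fibre $F$ is a smooth curve contained in a leaf, and $\omega_{1|F}$ is a holomorphic $1$-form on $F$ that is identically zero because $F$ lies in $\{\omega_1=0\}$; so $\omega_1$, and likewise $\omega_2$, lies in the image of $f^*\colon H^0(\Omega^1_C)\to H^0(\Omega^1_S)$, say $\omega_i=f^*(\alpha_i)$.

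The second step is to see that $\alpha_1,\alpha_2$ are linearly independent on $C$ — this is immediate since $f^*$ is injective on global $1$-forms (as $f$ has connected fibres, $f_*\O_S=\O_C$, and a relation among the $\alpha_i$ would pull back to one among the $\omega_i$) and $\omega_1,\omega_2$ were assumed independent. The third step is the genus bound $g(C)\geq 2$. Since $C$ carries at least two independent holomorphic $1$-forms, $g(C)\geq 2$ unless $g(C)=1$; if $g(C)=1$ then $h^0(\Omega^1_C)=1$, so $\alpha_1$ and $\alpha_2$ would be proportional, contradicting their independence. Hence $g(C)\geq 2$. (One should be slightly careful: $g(C)=0$ is impossible for the same reason, and the case $g(C)=1$ is excluded as above.)

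The main obstacle is the first step: making rigorous the passage from the meromorphic function $\phi$ to an honest fibration with connected fibres, and controlling the behaviour of $\omega_1$ near the indeterminacy points of $\phi$ and near the non-reduced or multiple fibres introduced by Stein factorisation. The key technical point is that a holomorphic $1$-form extends across the exceptional locus of a blow-up (the pullback of a holomorphic form is holomorphic), so after resolving indeterminacies we still have the forms $\omega_i$ available upstairs and they descend to $C$ once we know they restrict to zero on fibres; the restriction-to-fibres argument only uses that a general fibre is contained in a leaf of $\{\omega_1=0\}$, which is exactly the content of $d\phi\wedge\omega_1=0$. A clean way to organise this is: first replace $S$ by a blow-up $\tilde S$ so that $\phi$ becomes a morphism to $\PP^1$, apply Stein factorisation to get $\tilde f\colon \tilde S\to C$ with connected fibres and a finite map $C\to\PP^1$, observe $\omega_i$ (still holomorphic on $\tilde S$, by the blow-up formulas of §\ref{ssec:bu}) restricts to $0$ on a general fibre of $\tilde f$ hence equals $\tilde f^*(\alpha_i)$ for holomorphic $\alpha_i$ on $C$, and finally note that the $\alpha_i$ and the factorisation descend to the original $S$ because the forms $\omega_i$ on $S$ are the pushforward data and the exceptional curves of $S\to \tilde S$... rather, of $\tilde S\to S$, are contained in fibres of $\tilde f$ (each is contracted by $S$-side, so maps to a point of $C$), so $\tilde f$ factors through $S$, yielding $f\colon S\to C$ with $\omega_i=f^*(\alpha_i)$.
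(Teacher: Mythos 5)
Your proposal is correct and is in essence the paper's own argument, with two presentational differences. Where you use the single rational function $\phi=\omega_2/\omega_1$ and resolve the resulting map to $\PP^1$, the paper uses the pair $(g,h)$ with $\omega_2=g\omega_1$ and $\omega_1=h\,dg$, maps to $\PP^2$, and takes the image curve $\Gamma$ (the integrability relation $dg\wedge dh\equiv 0$, which is your $d\phi\wedge\omega_1\equiv 0$ in disguise, is what forces the image to be a curve); after normalization and Stein factorization both routes produce the same fibration. The second difference is in how the forms descend: the paper writes $\alpha_1,\alpha_2$ explicitly as pullbacks of meromorphic $1$--forms in the coordinates $(g,h)$ and deduces their holomorphy from the holomorphy of $\pi^*\omega_i=f^*\alpha_i$, whereas you argue that $\omega_i$ restricts to zero on the general fibre and hence lies in $f^*H^0(\Omega^1_C)$. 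Your version works, but the implication ``vanishes on general fibres $\Rightarrow$ pulled back from $C$'' deserves a word: over the smooth locus of $f$ it follows from the exact sequence $0\to f^*\Omega^1_C\to\Omega^1_{\tilde S}\to\Omega^1_{\tilde S/C}\to 0$ together with $f_*\O_{\tilde S}=\O_C$ (connected fibres), and holomorphy of $\alpha_i$ at the finitely many remaining points of $C$ then follows because $f^*\alpha_i=\omega_i$ is holomorphic and $f$ is surjective.

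The one place where your stated reason is actually wrong is the final descent from $\tilde S$ to $S$: you justify that the exceptional curves of $\pi:\tilde S\to S$ are contracted by $\tilde f$ by saying they are ``contracted by the $S$-side, so map to a point of $C$'', which is a non sequitur --- being contracted by $\pi$ says nothing about $\tilde f$. The correct reason, which you have already made available by establishing $g(C)\geq 2$ beforehand, is that each exceptional curve is rational, and a non-constant morphism from a rational curve onto a curve of genus at least $2$ is impossible; equivalently, this is the remark in \S\ref{ssec:rm} that a dominant rational map from a surface to a non-rational curve has no indeterminacy points, which is precisely how the paper concludes that $\pi$ is the identity and $f$ is defined on $S$ itself. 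With that substitution your argument is complete.
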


\begin{proof} Since $\omega_1\wedge\omega_2\equiv 0$, there is a non--zero rational function $g$ on $S$ such that $\omega_2=g\omega_1$. Since $\omega_1,\omega_2$ are closed forms, we deduce that $\omega_1\wedge dg\equiv 0$. Hence there is a non--zero rational function $h$ on $S$ such that $\omega_1=hdg$. Hence $dg\wedge dh\equiv 0$. Consider now the rational map
\[
\phi: S\dasharrow \PP^ 2, \quad p\in S\to (g(p),h(p)).
\]
Since $dg\wedge dh\equiv 0$,  the image of $\phi$ is a curve $\Gamma$, with affine equation $P(x,y)=0$. If $\nu: D\to \Gamma$ is the normalization of $\Gamma$, we have a rational map $\psi: S\dasharrow D$. Now, eliminating the indeterminacies and using Stein factorization we can find a commutative diagram
\begin{equation*}\label{eq:K3}
\xymatrix{ 
&S'\ar^{f}[r]\ar_{\pi}[d]&C\ar^{q}[d]\ar^{\bar q}[dr]& \\
&S\ar@{-->}^{\psi}[r] & D\ar^{\nu}[r]& \Gamma \,\,
}
\end{equation*}
where $\pi: S'\to S$ is a birational morphism, $C$ is a smooth curve, $q: C\to D$ is a finite map and $f: S'\to C$  is surjective with connected fibres. Consider the meromorphic 1--forms on $C$ given by $\alpha_1=\bar q^*(xdy)$ and $\alpha_2=\bar q^ *(xydy)$. We have $\pi^ *(\omega_i)=f^ *(\alpha_i)$, for $1\leqslant i\leqslant 2$, hence $\alpha_1$ and $\alpha_2$ are holomorphic as well as  $\pi^ *(\omega_1)$ and $\pi^ *(\omega_2)$. This proves that $g(C)\geqslant 2$. Then the rational map $\pi^ {-1}\circ f: S\dasharrow C$ has no  indeterminacies (see \S \ref {ssec:rm}), so $S=S'$, $\pi={\rm id}$, and the assertion follows. \end{proof}

\begin{lemma}\label{lem:purp} Let $C$ be a reduced curve on a surface $S$. Then 
\begin{equation}\label{eq:piz}
 e(C)\geqslant 2\chi(\O_C)
 \end{equation}
with equality if and only if $C$ is smooth.
\end{lemma}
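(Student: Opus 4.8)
The plan is to relate the two sides of \eqref{eq:piz} through the normalization $\nu\colon\widetilde C\to C$, whose source is a disjoint union of smooth projective curves, and to localise the difference $e(C)-2\chi(\O_C)$ at the finitely many singular points of $C$. Nothing in the argument uses the embedding $C\subset S$.

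First I would record two additivity formulas. On the coherent side, $\nu$ is finite, so $\chi(\nu_*\O_{\widetilde C})=\chi(\O_{\widetilde C})$, and from the exact sequence
\[
0\longrightarrow\O_C\longrightarrow\nu_*\O_{\widetilde C}\longrightarrow\mathcal F\longrightarrow 0,
\]
with $\mathcal F$ a skyscraper sheaf supported on $\Sing(C)$, one gets $\chi(\O_C)=\chi(\O_{\widetilde C})-\sum_{p}\delta_p$, where $\delta_p:=\dim_{\CC}\mathcal F_p$ is the $\delta$--invariant of $C$ at $p$. On the topological side, $\nu$ is an isomorphism over $C\setminus\Sing(C)$ and collapses each fibre $\nu^{-1}(p)$, of cardinality $m_p:=|\nu^{-1}(p)|$, to the single point $p$; additivity of the Euler--Poincar\'e characteristic for complex varieties then gives $e(C)=e(\widetilde C)-\sum_{p}(m_p-1)$. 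Since $\widetilde C$ is smooth one has $e(\widetilde C)=2\chi(\O_{\widetilde C})$ componentwise (this is exactly the case of equality $2-2g=2(1-g)$). Subtracting yields the key identity
\[
e(C)-2\chi(\O_C)=\sum_{p\in\Sing(C)}\bigl(2\delta_p-m_p+1\bigr).
\]

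The heart of the argument is the purely local inequality $\delta_p\geq m_p-1$ for each $p$. I would prove it by localising: set $A:=\O_{C,p}$ and let $\widetilde A=\prod_{i=1}^{m_p}\O_{\widetilde C,q_i}$ be its normalization, where $q_1,\dots,q_{m_p}$ are the points of $\nu^{-1}(p)$ and each $\O_{\widetilde C,q_i}$ is a discrete valuation ring; then $\delta_p=\dim_{\CC}\widetilde A/A<\infty$. Quotienting $\widetilde A$ by its Jacobson radical gives a surjection $\widetilde A\twoheadrightarrow\CC^{m_p}$ under which the maximal ideal of the local ring $A$ maps to $0$ (a regular function vanishing at $p$ vanishes at every $q_i$), so the image of $A$ is the diagonal $\CC\subset\CC^{m_p}$. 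Hence $\widetilde A/A$ surjects onto $\CC^{m_p}/\CC\cong\CC^{m_p-1}$, and therefore $\delta_p\geq m_p-1$. Combined with $\delta_p\geq 0$ this gives
\[
2\delta_p-m_p+1=\delta_p+(\delta_p-m_p+1)\geq\delta_p\geq 0
\]
for every $p$, so the right--hand side of the key identity is a sum of non--negative terms and \eqref{eq:piz} follows.

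Finally, each $p\in\Sing(C)$ satisfies $\delta_p\geq 1$ (a reduced curve is singular at $p$ exactly when $\O_{C,p}$ fails to be normal, i.e.\ when $\delta_p>0$), so the corresponding term obeys $2\delta_p-m_p+1\geq\delta_p\geq 1$. Hence if $C$ has a singular point the right--hand side of the key identity is strictly positive, while if $C$ is smooth the sum is empty; thus equality in \eqref{eq:piz} holds precisely when $C$ is smooth. I expect the only genuinely delicate step to be the local inequality $\delta_p\geq m_p-1$, and the Jacobson--radical argument above is meant to cover uniformly the reducible/multibranch situation ($m_p\geq 2$) and the unibranch singular one such as a cusp, where $m_p=1$ and the inequality degenerates to the trivial $\delta_p\geq 0$.
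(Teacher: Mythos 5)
Your proof is correct and follows essentially the same route as the paper: both compare $e$ and $\chi(\O_C)$ through the normalization and reduce to the inequality $\sum_p(m_p-1)\leq\sum_p\delta_p$, which is exactly the paper's $h^0(\epsilon)\leq h^0(\delta)$. The only difference is in how that key inequality is established — the paper deduces it from the injectivity of the map of torsion sheaves $\epsilon\to\delta$ (a regular function pulling back to a locally constant one is constant), while you obtain the stalkwise bound $\delta_p\geq m_p-1$ by mapping $\widetilde A/A$ onto $\CC^{m_p}/\CC$ via the Jacobson radical; both arguments are sound.
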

\begin{proof} First of all if $C$ is a smooth curve then \eqref {eq:piz} holds with equality. In fact, $C$ is the disjoint union of $C_1,\ldots, C_n$ its irreducible components. Then 
$g(C_i)=h^ 1(\O_{C_i})$, and 
\[
e(C_i)=2-2g(C_i)=2\Big (h^0(\O_{C_i})-h^ 1(\O_{C_i})\Big )=2\chi(\O_{C_i}),\quad \text {for}\quad 1\leq i\leq n.
\] Then 
\[
e(C)=\sum_{i=1}^nc(C_i)=2\Big (n-\sum_{i=1}^ng(C_i)\Big)=2\Big (h^0(\O_{C})-h^ 1(\O_{C})\Big )=2\chi(\O_C).
\]

Consider now the general case. Let $\nu: D\to C$ be the normalization of $C$. Consider the diagram
\begin{equation}\label{eq:pup}
\xymatrix{ 
0\ar[r]&\CC_C\ar[r]\ar[d]&\nu_*\CC_D\ar[d]\ar[r]& \epsilon\ar^{f}[d]\ar[r]&0 \\
0\ar[r]&\O_C\ar[r]&\nu_*\O_D\ar[r]& \delta\ar[r]&0\,\,
}
\end{equation}
where $\C_X$ denotes the constant sheaf with stalk $\CC$ on a variety $X$ and the torsion sheaves $\epsilon,\delta$ are defined in such a way that the horizontal sequences are exact. 

The map $f$ is injective. Indeed, this amounts to prove that local sections of $\nu_*\O_D$ which come from both 
$\O_C$ and from $\nu_*\CC_D$, in fact come from $\CC_C$. This is immediate. In fact, the sections in question are local regular functions on $C$ which, when pulled back to $D$, become constant which means they were constant to start with. 

Hence we have $h^ 0(\epsilon)\leqslant  h^ 0(\delta)$. From diagram \eqref {eq:pup} we deduce
\[
e(D)=e(C)+h^ 0(\epsilon), \quad \chi(\O_D)=\chi(\O_C)+h^ 0(\delta)
\]
hence
\[
e(C)=e(D)-h^ 0(\epsilon)=2\chi(\O_D)-h^ 0(\epsilon)=2\chi(\O_C)+2h^ 0(\delta)-h^ 0(\epsilon)
\]
which implies \eqref {eq:piz}. Moreover the equality holds if and only if
\[
0=2h^ 0(\delta)-h^ 0(\epsilon)=h^ 0(\delta)+\Big (h^ 0(\delta)-h^ 0(\epsilon)\Big)
\]
which holds if and only if $h^ 0(\delta)=0$, since $h^ 0(\delta)\geqslant h^ 0(\epsilon)$. On the other hand $h^ 0(\delta)=0$ if and only if $C$ is smooth. \end{proof}

\begin{proposition}\label{prop:cdf} Let $S$ be a surface and $C$ a smooth, projective curve and $f: S\to C$ a surjective morphism, with connected general fibre $F$ (a smooth curve) and $F_1,\ldots,F_h$ the only singular fibres of $f$. Then:\\
\begin{inparaenum}
\item [(i)] one has
\[
e(F_i)\geqslant e(F), \quad \mbox{for}\quad 1\leqslant i\leqslant h;
\]
\item [(ii)] one has
\[
e(S)=e(C)\cdot e(F)+\sum_{i=1}^ h \Big (e(F_i)-e(F)\Big)\geqslant e(C)\cdot e(F).
\]
\end{inparaenum}
\end{proposition}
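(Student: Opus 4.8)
The plan is to deduce (ii) from (i) by a standard topological computation, and to prove (i) by reducing to the case where $f$ is relatively minimal and then combining Lemma~\ref{lem:purp} with Zariski's Lemma (Theorem~\ref{thm:zar}).

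For (ii): set $C^{\circ}=C\setminus\{f(F_1),\dots,f(F_h)\}$. Since $f$ is flat (its fibres are divisors) and all fibres over $C^{\circ}$ are smooth, $f$ is smooth and proper over $C^{\circ}$, so by Ehresmann's theorem $f\colon f^{-1}(C^{\circ})\to C^{\circ}$ is a locally trivial $C^{\infty}$--fibre bundle with fibre $F$. Multiplicativity of the topological Euler characteristic in fibre bundles gives $e\bigl(f^{-1}(C^{\circ})\bigr)=e(C^{\circ})e(F)=(e(C)-h)e(F)$, and additivity of $e$ over the decomposition of $S$ into the open set $f^{-1}(C^{\circ})$ and the closed sets $F_1,\dots,F_h$ --- legitimate because all spaces occurring are complex algebraic varieties, for which $e$ equals the compactly supported Euler characteristic --- yields
\[
e(S)=(e(C)-h)e(F)+\sum_{i=1}^{h}e(F_i)=e(C)e(F)+\sum_{i=1}^{h}\bigl(e(F_i)-e(F)\bigr).
\]
This is the asserted identity; the inequality follows from (i).

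For (i): I argue by induction on $e(S)$. If $f$ is not relatively minimal, some fibre $F_{i_0}$ contains a $(-1)$--curve $E$; contracting it gives $\mu\colon S\to S'$ with induced morphism $f'\colon S'\to C$, where $e(S')=e(S)-1$, the general fibre and its genus are unchanged, the fibres over points other than $f(F_{i_0})$ are unchanged, and $e(F_{i_0})=e\bigl((f')^{-1}(f(F_{i_0}))\bigr)+1$ (collapsing a copy of $\PP^{1}$ lying in a fibre to a point lowers that fibre's Euler characteristic by one, as one checks from $F_{i_0}=\mu^{*}\bigl((f')^{-1}(f(F_{i_0}))\bigr)$); granting (i) for $f'$, every fibre of $f$ then satisfies $e(F_i)\geq e(F)$. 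So we may assume $f$ relatively minimal. Let $F_i=\sum_j n_jC_j$ be a singular fibre and $\Gamma=(F_i)_{\mathrm{red}}=\sum_j C_j$. As $F_i$ and $\Gamma$ have the same underlying space, Lemma~\ref{lem:purp} gives $e(F_i)=e(\Gamma)\geq 2\chi(\O_{\Gamma})=2\bigl(1-p_a(\Gamma)\bigr)$; and since all fibres are numerically equivalent of square $0$ we have $\chi(\O_{F_i})=\chi(\O_F)$, so $e(F)=2\chi(\O_F)=2\bigl(1-p_a(F_i)\bigr)$. Hence it is enough to show $p_a(\Gamma)\leq p_a(F_i)$. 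By adjunction and $F_i^{2}=0$,
\[
p_a(F_i)-p_a(\Gamma)=\tfrac12\Bigl((K_S+F_i)\cdot F_i-(K_S+\Gamma)\cdot\Gamma\Bigr)=\tfrac12\Bigl(K_S\cdot(F_i-\Gamma)-\Gamma^{2}\Bigr).
\]
Here $\Gamma^{2}\leq 0$ by Zariski's Lemma ($\Gamma$ is a nonzero curve supported on $F_i$), while $F_i-\Gamma=\sum_j(n_j-1)C_j$ is effective and $K_S\cdot C_j\geq 0$ for every component $C_j$: indeed $C_j^{2}\leq 0$, and $K_S\cdot C_j<0$ would force by adjunction $p_a(C_j)=0$ and either $C_j^{2}=-1$ (so $C_j$ a $(-1)$--curve, excluded) or $C_j^{2}=0$ (so $F_i=n_jC_j$, whence $\chi(\O_{F_i})=n_j\chi(\O_{C_j})=n_j\geq 2$, contradicting $\chi(\O_{F_i})=\chi(\O_F)=1-g\leq 1$). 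Therefore $p_a(F_i)-p_a(\Gamma)\geq 0$, which proves (i); then (ii) follows as above.

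The main obstacle is part (i): part (ii) is essentially formal once (i) is known. Within (i) the delicate points are the reduction to the relatively minimal case (together with the Euler-characteristic bookkeeping for fibres under blow-downs) and the numerical inequality $p_a\bigl((F_i)_{\mathrm{red}}\bigr)\leq p_a(F_i)$ --- which fails for a general effective divisor of self-intersection $0$ (e.g.\ a non-reduced member of a disconnected pencil), and whose proof genuinely uses both relative minimality, via $K_S\cdot C_j\geq 0$ on fibre components, and Zariski's Lemma.
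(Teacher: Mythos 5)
Your part (ii) is exactly the paper's argument (local triviality over the complement of the critical values plus additivity/multiplicativity of the Euler characteristic), but your part (i), while correct, takes a genuinely different and considerably longer route. Both proofs begin identically: $e(F_i)=e\bigl((F_i)_{\rm red}\bigr)$, Lemma \ref{lem:purp} gives $e\bigl((F_i)_{\rm red}\bigr)\geq 2\chi(\O_{(F_i)_{\rm red}})$, and flatness (or numerical invariance of $\chi(\O_D)$, as you put it) gives $\chi(\O_{F_i})=\chi(\O_F)$, so everything comes down to comparing $\chi(\O_{(F_i)_{\rm red}})$ with $\chi(\O_{F_i})$. You make this comparison through Riemann--Roch and adjunction on $S$, reducing it to $K_S\cdot\bigl(F_i-(F_i)_{\rm red}\bigr)\geq (F_i)_{\rm red}^2$, where the right-hand side is $\leq 0$ by Zariski's Lemma and the left-hand side is $\geq 0$ because $K_S\cdot C_j\geq 0$ on every fibre component --- a fact that genuinely needs relative minimality, whence your preliminary blow-down induction and the Euler-characteristic bookkeeping for fibres under contractions. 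The paper instead notes that the restriction map $\O_{F_i}\to\O_{(F_i)_{\rm red}}$ is a surjection of sheaves on a one-dimensional scheme, so $h^1(\O_{(F_i)_{\rm red}})\leq h^1(\O_{F_i})$ outright; no intersection theory, no relative minimality, no induction. Your detour is sound --- the two delicate points you rightly flag (that contracting a $(-1)$-curve in a fibre drops that fibre's Euler number by one, and that $K_S\cdot C_j<0$ on a fibre component forces either a $(-1)$-curve or an irreducible multiple rational fibre, both excluded) are handled correctly --- but the cohomological one-liner renders the entire relative-minimality apparatus unnecessary, and it is worth internalizing that the inequality $p_a(D_{\rm red})\leq p_a(D)$ you establish numerically is really just the surjectivity of $H^1(\O_D)\to H^1(\O_{D_{\rm red}})$ in disguise.
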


\begin{proof} (i) Since $f: S\to C$ is flat, then $\chi(\O_{F_i})=\chi(\O_{F})$ for $1\leqslant i\leqslant h$. Since the fibres of $f$ are connected, so that  $h^0(\O_{F_i})=h^0(\O_{F})=1$ for $1\leqslant i\leqslant h$, we have $h^1(\O_{F_i})=h^1(\O_{F})=g(F)$ for $1\leqslant i\leqslant h$. Since $e(F)=2-2g(F)$, we have to prove that
$e(F_i)\geqslant 2-2h^1(\O_{F_i})$ for $1\leqslant i\leqslant h$.

Now notice that for any curve $D$ on a surface, if we denote by $D_{\rm red}$ its (reduced) support  we have $e(D)=e(D_{\rm red})$. If $D$ is connected, by taking into account Lemma \ref {lem:purp}, we have
\[
e(D)=e(D_{\rm red})\geqslant 2\chi(\O_{D_{\rm red}})=2h^0(\O_{D_{\rm red}})-2h^1(\O_{D_{\rm red}})=2-2h^1(\O_{D_{\rm red}}).
\]
Moreover, we have $h^ 1(\O_D)\geqslant h^1(\O_{D_{\rm red}})$, because we have a surjection of sheaves, i.e., the restriction map, $\O_D\to \O_{D|D_{\rm red}}\cong \O_{D_{\rm red}}$. Hence
\[
e(D)\geqslant 2-2h^1(\O_{D_{\rm red}})\geqslant 2-2h^1(\O_{D}).
\]
Applying this to $D=F_i$ for $1\leqslant i\leqslant h$, ends the proof of (i).

(ii) Let $p_1,\ldots,p_h$ be the points such that $F_i$ is the fibre over $p_i$, with $1\leqslant i\leqslant h$. Then $f: S-\cup_{i=1}^ hF_i\to C-\{p_1,\ldots,p_h\}$ is locally topologically a product with fibre $F$. Hence we have
\[
e(S)= e(S-\sum_{i=1}^h F_i)+ e(\sum_{i=1}^h F_i)=(e(C)-h)\cdot e(F)+\sum_{i=1}^ h e(F_i)
\]
whence the assertion follows. \end{proof}

\begin{corollary}\label{cor:cdf} Same hypotheses as in Theorem \ref {thm:cdf}. In addition, assume $K_S$ nef. Then $e(S)\geqslant 0$.
\end{corollary}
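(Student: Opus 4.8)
The plan is to feed the fibration produced by Castelnuovo--De Franchis' Theorem into the Euler-characteristic inequality of Proposition \ref{prop:cdf}, and to observe that the nefness of $K_S$ forces the general fibre to have positive genus, so that the product appearing in that inequality has a favourable sign.

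By Theorem \ref{thm:cdf}, the 1--forms $\omega_1,\omega_2$ give a surjective morphism $f\colon S\to C$ with connected fibres onto a smooth projective curve $C$ with $g(C)\geqslant 2$. Let $F$ be a general fibre; it is a smooth irreducible curve with $F^{2}=0$, so the adjunction formula gives
\[
K_S\cdot F=(K_S+F)\cdot F=2g(F)-2.
\]
Since $K_S$ is nef we have $K_S\cdot F\geqslant 0$, hence $g(F)\geqslant 1$, and therefore $e(F)=2-2g(F)\leqslant 0$.

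Now I would apply Proposition \ref{prop:cdf}(ii), which yields $e(S)\geqslant e(C)\cdot e(F)$. Because $g(C)\geqslant 2$ we have $e(C)=2-2g(C)\leqslant -2<0$, while $e(F)\leqslant 0$ by the previous step; hence the product $e(C)\cdot e(F)$ is non--negative, and we conclude $e(S)\geqslant 0$.

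I do not expect a genuine obstacle: the argument is an assembly of results already established. The only point that matters is the elementary observation that a nef canonical class is incompatible with a rational pencil---one would have $K_S\cdot F=-2$ if $g(F)=0$---which is exactly what excludes $e(F)>0$ and makes the sign bookkeeping in $e(C)\cdot e(F)$ come out right.
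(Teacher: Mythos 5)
Your argument is correct and coincides with the paper's own proof: nefness of $K_S$ together with adjunction forces $g(F)\geqslant 1$, hence $e(F)\leqslant 0$, and then Proposition \ref{prop:cdf}(ii) gives $e(S)\geqslant e(C)\cdot e(F)\geqslant 0$ because $e(C)\leqslant -2$. The only difference is that you spell out the adjunction computation that the paper leaves implicit.
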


\begin{proof} If $K_S$ is nef, then the generic fibre $F$ of $f$ does not have genus 0, hence $e(F)\leqslant 0$. Then, applying Proposition \ref {prop:cdf}, we have
\[
e(S)\geqslant e(F)\cdot e(C)\geqslant 0
\] 
since $e(C)=2-2g(C)\leqslant -2$, being $g(C)\geqslant 2$. 
\end{proof}

\begin{lemma}[Hopf's Lemma]\label{lem:hopf} Let $V, W$ be vector spaces of finite dimension $n,m$ respectively over a field $\mathbb K$. Assume we have a linear map
\[
\phi: \wedge^2 V\to W.
\]
If 
\[
m\leqslant 2n-4
\]
then there are non--zero indecomposable tensors $\omega_1\wedge\omega_2$ in ${\rm Ker}(\phi)$.
\end{lemma}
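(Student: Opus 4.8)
The plan is to rephrase the statement as an intersection problem inside the projective space $\PP(\wedge^2 V)$ and to settle it by the projective dimension theorem. Throughout I would assume $\mathbb K$ algebraically closed, which is all that is needed in the applications (where $\mathbb K=\CC$).

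First I would recall that the classes in $\PP(\wedge^2 V)$ of the non--zero tensors of the form $\omega_1\wedge\omega_2$, with $\omega_1,\omega_2\in V$ linearly independent, are precisely the points of the Grassmannian $G:=G(2,V)$, embedded in $\PP(\wedge^2 V)=\PP^N$ (with $N=\binom n2-1$) by the Plücker embedding; thus $G$ is an irreducible projective subvariety of $\PP^N$ of dimension $2(n-2)=2n-4$. Next, since $\dim\ker(\phi)\geq \binom n2-m$ by the rank--nullity theorem, its projectivization $P:=\PP(\ker\phi)$ is a linear subspace of $\PP^N$ of dimension at least $\binom n2-1-m$, i.e.\ of codimension at most $m$.

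Then I would apply the projective dimension theorem (cf.\ \cite[Chapt.~I, Thm.~7.2]{Hart}) to $G$ and $P$ inside $\PP^N$: every irreducible component of $G\cap P$ has dimension at least
\[
(2n-4)+\Big(\binom n2-1-m\Big)-\Big(\binom n2-1\Big)=2n-4-m\geq 0,
\]
the last inequality being the hypothesis $m\leq 2n-4$; in particular $G\cap P\neq\emptyset$. Any point of $G\cap P$ is the class of a non--zero tensor $\omega_1\wedge\omega_2$, with $\omega_1,\omega_2\in V$ linearly independent, that lies in $\ker(\phi)$, which is exactly the desired conclusion.

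I do not expect a real obstacle: the argument is essentially a one--line dimension count once the objects are set up. The only thing to get right is the numerology --- $\dim G(2,V)=2n-4$, so the hypothesis $m\leq 2n-4$ is exactly the condition making $\dim G\geq \codim_{\PP^N}P$ and hence forcing a non--empty intersection. If one prefers to avoid projective space, the identical bookkeeping goes through with the affine cone over $G$ (of dimension $2n-3$) and the subspace $\ker(\phi)$ of $\wedge^2 V$, using the affine dimension theorem together with the fact that $0$ belongs to both.
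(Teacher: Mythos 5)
Your proof is correct and is essentially the paper's argument: both reduce to the observation that the Grassmannian of decomposable tensors has dimension $2n-4$ in $\PP(\wedge^2 V)$, so the hypothesis $m\leq 2n-4$ forces it to meet the linear space $\PP(\ker\phi)$. The only cosmetic difference is that the paper packages the dimension count as the non--finiteness of the projection $\widetilde\phi$ restricted to the Grassmannian (which must therefore meet the indeterminacy locus $\PP(\ker\phi)$), whereas you invoke the projective dimension theorem directly; your explicit remark that $\mathbb K$ should be taken algebraically closed is a point the paper leaves implicit but also needs.
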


\begin{proof}
Consider the projective transformation induced by $\phi$
\[
\widetilde \phi: \PP(\wedge^2 V)\dasharrow \PP(W)
\]
whose indeterminacy locus is $\PP({\rm Ker}(\phi))$.  In $ \PP(\wedge^2 V)$ there is the $(2n-4)$--dimensional \emph{Grassmann variety} $\mathbb G$ of lines in $\PP(V)$, which is the locus of equivalence classes of non--zero indecomposable tensors. If $m\leqslant 2n-4$, then
\[
\dim (\PP(W))=m-1 \leqslant 2n-5=\dim(\mathbb G)-1
\]
so that $\widetilde \phi_{|\mathbb G}$ is not finite. This implies that $\mathbb G$ has to intersect the indeterminacy locus $\PP({\rm Ker}(\phi))$ of $\widetilde \phi$. This proves the assertion.
\end{proof}

\begin{corollary}\label{cor:cdf2} Let $S$ be a surface such that
\[
p_g\leqslant 2q-4.
\]
Then there is a morphism $f: S\to C$ as in Castelnuovo--De Franchis' Theorem. 
\end{corollary}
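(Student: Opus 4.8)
The plan is to apply Hopf's Lemma (Lemma \ref{lem:hopf}) to the wedge-product pairing on global holomorphic forms, and then feed the output into Castelnuovo--De Franchis' Theorem (Theorem \ref{thm:cdf}).

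First I would set $V=H^0(S,\Omega^1_S)$ and $W=H^0(S,\Omega^2_S)$, so that $n:=\dim V=q$ and $m:=\dim W=p_g$. Exterior multiplication of holomorphic $1$--forms gives a well--defined linear map
\[
\phi:\wedge^2 V\to W,\qquad \omega_1\wedge\omega_2\mapsto \omega_1\wedge\omega_2,
\]
since the wedge of two holomorphic $1$--forms is a holomorphic $2$--form. Note that the hypothesis $p_g\leq 2q-4$ forces $q\geq 2$ (as $p_g\geq 0$), so $\wedge^2 V\neq 0$, and it is precisely the inequality $m\leq 2n-4$ required in Hopf's Lemma.

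Next I would invoke Hopf's Lemma to obtain a non--zero indecomposable tensor $\omega_1\wedge\omega_2\in\ker(\phi)$. Being indecomposable and non--zero means that $\omega_1$ and $\omega_2$ are linearly independent elements of $H^0(S,\Omega^1_S)$; being in $\ker(\phi)$ means that $\omega_1\wedge\omega_2=0$ as a global section of $\Omega^2_S$, i.e. $\omega_1\wedge\omega_2\equiv 0$. Thus the hypotheses of Castelnuovo--De Franchis' Theorem are met, and applying it produces the desired morphism $f:S\to C$ onto a smooth projective curve $C$ with $g(C)\geq 2$, connected fibres, and $\omega_i=f^*(\alpha_i)$ for suitable holomorphic $1$--forms $\alpha_i$ on $C$.

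There is essentially no hard step: the content is entirely in Lemma \ref{lem:hopf} and Theorem \ref{thm:cdf}, both already available. The only point needing a word of care is the translation between ``$\omega_1\wedge\omega_2\in\ker(\phi)$'' (vanishing of a global holomorphic $2$--form) and the hypothesis ``$\omega_1\wedge\omega_2\equiv 0$'' of Theorem \ref{thm:cdf}; but a holomorphic section that vanishes is identically zero, so these two conditions coincide.
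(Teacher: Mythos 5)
Your proposal is correct and follows exactly the paper's own argument: apply Hopf's Lemma to the wedge map $\wedge^2 H^0(\Omega^1_S)\to H^0(\Omega^2_S)$ and feed the resulting decomposable tensor into Castelnuovo--De Franchis' Theorem. The extra remarks (that $q\geq 2$ so $\wedge^2 V\neq 0$, and that indecomposability gives linear independence) are accurate and only make explicit what the paper leaves implicit.
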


\begin{proof} Consider the obvious linear map
\[ \wedge ^ 2H^ 0(\Omega^ 1_S)\to H^ 0(\Omega^ 2_S)\]
which sends $\sum_{ij}\omega_i\wedge \omega_j$ to the 2--form which is denoted in the same way. An application of Hopf's Lemma implies that there is a non--zero indecomposable tensor $\omega_1\wedge\omega_2$ such that $\omega_1\wedge\omega_2\equiv 0$ as a 2--form. The assertion then follows by Castelnuovo--De Franchis' Theorem. \end{proof}

\begin{thm}[Castelnuovo--De Franchis--Enriques' Theorem]\label{thm:cdfe} If $S$ is a surface with $K_S$ nef, then:\\
\begin{inparaenum}
\item [(i)] $\chi\geqslant 0$ and $e\geqslant 0$;\\
\item [(ii)] if $\kappa=2$, then $\chi >0$ and $K_S^2>0$;\\
\item [(iii)] if $K_S^2>0$ then $\kappa=2$;\\
\item [(iv)] if $\chi>0$ then $\kappa\geq 0$.
\end{inparaenum}
\end{thm}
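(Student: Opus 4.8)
The four parts all rest on the fact that $K_S$ nef forces $K_S^2\ge 0$, and each combines Riemann--Roch with a vanishing theorem; part (i) is logically prior, so the plan is to prove (i) first and then deduce (iii), then (ii), then (iv). For (i) I would first establish $e(S)\ge 0$ and then read off $\chi(S)\ge 0$ from Noether's formula \eqref{eq:NNN} together with $K_S^2\ge 0$. If $q=0$ then $b_1=0$ and $e(S)=2+b_2\ge 2$. If $q\ge 1$, consider the Albanese morphism $\alpha\colon S\to\Alb(S)$ and put $W=\alpha(S)$. If $\dim W=1$, the curve $W$ spans $\Alb(S)$, so its normalization has genus $\ge q$; taking the Stein factorization of $\alpha$ produces a fibration $f\colon S\to B$ with $g(B)\ge q\ge 1$, and since $K_S$ is nef the general fibre $F$ cannot be $\PP^1$ (otherwise $K_S\cdot F=-2$), so $e(F)\le 0$ and $e(B)\le 0$, whence Proposition \ref{prop:cdf}(ii) gives $e(S)\ge e(B)e(F)\ge 0$. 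If $\dim W=2$ (so $q\ge 2$), then whenever $p_g\le 2q-4$ Corollary \ref{cor:cdf2} yields a Castelnuovo--De Franchis fibration $S\to C$ with $g(C)\ge 2$ and Corollary \ref{cor:cdf} gives $e(S)\ge 0$; the remaining sub-case is $q\ge 2$, $\dim W=2$ and $p_g\ge 2q-3$, where $\alpha$ is generically finite onto a surface $W$ inside an abelian variety. Here $\chi(S)=1-q+p_g\ge q-2\ge 0$ is already clear, so only $e(S)\ge 0$ (equivalently $K_S^2\le 12\chi(S)$) remains; I would get it by comparing $e(S)$ and $\chi(\O_S)$ with the corresponding invariants of a smooth model of $W$ and using that a smooth surface in an abelian variety has non-negative Euler number, which follows from the finiteness of the Gauss map of a subvariety of an abelian variety (so that $e(W)$ is an intersection number of globally generated classes). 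I expect this sub-case to be the main obstacle; everything else is essentially formal.

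For (iii): if $K_S^2>0$ then $K_S$ is big and nef, so by the Kawamata--Viehweg vanishing theorem $h^i(nK_S)=h^i(K_S+(n-1)K_S)=0$ for all $i\ge 1$ and $n\ge 2$; hence $P_n=\chi(nK_S)=\chi(S)+\tfrac12 n(n-1)K_S^2$ for $n\ge 2$, which (using $\chi(S)\ge 0$ from (i)) lies between fixed positive multiples of $n^2$, so $\kappa(S)=2$ by Theorem \ref{thm:kod}. For (ii), first $K_S^2>0$: assume $K_S^2=0$ and take $n$ with $|nK_S|\ne\emptyset$; writing $|nK_S|=|M_n|+F_n$ with $M_n$ the moving part (nef), one gets $0\le M_n^2\le M_n\cdot nK_S\le (nK_S)^2=0$, so $M_n^2=0$ and $M_n\cdot K_S=0$, whence, exactly as in the proof of the Base Point Freeness Theorem, $|M_n|$ is composed with a pencil $g_n\colon S\to B_n$ with $M_n=g_n^*(N_n)$ for a line bundle $N_n$ on the curve $B_n$; then $h^0(nK_S)=h^0(B_n,N_n)\le\deg N_n+1$, and intersecting $nK_S\sim g_n^*(N_n)+F_n$ with a very ample $H$ gives $\deg N_n\le n(K_S\cdot H)$, so $h^0(nK_S)=O(n)$, contradicting $\kappa(S)=2$ via Theorem \ref{thm:kod}. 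Thus $K_S^2>0$. Then $\chi(S)>0$: by (i) we have $\chi(S)\ge 0$ and $e(S)\ge 0$, so $\chi(S)=0$ would force $K_S^2=0$ by \eqref{eq:NNN}, contradicting what was just shown.

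For (iv): by Serre duality $h^2(2K_S)=h^0(-K_S)$, so Riemann--Roch gives $h^0(2K_S)+h^0(-K_S)\ge\chi(2K_S)=\chi(S)+K_S^2>0$, using $\chi(S)>0$ and $K_S^2\ge 0$. If $h^0(2K_S)>0$ we are done; otherwise $-K_S\sim D$ with $D\ge 0$, and $D\ne 0$ would give $K_S\cdot H=-D\cdot H<0$ for $H$ ample, against $K_S$ nef, so $D=0$, i.e. $K_S\sim 0$ and $\kappa(S)=0$. To summarize, the only genuinely hard point is the non-negativity of $e(S)$ in part (i) when the Albanese map of $S$ is generically finite onto a surface with $p_g\ge 2q-3$; the other arguments are direct applications of Riemann--Roch, the vanishing theorems, Proposition \ref{prop:cdf} and Corollary \ref{cor:cdf}.
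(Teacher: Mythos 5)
Your parts (ii), (iii) and (iv) are correct; they are minor variants of the paper's arguments (your (iv), which disposes explicitly of the possibility $h^0(-K_S)>0$ by showing it forces $K_S\sim 0$, is in fact slightly more careful than the paper's). The problem is part (i), and it is exactly where you say it is: the sub-case $q\geq 2$, $\alpha$ generically finite onto its image $W$, and $p_g\geq 2q-3$. This is not a technical loose end but a genuine missing idea. The image $W=\alpha(S)$ need not be smooth, a desingularization of $W$ need not embed in $\Alb(S)$, and, most seriously, even granting that a smooth surface inside an abelian variety has non-negative Euler number (which does follow from global generation of its cotangent bundle), there is no elementary comparison between $e(S)$ and the Euler number of a model of $W$: the map $S\to W$ is generically finite but may be ramified along divisors and may contract curves, and the correction terms (Chern classes of the torsion quotient $\Omega^1_S/\mathrm{im}(\alpha^*\Omega^1_{\Alb(S)})$) have no sign. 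Statements of the type $e(S)\geq 0$ for minimal surfaces of maximal Albanese dimension are, absent a further idea, of the same depth as the inequality $K_S^2\leq 12\chi$ you are trying to prove, so the argument is circular in spirit.

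The paper avoids the case division entirely by a covering trick that you should adopt: assume $e<0$; then $0>e=2-2b_1+b_2$ gives $b_1\geq 2$, so $H_1(S,\ZZ)$ has free rank at least $2$ and admits subgroups of arbitrarily large finite index $m$. The corresponding \'etale cover $f\colon S'\to S$ of degree $m\geq 6$ has $e(S')=m\,e(S)\leq -6$, and since $e(S')=2-2b_1(S')+b_2(S')\geq 2-4q(S')+2p_g(S')$ this forces $p_g(S')\leq 2q(S')-4$. As $K_{S'}=f^*(K_S)$ is again nef, Corollary \ref{cor:cdf2} produces a Castelnuovo--De Franchis fibration on $S'$ and Corollary \ref{cor:cdf} gives $e(S')\geq 0$, a contradiction. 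The point is that passing to an \'etale cover multiplies $e$ while shifting the relation between $p_g$ and $q$ favourably, so the hypothesis $p_g\leq 2q-4$ of Hopf's Lemma can always be arranged on a cover whenever $e<0$; this is what replaces your problematic sub-case. With this, $\chi\geq 0$ follows from Noether's formula and $K_S^2\geq 0$ exactly as you say, and the rest of your proof goes through.
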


\begin{proof} (i) First we prove that $e\geqslant 0$. Argue by contradiction, and assume $e<0$. Then
$0>e=2-2b_1+b_2$, hence $2b_1\geqslant 2+b_2\geqslant 3$, which implies $b_1\geqslant 2$. Therefore
$H_1(S,\ZZ)$ is a free abelian group of rank at least 2. Choose a subgroup $G$ of $H_1(S,\ZZ)$ of index $m\geqslant 6$. This lifts to a subgroup $G'$ of the same index of $\pi_1(S)$, so it gives rise to an \'etale cover 
$f: S'\to S$ of order $m$, such that
\[
-6\geqslant me(S)=e(S')=2-2b_1(S')+b_2(S')\geqslant 2-4q(S')+2p_g(S'),
\]
hence $p_g(S')\leqslant 2q(S')-4$. Since $K_{S'}=f^ *(K_S)$, then $K_{S'}$ is nef. Then by Corollary    \ref {cor:cdf2} and Corollary \ref  {cor:cdf} we have $e(S')\geqslant 0$, which implies $e(S)\geqslant 0$, a contradiction.

Then $\chi\geqslant 0$ follows by Noether's Formula, since also $K_S^2\geqslant 0$ because $K_S$ is nef. This proves (i).

(ii) To prove the assertion it suffices to prove that $K_S^ 2>0$, because then $\chi>0$ by Noether's Formula. If $\kappa=2$, then there is an $n\in \NN$ such that the image of the map $\varphi_{|nK_S|}$ is a surface. Set 
\[
|nK_S|= |M|+F
\]
where $F$ is the fixed part and $|M|$ the movable part, which is nef. One has $M^ 2>0$, hence
\[
(nK_S)^ 2=nK_S\cdot F+nK_S\cdot M\geqslant nK_S\cdot M=(F+M)\cdot M\geqslant M^ 2>0
\]
as wanted. 

(iii) Assume  $K_S^ 2>0$. Then for any $n\in \NN$ we have
\begin{equation}\label{eq:olpu}
h^ 0(nK_S)+h^ 2(nK_S)\geqslant \chi(nK_S)=\frac  {m(m-1)}2 K^2_S+\chi. 
\end{equation}
If $n\geqslant 2$, one has $h^ 2(nK_S)=h^ 0((1-n)K_S)=0$. Indeed, if $A$ is ample   and if $n\geqslant 2$, then  $A\cdot  ((1-n)K_S)<0$ because $K_S$ is nef, so $h^ 0((1-n)K_S)=0$. Then \eqref {eq:olpu} implies that $h^ 0(nK_S)$ grows as $n^ 2$ proving that $\kappa=2$ (see Theorem \ref {thm:kod}).  

(iv) The proof is similar to the one of (iii). If $\chi>0$, then from  \eqref {eq:olpu} and $h^ 0((1-n)K_S)=0$ one has $P_n>0$ as soon as $n\geq 2$. \end{proof}

\subsection{The canonical bundle formula for elliptic fibrations}\label{ssec:canb}

Consider the following situation: $S$ is a surface, $C$ is a smooth projective curve, and we have a surjective morphism $f: S\to C$, such that the general fibre $F$ of $f$ is a smooth, irreducible, projective curve of genus $1$, and $S$ is {minimal}. Then $K_S\cdot F=0$ hence $\kappa(S)<2$ and, by Castelnuovo--De Franchis--Enriques' theorem, we have $K_S^2=0$. 
One has:

\begin {thm}\label {thm:canb} In the above setting, one has
\[
K_S=f^ *\Big (K_C\otimes (R^1f_*\O_S)^\vee\Big )\otimes \O_S\Big (\sum_{i=1}^h (m_i-1)F_i\Big),
\]
where $R^1f_*\O_S$ is a line bundle of degree $\chi$ on $C$, and $m_iF_i$, for $1\leqslant i\leqslant h$, are the multiple fibres of $f$. 

As a consequence, there is an $n\in \NN$ and a line bundle $\mathcal K$ on $C$ such that $nK_S=f^*(\mathcal K)$. 

\end{thm}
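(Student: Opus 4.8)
The plan is to split $K_S$ as a part pulled back from $C$ plus a vertical part supported on fibres, to identify the pulled-back part by Relative Duality, and then to pin down the vertical multiplicities by an adjunction argument on the reduced multiple fibres. Throughout I may assume $f$ has connected fibres (its general fibre is irreducible), so that $f_*\O_S=\O_C$.

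The geometric heart of the matter is the claim that $K_S\cdot C_k=0$ for \emph{every} irreducible component $C_k$ of \emph{every} fibre of $f$. Indeed $K_S\cdot G=0$ for a whole fibre $G$, since $G^2=0$ and $p_a(G)=1$; by Lemma \ref{lem:penc} a component of a reducible fibre has $C_k^2<0$, and by Zariski's Lemma \ref{thm:zar} a component with $C_k^2=0$ is a positive rational multiple of its (connected) fibre, so $K_S\cdot C_k=0$ in that case; in the remaining case, expanding $2p_a(C_k)-2=K_S\cdot C_k+C_k^2$ and using that $S$ is minimal (no $(-1)$--curves), one sees $K_S\cdot C_k$ cannot be negative, whence $\sum_k(\mathrm{mult}_k)\,K_S\cdot C_k=K_S\cdot G=0$ forces it to vanish for all $k$.

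Next I would run the classical argument. The sheaf $f_*\O_S(K_S)$ is torsion--free of generic rank $h^0(\omega_F)=h^0(\O_F)=1$, hence a line bundle $L'$ on $C$; the adjunction map $f^*L'\to\O_S(K_S)$ is generically an isomorphism along fibres, so $\O_S(K_S)=f^*L'\otimes\O_S(E)$ with $E$ effective, and restricting to a general fibre shows $E$ is vertical. Since $K_S\cdot C_k=0$ for every fibre component, $E$ meets every fibre component in $0$; applying Zariski's Lemma \ref{thm:zar} fibre by fibre, the part of $E$ over each point is a nonnegative rational multiple of that fibre, and integrality forces it to be an integer multiple of the primitive (reduced) fibre. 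Absorbing full fibres into $f^*L'$ I obtain
\[
K_S=f^*L''\otimes\O_S\Big(\sum_{i=1}^h a_iF_i\Big),\qquad 0\le a_i\le m_i-1,
\]
where $m_iF_i$ are the multiple fibres. Because $a_i<m_i$, any rational function with poles bounded by $\sum a_iF_i$ is constant on general fibres, hence pulled back from $C$, hence regular; so $f_*\O_S(\sum a_iF_i)=\O_C$ and $f_*\O_S(K_S)=L''$. Tensoring by $K_C^\vee$ and using Relative Duality \ref{thm:reldu}, which gives $f_*\omega_{S|C}\cong(R^1f_*\O_S)^\vee$, we get $L''=K_C\otimes(R^1f_*\O_S)^\vee$; in particular $R^1f_*\O_S$ is a line bundle, and the identity $\chi(\O_S)=\chi(\O_C)-\chi(C,R^1f_*\O_S)$ from the Leray spectral sequence, together with Riemann--Roch on $C$, computes its degree.

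The remaining---and main---point is that $a_i=m_i-1$, and this is where the local structure of multiple fibres enters. Restricting the displayed identity to the reduced fibre $F_i$ gives $K_S|_{F_i}=\O_{F_i}(a_iF_i)=\eta_i^{\otimes a_i}$ with $\eta_i:=\O_{F_i}(F_i)$; since $m_iF_i=f^*(p_i)$, one has $\eta_i^{\otimes m_i}=\O_{F_i}$, so $\eta_i$ is $m_i$--torsion. I would then invoke the standard analysis of multiple fibres of a relatively minimal elliptic fibration (see \cite{BPHV}): the reduced fibre $F_i$ has trivial dualizing sheaf $\omega_{F_i}\cong\O_{F_i}$, and $\eta_i$ has order \emph{exactly} $m_i$ in $\Pic(F_i)$. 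Adjunction along $F_i$ then reads $\O_{F_i}=\omega_{F_i}=(K_S+F_i)|_{F_i}=\eta_i^{\otimes(a_i+1)}$, so $m_i\mid a_i+1$, and with $0\le a_i\le m_i-1$ this forces $a_i=m_i-1$. Finally, with $n=\mathrm{lcm}(m_1,\dots,m_h)$, each $n(m_i-1)/m_i$ is an integer and $n(m_i-1)F_i=\frac{n(m_i-1)}{m_i}f^*(p_i)$, so $\O_S\big(n\sum(m_i-1)F_i\big)$ is pulled back from $C$; hence $nK_S=f^*\mathcal K$ with $\mathcal K=\big(K_C\otimes(R^1f_*\O_S)^\vee\big)^{\otimes n}\otimes\O_C\big(\sum_i\tfrac{n(m_i-1)}{m_i}p_i\big)$. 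The genuinely hard input is the exact order of $\eta_i$ (and the triviality of $\omega_{F_i}$); everything else is the interplay of Zariski's Lemma and Relative Duality.
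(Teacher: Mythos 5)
Your proof is correct and follows essentially the same route as the paper's: a decomposition of $K_S$ into a pullback from $C$ plus a vertical part controlled by Zariski's Lemma, adjunction along the reduced multiple fibres combined with the exact order $m_i$ of $\O_{F_i}(F_i)$ to force $a_i=m_i-1$, Relative Duality to identify the horizontal part as $K_C\otimes(R^1f_*\O_S)^\vee$, and the Leray spectral sequence for the degree. The only (harmless) variations are at the first step, where you obtain $K_S=f^*L'\otimes\O_S(E)$ from the evaluation map $f^*f_*\omega_S\to\omega_S$ rather than, as the paper does, from an effective divisor in $|K_S+\sum_i F_{x_i}|$ for many general fibres, and in your being more explicit than the paper about the identity $f_*\O_S\bigl(\sum_i a_iF_i\bigr)=\O_C$.
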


\begin{proof} Let $x\in C$ be a point. We will denote by $F_x$ the fibre of $f$ over $x$ and, as usual, by $F$ the general fibre of $f$. Let $x_1,\ldots, x_n$ be general points of $C$. Consider  the exact sequence
\[
0\to K_S\to K_S\otimes \O_S(\sum_{i=1}^nF_{x_i})\to \oplus_{i=1}^n \O_{F_{x_i}}\to 0.
\]
The image of $H^0(K_S\otimes\O_S(\sum_{i=1}^nF_{x_i}))$ in $H^0( \oplus_{i=1}^n \O_{F_{x_i}})\cong \CC^n$ has codimension at most $q=h^1(K_S)$, hence
\[
h^0(K_S\otimes\O_S(\sum_{i=1}^nF_{x_i}))\geq	 n+p_g-q
\] 
which is positive when $n\gg 0$. If $D\in  |K_S+\sum_{i=1}^nF_{x_i}|$, we have $D\cdot F=0$. This implies that we can write
\[
K_S=f^*(L)\otimes \O_S(B)
\]
with $L$ a line bundle on $C$ and $B$ a curve contained in a union of fibres of $f$, but not containing any fibre of $f$. Let $B_0$ be any connected component of $B$ and let $F_0$ be the fibre containing it. 

\begin{claim}\label{cl:uff}  $F_0$ is a multiple fibre and $B_0$ is a rational submultiple of $F_0$. i.e., there is an effective divisor $P$ such that
\[
F_0=mP \quad \text{and} \quad B_0=aP
\]
with $0<a <m$. \end{claim}

\begin{proof} [Proof of Claim \ref{cl:uff}] If $B_0,\ldots, B_h$ are the connected components of $B$, we have $B=\sum_{j=1}^h B_j$ and 
\[
0=K_S^2=B^2=\sum_{j=0}^h B^2_j.
\]
On the other hand we have $B_j^2\leq 0$, hence we have 
$B_j^2= 0$ for all $j\in \{0,\ldots, h\}$ and the claim follows by Zariski's Lemma. \end{proof}

So we have
\[
K_S=f^*(L)\otimes \O_S\Big (\sum_{i=1}^h a_iF_i\Big )
\]
with $m_iF_i$ the multiple fibres of $f$ and $0\leq a_i<m_i$.  By the adjunction formula we have
\[
\O_{F_i}=K_{F_i}=\O_S(K_S+F_i)\otimes \O_{F_i} =\O_S((a_i+1)F_i)\otimes \O_{F_i}.
\]
But $\O_S(F_i)\otimes \O_{F_i}$ is torsion, of order $m_i$, therefore we must have $a_i=m_i-1$, for $1\leq i\leq h$. 

 Note that, by the projection formula and relative duality (see Theorem \ref {thm:reldu}), we have
\[
L=L\otimes f_*(\O_S(\sum_{i=1}^h (m_i-1)F_i))=f_*(K_S)=K_C\otimes (R^1f_*\O_S)^\vee.
\]

Finally, by the spectral sequence associated to the map $f$, we have
\[
\begin{split}
&\chi=h^0(\O_S)-h^1(\O_S)+h^2(\O_S)=\\
&=h^0(f_*\O_S)-h^1(f_*\O_S)-h^0(R^1f_*\O_S)+h^1(R^1f_*\O_S)=\\
&=h^0(\O_C)-h^1(\O_C)-h^0(R^1f_*\O_S)+h^1(R^1f_*\O_S)=\\
&=1-g(C)-\chi(R^1f_*\O_S)=1-g(C)-(\deg(R^1f_*\O_S)-g(C)+1)=\\
&=-\deg(R^1f_*\O_S)=\deg (R^1f_*\O_S)^\vee
\end{split}
\]
as desired. \end{proof}

\subsection{Basic lemmas}\label{ssec:basic}

Now we prove a couple of lemmas which will be crucial in what follows. Let $S$ be a surface with $K_S$ nef, $q=1$, $p_g=0$ and $\kappa\leqslant 0$. Consider the Albanese map $\alpha: S\to E$, where $E=\Alb(S)$ is an elliptic curve. Consider the Stein factorization 
\begin{equation*}
\xymatrix{ 
&S\ar_{f}[d]\ar^{\alpha}[dr] &\\
&C\ar^{g}[r] &  E&
}
\end{equation*} 
By the universal property of the Albanese map, $\Alb(C)$ and $E=\Alb(S)$ are isomorphic, hence  $C=E$, hence $\alpha$ has connected fibres. We denote by $g$ the genus of a general fibre of $\alpha$. 

\begin{lemma} \label{cl:one} In the above setting  the Albanese morphism $\alpha: S\to E$ is smooth. \end{lemma}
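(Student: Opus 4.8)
The goal is to prove that every fibre of $\alpha$ is a smooth curve (necessarily of genus $g$); since $\alpha$ is a dominant morphism of smooth varieties whose fibres are all curves, smoothness of $\alpha$ is equivalent to smoothness of every fibre. I would begin by recording the numerical consequences of the hypotheses. From $q=1$ and $p_g=0$ one has $\chi=0$, so Noether's formula \eqref{eq:NNN} gives $K_S^2+e(S)=0$; since $K_S$ is nef one has $K_S^2\geq 0$, and by Castelnuovo--De Franchis--Enriques' Theorem \ref{thm:cdfe}(i) one has $e(S)\geq 0$, whence $K_S^2=e(S)=0$. Also $g\geq 1$: otherwise the general fibre $F$ would be a $\PP^1$ with $F^2=0$, so $K_S\cdot F=-2<0$ by adjunction, contradicting the nefness of $K_S$.

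The heart of the argument is the analysis of the singular fibres $F_1,\dots,F_h$ of $\alpha$. Since $E=\Alb(S)$ is elliptic, $e(E)=0$, so Proposition \ref{prop:cdf}(ii) yields $0=e(S)=\sum_{i=1}^h\bigl(e(F_i)-e(F)\bigr)$ with all summands $\geq 0$ by \ref{prop:cdf}(i); hence $e(F_i)=e(F)$ for every $i$. I would then re-run the chain of inequalities from the proof of \ref{prop:cdf}(i): for a connected fibre $F_i$ ($\alpha$ has connected fibres by construction), with reduced support $P_i:=(F_i)_{\rm red}$, one has $e(F_i)=e(P_i)\geq 2\chi(\O_{P_i})=2-2h^1(\O_{P_i})\geq 2-2h^1(\O_{F_i})=e(F)$, using flatness of $\alpha$, Lemma \ref{lem:purp}, and the surjection $\O_{F_i}\twoheadrightarrow\O_{P_i}$. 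Equality forces, via Lemma \ref{lem:purp}, that $P_i$ is smooth, hence (being connected) a smooth irreducible curve of genus $g$; thus $F_i=m_iP_i$ with $P_i$ smooth of genus $g$, and, $F_i$ being numerically a fibre, $P_i^2=0$ and $K_S\cdot P_i=2g-2$. Since $F_i$ is singular we must have $m_i\geq 2$, i.e. each singular fibre is a multiple fibre.

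It then remains to show that no multiple fibre exists. If $g\geq 2$ this is immediate: from $K_S\cdot F_i=K_S\cdot F=2g-2$ (all fibres are numerically equivalent) and $K_S\cdot F_i=m_i(K_S\cdot P_i)=m_i(2g-2)$ with $2g-2>0$ one gets $m_i=1$, a contradiction. If $g=1$ I would invoke the canonical bundle formula: $K_S$ nef makes $S$ minimal and the general fibre is elliptic, so Theorem \ref{thm:canb} applies. With $K_E=\O_E$ and $N:=(R^1\alpha_*\O_S)^\vee$ of degree $0$ on $E$, and choosing $n$ divisible by $\mathrm{lcm}(m_1,\dots,m_h)$, one rewrites the term $\sum(m_i-1)P_i$ using $m_iP_i=\alpha^*(x_i)$ to obtain $nK_S=\alpha^*(\mathcal K)$ with $\deg\mathcal K=n\sum_i\bigl(1-\tfrac1{m_i}\bigr)$. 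If some $m_i\geq 2$ this degree is $\geq n/2>0$, and then $\alpha_*\O_S=\O_E$, the projection formula, and Riemann--Roch on the elliptic curve $E$ give $P_n(S)=h^0(E,\mathcal K)=\deg\mathcal K$, which tends to $\infty$ — contradicting $\kappa(S)\leq 0$, which forces $P_n(S)\leq 1$ for all $n$. Hence $\alpha$ has no singular fibre, so every fibre is smooth and $\alpha$ is smooth.

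I expect the only delicate point to be the equality analysis in the second step: extracting from $e(F_i)=e(F)$ that the reduced fibre is a smooth \emph{irreducible} curve of genus exactly $g$, and then reconciling the "reduced versus full fibre" bookkeeping in Theorem \ref{thm:canb} with the divisibility requirements on $n$. The numerical preliminaries, the case $g\geq 2$, and the degree computation on $E$ are routine.
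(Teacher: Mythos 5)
Your proof is correct, and it follows the same overall skeleton as the paper's: establish $\chi=e(S)=K_S^2=0$, use Proposition \ref{prop:cdf} to force $e(F_i)=e(F)$ for every fibre, deduce that each reduced fibre is a smooth irreducible curve of genus $g$, and then exclude multiple fibres numerically when $g\geq 2$ and via the canonical bundle formula (Theorem \ref{thm:canb}) when $g=1$. The one genuine divergence is how irreducibility of $(F_i)_{\rm red}$ is obtained. The paper proves it \emph{first}, by a separate lattice-theoretic argument: Noether's formula gives $b_2=2$, and an ample class together with two distinct components of a fibre would give three independent classes in $H^2(S,\ZZ)$ (independence being checked with Zariski's Lemma); only then does it run the Euler-number inequality on the irreducible curve $G_x$, via adjunction, getting $e(G_x)\geq \frac{1}{n_x}e(F)$. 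You instead extract both smoothness and irreducibility from the equality case of the chain $e(F_i)=e(P_i)\geq 2\chi(\O_{P_i})\geq 2-2h^1(\O_{F_i})=e(F)$: equality in Lemma \ref{lem:purp} forces $P_i$ smooth, and a smooth connected curve is irreducible. This makes the $b_2=2$ computation superfluous and is self-contained given Proposition \ref{prop:cdf} and Lemma \ref{lem:purp}; the price is that you must (and do) recover the multiplicity information separately, via $K_S\cdot F_i=m_i(2g-2)=2g-2$ for $g\geq 2$, which is exactly the content of the paper's condition $\frac{1}{n_x}e(F)=e(F)$. Both endgames for $g=1$ are identical.
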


\begin{proof} First we remark that $\chi=0$ and that, by Castelnuovo--De Franchis--Enriques' Theorem, we have $K_S^2=0$. Then
\begin{equation}\label{eq:park1}
e=12\chi-K_S^2=0=2(b_0-b_1)+b_2=2-4q+b_2=b_2-2, \quad \mbox {hence} \quad b_2=2.
\end{equation}

First we prove that for each fibre $F$ of $\alpha$, $F_{\rm red}$ is irreducible. Indeed, assume the contrary and suppose that a fibre contains two distinct irreducible components $F_1,F_2$. Take $H$ an ample divisor. We claim that the classes of $H, F_1,F_2$ are linearly independent in $H^ 2(S,\ZZ)$, contradicting 
\eqref {eq:park1}. Indeed, assume $hH+f_1F_1+f_2F_2=0$ in $H^ 2(S,\ZZ)$. Then, intersecting with $F$, we find
\[
0=(hH+f_1F_1+f_2F_2)\cdot F=h H\cdot F
\]
whence $h=0$. On the other hand Zariski's Lemma implies that  $f_1F_1+f_2F_2\equiv 0$ if and only if $f_1=f_2=0$. This proves the claim.

Next, for every $x\in E$ we consider the fibre $F_x$, which we write as $F_x=n_xG_x$, with $n_x\geqslant 1$ and $G_x$ irreducible and reduced. 
If $F$ is the general fibre of $\alpha$, by Lemma \ref {lem:purp} we have
\[
\begin{split}
e(G_x)&\geqslant 2\chi(\O_{G_x})=-G_x\cdot (G_x+K_S)=\\
&=-\frac 1{n_x} F\cdot (K_S+\frac 1{n_x}F)=-\frac 1{n_x} F\cdot K_S=\\
&=-\frac 1{n_x} F\cdot (K_S+F)=\frac 2{n_x} \chi(\O_F)=\frac 1{n_x} e(F).
\end{split}
\]
Since $g\geqslant 1$, we have $e(F)\leqslant 0$, hence $e(G_x)\geqslant e(F)$ for all $x\in E$, and the equality holds if and only if 
\[
e(G_x)= 2\chi(\O_{G_x}) \quad \mbox {hence $G_x$ is smooth, and}\quad  \frac 1{n_x} e(F)=e(F).
\]
By part (ii) of Proposition \ref {prop:cdf} we have
\[
0=e(S)=e(E)\cdot e(F)+\sum_{x\in E} (e(F_x)-e(F))=\sum_{x\in E} (e(G_x)-e(F))
\]
since $e(E)=0$. Thus we have $e(G_x)=e(F)$ for all $x\in E$. As we saw, this implies that $G_x$ is smooth for all $x\in E$. Moreover, if $e(F)\neq 0$, i.e., if $g>1$, then we must have $n_x=1$ for all $x\in E$, proving the lemma. If $g=1$ and $n_x>1$ for some $x\in E$, then the canonical bundle formula for $\alpha$ implies that there is an $n\in \NN$ and a line bundle $\mathcal K$ of positive degree on $E$ such that $nK_S=\alpha^ *(\mathcal K)$, which implies that that there is a positive integer $m$ such that $P_{nm}\geqslant 2$, so that $\kappa>0$, a contradiction. So again $n_x=1$ for all $x\in E$, and the assertion holds. \end{proof}

Next we assume that the general fibre of $\alpha$ has genus $g=1$.

\begin{lemma}\label{lem:basic} In the above set up, there is a morphism $\beta: S\to \PP^ 1$ with connected fibres of genus 1.
\end{lemma}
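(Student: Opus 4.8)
The plan is to first determine the global structure of the smooth elliptic fibration $\alpha\colon S\to E$ produced in Lemma~\ref{cl:one}, and then to extract $\beta$ from that description.

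First I would check that $\alpha$ is \emph{isotrivial}. By Lemma~\ref{cl:one} the morphism $\alpha$ is smooth, so all its fibres are smooth curves of genus $g=1$; in particular $\alpha$ has no multiple fibres. The $j$--invariant of the fibres then defines a morphism $j\colon E\to \AA^1\subset\PP^1$, landing in $\AA^1$ precisely because no fibre is singular. Since $E$ is complete and $\AA^1$ is affine, $j$ is constant, so every fibre of $\alpha$ is isomorphic to a fixed elliptic curve $F$. Next I would invoke the structure theorem for smooth isotrivial elliptic surfaces (see \cite[Chapt.~V]{BPHV}): since $\alpha$ is smooth, isotrivial with fibre $F$, and has no multiple fibres, there are a finite group $G$, an elliptic curve $\hat E$ with a free $G$--action and $\hat E/G=E$ (so $\hat E\to E$ is \'etale of degree $|G|$), and an action of $G$ on $F$ by automorphisms of the elliptic curve, such that
\[
S\cong(\hat E\times F)/G
\]
with $G$ acting diagonally; this diagonal action is free because $G$ already acts freely on $\hat E$. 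Note that a finite group acting freely on an elliptic curve acts by translations, so $G$ acts on $\hat E$ by translations.

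Then I would use the hypothesis $q(S)=1$. If every element of $G$ acted on $F$ by a translation, then $G$ would act on $\hat E\times F$ entirely by translations, and $S$ would be an \'etale quotient of the abelian surface $\hat E\times F$, hence itself an abelian surface with $q(S)=2$ --- a contradiction (equivalently, $H^0(\Omega^1_{\hat E\times F})^G$ would be two--dimensional). Hence some $g_0\in G$ acts on $F$ with non--trivial linear part, so $g_0$ has a fixed point on $F$; thus $F\to F/G$ is ramified, and Riemann--Hurwitz together with $g(F)=1$ forces $g(F/G)=0$, i.e.\ $F/G\cong\PP^1$. The composition of the second projection $\hat E\times F\to F$ with the quotient $F\to F/G$ is $G$--invariant, so it descends to a morphism $\beta\colon S\to F/G\cong\PP^1$. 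Over a point of $F/G$ whose preimage in $F$ is a free $G$--orbit the fibre of $\beta$ is $(\hat E\times(Gf))/G\cong\hat E$, a connected curve of genus $1$; since the general fibre is connected, Stein factorization shows that $\beta$ itself has connected fibres, and $\beta$ is the required fibration.

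The main obstacle is the structure theorem invoked above, namely the identification $S\cong(\hat E\times F)/G$ with $\hat E\to E$ \'etale. Granting the isotriviality of $\alpha$ and the absence of multiple fibres --- both already established --- this is a standard fact about isotrivial elliptic surfaces, but a fully self--contained argument requires some work: one would pass to a finite \'etale cover $\hat E\to E$, an elliptic curve by Proposition~\ref{prop:pp}, trivializing the finite monodromy of the fibration, and then analyse the resulting family with constant Jacobian $\hat E\times F$ directly. Everything else in the argument is routine.
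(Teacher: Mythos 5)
Your route is genuinely different from the paper's, and its second half is fine: granting $S\cong(\hat E\times F)/G$ with $G$ acting on $\hat E$ by translations, your observation that $q=1$ forces some element of $G$ to act on $F$ with non-trivial linear part, hence $F/G\cong\PP^1$ by Riemann--Hurwitz, and that the second projection then descends to the desired $\beta$ with connected elliptic fibres, is correct and arguably cleaner than the paper's construction. The problem is that the entire difficulty of the lemma has been pushed into the ``structure theorem'' $S\cong(\hat E\times F)/G$, and your sketch of how to supply it does not work as stated. For fibres of genus $1$ the group $\Aut(F)$ is not finite, so ``trivializing the finite monodromy'' (killing the image of $\pi_1(E)$ in the finite group $\Aut_0(F)$) only reduces the family to a torsor under the constant elliptic curve $F$ over an \'etale cover $\hat E$. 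Such torsors are classified by $H^1(\hat E,\mathcal F)$, with $\mathcal F$ the sheaf of holomorphic maps to $F$, and this group is large: among the total spaces of its non-trivial elements one finds primary Kodaira surfaces (non-K\"ahler) and complex tori that are non-split extensions of $\hat E$ by $F$. Thus ``smooth $+$ isotrivial $+$ no multiple fibres $\Rightarrow$ product after finite \'etale base change'' is false without invoking projectivity (or K\"ahlerness) of $S$ in an essential way; ``analysing the family with constant Jacobian directly'' is exactly the missing step, not a routine verification.

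There are two honest ways to close the gap. One is to quote the classification of K\"ahler elliptic fibre bundles (together with Fischer--Grauert) from \cite{BPHV}; that makes your argument correct but imports a substantial external theorem that this text is otherwise at pains to avoid. The other is to produce a multisection of $\alpha$ that is \'etale over $E$ --- e.g.\ the curve traced out by the $m$-division points of $H_{|F}$ for $H$ very ample --- and base-change to one of its components, after which the family acquires a section and the finite-monodromy argument does apply; but this multisection is precisely the first Claim in the paper's proof of Lemma \ref{lem:basic}, so along this road you rejoin the paper's argument at its hardest point. By contrast, the paper never establishes the product structure here: it constructs a second elliptic multisection $G'$ with $G\cdot G'=0$ by Riemann--Roch applied to $|nK_S+nG|$, and extracts the pencil $\beta$ from the movable part of $|2K_S+2G+2G'|$, postponing the Bagnera--De Franchis structure theorem to \S\ref{sec:bdf}.
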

\begin{proof} Recall that $\chi=0$, $K_S^2=0$, $q=1$.

\begin{claim} \label{cl:two} There is a smooth elliptic curve $G$ on $S$ such that $G^ 2=K_S\cdot G=0$ and $\alpha(G)=E$. \end{claim}

\begin{proof}[Proof of the Claim] Consider a very ample divisor $H$, then $H_{|F}$ is an effective divisor of degree $m:=F\cdot H>0$. For each fibre $F$ there are $m^ 2$ points $p_{F,i}$ such that $mp_{F,i}\sim H_{|F}$, for $1\leqslant i\leqslant m^2$. As $F$ moves among the fibres, these points describe a curve $G'$, and the map
$\alpha_{|G'}: G'\to E$ is \'etale. Hence any irreducible component $G$ of $G'$ is a smooth elliptic curve. The canonical bundle formula for $\alpha$ implies that $K_S\equiv 0$, hence $K_S\cdot G=0$, thus also $G^2=0$, as desired. \end{proof}

\begin{claim} \label{cl:two} There is another smooth elliptic curve $G'\neq G$ on $S$ such that $G\cdot G'=G'^ 2=K_S\cdot G'=0$ and $\alpha(G')=E$. \end{claim}

\begin{proof}[Proof of the Claim] For every integer $n\geqslant 2$, consider the exact sequence
\[
0\to \O_S(nK_S+(n-1)G)\to \O_S(nK_S+nG)\to \O_G(nK_S+nG)=nK_G\cong \O_G \to 0
\]
and the related exact cohomology sequence
\[
H^1(\O_S(nK_S+nG))\to H^ 1(\O_G)\cong \CC\to H^2( \O_S(nK_S+(n-1)G)).
\]
By Serre Duality we have
\begin{equation}\label{eq:cak}
h^2( \O_S(nK_S+(n-1)G))=h^0( \O_S(-(n-1)K_S-(n-1)G))=0
\end{equation}
because $-(n-1)K_S-(n-1)G\equiv -(n-1)G$ and $G$ is effective. Hence we have $h^1(\O_S(nK_S+nG))\geqslant 1$. Similarly to \eqref {eq:cak} we see that $h^2(\O_S(nK_S+nG))=0$.
Therefore
\[
h^0(\O_S(nK_S+nG))=\chi +\frac 12 (nK_S+nG)\cdot ((n-1)K_S+nG)+h^1(\O_S(nK_S+nG))\geqslant 1. 
\]
Hence, for every $n\geqslant 2$ we find a curve $G_n\in |nK_S+nG|$. 

Note that $G_n$ cannot be a multiple of $G$ for all $n\geqslant 2$. Otherwise we would have
\[
G_n=mG\sim n(K_S+G), \quad G_{n+1}=m'G\sim (n+1)(K_S+G).
\]
If $A$ is any ample divisor, since $K_S\equiv 0$, we clearly have $A\cdot G_{n+1}>A\cdot G_n$, hence $m'>m$. But then
\[
K_S\sim G_{n+1}-G_n-G\sim (m'-m-1)G
\]
hence we would have $h^ 0(K_S)>0$, a contradiction. 

So we can take an integer $n\geqslant 2$ such that 
\[
G_n= mG+\sum_{i=1}^ hd_iD_i
\]
with $h>0$,  $D_1,\ldots, D_h$ irreducible distinct curves distinct from $G$, $d_1,\ldots, d_h$ positive integers. Since 
\[
0=G\cdot (nK_S+nG)=G\cdot G_n=G\cdot (mG+\sum_{i=1}^ hd_iD_i)=G\cdot \sum_{i=1}^ hd_iD_i
\] 
then we must have $G\cdot D_i=0$, for $1\leqslant i\leqslant h$. By Hodge Index Theorem this yields $D_i^2\leqslant 0$, for $1\leqslant i\leqslant h$. On the other hand
\[
0=K_S\cdot (nK_S+nG)=K_S\cdot G_n=K_S\cdot (mG+\sum_{i=1}^ hd_iD_i)= K_S\cdot \sum_{i=1}^ hd_iD_i
\]
and since $K_S$ is nef we have $K_S\cdot D_i=0$, for $1\leqslant i\leqslant h$. This implies that each $D_i$ is either rational or it is smooth of genus 1. However, there is no rational curve on $S$, because a rational curve cannot dominate $E$, so it should be contained in a fibre of $\alpha$, which is impossible, since all fibres of $\alpha$ are smooth elliptic curves. So $D_i$ is smooth, elliptic, for $1\leqslant i\leqslant h$. So we can take $G'$ equal to one of the $D_i$s.  Furthermore, $\alpha(G')=E$, otherwise $G'$ would be a fibre of $\alpha$, and this is impossible, because the fibre of $\alpha$ intersect $G$ positively. \end{proof}

Now we can finish the proof of the Lemma. Consider the exact sequence
\[
0\to \O_S(2K_S+G+G')\to  \O_S(2K_S+2G+2G')\to  \O_{G\cup G'}(2K_S+2G+2G')\cong \O_G\oplus \O_{G'}\to 0
\]
which gives rise to
\[
H^1( \O_S(2K_S+2G+2G'))\to H^1(\O_{G\cup C'}(2K_S+2G+2G'))\cong \CC^2\to H^2(\O_S(2K_S+G+G')).
\]
We have
\[
h^2(\O_S(2K_S+G+G'))\cong h^0(-K_S-G-G')=0
\]
and similarly $h^2(\O_S(2K_S+2G+2G'))=0$. Then 
\[
h^1( \O_S(2K_S+2G+2G'))\geqslant h^1(\O_{G\cup C'}(2K_S+2G+2G'))=2.
\] 
Hence
\[
\begin{split}
h^0(\O_S(2K_S+2G+2G'))&=\chi+\frac 12 (2K_S+2G+2G')\cdot (K_S+2G+2G')+\\
&+h^1(\O_S(2K_S+2G+2G'))\geqslant 2.
\end{split}
\]
Write $|2K_S+2G+2G'|=|M|+\Phi$, where $|M|$ is the movable part and $\Phi$ the fixed part and note that $\dim (|M|)\geqslant 1$. We have that $2K_S+2G+2G'$ is nef, because so if $K_S$ and moreover $(2K_S+2G+2G')\cdot G=(2K_S+2G+2G')\cdot G'=0$. So, since $M$ is also nef, we have
\[
\begin{split}
&0=(2K_S+2G+2G')^2=(2K_S+2G+2G')\cdot (M+\Phi)\geqslant\\
&\geqslant (2K_S+2G+2G')\cdot M=M\cdot (M+\Phi)\geqslant M^2\geqslant 0
\end{split}
\]
hence $M^2=0$ and $|M|$ is base point free. Moreover
\[
\begin{split}
&0=(2K_S+2G+2G')^2\geqslant (2K_S+2G+2G')\cdot (M+\Phi)\geq \\
&\geqslant (2K_S+2G+2G')\cdot M\geqslant 2K_S\cdot M\geqslant 0, 
\end{split}
\]
thus $K_S\cdot M=0$, hence $|M|$ is composed with  a pencil of elliptic curves. Hence there is a smooth curve $B$, a morphism $\beta: S\to B$ with connected fibres of arithmetic genus 1, and a linear series $\mathcal M$ on $B$ such that $|M|=\beta^*(\mathcal M)$. Note that $\beta: S\to B$ cannot coincide with $\alpha: S\to E$, since otherwise we have
\[
0<G\cdot F\leqslant G\cdot (M+\Phi)=G\cdot (2K_S+2G+2G')=0,
\]
a contradiction. Finally we observe that the genus of $B$ cannot be positive, otherwise by the universal property of the Albanese variety, we would have a commutative diagram
\begin{equation*}\label{eq:K3}
\xymatrix{ 
&S\ar^{\beta}[r]\ar_{\alpha_S}[d]&B\ar^{\alpha_B}[d] \\
&E\ar^{f}[r] & J(B)\,\,
}
\end{equation*}
and $\alpha_S$ would coincide with $\beta$, a contradiction. Hence $B\cong \PP^1$.  \end{proof}

\subsection{The proof of Theorem \ref {thm:ftc2} }\label{ssec:s1}

Now we can turn to the:
\begin{proof}[Proof of Theorem \ref {thm:ftc2}]  We argue by contradiction, assuming $\kappa=-\infty$. In  particular $P_2=0$. Then we must have $q>0$ otherwise, by Castelnuovo's Criterion, $S$ would be rational, i.e.,
 we would have a birational map $f: S\dasharrow \PP^2$, which leads to a contradiction.
 Indeed,  if $f$ is an isomorphism, then $K_S$ is not nef, a contradiction. If $f$ is a morphism which is not an isomorphism, then $S$ has some $(-1)$--curve, and $K_S$ is not nef, a contradiction. Then assume $f$ is not a morphism. Then we have a resolution of the indeterminacies
 \begin{equation*}
\xymatrix{ 
&X\ar_{p}[d]\ar^{\pi}[dr] &\\
&S\ar@{-->}^{f}[r] &  \PP^2&
}
\end{equation*}
and assume that $\pi$ is composed with the minimal number of blow--ups. Then, by an argument we already made, $X$ contains a $(-1)$--curve $C$, the exceptional curve of the last blow--up composing $\pi$, which is not contracted by $p$. We have
\[
K_X\sim p^ *(K_S)+E\sim \pi^*(K_{\PP^2})+E',
\]
where $E$ and $E'$ are divisors contracted to points by $p$ and $\pi$ respectively. As we said, $C$ is not contained in $E$. Hence
\[
-1=K_X\cdot C=p^ *(K_S)\cdot C+E\cdot C\geq p^ *(K_S)\cdot C= K_S\cdot p_*(C),
\]
thus $K_S$ is not nef, a contradiction. 

Now we claim that $q=1$. In fact 
\[
0\leqslant \chi=1-q+p_g=1-q, \quad \mbox {hence} \quad 0<q\leqslant 1.
\]
Hence the Albanese variety of $S$ is an elliptic curve $E$ and the Albanese map is a morphism $\alpha: S\to E$ which, as we saw at the beginning of \S \ref {ssec:basic}, has
 connected fibres. We denote by $g$ the genus of the general fibre of $\alpha$. Since we may assume $K_S$ is nef, we have $g>0$. \medskip

\noindent {\bf Step 1.}  First we assume $g=1$.  Then we can apply Theorem \ref {thm:canb}. Note that $K_E$ is trivial, $\chi=0$, so that $R^1\alpha_*\O_S$ has degree 0. Moreover, by Lemma \ref {cl:one}, the Albanese morphism is smooth. So we conclude that $K_S\equiv 0$. 

By Lemma \ref {lem:basic}, we have a morphism $\beta: S\to \PP^ 1$ with connected fibres of genus 1, and, again by Theorem \ref {thm:canb}, we have an $n\in \NN$ and  a line bundle $\mathcal K$ on $\PP^1$ such that $nK_S=\beta^*(\mathcal K)$. Since $K_S\equiv 0$, then $\mathcal K\cong \O_{\PP^1}$. This implies that $nK_S\cong \O_S$, so $P_n=1$, proving that $\kappa\geqslant 0$, thus contradicting the hypothesis $\kappa=-\infty$. \medskip

\noindent {\bf Step 2.}
Next we assume that the fibres of the Albanese map $\alpha: S\to E$ have genus $g>1$. Let us apply Theorem \ref {thm:rcs} to $\alpha: S\to E$, noting that $\omega_{S|E}\cong K_S$ in this case, since $K_E$ is trivial. If $\deg(\alpha_*\omega_{S|E})>0$, by Riemann--Roch theorem for vector bundles on curves, we have 
\[
\chi(\alpha_*\omega_{S|E})=h^ 0(\alpha_*\omega_{S|E})-h^1(\alpha_*\omega_{S|E})=\deg(\alpha_*\omega_{S|E})>0,
\]
hence $h^ 0(\alpha_*\omega_{S|E})>0$. But $\alpha_*\omega_{S|E}=\alpha_*(K_S)$, hence we get $p_g>0$, a contradiction. So it does not happen that $\deg(\alpha_*\omega_{S|E})>0$, then by Theorem \ref{thm:rcs}  all fibres of $\alpha: S\to E$ are isomorphic.  Let $F$ be such a fibre. Then $\Aut(F)$ is finite and $\pi_1(E)$ acts on $F$, i.e., we have a homomorphism
\[
\rho: \pi_1(E)\to \Aut(F).
\]
Hence $\ker(\rho)$ has finite index, so it corresponds to an \'etale cover $f: C\to E$, with $C$ an elliptic curve. Consider the cartesian square
\begin{equation*}\label{eq:K3}
\xymatrix{ 
&S'\ar^{g}[r]\ar_{\alpha'}[d]&S\ar^{\alpha}[d] \\
&C\ar^{f}[r] & E\,\,
}
\end{equation*}
with $\alpha': S'\to C$ isotrivial  as well, with fibres isomorphic to $F$. By definition now $\pi_1(C)$ acts trivially on the fibres of $\alpha'$, hence $S'=C\times F$. Then the canonical bundle of $S'$ is the pull--back of the canonical bundle $K_F$ on $F$ via the obvious projection, hence $\kappa(S')=1$. 

Finally we prove that $\kappa(S)\geqslant 0$. Since $g$ is \'etale,   we have $nK_{S'}=g^* (nK_S)$, for all $n\in \NN$.  The surface $S$ is the quotient of $S'$ via the action of a finite group $G$ of order $m$ which acts freely on $S'$. Hence, for all $n\in \NN$, we have
\[
H^ 0(nK_S)=H^0(nK_{S'})^ G.
\]
Take any non--zero section $s\in H^0(nK_{S'})$ and consider the section $s^G\in H^ 0(mnK_{S'})$ defined as
\[
s^G(x)=\prod_{g\in G}s(g(x)), \quad \mbox{for all}\quad x\in S'. 
\] 
The section $s^G$ is clearly non--zero and $s^G\in H^ 0(mnK_{S'})^G\cong H^ 0(mnK_{S})$. Hence $P_{mn}(S)>0$, proving that $\kappa(S)\geq 0$.

This ends the proof of Theorem \ref {thm:ftc2}. \end{proof}

\section{The classification and the Abundance Theorem}

By the Fundamental Theorem of the Classification and by Remark \ref {rem:kk}, we have that:\\
\begin{inparaenum}
\item [$\bullet$] the result of the minimal model programme applied to a surface $S$ is a Mori fibre space if and only if $\kappa(S)=-\infty$;\\
\item [$\bullet$] the result of the minimal model programme applied to a surface $S$ is a strong minimal model if and only if $0\leqslant \kappa(S)\leq 2$.
\end{inparaenum}

\subsection{Surfaces with $\kappa=-\infty$}\label{ssec:k-inf}

If $\kappa(S)=-\infty$, then either $S$ is rational, or it is birational to an irrational scroll. 
In the former case the birational equivalence class of $S$ is represented by $\PP^2$, thus $q=0$, in the latter case, since $S$ is birational to $\PP(\mathcal E)$ with $\mathcal E$ a rank 2 vector bundle on a curve $C$, with $g(C)>0$, the birational equivalence class of $S$ is represented by $C\times \PP^1$, hence $q=g(C)$, because the only holomorphic 1--forms on $C\times \PP^1$ are pull--back of the 1--forms on $C$ via the projection to the first factor. More precisely, one has $\Alb(S)=J(C)$ in this case. 

It is useful to record the following:

\begin{thm}[Vaccaro's Theorem]\label{thm:vac} If $S$ is a minimal surface with $\kappa(S)=-\infty$, then:\\
\begin{inparaenum}
\item [(i)] if $S$ is rational then either $S=\PP^2$ or $S=\FF_n$, with $n\in \NN-\{1\}$;\\
\item [(ii)] if $S$ is not rational, then $S=\PP(\mathcal E)$ with $\mathcal E$ a rank 2 vector bundle on a curve $C$ of genus $g=q>0$.
\end{inparaenum}
\end{thm}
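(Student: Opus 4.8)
The statement is essentially a repackaging of the Fundamental Theorem of the Classification together with the structure theory of Mori fibre spaces and scrolls established above, so the argument is mostly bookkeeping. The plan is: (1) show that for a minimal $S$ with $\kappa(S)=-\infty$ the minimal model programme contracts nothing, so that $S$ is \emph{itself} a Mori fibre space; (2) analyse the two cases of a Mori fibre space, using $\PP^1$--bundle structure; (3) split according to the genus of the base and identify the rational ones with $\PP^2$ and $\FF_n$, $n\neq 1$.

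First I would observe that $K_S$ is not nef: if it were, then $S$ would already be strongly minimal (a minimal surface with $K_S$ nef has no $(-1)$--curve), hence it would be the end result of the minimal model programme applied to itself, forcing $\kappa(S)\geqslant 0$ by the Fundamental Theorem of the Classification (Theorem~\ref{sec:ftc}), contrary to hypothesis. So there is an extremal contraction $\varphi\colon S\to V$ (Theorem~\ref{thm:ect}). If $\dim V=2$, then, by the $\dim V=2$ case in the proof of Theorem~\ref{thm:ect}, $\varphi$ is the blow--down of a $(-1)$--curve, which is impossible since $S$ is minimal. Hence $\dim V\leqslant 1$: either $\dim V=0$ and $S\cong\PP^2$, or $\dim V=1$ and, setting $C:=V$ (a smooth projective curve), $\varphi\colon S\to C$ has general fibre $\PP^1$. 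In the latter case Proposition~\ref{prop:olp} shows $S$ is a $\PP^1$--bundle over $C$, so by Theorem~\ref{thm:plus} we may write $S\cong\PP(\mathcal E)$ for a rank $2$ vector bundle $\mathcal E$ on $C$.

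Next I would split on $g(C)$. If $g(C)=0$, then $C\cong\PP^1$ and Corollary~\ref{tmg:bp1} gives $S\cong\FF_n$ for some $n\in\NN$; since $\FF_1$ is not minimal (Example~\ref{ex:2}), necessarily $n\neq 1$, and $S$ is rational. If $g(C)>0$, then $S$ is not rational: $q$ is a birational invariant, $S$ is birational to $C\times\PP^1$, and by K\"unneth the only holomorphic $1$--forms on $C\times\PP^1$ are pulled back from $C$, so $q(S)=q(C\times\PP^1)=g(C)>0=q(\PP^2)$; this simultaneously gives $q(S)=g(C)$, as required in case (ii). Assembling: a minimal $S$ with $\kappa(S)=-\infty$ is $\PP^2$, or $\FF_n$ with $n\in\NN\setminus\{1\}$ (both rational), or $\PP(\mathcal E)$ over a curve $C$ of genus $g=q>0$ (not rational), which is exactly the dichotomy of the statement.

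\textbf{Main obstacle.} There is no genuine difficulty; the only steps requiring care are the reduction in the first paragraph—that a \emph{minimal} $S$ is literally a Mori fibre space rather than merely birational to one, which uses both the characterisation of ``$K_S$ not nef'' via $\kappa$ and the fact that the $\dim V=2$ extremal contraction is always a $(-1)$--curve contraction—and the exclusion of $\FF_1$ from the minimal rational surfaces via Example~\ref{ex:2}.
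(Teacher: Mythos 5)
Your proposal is correct and follows essentially the same route as the paper: the paper's proof simply says ``by the structure of the minimal model programme, $S$ is a Mori fibre space, hence a scroll or $\PP^2$,'' and your first paragraph is exactly the unpacking of that phrase ($K_S$ not nef since $\kappa=-\infty$, the $\dim V=2$ contraction excluded by minimality, then Proposition~\ref{prop:olp}, Theorem~\ref{thm:plus} and Corollary~\ref{tmg:bp1}). The only cosmetic difference is that the paper also spends a line verifying that $\PP^2$ and $\FF_n$, $n\neq 1$, really are minimal, which is not strictly required by the stated implication.
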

\begin{proof} (i) First it is clear that $\PP^2$ is minimal. Also $\FF_n$ with $n\in \NN-\{1\}$, is minimal. Indeed on 
$\FF_0$ there is no curve with negative self--intersection and on $\FF_n$ with $n\geq 2$, there is only one irreducible curve with negative self--intersection equal to $-n$. Moreover all $\FF_n$ are rational.

Conversely, let $S$ be rational and minimal. Then, by the structure of the minimal model programme, $S$ is a Mori fibre space, hence it is a scroll over $\PP^1$, and the assertion  follows.

(ii) Let $S$ be not rational, minimal and $\kappa(S)=-\infty$. Then the  minimal model programme applied to $S$  is a Mori fibre space, hence $S$ is a scroll over a curve $C$ of genus $g=q>0$.   Hence $S=\PP(\mathcal E)$ with $\mathcal E$ a rank 2 vector bundle on $C$ (see Theorem \ref {thm:plus}). \end{proof}

\subsection{The Abundance Theorem: statement}\label{ssec:ab}

In this section we will  prove the following result:

\begin{thm} [The Abundance Theorem]\label{thm:ab} 
Let $S$ be a strong minimal model. Then there is an $m\gg 0$ and highly divisible such that $|mK_S|$ is base point free. 
\end{thm}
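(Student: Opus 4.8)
The strategy is to run through the classification of strong minimal models by Kodaira dimension and verify base-point-freeness of $|mK_S|$ (for $m$ large and sufficiently divisible) case by case, using the structural results already established. Since $S$ is a strong minimal model, $K_S$ is nef; combined with the Castelnuovo--De Franchis--Enriques' Theorem \ref{thm:cdfe} we have $\chi\geqslant 0$, $e\geqslant 0$, $K_S^2\geqslant 0$, and the dichotomy: $\kappa=2$ iff $K_S^2>0$. So I would split into $\kappa=2$, $\kappa=1$, and $\kappa=0$ (the case $\kappa=-\infty$ does not occur for a strong minimal model by the Fundamental Theorem \ref{sec:ftc}).

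\textbf{Case $\kappa=2$ (general type).} Here $K_S^2>0$, so $K_S$ is big and nef. The plan is to show $|mK_S|$ is base point free for $m\gg 0$. First, by Kawamata--Viehweg Vanishing (Theorem \ref{thm:kvt}) applied to the big and nef bundle $(m-1)K_S$, one gets $h^i(mK_S)=0$ for $i\geqslant 1$ and $m\geqslant 2$, so $h^0(mK_S)=\chi(mK_S)$ grows like $m^2$. To get base-point-freeness one separates the fixed part: write $|mK_S|=|M_m|+F_m$. Using nefness of $K_S$ and of $M_m$, and the estimate $M_m^2\leqslant M_m\cdot mK_S\leqslant (mK_S)^2$, one shows that for $m$ divisible enough the fixed part contracts to finitely many points and then a further multiple kills those base points — this is the only subtle point in this case and is handled by the same kind of argument as in the Base Point Freeness Theorem, Case (a), now in the big regime; alternatively one invokes that $K_S$ semiample here because the canonical model exists. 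I would give the direct argument: the curves in $F_m$ with $K_S\cdot C=0$ are $(-2)$-curves forming an ADE configuration with negative definite intersection form, their contraction gives a surface $X$ with $K_X$ ample, and then $|mK_X|$ is very ample for $m\gg 0$, whose pullback is base point free.

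\textbf{Cases $\kappa=1$ and $\kappa=0$.} For $\kappa=1$ we have $K_S^2=0$ and $K_S$ nef; the linear system $|mK_S|$ for suitable $m$ defines a map to a curve, i.e. $S$ carries an elliptic fibration $f\colon S\to C$ (the general fibre is elliptic since $K_S\cdot F=0$, $F^2=0$ force $K_S\cdot F=0$ and $p_a(F)=1$). Then the Canonical Bundle Formula for elliptic fibrations (Theorem \ref{thm:canb}) gives $nK_S=f^*(\mathcal K)$ for some $n$ and some line bundle $\mathcal K$ on $C$ of nonnegative degree; since $\kappa=1$, $\deg\mathcal K>0$, so $\mathcal K$ is ample, hence $|m\mathcal K|$ is base point free on $C$ for $m\gg 0$ and $f^*|m\mathcal K|\subseteq|mnK_S|$ is base point free. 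For $\kappa=0$ we have $K_S^2=0$ and $P_m\geqslant 1$ for some $m$; the classification (abelian, bielliptic, K3, Enriques — or at the level of this argument, the lemmas of \S\ref{ssec:basic} together with Theorem \ref{thm:canb}) shows that some multiple $nK_S$ is trivial, $nK_S\cong\O_S$, so $|nK_S|=\{0\}$ is trivially base point free. I would first dispose of $\kappa=0$ by showing $nK_S\cong\O_S$: since $K_S$ is nef with $K_S^2=0$ and $P_n>0$, the unique effective divisor $D\in|nK_S|$ satisfies $D\cdot A>0$ for ample $A$ unless $D=0$, but $D\cdot A\cdot(\text{something})$ — more carefully, $nK_S$ nef with $(nK_S)^2=0$ and effective of Kodaira dimension $0$ forces it numerically trivial, and then an effective numerically trivial divisor is zero.

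\textbf{Main obstacle.} The hard part is the general type case $\kappa=2$: proving that a high multiple of a big and nef $K_S$ is base point free is exactly the surface case of base-point-freeness for the canonical bundle, and the cleanest route is to pass to the canonical model $X=\mathrm{Proj}\bigoplus_m H^0(mK_S)$, check that $X$ has only rational double points (the $(-2)$-curves on $S$), that $K_X$ is Cartier and ample, and then pull back $|mK_X|$. Justifying that $X$ has only rational double points — i.e. that the curves contracted by $\varphi_{|mK_S|}$ have negative definite, ADE intersection matrices — is the technical heart; it follows from the Hodge Index Theorem together with adjunction ($K_S\cdot C=0$, $C^2<0$ for a contracted curve $C$ forces $C^2=-2$, $C\cong\PP^1$, and the configuration is negative definite by Zariski's Lemma-type reasoning). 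Once this is in place the three cases combine to give the statement.
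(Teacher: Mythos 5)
Your overall skeleton is the same as the paper's: the Abundance Theorem is proved there by splitting into $\kappa=2,1,0$ and treating each case with the classification, exactly as you propose, and your $\kappa=1$ argument (elliptic fibration plus the canonical bundle formula, Theorem \ref{thm:canb}) is in substance the paper's Theorem \ref{thm:ell}. Where you genuinely diverge is $\kappa=2$: the paper does \emph{not} pass to the canonical model. It proves directly (Theorem \ref{thm:bbbb1}) that $|nK_S|$ is base point free for $n\geq 5$ by blowing up a candidate base point, producing a $1$--connected divisor $C\in|p^*((n-1)K_S)-2E|$ via a dimension count, and applying the Franchetta--Ramanujam vanishing theorem \ref{thm:FR} to kill $h^1(p^*(nK_S)-E)$. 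Your route through the contraction of the $(-2)$--configurations buys a stronger structural statement (existence of the canonical model with rational double points and $K_X$ ample), but it needs as input the contractibility of a negative definite ADE configuration to a \emph{projective} normal surface with $K_X$ Cartier -- essentially Artin's contraction theorem -- which is nowhere available in the paper and is at least as hard as what it replaces. Your parenthetical alternative (``the same kind of argument as in Case (a) of the Base Point Freeness Theorem'') does not work as stated: that argument contracts $(-1)$--curves, which exist there because $K_S\cdot E<0$ on the curves orthogonal to $L$; here the curves orthogonal to $K_S$ are $(-2)$--curves and cannot be blown down to smooth points.

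There is one genuine gap, in your ``direct'' disposal of $\kappa=0$. The principle you invoke -- an effective nef divisor $D$ with $D^2=0$ and $\kappa(D)=0$ is numerically trivial -- is false in general. Take $B$ elliptic, $L\in\Pic^0(B)$ non--torsion, $S=\PP(\O_B\oplus L)$, and let $C_0$ be the section with $\O_{C_0}(C_0)\cong L^\vee$ (Proposition \ref{prop:section}): then $C_0$ is irreducible, nef, $C_0^2=0$, and $h^0(kC_0)=1$ for all $k\geq 0$ since $h^0(L^{-k})=0$, yet $C_0\cdot f=1$ so $C_0\not\equiv 0$. So ``nef, square zero, effective, Kodaira dimension zero'' does not force numerical triviality; to conclude $K_S\equiv 0$ one must use that $D$ is pluricanonical on a minimal surface with $\kappa=0$, and the known arguments (including the paper's, e.g.\ Theorem \ref{thm:biel} for the bielliptic case) do this by exhibiting a fibration, showing via Zariski's Lemma that $D$ is supported on fibres, and deriving the contradiction $P_{nm}>1$. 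Your fallback -- quoting the case-by-case classification of \S\ref{ssec:k1} (K3, Enriques, abelian, bielliptic), each of which yields $nK_S\cong\O_S$ -- is exactly what the paper does and closes the case; but the shortcut as you state it is not a proof.
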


Actually we will prove a finer result. The proof will consist in separately considering the cases $\kappa=0,1,2$.  

\subsection{Surfaces with $\kappa=2$}\label{ssec:gt}  Surfaces with $\kappa=2$ are called \emph{surfaces of general type}. Recall that for a minimal such surface $S$ one has $K_S^2>0$. For any positive integer $n\in \NN$ we have the \emph{n--canonical map} $\varphi_{|nK_S|}$, often denoted by $\varphi_n$, whose image we denote by $S_n\subseteq \PP^{P_n-1}$, and call the  \emph{n--canonical image} of $S$. If $S$ is of general type, $S_n$ is a surface for infinitely many $n\in \NN$.

For surfaces of general type  we have the following result which is much stronger than the Abundance Theorem:

\begin{thm}[Bombieri's Theorem]\label{thm:bomb} Let $S$ be a minimal surface of general type. Then:\\
\begin{inparaenum}
\item [$\bullet$]  $\varphi_n$ is a birational morphism onto its image $S_n$, for all $n\geqslant 3$, except for 
$n=3,4$, $p_g=2$, $K_S^2=1$ and $n=3$, $p_g=3$, $K_S^2=2$;\\
\item [$\bullet$] $\varphi_2$ is  a birational morphism onto $S_2$ if $K_S^2\geqslant 10$, unless $S$ has a pencil of genus 2 curves;\\
\item [$\bullet$] $\varphi_n$ is an isomorphism onto $S_n$, for all $n\geqslant 5$, off finitely many rational curves $C$ with $C^2=-2$. 
\end{inparaenum}

\end{thm}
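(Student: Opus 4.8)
The statement consists of three numerical assertions about the pluricanonical systems of the minimal surface of general type $S$, for which $K_S$ is nef and big, so $K_S^2>0$: that $|nK_S|$ is base point free (so $\varphi_n$ is a morphism) for the relevant $n$, that $\varphi_n$ is generically injective, and that $\varphi_n$ is an embedding off a finite set of curves. The plan is to reduce all three to the study of \emph{adjoint systems} $|K_S+L|$ with $L$ nef and big, and to control base points and failures of separation by exhibiting effective divisors of small degree against $L$. Concretely one writes $nK_S=K_S+(n-1)K_S$ and works with $L=(n-1)K_S$, so that $L^2=(n-1)^2K_S^2$.

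\textbf{The main tool.} The engine is a vanishing statement stronger than the Kawamata--Viehweg Vanishing Theorem, available for \emph{numerically connected} effective divisors (equivalently, obtained via Bogomolov instability of a suitable rank--$2$ bundle built by a Serre-type construction). First I would state and prove this in the form of \emph{Reider's dichotomy}: if $L$ is nef and big and $|K_S+L|$ fails to separate a length-$\ell$ subscheme $Z$ while $L^2$ is large compared with $\ell$ (roughly $L^2>4\ell$), then there is an effective divisor $E$ with $Z\subseteq E$, with $0\le L\cdot E\le\ell$, and, by the Hodge Index Theorem and the adjunction formula, with $E^2$ pinned down (in particular $L\cdot E-\ell\le E^2\le 0$ in the relevant ranges). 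Once this is in hand, everything else is bookkeeping with intersection numbers.

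\textbf{Part (c), $n\ge 5$.} Here $L=(n-1)K_S$ with $L^2=(n-1)^2K_S^2\ge 16$, so the tool applies with $\ell=1$ and $\ell=2$. Any obstruction to $\varphi_n$ being an embedding produces an effective $E$ with $(n-1)K_S\cdot E\le 2$; since $n-1\ge 4$ this forces $K_S\cdot E=0$. On a minimal surface of general type, $K_S^2>0$ and the Hodge Index Theorem give $E^2<0$, and adjunction then makes $E$ a union of smooth rational curves of self-intersection $-2$. The classes of such $(-2)$--curves lie in $K_S^{\perp}\subseteq\Num(S)$, which is negative definite by the Hodge Index Theorem, so there are only finitely many of them; by Castelnuovo's Contractibility Theorem $\varphi_n$ contracts exactly these (to rational double points) and is an isomorphism elsewhere. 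This is the clean case.

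\textbf{Parts (a) and (b), and the main obstacle.} For birationality I would again take $L=(n-1)K_S$ and apply the tool with $\ell=2$: this needs essentially $L^2\ge 10$, which holds for $n\ge 3$ once $K_S^2\ge 2$, and for $n\ge 4$ whenever $L^2$ is large enough; the only surviving obstructions of $(-2)$--curve type never prevent birationality, as they are contracted. For $n=2$ one has $L=K_S$ and needs $K_S^2\ge 10$; the residual obstruction is an effective $E$ with $K_S\cdot E=1$, and running adjunction on the pencil it generates yields a fibration whose general fibre $F$ has $K_S\cdot F=2$, i.e. a pencil of genus $2$ curves, on whose fibres $\varphi_2$ is the hyperelliptic $2:1$ map --- precisely the stated exception. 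The genuinely hard part, and the source of the whole exception list, is the analysis of the small-invariant cases where $((n-1)K_S)^2$ is too small to feed the tool (notably $K_S^2=1$ with $n=3,4$ and $K_S^2=2$ with $n=3$): there one restricts $|nK_S|$ to a general canonical curve $C\in|K_S|$, uses the vanishing to show the restriction map is surjective, and reduces to a question about the (pluri)canonical and adjoint systems on the curve $C$ of genus $1+K_S^2$; the fine geometry of special linear systems on curves (hyperellipticity, Clifford-type bounds) then isolates exactly $p_g=2,K_S^2=1$ (where $C$ is hyperelliptic and the canonical map of $S$ is $2:1$) and $p_g=3,K_S^2=2$, and one exhibits there an explicit failure of birationality. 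Thus the vanishing/Reider machinery handles the generic situation almost mechanically, and the real work is the case-by-case study of surfaces with the smallest invariants together with the construction of the counterexamples showing the exceptions are genuine.
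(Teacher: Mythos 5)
You should first be aware that the paper does not prove Theorem \ref{thm:bomb}: it states it with a reference to \cite{Bom}, and in \S\ref{sec:sgt} proves only the weaker Theorem \ref{thm:bbbb} (base point freeness of $|nK_S|$ for $n\geq 5$, birationality of $\varphi_n$ for $n\geq 6$). The method there is Bombieri's original one: one establishes numerical $1$--connectedness of the divisors in $|p^*((n-1)K_S)-2E|$ and $|p^*((n-1)K_S)-2E_x-2E_y|$ on the blown--up surface (Lemmas \ref{lem:ytic} and \ref{lem:ytic2}, resting on Lemma \ref{lem:cp}), feeds this into the Franchetta--Ramanujam vanishing theorem \ref{thm:FR} to kill the relevant $h^1$, and deduces surjectivity of the restriction to the exceptional divisors. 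Your route via Reider's dichotomy (Bogomolov instability of a rank--$2$ bundle attached to the unseparated cluster) is genuinely different and is the standard modern replacement: it produces the obstructing divisor $E$ with $L\cdot E\leq\ell$ uniformly, handles tangent vectors and pairs of points on the same footing, and immediately isolates the genus--$2$--pencil exception for $\varphi_2$; the connectedness-plus-Ramanujam route is more elementary (no stability of vector bundles) but needs the separate connectedness lemmas and, as carried out in the paper, yields weaker bounds. Your analysis of the case $n\geq 5$ (forcing $K_S\cdot E=0$, hence $E$ a configuration of $(-2)$--curves lying in the negative definite lattice $K_S^{\perp}$, hence finite in number) is correct and matches the third bullet of the statement.

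Two caveats. First, the contraction of the $(-2)$--curves by $\varphi_n$ is not an instance of Castelnuovo's Contractibility Theorem \ref{thm:cct}, which concerns $(-1)$--curves and produces a smooth point; here the curves with $K_S\cdot C=0$ are contracted to rational double points, and what identifies the image as a normal surface with these singularities is Artin's contractibility criterion (or a direct Stein factorization argument), not Theorem \ref{thm:cct}. Second, and more substantively, your strategy proves the theorem only where $((n-1)K_S)^2$ is large enough to feed the dichotomy; the precise exception list in the first bullet --- that $n=3,4$ with $p_g=2$, $K_S^2=1$ and $n=3$ with $p_g=3$, $K_S^2=2$ are the \emph{only} exceptions, together with the exhibition of actual non-birational examples there --- is exactly the content you defer to a case-by-case study of surfaces with small invariants, and that study is the bulk of Bombieri's paper. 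As a plan of attack this is sound and standard, but as written it is a strategy for the generic range rather than a proof of the stated theorem.
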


More precise results are known for canonical and bicanonical maps, but we do not dwell on this here. We will not prove Theorem \ref {thm:bomb} in its full generality here. However, in \S \ref {sec:sgt},  using some of the main ideas of Bombieri's, we will prove a weaker result which implies the Abundance Theorem in this case.  

\subsection{Surfaces with $\kappa=1$}\label{ssec:pe}  Surfaces with $\kappa=1$ are called \emph{properly elliptic surfaces}. By Castelnuovo--De Franchis--Enriques' Theorem, we have $K_S^2=0$ for a minimal such surface. 

\begin{thm}\label{thm:ell}Let $S$ be a minimal properly elliptic surface. Then there is a unique elliptic pencil $f: S\to C$ and there are infinitely many $n\in \NN$ such that there is a base point free complete linear series $\mathcal K_n$ on $C$ such that $|nK_S|=f^ *(\mathcal K_n)$.
\end{thm}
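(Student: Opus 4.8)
The plan is to go through the pluricanonical systems and then invoke the canonical bundle formula of Theorem~\ref{thm:canb}. Since $S$ is minimal with $\kappa(S)=1$ we may assume $K_S$ nef, and Theorem~\ref{thm:cdfe}(iii) gives $K_S^2=0$; also $K_S\not\equiv 0$, since otherwise $P_n(S)\leqslant 1$ for every $n$ and $\kappa(S)\leqslant 0$. By Theorem~\ref{thm:kod} there is an $n_0$ with $P_{mn_0}$ growing linearly in $m$; in particular, for $n$ a large multiple of $n_0$ the map $\varphi_{|nK_S|}$ is non-constant, and as $\kappa(S)=1$ its image is a curve. Writing $|nK_S|=|M_n|+F_n$ with $|M_n|$ the (nef) moving part and $F_n$ the fixed part, the chain $0=(nK_S)^2\geqslant nK_S\cdot M_n=M_n\cdot(M_n+F_n)\geqslant M_n^2\geqslant 0$ forces $M_n^2=0$, so $|M_n|$ is composed with a pencil; after Stein factorization one gets a fibration $f_n\colon S\to C_n$ with connected fibres and a linear series of some degree $a_n$ on $C_n$ whose pullback is $|M_n|$. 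First I would check these fibrations agree: from $M_n\sim nK_S-F_n$, with $M_n,M_{n'}$ nef, $F_n$ effective and $K_S$ nef, $K_S^2=0$, one gets $M_n\cdot M_{n'}=nK_S\cdot M_{n'}-F_n\cdot M_{n'}\leqslant 0$, whereas if $f_n\neq f_{n'}$ the general fibres $\Phi_n,\Phi_{n'}$ are not fibres of one another, hence meet, and $M_n\cdot M_{n'}=a_na_{n'}\,\Phi_n\cdot\Phi_{n'}>0$. So there is a single fibration $f\colon S\to C$, with general fibre $\Phi$.

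Next I would show $f$ is elliptic. By adjunction $K_S\cdot\Phi=2g(\Phi)-2$, so $g(\Phi)\neq 0$ since $K_S$ is nef. Suppose $g(\Phi)\geqslant 2$. As $\tfrac{a_n}{n}\Phi\equiv K_S-\tfrac1n F_n$ with $\tfrac1n F_n$ effective, pairing with an ample divisor bounds $a_n/n$, hence also $\tfrac1n F_n$ in $N^1(S)$; along a subsequence $a_n/n\to\mu\geqslant 0$ and $\tfrac1n F_n\to N\in\NE(S)$, with $K_S\equiv\mu\Phi+N$. Then $0\leqslant N\cdot K_S=K_S^2-\mu(K_S\cdot\Phi)=-\mu(2g(\Phi)-2)$, so $\mu=0$. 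But a linear series of degree $a_n$ on a curve has dimension at most $a_n$, so $a_n\geqslant\dim|M_n|=P_n-1$, which grows linearly along multiples of $n_0$; hence $\liminf a_n/n>0$, a contradiction. Thus $g(\Phi)=1$. Uniqueness of the elliptic pencil follows similarly: if $f'\colon S\to C'$ is another one, then $K_S\cdot\Phi'=0$ by adjunction, so $0=nK_S\cdot\Phi'=a_n\,\Phi\cdot\Phi'+F_n\cdot\Phi'$ with both terms $\geqslant 0$, whence $\Phi\cdot\Phi'=0$ and therefore $f'=f$.

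Finally I would feed $f$ into Theorem~\ref{thm:canb}:
\[
K_S=f^*\bigl(K_C\otimes(R^1f_*\O_S)^\vee\bigr)\otimes\O_S\Bigl(\sum_{i=1}^h(m_i-1)F_i\Bigr),
\]
where $m_iF_i=f^*(p_i)$ are the multiple fibres. Put $\mathcal L=K_C\otimes(R^1f_*\O_S)^\vee$ and $e=\mathrm{lcm}(m_1,\dots,m_h)$. For any multiple $n$ of $e$ one has $\O_S(nF_i)=f^*\O_C(\tfrac n{m_i}p_i)$, so
\[
nK_S=f^*\Bigl(\mathcal L^{\otimes n}\otimes\bigotimes_{i=1}^h\O_C\bigl(\tfrac{n(m_i-1)}{m_i}p_i\bigr)\Bigr)=:f^*(\mathcal K_n),
\]
a line bundle on $C$ of degree $n\lambda$ with $\lambda=\deg\mathcal L+\sum_i\tfrac{m_i-1}{m_i}$ a fixed rational number (and $e\lambda\in\ZZ$). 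By the projection formula and $f_*\O_S=\O_C$ one has $h^0(mnK_S)=h^0(C,\mathcal K_n^{\otimes m})$; taking $n$ also a multiple of $n_0$, so that the left side grows linearly in $m$, forces $\deg\mathcal K_n=n\lambda>0$, i.e.\ $\lambda>0$. Then for every multiple $n$ of $e$ with $n\lambda\geqslant 2g(C)+1$ the bundle $\mathcal K_n$ is base point free, hence $|nK_S|=f^*|\mathcal K_n|$ is base point free and is the pullback of the complete linear series $|\mathcal K_n|$ on $C$; these $n$ form an infinite set, which is the assertion.

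The step I expect to be the main obstacle is the second one, ruling out general fibres of genus $\geqslant 2$: the argument above stays inside the toolkit of the text (pairing $\NE(S)$ against the nef class $K_S$, together with the growth estimate of Theorem~\ref{thm:kod}), but one must run the limiting argument along an arithmetic progression where the plurigenera are genuinely linear, and keep track that $a_n$ is the degree of the series defining the pencil. A lesser but real nuisance is the bookkeeping with multiple fibres in the last step, converting the $\QQ$--divisor $\sum(m_i-1)F_i$ into an honest pullback from $C$ after multiplying by $e$.
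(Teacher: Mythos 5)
Your proof is correct, and it starts exactly as the paper does: decompose $|nK_S|=|M_n|+F_n$, run the chain $0=(nK_S)^2\geqslant nK_S\cdot M_n=M_n\cdot(M_n+F_n)\geqslant M_n^2\geqslant 0$, and conclude that $|M_n|$ is composed with a pencil. Two remarks on where you diverge. First, your elimination of $g(\Phi)\geqslant 2$ by the limiting argument ($a_n/n\to\mu$, $\tfrac1n F_n\to N$, etc.) is sound but unnecessary: the chain you already wrote forces $nK_S\cdot M_n=0$, hence $K_S\cdot\Phi=0$ since $M_n\equiv a_n\Phi$ with $a_n>0$, and adjunction with $\Phi^2=0$ gives $g(\Phi)=1$ immediately -- this is what the paper does. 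Second, your endgame is genuinely different from the paper's: the paper applies Zariski's Lemma to the fixed part $F_n$ (using $F_n^2=M_n\cdot F_n=0$) to see that $F_n$ is built from submultiples of fibres, and then concludes rather tersely that $|mh_nK_S|$ is a pullback for $m$ large and divisible; you instead invoke the canonical bundle formula (Theorem~\ref{thm:canb}) to write $nK_S=f^*(\mathcal K_n)$ explicitly for $n$ divisible by $\mathrm{lcm}(m_1,\dots,m_h)$, compute $\deg\mathcal K_n=n\lambda$, and use the linear growth of the plurigenera to force $\lambda>0$ and hence base point freeness of $\mathcal K_n$ for $n$ large in that progression. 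Your route costs the extra hypothesis-checking needed to apply Theorem~\ref{thm:canb}, but it buys a cleaner and fully explicit final step, including the degree formula for $\mathcal K_n$, where the paper leaves the divisibility bookkeeping to the reader. The identification of the fibrations for different $n$ and the uniqueness of the pencil, which the paper dismisses as immediate, are handled correctly by your intersection-number arguments.
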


\begin{proof} Remember that there is a sequence $\{h_n\}_{n\in \NN}$ of positive integers such that $P_{h_n}$ grows like $n$ as $n\to \infty$ (see Theorem \ref {thm:kod}). For such an $h_n\in \NN$ we write
\[
|h_nK_S|=F+|M|
\]
with $F$ the fixed part and $|M|$ the movable part. We have
\[
0=(h_nK_S)^ 2=h_nK_S\cdot (M+F)\geq h_nK_S\cdot M\geqslant (M+F)\cdot M\geq M^ 2+M\cdot F\geq M^2\geq 0,
\]
hence $M^2=M\cdot F=F^ 2=0$. This implies that $|M|$ is a composed with a pencil, i.e., there 
is a pencil $f: S\to C$ and a complete, base point free linear series $\mathcal M$ on $C$ such that $|M|=f^*(\mathcal M)$. Since $K_S\cdot M=\frac 1{h_n}M\cdot (F+M)=0$, we have that the arithmetic genus of the curves of the pencil is 1. Moreover, since $F^2=M\cdot F=0$, by Zariski's lemma we have that $F=F_1+\cdots+F_k$, where each $F_i$ is a submultiple of a mupltiple of a curve of the pencil, for $1\leq i\leq k$. It is then clear that by taking a sufficiently large and highly divisible $m\in \NN$, the linear system $|mh_nK_S|$ has the required property.  

The uniqueness of the pencil $f: S\to C$ is immediate.  \end{proof}

\subsection{Surfaces with $\kappa=0$}\label{ssec:k1} If $S$ is a minimal surface with $\kappa=0$, by Castelnuovo--De Franchis--Enriques' Theorem we have $K_S^2=0$, $e\geq 0$ and $\chi\geq 0$. Moreover, for all $n\in \NN$ we have $P_n\in \{0,1\}$ and there is an $n\in \NN$ such that $P_n=1$. Furthermore
\[
0\leq \chi=1-q+p_g, \quad \mbox{hence} \quad 1\geq p_g\geq q-1, \quad \mbox{thus}\quad q\leq 2, 
\]
so we have only the following possibilities:\\
\begin{inparaenum}
\item [(i)] $q=0, p_g=1$, these surfaces are called \emph{K3 surfaces};\\
\item [(ii)] $q=0, p_g=0$, these surface are called \emph{Enriques surfaces};\\
\item [(iii)] $q=2, p_g=1$, these are \emph{abelian surfaces};\\
\item [(iv)] $q=1, p_g=0$, these surfaces are called \emph{bielliptic surfaces};\\
\item [(v)]  $q=1, p_g=1$, as we will see there are no surfaces with $\kappa=0$ with  these invariants.
\end{inparaenum}
\medskip

\begin{case}\label{case:K3} $q=0, p_g=1$ \end{case}

\begin{thm}\label{thm:k3} If $S$ is a minimal surface with $\kappa=0$ and $q=0, p_g=1$, then $K_S$ is trivial, hence all pluricanonical systems are trivial, and therefore are base point free.
\end{thm}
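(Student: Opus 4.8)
The plan is to exploit the two pieces of information we have for free on such a surface: $q=0$ so $b_1=0$ and $\NS(S)=H^2(S,\ZZ)$ is torsion-free (since $p_g=1>0$ only means the Chern class map need not be surjective — actually one must be a bit careful here, so I would instead argue via $q=0 \Rightarrow \Pic^0(S)=0$, which gives that $\Pic(S)$ injects into $H^2(S,\ZZ)$ and numerical equivalence equals linear equivalence up to torsion); and $p_g=1$ so $h^0(K_S)=1$, i.e.\ there is an effective canonical divisor $K$, unique in its linear system. First I would show $K_S\equiv 0$. Since $\kappa=0$ and $S$ is minimal, by Castelnuovo--De Franchis--Enriques' Theorem (Theorem \ref{thm:cdfe}) we have $K_S^2=0$; since also $K_S$ is nef (minimality plus $\kappa\ge 0$ forces this — indeed a $(-1)$-curve is excluded and $\kappa\ge 0$ rules out the non-nef case as in the Fundamental Theorem, or more directly $K_S$ nef because $S$ is a strong minimal model here). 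Write the effective canonical divisor as $K=\sum n_iC_i$. For each component $C_i$, nefness gives $K_S\cdot C_i\ge 0$, and summing against $K$ gives $0=K_S^2=K_S\cdot K=\sum n_i(K_S\cdot C_i)$, so $K_S\cdot C_i=0$ for every $i$. Then by the Hodge Index Theorem applied with the class $K_S$ (which has $K_S^2=0$) against each $C_i$, together with the fact that $K_S$ pairs to zero with every component of an effective representative, I would conclude $K_S$ is numerically trivial; the cleanest route is: $K_S$ nef with $K_S^2=0$ means $K_S$ lies on the boundary of the nef cone and $K_S\cdot C_i=0$ for all components of $K$, which already shows $K$ itself has $K\cdot K_i=0$ for all $i$, and then Zariski-type/Hodge-index reasoning pins down $K_S\equiv 0$.

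The main step is upgrading $K_S\equiv 0$ to $K_S\sim 0$ (linear triviality). Since $q(S)=0$, the Picard variety $\Pic^0(S)$ is trivial, so numerical equivalence and linear equivalence coincide on $S$ up to torsion in $\NS(S)$; equivalently, the kernel of $\Pic(S)\to\Num(S)$ is the torsion subgroup $T$ of $\NS(S)=H^2(S,\ZZ)$ (using $q=0$, $b_1=0$). Thus $K_S\equiv 0$ gives that $K_S$ is a torsion class: $mK_S\sim 0$ for some $m\ge 1$. But $h^0(K_S)=p_g=1$, so $K_S$ is linearly equivalent to an effective divisor $K\ge 0$; a nonzero effective divisor cannot be torsion in $\Pic$ (its class would be nonzero in any ample-degree functional since $H\cdot K>0$ for ample $H$ if $K\ne 0$). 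More precisely: $mK_S\sim 0$ and $K_S\sim K\ge 0$ forces $mK\sim 0$, hence $H\cdot(mK)=0$ for ample $H$, hence $K=0$, i.e.\ $K_S\sim 0$. Therefore $\O_S(K_S)\cong\O_S$, all $\O_S(nK_S)\cong\O_S$ have a unique (up to scalar) nonvanishing section, and $|nK_S|=\{nK_S\}$ is a single point with no base points — in fact the linear system is empty of base conditions, it is base point free trivially since the divisor $0$ has empty support.

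I expect the only real subtlety — the "hard part" — to be the passage from numerical to linear triviality, specifically making rigorous that $q=0$ forces the torsion ambiguity and then killing the torsion using the existence of an effective canonical divisor; one has to be slightly careful that $p_g>0$ does not by itself give $\NS(S)=H^2(S,\ZZ)$ (the excerpt only asserts that when $p_g=0$), so the argument must go through $\Pic^0(S)=0$ rather than through surjectivity of the Chern class map. Everything else — $K_S^2=0$, $K_S$ nef, $K_S\cdot C_i=0$ for components of the canonical divisor — is immediate from the results already established (Theorem \ref{thm:cdfe}, Corollary \ref{cor:hit}, Hodge Index Theorem \ref{thm:hit}). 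Once $K_S\sim 0$ is in hand, the statement about pluricanonical systems being trivial and base point free is a one-line consequence.
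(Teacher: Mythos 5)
The second half of your plan (from ``$K_S\equiv 0$'' to ``$K_S\sim 0$'') is sound, and in fact simpler than you make it: a nonzero effective divisor meets any ample divisor positively, so $K_S\equiv 0$ together with $p_g=1$ immediately forces the effective canonical divisor $K$ to be zero, with no need for the torsion discussion. But this same observation exposes the gap in your first half: proving $K_S\equiv 0$ is \emph{literally equivalent} to proving $K=0$, and the facts you propose to deduce it from --- $K_S$ nef, $K_S^2=0$, and $K_S\cdot C_i=0$ for every component $C_i$ of $K$ --- do not suffice. A fiber $F$ of an elliptic fibration on a minimal properly elliptic surface satisfies all three ($F$ nef, $F^2=0$, $F$ meets each of its components trivially) and yet $F\not\equiv 0$; the Hodge Index Theorem in the form stated in the paper requires a class of \emph{positive} self-intersection, and Zariski's Lemma only tells you the intersection form is negative semidefinite on the span of the $C_i$ with $K$ in its kernel, which is consistent with $K$ being a nonzero fiber-like class. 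So the key step of your argument is missing; some cohomological input beyond intersection theory is needed to rule out $K\neq 0$.

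The paper supplies exactly that input via Riemann--Roch on $-K_S$: since $K_S^2=0$ one gets $\chi(-K_S)=\chi=2$, while Serre duality gives $h^2(-K_S)=h^0(2K_S)=P_2\leq 1$ because $\kappa=0$; hence $h^0(-K_S)\geq 1$, and since also $h^0(K_S)=1$, both $K_S$ and $-K_S$ are effective, forcing $K_S\sim 0$. If you want to keep your structure, the cleanest repair is the contrapositive of the same computation: if $K\neq 0$ were a nonzero effective canonical divisor, then $h^0(-K_S)=0$, so $h^2(2K_S)=h^0(-K_S)=0$ and $P_2=h^0(2K_S)\geq \chi(2K_S)=\chi=2$, contradicting $P_n\leq 1$ for all $n$ when $\kappa=0$. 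Either way, the hypothesis $\kappa=0$ must enter through a bound on a plurigenus, not through the nef-cone geometry alone.
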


\begin{proof} One has
\[
h^ 0(-K_S)+h^0(2K_S)\geq h^ 0(-K_S)-h^ 1(-K_S)+h^ 2(-K_S)=\chi(-K_S)=\chi=2
\]
but $h^0(2K_S)=P_2\leq 1$, hence $h^ 0(-K_S)\geq 1$. On the other hand also $h^ 0(K_S)=p_g=1$, hence $K_S$ is trivial.
\end{proof}

\begin{example}\label{ex:k3} K3 surfaces do exist. For instance, by \S \ref {ssec:ci} and \S \ref {ssec:dc}, a smooth surface of degree
4 in $\PP^3$, a smooth complete intersection of type $(2,3)$ in $\PP^4$, a smooth complete intersection of type $(2,2,2)$ in $\PP^5$, the double cover of $\PP^2$ branched along a smooth sextic curve, are K3 surfaces. More generally, for any integer $g\geq 3$, there are smooth K3 surfaces of degree $2g-2$ in $\PP^g$, and they depend on 19 parameters (see \cite [Chapt. VIII]{BPHV}). 
\end{example}

\begin{case}\label{case:enr} $q=0, p_g=0$ \end{case}

\begin{thm}\label{thm:k3} If $S$ is a minimal surface with $\kappa=0$ and $q=0, p_g=0$, then $2K_S$ is trivial, hence all even pluricanonical systems are trivial, and therefore are base point free, whereas $P_n=0$ for all odd  $n\in \NN$.
\end{thm}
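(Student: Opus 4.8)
The plan is to determine $P_2$ first, show that $P_2=1$, and then exploit the fact that $P_n\leq 1$ for all $n$ (which holds because $\kappa=0$) together with the uniqueness of the effective member of $|6K_S|$ to force $2K_S$ to be trivial. Throughout one uses $\chi=1-q+p_g=1$ and, by Castelnuovo--De Franchis--Enriques' Theorem \ref{thm:cdfe} (applicable because a minimal surface with $\kappa\geq 0$ has $K_S$ nef), $K_S^2=0$.

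First I would show $-K_S$ is not effective. If $-K_S\sim E$ with $E\geq 0$, then for any ample $H$ the nefness of $K_S$ gives $0\leq K_S\cdot H=-E\cdot H$, hence $E\cdot H\leq 0$, which forces $E=0$ and $K_S\sim 0$; this contradicts $p_g=0$. So $h^0(-K_S)=0$. Riemann--Roch and Serre duality then give
\[
h^0(-K_S)-h^1(-K_S)+h^0(2K_S)=\chi(-K_S)=\chi+\frac{(-K_S)\cdot(-2K_S)}2=\chi+K_S^2=1,
\]
so $P_2=h^0(2K_S)\geq 1$, and since $P_n\leq 1$ this means $P_2=1$. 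Fix the unique effective divisor $D\sim 2K_S$; I aim to prove $D=0$.

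Suppose $D\neq 0$. Then $h^0(-2K_S)=h^0(-D)=0$, so $h^2(3K_S)=h^0(-2K_S)=0$, and Riemann--Roch yields $h^0(3K_S)=\chi(3K_S)+h^1(3K_S)=\big(\chi+3K_S^2\big)+h^1(3K_S)\geq 1$; hence $P_3=1$ and we may fix the unique effective $D'\sim 3K_S$. Now $3D\sim 6K_S\sim 2D'$, both effective, and since $P_6\leq 1$ the linear system $|6K_S|$ has a single member, so $3D=2D'$ as divisors. Comparing the multiplicity of each prime component on both sides shows $2$ divides every coefficient of $D$, so $D=2D_0$ with $D_0\geq 0$ effective; then $2D'=3D=6D_0$ forces $D'=3D_0$, and therefore $K_S\sim 3K_S-2K_S\sim D'-D=D_0\geq 0$, contradicting $p_g=h^0(K_S)=0$. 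Hence $D=0$, i.e.\ $2K_S\sim\mathcal O_S$.

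To finish: $2mK_S\sim\mathcal O_S$ for every $m$, so $|2mK_S|$ consists of the single divisor $0$ and is base point free, while $(2m+1)K_S\sim K_S$ is not effective (as $p_g=0$), so $P_{2m+1}=0$. The one step carrying real content — the anticipated main obstacle — is the passage from $P_2=1$ to $2K_S\sim 0$: the idea is to manufacture an effective divisor in $|3K_S|$ as well and then compare $3D$ with $2D'$ inside the one-member system $|6K_S|$ to extract the divisibility $D=2D_0$; the rest is Riemann--Roch bookkeeping plus the positivity observation that kills $h^0(-K_S)$.
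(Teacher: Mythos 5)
Your proof is correct and follows essentially the same route as the paper: establish $P_2=1$, then compare the unique effective members of $|2K_S|$ and $|3K_S|$ inside the one-member system $|6K_S|$, and use the component-wise divisibility $3a_i=2b_i$ to produce an effective divisor in $|K_S|$, contradicting $p_g=0$. The only (harmless) divergence is at the first step, where you obtain $P_2\geq 1$ from Riemann--Roch applied to $-K_S$ together with $h^0(-K_S)=0$, whereas the paper simply invokes Castelnuovo's rationality criterion ($q=P_2=0$ would make $S$ rational, contradicting $\kappa=0$).
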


\begin{proof} First we observe that $P_2=1$ otherwise by Castelnuovo's criterion the surface is rational.  One has
\begin{equation}\label{eq:pt}
h^0(-2K_S)+h^0(3K_S)\geqslant h^0(-2K_S)-h^1(-2K_S)+h^2(-2K_S)=\chi(-2K_S)=\chi=1.
\end{equation}
Next we claim that $P_3=0$. Otherwise one has $P_3=1$ and there is a unique curve $D\in |3K_S|$. On the other hand there is a unique curve $D'\in |2K_S|$. We can write $D=\sum_{i=1}^ha_iD_i$ and $D'=\sum_{i=1}^hb_iD_i$, with $D_1,\ldots,D_h$ distinct irreducible curves and $a_i,b_i\geq 0$, for $1\leqslant i\leq h$. Then we have $P_6=1$ and the unique curve in $|6K_S|$ is $2D=3D'$. Hence we have
\[
2\sum_{i=1}^ha_iD_i=3\sum_{i=1}^hb_iD_i
\]
which implies that
\[
2a_i=3b_i, \quad \mbox{for all}\quad 1\leqslant i\leq h.
\]
Then there are non--negative integers $\lambda_1,\ldots, \lambda_h$ such that
\[
a_i=2\lambda_i, \quad b_i=3\lambda_i,  \quad \mbox{for all}\quad 1\leqslant i\leq h.
\]
Hence
\[
D-D'=\sum_{i=1}^h(a_i-b_i)D_i =\sum_{i=1}^h\lambda_iD_i\in |K_S|
\]
which implies $p_g=1$, a contradiction. 

Since $P_3=0$, then  \eqref {eq:pt} implies that $h^0(-2K_S)>0$. Since also $h^0(2K_S)=P_2=1$, we have that $2K_S$ is trivial, and the assertion follows. 
\end{proof}

\begin{remark}\label {rem:2:1} For Enriques surfaces $K_S$ is a non--trivial element of order 2 in $\Pic(S)$. This determines an \'etale double cover $\pi: S'\to S$ (see Example \ref {ex:et}), such that $K_{S'}=\pi^ *(K_S)=\O_{S'}$, so that $\kappa(S')=0$ . Moreover, by \eqref {eq:qu}, we have 
\[
q(S')=h^ 1(\O_{S'})=h^1(\pi_*\O_S')=h^ 1(\O_S)+h^1(K_S)=2h^ 1(\O_S)=0.
\]
Thus $S'$ is a K3 surface. Enriques surfaces can also be defined as the quotient of a K3 surface via a fixed point free involution. 
\end{remark}

Enriques surfaces do exist. Here we give three examples.

\begin{example}\label {ex:enr} (i) {\bf Classical Enriques sextic surfaces.} Let $T$ be a tetrahedron in $\PP^3$, which, up to change of coordinates, we may assume to be the \emph{coordinate tetrahedron}, whose four vertices are the \emph{coordinate points} with all coordinates but one equal to zero. Consider the linear system $\mathcal S$ of surfaces of degree 6 which are singular along the 6 edges of $T$. The linear system $\mathcal S$ has equation
 \begin{equation*} \label{eq:sigma}
c_3(x_0x_1x_2)^2+c_2(x_0x_1x_3)^2+c_1(x_0x_2x_3)^2+c_0(x_1x_2x_3)^2+Qx_0x_1x_2x_3=0,
\end{equation*} 
where $c_0,\ldots, c_3\in \CC$ and $Q=\sum_{i \leqslant j}q_{ij}x_ix_j$ is a quadratic form. 
This shows that $\dim(\mathcal S)=13$ and we may identify $\mathcal S$ with the $\PP^ {13}$ with  homogeneous coordinates
\[ q=(c_0:c_1:c_2:c_3:q_{00}:q_{01}:q_{02}:q_{03}:q_{11}:q_{12}:q_{13}:q_{22}:q_{23}:q_{33}). \]  
An open dense set of $\mathcal S$ correspond to irreducible surfaces which have ordinary singularities along the edges of $T$ and no further singularity. For any such surfaces $\Sigma$ its normalization $\nu:S\to \Sigma$ is smooth and
it is an Enriques surface.  Indeed, by \S \ref {ssec:ci}, $K_S$ is the pull--back via $\nu$ of the linear system of quadrics passing through the edges of $T$, off the pull--back of these edges. Since there is no such a quadric, then $|K_S|=\emptyset$ and $p_g=0$. The bicanonical system $2K_S$ is the pull--back via $\nu$ (off the pull--back of the edges of $T$) of the linear system of surfaces of degree 4 which are singular along the edges of $T$. There is only one such surface, namely $T$ itself, and the mentioned pull--back is trivial. Thus $2K_S\sim \O_S$ is trivial, $P_2=1$, $\kappa=0$ and $S$ is an Enriques surface, because $q=0$. To see this one can argue as follows. First blow--up the vertices of $T$, then the edges of $T$, so to obtain a smooth threefold $X$. The  take the strict transform $S'$ of an  element $S\in \mathcal S$ with ordinary singularities to $X$. Then $S'$ moves in a linear system $\mathcal S'$, the proper transform of $\mathcal S$, which is base point free, hence $S'$ is nef. Moreover it is not difficult to see that $S'^3>0$. So $\O_X(S')$ is big and nef.   Then the same argument we made in \S \ref {ssec:ci}, using the Kawamata--Viehweg Vanishing Theorem, tells us that $h^1(\O_{S'})=h^1(\O_X)$. But $S'$ is birational to $S$ and $X$ to $\PP^3$, hence $h^1(\O_S)=h^1(\O_{S'})=h^1(\O_X)=h^1(\O_{\PP^3})=0$.

It is known that \emph{all} Enriques surfaces appear as the desingularization of some surface $\Sigma\in \mathcal S$. Since $\dim(\mathcal S)=13$ but there is the 3--dimensional group of projective transformations fixing $T$ which acts on $\mathcal S$, one has that Enriques surfaces depend on $10$ parameters. 

(ii) Consider the complete intersection surface $S'$ of type $(2,2,2)$ in $\PP^5$ defined by equations of the form
\[
Q_i(x_0,x_1,x_2)+Q'_i(x_3,x_4,x_5)=0 \quad \mbox{with} \quad 1\leqslant i\leqslant 3, 
\]
where $Q_i(x_0,x_1,x_2)$ and $Q'_i(x_3,x_4,x_5)$ are quadratic forms, for $ 1\leqslant i\leqslant 3$.  It is not difficult to see that for a sufficiently general choice of $Q_i(x_0,x_1,x_2)$ and $Q'_i(x_3,x_4,x_5)$, for $1\leqslant i\leqslant 3$, the surface $S'$ is smooth, hence it is a K3 surface (see Example \ref {ex:k3}). Consider the projective transformation
\[
\sigma: [x_0,x_1,x_2,x_3,x_4,x_5]\in \PP^5 \to   [x_0,x_1,x_2,-x_3,-x_4,-x_5]\in \PP^5
\]
which clearly fixes $S'$ and acts on it freely for a sufficiently general choice of $Q_i(x_0,x_1,x_2)$ and $Q'_i(x_3,x_4,x_5)$, for $1\leqslant i\leqslant 3$. The quotient $S$ of $S'$ by the action of $\sigma$ is an Enriques surface. 

(iii) {\bf Reye congruences.} Let $\mathcal P$ be a linear system of dimension 3 of quadrics in $\PP^3$ such that :\\
\begin{inparaenum}
\item [$\bullet$] $\mathcal P$ is base point free;\\
\item [$\bullet$] if $\ell$ is a double line for $Q\in \mathcal P$, then $Q$ is the unique quadric in $\mathcal P$ containing $\ell$.
 \end{inparaenum}
 
 These conditions are verified if $\mathcal P$ is sufficiently general. 
 
 Next, denote by $\mathbb G$ the grassmannian variety of lines in $\PP^3$ and consider 
 \[
 S=\{\ell\in \mathbb G:  \mbox{$\ell$ is contained in a pencil contained in $\mathcal P$}\}.
 \]
 We claim that $S$ is an Enriques surface. Surfaces obtained in this way are called \emph{Reye congruences}.
 
 To prove our claim, consider $S'\subset \PP^3\times \PP^3$ thus defined
 \[
 S'=\{(x,y)\in  \PP^3\times \PP^3:  \mbox{$x\neq y$ and $x$ and $y$ are conjugated with respect to all  $Q\in \mathcal P$}\}.
 \]
 One remarks that $\ell=\langle x,y\rangle\in S$ if and only if $(x,y)\in S'$ and moreover the  involution 
 \[
 \sigma: (x,y)\in S'\to (y,x)\in S'
 \]
 has no fixed points. Then $S$ is the quotient of $S'$ via the action of $\sigma$ and, to prove that $S$ is an Enriques surface, one has to prove that $S'$ is a K3 surface. To see this, suppose $\mathcal P$ is spanned by the four independent quadrics $Q_i(x_0,x_1,x_2,x_3)=0$, with $1\leqslant i\leqslant 4$. Consider the bilinear forms $\phi_i(x_0,x_1,x_2,x_3; y_0,y_1,y_2,y_3)$ associated to $Q_i$, for $1\leqslant i\leqslant 4$. Then $S'$ is clearly defined in $\PP^3\times \PP^3$ by the four equations $\phi_i=0$, for $1\leqslant i\leqslant 4$.  The canonical bundle of $\PP^3\times \PP^3$ is the sum of the pull--backs of $K_{\PP^3}=\O_{\PP^3}(-4)$ via the projections to the two factors. Then  the adjunction formula (see \S \ref {ssec:ci}) tells us that $K_{S'}$ is trivial.  Moreover one can prove that $q(S')=0$ (see  \S \ref {ssec:ci} for a similar argument), thus $S'$ is a K3 surface.  
\end{example} 

For more information on Enriques surfaces, see \cite {CD} and \cite [Chapt. VII] {BPHV}. 

\begin{case} \label{case:ab} $q=2$, $p_g=1$
\end{case}

We need the following special case of Poincar\'e Complete Reducibility Theorem \ref {thm:pcrt}:

\begin{lemma}\label{lem:pcrt} Let $A$ be an abelian surface and let $C$ be a smooth elliptic curve contained  in $A$. Then there is a smooth elliptic curve $E$ and a morphism $f: A\to E$ with connected fibres such that $C$ is a fibre of $f$.
\end{lemma}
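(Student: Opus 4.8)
The plan is to realize $C$ as a fibre of the quotient of $A$ by a one–dimensional abelian subvariety. First I would reduce to the case where $C$ passes through the origin $0\in A$. If $0\notin C$, replace $C$ by the translate $C_0:=C-a$ for some $a\in C$, so that $0\in C_0$; any morphism $f$ exhibiting $C_0$ as a fibre will, being a group homomorphism, exhibit $C=C_0+a$ as the fibre over $f(a)$. Hence there is no loss of generality in assuming $0\in C$.

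The key preliminary step is to show that, once $0\in C$, the curve $C$ is an \emph{abelian subvariety} of $A$. For this I would combine the universal property of the Albanese variety with the fact that, since $C$ has genus $1$, the Albanese morphism $\alpha_C\colon C\to \Alb(C)=J(C)$ with base point $0$ is an isomorphism. The inclusion $\iota\colon C\hookrightarrow A$ sends $0$ to $0$, so by the universal property of $\Alb(C)$ it factors as $\iota=\gamma\circ\alpha_C$, where $\gamma\colon J(C)\to A$ is a morphism of complex tori fixing the origin, hence a homomorphism. Since $\iota$ is a closed immersion and $\alpha_C$ is an isomorphism, $\gamma$ is injective; therefore its image $\gamma(J(C))=\iota(C)=C$ is an abelian subvariety of $A$.

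Now I would invoke Poincar\'e's Complete Reducibility Theorem (Theorem \ref{thm:pcrt}) with $X=A$ and $Y=C$: it produces an abelian subvariety $Z\subseteq A$ with $C\cap Z$ finite and $C+Z=A$, together with an isogeny $C\times Z\to A$. Since $\dim A=2$ and $\dim C=1$, the finiteness of $C\cap Z$ forces $\dim Z=1$, so $Z$ is an elliptic curve. To obtain a fibration with connected fibres I would then pass to the quotient homomorphism
\[
f\colon A\longrightarrow E:=A/C ,
\]
where $E$ is a one–dimensional complex torus, hence an elliptic curve. Its kernel is $C$, which is connected, so every fibre of $f$ is a coset of $C$ and is in particular connected, while $C=f^{-1}(0_E)$ is a fibre. (Equivalently one could take the Stein factorization of the morphism $A\to Z$ supplied by Theorem \ref{thm:pcrt}.) Translating back to the original position of $C$ as in the first paragraph, $C$ is the fibre of $f$ over the corresponding point of $E$, which completes the argument.

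The only point requiring genuine care is the preliminary step — verifying that a smooth genus–$1$ curve through the origin of an abelian surface is automatically a subgroup — and the Albanese argument above settles it cleanly; the rest is routine structure theory of abelian varieties, so I do not anticipate a real obstacle.
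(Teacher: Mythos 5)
Your proof is correct, but it takes a genuinely different route from the paper's. You first show that a smooth genus--$1$ curve through the origin is an abelian subvariety of $A$ (via the universal property of the Albanese: the inclusion factors through $J(C)\cong C$ by a morphism of tori fixing the origin, hence a homomorphism, whose image is $C$), and then you simply take the quotient homomorphism $A\to A/C$, whose fibres are the cosets of $C$. The paper never establishes that $C$ is a subgroup; instead it argues with intersection theory and the tools it has already built for surfaces: since $K_A$ is trivial, $C^2=0$, so the translates $C_x=x+C$ are pairwise disjoint or equal and sweep out $A$, and the map $x\mapsto [C_x-C]\in\Pic^0(A)$ is shown to be non--constant (using the canonical bundle formula for elliptic fibrations to rule out the translates being linearly equivalent), after which Stein factorization and the canonical bundle formula again identify the base as an elliptic curve. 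Your argument is shorter and more structural, at the price of importing standard facts about abelian varieties not stated in the paper (a holomorphic map of complex tori fixing the origin is a homomorphism; the quotient of an abelian variety by an abelian subvariety is an abelian variety and the quotient map is algebraic) — note in particular that Poincar\'e Complete Reducibility as quoted only gives $C$ \emph{contained in} a fibre, so the quotient construction, not Theorem \ref{thm:pcrt}, is what actually makes $C$ exactly a fibre. The paper's proof stays inside the surface-theoretic toolkit it has developed and needs no group structure on $C$ at all.
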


\begin{proof} Since $K_A$ is trivial, we have $C^2=C\cdot (C+K_A)=0$. We also assume that, up to translations,  $C$ contains the point $0$ of $A$.
 
For each $x\in A$ consider the curve $C_x=x+C$, which is isomorphic to $C$. Note that $x\in C_x$.
For each $x\in A$, the curve $C_x$ is homologically equivalent to $C$, hence $C_x\cdot C=0$ and $C_x^2=0$.
Then given $x,y\in A$, either $C_x=C_y$ or $C_x\cap C_y=\emptyset$. Moreover, for every point $z\in A$,  $C_z$ is the unique curve in the family $\mathcal C:=\{C_x\}_{x\in A}$ passing through $z$. Furthermore, if $y\in C_x$, then $C_x=C_y$, because both $C_x$ and $C_y$ contain $y$. 

Consider now the map 
\[
f: x\in A\to [C_x-C]\in \Pic^0(A).
\]
From the above remarks it follows that all curves in the family $\mathcal C$ are contained in fibres of $f$.
Note that $f$ cannot be constant. Otherwise the curves  in  $\mathcal C$ would be linearly equivalent and we would have a morphism $\phi: A\to \PP^1$ such that the curves  in  $\mathcal C$ are the fibres of $\phi$. By applying the canonical bundle  formula for elliptic fibrations, we find a contradiction to $K_A$ being trivial. 

So the image of $f$ is a curve $E$ which we can assume to be smooth and, up to Stein factorization, we can assume that  $\mathcal C$  is the family of all fibres of $f$. By applying again  the canonical bundle  formula for elliptic fibrations we se that $E$ has to be an elliptic curve, as desired. 
\end{proof}

\begin{thm}[Enriques' Theorem] \label{thm:ab} If $S$ is a minimal surface wirth $\kappa=0$ and $q=2$, $p_g=1$, then $S$ is an abelian surface. In particular $K_S$ is trivial, so that the Abundance Theorem holds. 
\end{thm}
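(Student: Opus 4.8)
The plan is to show that the Albanese morphism $\alpha\colon S\to \Alb(S)$ is an isomorphism. Since $q=2$, the Albanese variety $A:=\Alb(S)$ is an abelian surface, and $\alpha$ is a morphism whose image spans $A$. Because $\dim S=\dim A=2$, $\alpha$ is surjective onto $A$, hence generically finite of some degree $d\geq 1$. The goal is to prove $d=1$ and that $\alpha$ is étale, so that $S\to A$ is an isomorphism (an étale cover of an abelian surface of degree $1$). First I would record the numerical data: $K_S$ is nef with $K_S^2=0$ (Castelnuovo--De Franchis--Enriques), $\chi=1-q+p_g=0$, and by Noether's formula $e(S)=12\chi-K_S^2=0$; moreover $b_1=2q=4$, so $b_2=e-2+2b_1=2\cdot 4-2=6$, wait — $e=2-2b_1+b_2$ gives $b_2=e-2+2b_1=0-2+8=6$. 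So $b_2=6$, which matches $b_2\geq 2p_g=2$ and is consistent with $\rho\le 6$.

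Next I would analyze the ramification divisor $R$ of $\alpha$. By the Ramification Formula (Theorem~\ref{thm:ram}), $K_S=\alpha^*(K_A)+R=\O_S(R)$ since $K_A$ is trivial; thus $R$ is an effective canonical divisor, and $p_g=1$ forces $|K_S|$ to consist of the single divisor $R$ (possibly $R=0$). Since $K_S^2=0$ we get $R^2=0$, and since $K_S$ is nef, $R$ is a nef effective divisor with $R^2=0$. The key step is to prove $R=0$, i.e.\ $\alpha$ is unramified. One route: if $R\neq 0$, decompose $R=\sum n_i R_i$; each $R_i$ satisfies $R\cdot R_i=0$ (nef, $R^2=0$), hence by the Hodge Index Theorem $R_i^2\le 0$, and by adjunction $2p_a(R_i)-2=(K_S+R_i)\cdot R_i=R\cdot R_i+R_i^2=R_i^2\le 0$. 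If some $R_i^2<0$ then $R_i$ is rational with $R_i^2\le -2$ (as $K_S\cdot R_i=R\cdot R_i=0$ forces $p_a(R_i)\le 0$ hence $=0$ and $R_i^2=-2$). But a rational curve $R_i$ maps into $A$ with image a point or a rational curve — impossible, since abelian varieties contain no rational curves (any map $\PP^1\to A$ is constant). So each $R_i$ is a smooth elliptic curve with $R_i^2=0$, and $\alpha(R_i)$ is an elliptic curve $\subset A$. Now apply Lemma~\ref{lem:pcrt}: through $\alpha(R_i)$ there is a fibration $h\colon A\to E$ with connected fibres having $\alpha(R_i)$ as a fibre; the composite $h\circ\alpha\colon S\to E$ is an elliptic fibration, and one applies the canonical bundle formula (Theorem~\ref{thm:canb}) together with $K_S$ numerically trivial-modulo-$R$-structure to derive a contradiction with $p_g=1$ or with $\kappa=0$ — this is the step I expect to require the most care, reconciling the presence of the canonical divisor supported on fibre-type curves with the triviality forced by the invariants. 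Once $R=0$ is established, $\alpha$ is étale, so by Proposition~\ref{prop:pp} $S$ is a complex torus; being projective, it is an abelian surface, $K_S$ is trivial, $P_n=1$ for all $n$, and the Abundance Theorem holds trivially.

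The main obstacle, as indicated, is ruling out $R\neq 0$: one must show that no configuration of elliptic curves can occur as the ramification divisor. I would handle this by combining the étale-cover reduction with the structure of elliptic fibrations: passing to the Albanese, using that $\alpha$ must then factor in a way incompatible with $\alpha$ having image spanning $A$, or more directly, showing that if $\alpha$ is ramified along an elliptic curve then $\alpha$ followed by an appropriate projection $A\to E$ would make $S$ properly elliptic ($\kappa=1$), contradicting $\kappa=0$. An alternative cleaner approach to the whole theorem: show $h^0(-K_S)>0$ (from $\chi(-K_S)=\chi=0$ and Riemann--Roch/Serre duality: $h^0(-K_S)+h^0(2K_S)\ge \chi(-K_S)+2h^1$... actually $h^0(-K_S)-h^1(-K_S)+h^0(2K_S)=\chi=0$, so $h^0(-K_S)+h^0(2K_S)\ge h^1(-K_S)\ge$ ... ) — since also $h^0(K_S)=p_g=1$, if one can force $h^0(-K_S)\ge 1$ then $K_S$ is trivial directly, and then $S$ is a minimal surface with trivial canonical bundle, $q=2$, which by the Albanese argument (now $\alpha$ étale since $R=K_S=0$) and Proposition~\ref{prop:pp} is an abelian surface. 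I would try this shortcut first, as it bypasses the fibration analysis entirely, and fall back on the ramification-divisor argument only if the vanishing $h^1(-K_S)=0$ or the needed inequality cannot be obtained cleanly.
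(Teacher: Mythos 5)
Your overall strategy coincides with the paper's (Albanese map, analysis of the effective canonical/ramification divisor, Poincar\'e reducibility to produce an elliptic fibration, then \'etaleness), but there are three genuine gaps. First, the surjectivity of $\alpha$ is not automatic: ``$\alpha(S)$ spans $A$'' does not imply $\alpha(S)=A$ --- the image could be a curve of genus $2$ generating $A$. The paper spends its entire Case (i) ruling this out: the Stein factorization gives $S\to C$ with $g(C)=2$, and pulling back an \'etale double cover of $C$ produces a surface $\widetilde S$ with $\kappa(\widetilde S)=0$ and $q(\widetilde S)\geq 3$, contradicting $q\leq 2$ for $\kappa=0$ (and the equality $\kappa(\widetilde S)=\kappa(S)$ itself requires the averaging-of-pluricanonical-sections argument). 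Without this step your ramification formula $K_S=\alpha^*(K_A)+R$ does not even apply, since it needs $\alpha$ generically finite.

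Second, your elimination of rational components of $R$ does not work as stated: a smooth rational curve $R_i$ with $R_i^2=-2$ contracted by $\alpha$ to a point is perfectly consistent (contracted curves always have negative self-intersection), so ``image is a point'' is not by itself impossible. What the paper shows is that not \emph{all} components can be rational: otherwise the whole divisor $D\in|K_S|$ would be contracted to points and $D^2<0$ by the Hodge Index Theorem, against $K_S^2=0$; one then works with a single elliptic component, which is all the sequel needs. Third --- and most importantly --- the contradiction you defer (``the step I expect to require the most care'') is the heart of the proof and is missing. The paper's argument: the elliptic component $D_i$ has $D_i^2=0$ and lies in a fibre of $\beta=f\circ\alpha:S\to B$ with $B$ elliptic, so by Zariski-type reasoning $D_i$ is the support of a full fibre $nD_i$; since $K_S\sim D\geq D_i$, one gets $h^0(hnK_S)\geq h^0(hnD_i)\geq h$ for all $h$ (multiples of a fibre move over the elliptic base $B$), contradicting $P_m\leq 1$ for $\kappa=0$. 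Finally, your proposed ``cleaner shortcut'' via $\chi(-K_S)$ indeed collapses because $\chi=0$ here, as you yourself suspected, so it cannot replace the fibration analysis.
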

\begin{proof} Consider the Albanese map
\[
\alpha: S\to A:=\Alb(S).
\]
There are two cases to be discussed:\\
\begin{inparaenum}
\item [(i)] $\dim(\alpha(S))=1$;\\
\item [(ii)] $\alpha(S)=A$.
\end{inparaenum}\medskip

\noindent {\bf Case (i).}\label{casei}  We prove that this case does not occur. Indeed, consider the Stein factorization
\begin{equation*}
\xymatrix{ 
&S\ar_{\beta}[d]\ar^{\alpha}[dr] &\\
&C\ar^{j}[r] &  A&
}
\end{equation*}
Since $\alpha(S)$ spans $A$, then $g(C)\geq 2$. On the other hand the map
\[
\beta^*: H^0(\Omega^1_C)\to H^0(\Omega^1_S)
\]
is an injection, hence $2=q(S)\geq g(C)$, thus $g(C)=2$. Now take a non--trivial point of order 2 in $\Pic^0(C)$, and consider the corresponding \'etale double cover $f: \widetilde C\to C$, with $g(\widetilde C)=3$. Consider the cartesian diagram
\begin{equation*}\label{eq:K3}
\xymatrix{ 
&\widetilde S\ar^{g}[r]\ar_{\tilde\alpha}[d]&S\ar^{\beta}[d] \\
&\widetilde C\ar^{f}[r] & C\,\,
}
\end{equation*}
We claim that $\kappa(\widetilde S)=\kappa(S)=0$. Since $K_{\widetilde S}=g^ *(K_S)$, it is clear that  
$\kappa(\widetilde S)\geq \kappa(S)$. To see the converse, we make an argument similar to the one at the end of the proof of  Theorem \ref {thm:ftc2}. Take an $n\gg 0$ such that 
\[
\dim({\rm im}(\varphi_{|nK_{\widetilde S}|}))=\kappa(\widetilde S). 
\]
The surface $S$ is the quotient of $\widetilde S$ via the action of a group $G\cong \ZZ_2$ which acts freely on $\widetilde S$. Hence we have
\[
H^ 0(nK_S)=H^0(nK_{\widetilde S})^ G.
\]
Take any non--zero section $s\in H^0(nK_{\widetilde S})$ and consider the section $s^G\in H^ 0(2nK_{\widetilde S})$ defined as
\[
s^G(x)=\prod_{g\in G}s(g(x)), \quad \mbox{for all}\quad x\in \widetilde S. 
\] 
The section $s^G$ is  non--zero and $s^G\in H^ 0(2nK_{\widetilde S})^G\cong H^ 0(2nK_{S})$. Consider now the map
\[
\rho: [(s)]\in \PP(H^ 0(nK_{\widetilde S}))\to [(s^G)]\in \PP(H^ 0(2nK_{\widetilde S})^G)=\PP(H^ 0(2nK_{S})).
\]
The map $\rho$ is clearly finite, since it maps the general $D\in |nK_{\widetilde S}|$ to 
\[
D^G:= \sum_{\gamma\in G}\gamma(D)\in |2nK_{\widetilde S}|.
\]
This proves that $\dim({\rm im}( \varphi_{2nK_{S}}))=\dim ({\rm im}(\varphi_{nK_{\widetilde S}}))=\kappa(\widetilde S)$, which proves the claim.

Hence we proved that $\kappa(\widetilde S)=0$. Moreover $q(\widetilde S)\geq 3$, because we have the map $\tilde\alpha: \widetilde S\to C$ and $g(\widetilde C)=3$. This is a contradiction, since, as we saw, there are no surfaces with $\kappa=0$ and $q>2$. This proves that case (i) does not occur.\medskip

\noindent {\bf Case (ii).} We prove that $K_S$ is trivial in this case. Assume the contrary holds and let $D=\sum_{i=1}^ha_iD_i$ be the unique element in $|K_S|$, where $D_1,\ldots, D_h$ are irreducible and distinct and $a_1,\ldots, a_h$ are positive integers. For every $i\in \{1,\dots, h\}$ one has
\begin{equation}\label{eq:kk}
0=K_S^2=K_S\cdot D\geqslant K_S\cdot D_i=D\cdot D_i=\sum_{j\neq i}^ha_jD_j\cdot D_i+a_iD_i^2\geq a_iD_i^2
\end{equation}
and 
\[
p_a(D_i)=\frac 12 (D_i+K_S)\cdot D_i+1 \leq 1,
\]
hence:\\
\begin{inparaenum}
\item [(a)] either $D_i$ is a smooth elliptic curve;\\
\item [(b)] or $D_i$ is rational which could either be smooth or singular with a node or a cusp.
\end{inparaenum}

Assume that case (b) occurs for all $i\in \{1,\dots, h\}$ . Then each divisor $D_i$ is contracted to a point by $\alpha$, because there are no rational curves on an abelian variety. Thus $D$ is contracted to a union of points, hence, by the Hodge Index Theorem, one has $K_S^2=D^2<0$, a contradiction. Hence there is an $i\in \{1,\dots, h\}$  such that case (a) occurs. From \eqref {eq:kk} we have $D_i^2\leq 0$ and $K_S\cdot D_i\leq 0$. On the other hand one has $D_i\cdot (K_S +D_i)=0$, so that $D_i^2= 0$ and $K_S\cdot D_i=0$. Therefore $D_i$ is not contracted to a point by $\alpha$. Its image is a smooth elliptic curve $E$ in $A$. By Lemma \ref {lem:pcrt}, there is  an elliptic curve $B$ and a morphism $f: A\to B$ with connected fibres such that $E$ is a fibre of $f$. Consider $\beta=f\circ \alpha: S\to B$. Then $D_i$ is contained in a fibre of $\beta$ and, being $D_i^2=0$, one has that $D_i$ is the support of a fibre of $\beta$. If $nD_i$ is the full fibre, then for all  $h\in \NN$ we have
\[
h^ 0(hnK_S)\geq h^ 0(hnD_i)=h
\]
which is not possible. This proves that $K_S$ is trivial.

Let now $R$ be the ramification divisor of the Albanese map $\alpha: S\to A$. By the ramification formula we have 
\[
\O_S=K_S=\alpha^ *(K_A)+R=R
\]
which proves that $R$ is trivial, hence $\alpha$ is unramified. This implies that $S$ is an abelian surface and $\alpha$ is an isomorphism (see Proposition \ref{prop:pp}).\end{proof}

\begin{case}\label{case:bdf}
$q=1$, $p_g=0$
\end{case} 

\begin{thm}\label{thm:biel} If $S$ is a surface with $\kappa=0$, $q=1$ and $p_g=0$, for $n\in \NN$ with $P_n=1$, one has that $nK_S$ is trivial.
\end{thm}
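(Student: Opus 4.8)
The plan is to reduce everything to the single fact that $K_S$ is numerically trivial. Once $K_S\equiv 0$ is known, the statement is immediate: if $P_n=1$, choose a non--zero section of $nK_S$; its divisor $D$ is effective and satisfies $D\equiv nK_S\equiv 0$, so intersecting $D=\sum_i a_iD_i$ with an ample class $H$ yields $0=\sum_i a_i(D_i\cdot H)$ with all $a_i\geqslant 0$ and all $D_i\cdot H>0$, forcing $D=0$, i.e.\ $nK_S\cong\O_S$. So the real task is to prove $K_S\equiv 0$.

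To do this I would first set up the Albanese map. Since $S$ is minimal (we are in the setting of \S\ref{ssec:k1}), $K_S$ is nef, $K_S^2=0$ and $\chi=1-q+p_g=0$; as $q=1$, the Albanese variety is an elliptic curve $E$ and, as recalled at the beginning of \S\ref{ssec:basic}, $\alpha\colon S\to E$ is a morphism with connected fibres. Let $g$ be the genus of the general fibre $F$. From $F^2=0$ and $K_S$ nef, adjunction gives $2g-2=K_S\cdot F\geqslant 0$, so $g\geqslant 1$; and Lemma \ref{cl:one} already tells us $\alpha$ is smooth, hence has no singular and no multiple fibres. Next I would exclude $g\geqslant 2$, arguing exactly as in Step 2 of the proof of Theorem \ref{thm:ftc2}: since $\alpha$ has no singular fibres, Theorem \ref{thm:rcs} applies, and as $K_E$ is trivial we have $\omega_{S|E}\cong K_S$; if $\deg(\alpha_*\omega_{S|E})>0$, Riemann--Roch on the elliptic curve $E$ would give $h^0(\alpha_*K_S)=p_g>0$, a contradiction, so $\alpha$ is isotrivial, and then an \'etale base change produces a product $C\times F$ of Kodaira dimension $\kappa(F)=1$, whence $\kappa(S)=1$ (the Kodaira dimension being an \'etale--cover invariant, by the same norm argument used there), contradicting $\kappa=0$. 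So $g=1$, and Theorem \ref{thm:canb}, applied to $\alpha$ and using that it has no multiple fibres, gives $K_S=\alpha^*\bigl(K_E\otimes(R^1\alpha_*\O_S)^\vee\bigr)$; since $K_E$ is trivial and $R^1\alpha_*\O_S$ has degree $\chi=0$ on $E$, the right--hand side is the pull--back of a degree--zero line bundle, so $K_S\equiv 0$, as wanted.

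The main obstacle is the exclusion of the case $g\geqslant 2$: everything else is formal once $K_S\equiv 0$ is in hand, but ruling out higher genus of the Albanese fibres genuinely requires the relative--canonical--sheaf estimate (Theorem \ref{thm:rcs}) to force isotriviality, together with the invariance of the Kodaira dimension under \'etale covers to reach the contradiction. If the structure theory developed so far already records that a minimal surface with $\kappa=0$, $q=1$, $p_g=0$ has $K_S\equiv 0$ (equivalently, that its Albanese fibres are elliptic), then the proof collapses to the first paragraph.
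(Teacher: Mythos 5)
Your proof is correct, and for most of its length it coincides with the paper's: both set up the Albanese fibration $\alpha\colon S\to E$ with connected fibres and rule out fibre genus $g\geqslant 2$ by exactly the same isotriviality argument (Theorem \ref{thm:rcs} applied to $\omega_{S|E}\cong K_S$, followed by the \'etale base change and the invariance of $\kappa$ under \'etale covers). You diverge only in the final step. The paper never proves $K_S\equiv 0$ inside this proof: it takes the unique divisor $D\in|nK_S|$, observes that $D\cdot F=0$ and $D^2=0$, invokes Zariski's Lemma to conclude that $D$ is a positive rational combination of full fibres of $\alpha$, and then gets a contradiction because $h^0(mD)$ would grow with $m$, violating $\kappa=0$. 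You instead first establish the stronger intermediate statement $K_S\equiv 0$ by combining Lemma \ref{cl:one} (smoothness of $\alpha$, hence absence of multiple fibres) with the canonical bundle formula of Theorem \ref{thm:canb} and $\deg(R^1\alpha_*\O_S)=\chi=0$, after which an effective divisor numerically equivalent to zero must vanish against an ample class. Both routes are legitimate, and everything you cite is available at this point of the text (Lemma \ref{cl:one} applies because $S$ is implicitly minimal with $\kappa=0$, so $K_S$ is nef and $\kappa\leqslant 0$). Your version buys the explicit fact $K_S\equiv 0$ --- which the paper only re-derives later, in the Bagnera--De Franchis analysis --- at the price of leaning on the heavier Lemma \ref{cl:one}; the paper's version is more economical, needing only Zariski's Lemma applied directly to the pluricanonical divisor.
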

\begin{proof} By an argument we already made, the Albanese map $\alpha: S\to A:=\Alb(S)$, where $A$ is an elliptic curve, has connected fibres. Let $F$ be the general such fibre. 
Of course $g(F)\geq 1$. 

We claim that $g(F)=1$.  Indeed, assume by contradiction that $g(F)\ge 2$. Then we can argue as in Step 2 of the proof of Theorem \ref {thm:ftc2}. Namely, if $\deg(\alpha_*\omega_{S|A})>0$, we have $p_g>0$, a contradiction. Otherwise we have a cartesian square
\begin{equation*}\label{eq:K3}
\xymatrix{ 
&S'\ar^{g}[r]\ar_{\alpha'}[d]&S\ar^{\alpha}[d] \\
&C\ar^{}[r] & A\,\,
}
\end{equation*}
with $C$ an elliptic curve and $S'=C\times F$, hence  $\kappa(S')=1$. On the other hand, the same argument we made in the proof of case (i) of Enriques' Theorem \ref {thm:ab}, on p. \pageref {casei}, proves that $\kappa(S)=\kappa(S')=1$, a contradiction. This proves that $g(F)=1$. 

Let now $n\in \NN$ be such that $P_n=1$ and consider the unique divisor $D\in |nK_S|$. 
We claim that $D=0$. We argue by contradiction and assume $D\neq 0$.
Since $K_S\cdot F=0$, we have $D\cdot F=0$, hence $D$ consists of parts of fibres of $\alpha$. Moreover $D^2=(nK_S)^2=0$ and Zariski's Lemma, imply that the support of $D$ consists of the support of fibres of $\alpha$. But in this case for $m\gg0$ and highly divisible we would have 
\[
P_{nm}=h^ 0(nmK_S)=h^0(mD)>1
\]
a contradiction. Hence  $nK_S$ is trivial, proving the theorem.\end{proof}

The surfaces in this case, called bielliptic surfaces, have been classified by Bagnera--De Franchis, and we will explain their classification in \S \ref{sec:bdf} below.

\begin{case}\label{case:pq1}
$q=p_g=1$
\end{case}

This case does not occur. Indeed, let $\eta\in \Pic(S)$ be a non--trivial order two element. Then
\[
h^0(\eta)+h^0(K_S-\eta)\geq \chi(\eta)=\chi=1.
\]
Since $h^0(\eta)=0$, we have $h^0(K_S-\eta)\geq 1$. Take $D\in |K_S-\eta|$ and $D'\in |K_S|$. Then $2D, 2D'\in |2K_S|$, hence $2D=2D'$,  thus $D=D'$ and $\eta=0$, a contradiction. 

\section{Surfaces of general type}\label{sec:sgt}

In this section we give a sketch of the proof of the following weaker version of Bombieri's Theorem \ref {thm:bomb}:

\begin{thm}\label{thm:bbbb} Let $S$ be a minimal surface of general type. Then $|nK_S|$ is base point free as soon as $n\geq 5$. Moreover $\varphi_{nK_S}$ is birational onto its image $S_n$ as soon as $n\geq 6$.
\end{thm}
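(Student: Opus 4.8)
The plan is to run an adjoint linear system argument: write $nK_S=K_S+(n-1)K_S$ and use that $(n-1)K_S$ is big and nef, since $K_S$ is nef and big ($K_S^2>0$) for $S$ minimal of general type, with $\chi(S)>0$ by Theorem~\ref{thm:cdfe}. The Kawamata--Viehweg Vanishing Theorem then gives $h^1(nK_S)=h^2(nK_S)=0$ for all $n\geq 2$, and likewise on any blow--up of $S$ after pulling back $nK_S$. Hence, to prove $|nK_S|$ is base point free I only need $H^1(S,\mathcal I_x\otimes\O_S(nK_S))=0$ for every $x\in S$; and to prove $\varphi_{nK_S}$ is birational onto $S_n$ I only need, once base point freeness holds, that $\varphi_{nK_S}$ be generically injective, since then it is a birational morphism onto $S_n$. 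I aim for base point freeness when $n\geq 5$ and generic injectivity when $n\geq 6$.

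\textbf{Base point freeness off the $(-2)$--curves.} First I would fix $x$ and blow it up, $\pi\colon\widetilde S\to S$ with exceptional curve $E$; then $H^1(S,\mathcal I_x\otimes\O_S(nK_S))\cong H^1(\widetilde S,\,K_{\widetilde S}+D)$ with $D:=\pi^*((n-1)K_S)-2E$, so by Kawamata--Viehweg it suffices that $D$ be big and nef. Since $D^2=(n-1)^2K_S^2-4>0$ for $n\geq 5$, the issue is nefness, which can fail only against strict transforms $\widetilde C$ of irreducible $C\ni x$ with $(n-1)(K_S\cdot C)<2\,\mult_x C$. For these I would combine the Hodge Index Theorem (Theorem~\ref{thm:hit}), $C^2\leq(K_S\cdot C)^2$, hence by adjunction $p_a(C)\leq 1+\tfrac12(K_S\cdot C)(K_S\cdot C+1)$, with the fact that $p_a$ drops by $\binom{\mult_x C}{2}$ under the blow--up and stays $\geq p_g(C)\geq 0$, so $\binom{\mult_x C}{2}\leq p_a(C)$. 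This bounds $\mult_x C$ in terms of $K_S\cdot C$, and a short check — whose tight case is $K_S\cdot C=1$, $\mult_x C=2$, possible only if $K_S^2=1$ — gives $(n-1)(K_S\cdot C)\geq 2\,\mult_x C$ whenever $K_S\cdot C\geq 1$ and $n\geq 5$. So for any $x$ lying on no $(-2)$--curve, $D$ is nef and $x$ is not a base point of $|nK_S|$ for $n\geq 5$.

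\textbf{Base point freeness on the $(-2)$--curves.} Next I would treat $x$ on a $(-2)$--curve. By Hodge Index and adjunction the curves with $K_S\cdot C=0$ are exactly the $(-2)$--curves ($C\cong\PP^1$, $C^2=-2$), and their classes lie in $K_S^{\perp}$, on which the intersection form is negative definite; so the connected component $\Delta=\Delta_1+\cdots+\Delta_r$ of the union of $(-2)$--curves through $x$ spans a negative definite root lattice, with fundamental cycle $Z>0$ supported on $\Delta$ satisfying $Z\cdot\Delta_i\leq 0$ for all $i$, $K_S\cdot Z=0$, $Z^2=-2$, $\chi(\O_Z)=1$, $h^1(\O_Z)=0$, and $\O_Z(nK_S)\cong\O_Z$. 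From
\[
0\longrightarrow \O_S(nK_S-Z)\longrightarrow \O_S(nK_S)\longrightarrow \O_Z(nK_S)\longrightarrow 0
\]
and $H^1(S,\O_S(nK_S-Z))=H^1(S,K_S+((n-1)K_S-Z))=0$ — valid by Kawamata--Viehweg because $(n-1)K_S-Z$ is big ($((n-1)K_S-Z)^2=(n-1)^2K_S^2-2>0$ for $n\geq 5$) and nef (non--negative on each $\Delta_i$ since $Z\cdot\Delta_i\leq 0$, and on every other curve by the multiplicity bound above) — I would conclude that $H^0(nK_S)$ surjects onto $H^0(\O_Z)\cong\CC$, producing a section of $nK_S$ nowhere zero on $\Supp(Z)\ni x$. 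Equivalently, to sidestep the non--nef auxiliary divisors one can contract all $(-2)$--curves to the canonical model $X$, where $K_X$ is ample, and run the vanishing argument on $X$. Either way, $|nK_S|$ is base point free for $n\geq 5$.

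\textbf{Birationality, and the main obstacle.} For $n\geq 6$ I would take a general point $x$ and an arbitrary $y\neq x$, blow up both (exceptional curves $E_1,E_2$), and observe that $\varphi_{nK_S}$ separates $x$ and $y$ as soon as $D':=\pi^*((n-1)K_S)-2E_1-2E_2$ is big and nef — and $(D')^2=(n-1)^2K_S^2-8>0$ for $n\geq 6$. The Hodge Index/adjunction estimates again bound the multiplicities of any curve through both points and give $(n-1)(K_S\cdot C)\geq 2\,\mult_x C+2\,\mult_y C$ for $n\geq 6$ (the borderline case $K_S\cdot C=2$, $\mult_x C+\mult_y C=5$, once more on a $K_S^2=1$ surface, being precisely what costs one more than the base point free bound). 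Then $H^1(\mathcal I_{\{x,y\}}\otimes\O_S(nK_S))=0$ for general $x$ and all $y\neq x$, so $\varphi_{nK_S}$ is generically injective, hence birational onto $S_n$. The genuinely hard part — and where Bombieri's analysis is indispensable — is to establish the nefness of $D$, $D'$ and $(n-1)K_S-Z$ \emph{uniformly over all minimal surfaces of general type}: one must control all curves of small $K_S$--degree, i.e. the $(-2)$--curves and their $ADE$ configurations and the genus $\leq 2$ curves of $K_S$--degree $1$ that occur when $K_S^2=1$, together with their multiplicities at the chosen points. Those are exactly the configurations on which Bombieri's sharper thresholds ($n=3,4$, and $\varphi_2$) break down, and moving up to $n\geq 5$, resp. $n\geq 6$, is what clears them.
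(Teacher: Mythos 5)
Your plan rests everything on Kawamata--Viehweg (equivalently Ramanujam--Mumford, Theorem \ref{thm:mum}), which requires the auxiliary classes $\pi^*((n-1)K_S)-2E$, $\pi^*((n-1)K_S)-2E_1-2E_2$, $(n-1)K_S-Z$ to be \emph{nef}. The paper instead uses Franchetta--Ramanujam (Theorem \ref{thm:FR}), which asks only that an \emph{effective} member $C$ of $|p^*((n-1)K_S)-2E_x-2E_y|$ be $1$--connected with $C^2>0$ --- no nefness at all; the $1$--connectedness is supplied by Bombieri's Lemmas \ref{lem:ytic} and \ref{lem:ytic2} and the existence of the member by the same dimension count you use. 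This is not a cosmetic difference: $1$--connectedness is exactly the hypothesis that survives the curves of small canonical degree, and nefness does not.

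The concrete gap is in your birationality step. The numerical implication you claim --- that Hodge Index plus adjunction force $(n-1)(K_S\cdot C)\geq 2\operatorname{mult}_xC+2\operatorname{mult}_yC$ for $n\geq 6$ --- is false, and your borderline case is misidentified. Take $K_S\cdot C=1$, $\operatorname{mult}_xC=1$, $\operatorname{mult}_yC=2$: your genus bound gives $\binom{1}{2}+\binom{2}{2}=1\leq p_a(C)\leq 2$, so this is numerically allowed, yet $D'\cdot\widetilde C=5-2-4=-1<0$. The configuration is realized by canonical curves on a minimal surface with $K_S^2=1$, $p_g=2$ (a pencil of curves with $K_S\cdot C=1$, $p_a=2$ covering $S$); excluding it for \emph{general} $x$ requires showing that the member through a general point is everywhere smooth, i.e.\ a separate analysis of covering families of curves of canonical degree one, which your ``short check'' does not contain. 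There are also smaller unsupported steps in the base--point--freeness part: nefness of $(n-1)K_S-Z$ against curves $C$ with $K_S\cdot C\geq 1$ needs a bound on $Z\cdot C$, not on $\operatorname{mult}_xC$ (it follows from Cauchy--Schwarz in the negative definite lattice $K_S^{\perp}$, but that is not what you wrote), and the facts $p_a(Z)=0$, $h^1(\O_Z)=0$, $\O_Z(nK_S)\cong\O_Z$ for the fundamental cycle are themselves part of the theory of rational double points and need proof (as does the existence of the canonical model in your alternative route). The economical repair is to abandon nefness altogether and argue as the paper does, via $1$--connectedness and Theorem \ref{thm:FR}.
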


The proof of Bombieri's Theorem \ref {thm:bomb}, though more complicated, relies on the same basic ideas. For the proof we need a few essential preliminaries. 

\subsection{Some vanishing theorems}\label{ssec:FR} Let $S$ be a surface and $C$ a curve on $S$. One says that $C$ is \emph{numerically $m$--connected} (or simply  \emph{$m$--connected}), if for every decomposition $C=C_1+C_2$ with $C_1,C_2$ effective, non--zero, one has $C_1\cdot C_2\geq m$. 


The following are fundamental results which widely extend Kodaira vanishing theorem:

\begin{thm} [Franchetta--Ramanujam's Theorem]\label {thm:FR} Let $S$ be a surface and $C$ an effective divisor on $S$. If $C$ is 1--connected and $C^2>0$, then $h^1(K_S+C)=0$. 
\end{thm}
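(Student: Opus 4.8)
The strategy is to pass to $h^1(\mathcal{O}_S(-C))$ by Serre duality and then to control the restriction map on $H^1$ to $C$. By Serre duality $h^1(K_S+C)=h^1(\mathcal{O}_S(-C))$, so it suffices to prove $h^1(\mathcal{O}_S(-C))=0$. From the structure sequence $0\to\mathcal{O}_S(-C)\to\mathcal{O}_S\to\mathcal{O}_C\to 0$, and using $h^0(\mathcal{O}_S(-C))=0$ since $C>0$, the long exact cohomology sequence reads
\[
0\to H^0(\mathcal{O}_S)\to H^0(\mathcal{O}_C)\to H^1(\mathcal{O}_S(-C))\to H^1(\mathcal{O}_S)\xrightarrow{\ r\ }H^1(\mathcal{O}_C),
\]
whence $h^1(\mathcal{O}_S(-C))=\bigl(h^0(\mathcal{O}_C)-1\bigr)+\dim\ker(r)$. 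Thus the theorem reduces to the two assertions: (i) $h^0(\mathcal{O}_C)=1$; (ii) $r$ is injective.

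For (i) the plan is to invoke (and sketch) the standard connectedness lemma for numerically connected divisors: a $1$-connected $C$ is connected, so $h^0(\mathcal{O}_C)\ge 1$, and the reverse inequality follows by peeling off irreducible components. Writing $C=\Gamma+C'$ with $\Gamma$ an integral component and $C'>0$, one has $0\to\mathcal{O}_\Gamma(-C')\to\mathcal{O}_C\to\mathcal{O}_{C'}\to 0$ with $\deg_\Gamma\mathcal{O}_\Gamma(-C')=-\Gamma\cdot C'<0$ whenever $\Gamma\cdot C'\ge 1$, so $H^0(\mathcal{O}_\Gamma(-C'))=0$ and $H^0(\mathcal{O}_C)\hookrightarrow H^0(\mathcal{O}_{C'})$; iterating, with the removals organized so that this positivity holds at each stage — which is exactly what $1$-connectedness of $C$ secures — one is reduced to an integral curve, for which $h^0(\mathcal{O})=1$.

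Assertion (ii) is the crux, and it is here that $C^2>0$ is used; the plan is to follow Ramanujam's argument. First, $C$ is big: $\chi(\mathcal{O}_S(nC))=\chi(S)+\tfrac12\bigl(n^2C^2-nK_S\cdot C\bigr)$ grows like $n^2$ while $h^2(\mathcal{O}_S(nC))=h^0(K_S-nC)=0$ for $n\gg 0$ (since $(K_S-nC)\cdot A<0$ for an ample $A$), so $h^0(\mathcal{O}_S(nC))$ grows like $n^2$. Now assume $0\ne\xi\in\ker(r)$. Via the Hodge decomposition $H^1(\mathcal{O}_S)\cong\overline{H^0(\Omega^1_S)}$ the class $\xi$ corresponds to a nonzero holomorphic $1$-form $\omega$ on $S$, and $\xi\in\ker(r)$ means that $\omega$ pulls back to zero on the normalization of $C$. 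Writing $\omega=\alpha^*\eta$ for the Albanese morphism $\alpha\colon S\to\Alb(S)$ (see \S\ref{ssec:alb}) and $\eta\ne 0$ an invariant $1$-form, this says that $\alpha(C)$ is everywhere tangent to the kernel foliation of $\eta$; composing $\alpha$ with a suitable quotient torus map one obtains a morphism from $S$ onto an elliptic curve contracting $C$ to a point, and then Corollary \ref{cor:hit} forces $C^2<0$, contradicting the hypothesis. Hence $\ker(r)=0$ and the theorem follows.

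The main obstacle is precisely this last step when $q=h^1(\mathcal{O}_S)\ge 2$: the kernel foliation of a "general" invariant $1$-form on $\Alb(S)$ need not be induced by a map onto an elliptic curve, so producing the contracting morphism requires a more careful analysis — the delicate ingredient in Ramanujam's proof. For $q\le 1$ the argument is immediate; indeed for $q=0$ the map $r$ is trivially injective, so the whole content is in the irregular case.
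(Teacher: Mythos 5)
The paper itself does not prove this statement --- it quotes it from Bombieri's article --- so there is no internal proof to compare with, and your proposal must stand on its own. Its skeleton (Serre duality, the structure sequence, and the two claims (i) $h^0(\mathcal{O}_C)=1$ and (ii) injectivity of $r\colon H^1(\mathcal{O}_S)\to H^1(\mathcal{O}_C)$) is exactly Ramanujam's route, and the reduction is correct. But there is a genuine gap at the crux, and you name it yourself without closing it: from $r(\xi)=0$ you correctly deduce that the $(1,0)$--form $\omega$ with $\bar\omega=\xi$ pulls back to zero on the normalization of each component of $C$, so each $\alpha(\Gamma_i)$ is tangent to the distribution $\ker\eta$ on $\Alb(S)$; however $\ker\eta$ is a linear and in general totally irrational foliation, its leaves are not algebraic, and there is no quotient torus whose fibres are those leaves. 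The sentence ``composing $\alpha$ with a suitable quotient torus map one obtains a morphism onto an elliptic curve contracting $C$'' is therefore precisely the assertion that has to be proved, and for $q\geq 2$ it does not follow from anything preceding it. (Also, the paragraph establishing that $C$ is big is never used afterwards: the only role of $C^2>0$ in your scheme is the final Hodge--index contradiction.)

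The gap is fillable with tools already in the paper, and it is worth saying how, since this is where the actual content lies. For each component $\Gamma_i$ of $C_{\mathrm{red}}$ not contracted by $\alpha$, let $B_i\subseteq\Alb(S)$ be the abelian subvariety generated by $\alpha(\Gamma_i)-p_i$ for a point $p_i\in\alpha(\Gamma_i)$; its tangent space at $0$ is spanned by the translated tangent directions of $\alpha(\Gamma_i)$, so $\eta_{|B_i}=0$. Hence $\eta$ vanishes on $B:=\sum_iB_i$, which is consequently a \emph{proper} abelian subvariety. Since $C$ is connected, all of $\alpha(C)$ lies in a single translate of $B$, so the composite $S\to\Alb(S)\to\Alb(S)/B$ is non--constant (as $\alpha(S)$ generates $\Alb(S)$) and contracts $C$ to a point. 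If its image is a surface, Corollary \ref{cor:hit} gives $C^2<0$; if the image is a curve, $C$ lies in a fibre and Zariski's Lemma (Theorem \ref{thm:zar}) gives $C^2\leq 0$; either way $C^2>0$ is contradicted. A secondary, smaller issue: your peeling argument for (i) does not close as stated, because after removing one integral component the residual divisor need not be $1$--connected (nor even connected), so the inductive hypothesis is not available; the standard proof of the connectedness lemma instead takes $\sigma\in H^0(\mathcal{O}_C)$, subtracts a constant, considers the maximal subdivisor $D\leq C$ on which it vanishes, and derives $D\cdot(C-D)\leq 0$ from the resulting nonvanishing section of $\mathcal{O}_{C-D}(-D)$. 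Since (i) is a standard quoted lemma this is minor, but as written it is not a proof.
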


For the proof, see \cite [p. 179--180]{Bom}. 

The following is the surface version of Kawamata--Viehweg Vanishing Theorem:

\begin{thm}[Ramanujam--Mumford's Vanishing Theorem]\label{thm:mum} Let $S$ be a surface and $C$ an effective divisor on $S$. If $C$ is nef and $C^2>0$, then $h^1(K_S+C)=0$. 
\end{thm}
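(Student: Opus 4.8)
The plan is to deduce this from the Franchetta--Ramanujam Theorem \ref{thm:FR} by showing that the hypotheses ``$C$ nef and $C^2>0$'' already force $C$ to be $1$--connected; once this is known, Theorem \ref{thm:FR} applies verbatim and gives $h^1(K_S+C)=0$. So the whole content of the argument is the implication
\[
C \text{ effective, nef, } C^2>0 \ \Longrightarrow\ C \text{ is } 1\text{--connected.}
\]

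To prove this I would argue by contradiction. Suppose $C=A+B$ with $A,B$ effective and non--zero, and assume $A\cdot B\leqslant 0$; set $k:=-A\cdot B\geqslant 0$. Since $C$ is nef, $0\leqslant C\cdot A=A^2+A\cdot B$ and $0\leqslant C\cdot B=B^2+A\cdot B$, hence $A^2\geqslant k$ and $B^2\geqslant k$. Now split into two cases. If $k>0$, then in particular $A^2>0$, so the Hodge Index Theorem \ref{thm:hit} yields $A^2B^2\leqslant (A\cdot B)^2=k^2$; together with $A^2\geqslant k$ and $B^2\geqslant k$ this forces $A^2=B^2=k$, and therefore $C^2=A^2+2A\cdot B+B^2=k-2k+k=0$, contradicting $C^2>0$. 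If instead $k=0$, i.e.\ $A\cdot B=0$, then $C^2=A^2+B^2>0$, so at least one of $A^2,B^2$ is positive, say $A^2>0$; since $A\cdot B=0$ and, by Theorem \ref{thm:hit}, the intersection form is negative definite on the orthogonal complement of the class $A$, we get $B^2\leqslant 0$, and $B^2=0$ would force $B$ numerically trivial, which is impossible for a non--zero effective divisor (it has strictly positive intersection with any ample class); but $B^2\geqslant k=0$, a contradiction. Hence in every case $A\cdot B\geqslant 1$, so $C$ is $1$--connected, and Theorem \ref{thm:FR} finishes the proof.

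The argument is short once the reduction to the $1$--connected case is spotted, so I do not expect a serious obstacle; the one point that needs some care is the degenerate subcase $A\cdot B=0$, where one must invoke the full strength of the Hodge Index Theorem (negative definiteness on the orthogonal complement of a class of positive self--intersection), rather than only the inequality $D^2E^2\leqslant (D\cdot E)^2$, and combine it with the elementary remark that a non--zero effective divisor can never be numerically trivial.
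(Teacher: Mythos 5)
Your argument is correct, but it takes a different route from the paper: the paper does not prove Theorem \ref{thm:mum} at all, it simply cites \cite[Theorem (12.1)]{BPHV}, whereas you derive it from the Franchetta--Ramanujam Theorem \ref{thm:FR} (itself a black box in the paper, cited from \cite{Bom}) by proving that an effective nef divisor with $C^2>0$ is automatically $1$--connected. That reduction is sound and the two cases are handled correctly: for $A\cdot B=-k<0$ nefness gives $A^2\geqslant k>0$ and $B^2\geqslant k$, the Hodge inequality forces $A^2=B^2=k$ and hence $C^2=0$; for $A\cdot B=0$ with, say, $A^2>0$, negative definiteness of the intersection form on $A^{\perp}$ (which follows from the signature $(1,\rho-1)$ in Theorem \ref{thm:hit}) forces $B^2<0$ unless $B\equiv 0$, and a non--zero effective divisor meets an ample class positively, so both possibilities are excluded. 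The case split on the sign of $A\cdot B$ is exhaustive since $A\cdot B$ is an integer. What your approach buys is a self--contained deduction within the paper's own toolkit, replacing one external citation by a short lemma and unifying the two vanishing statements of \S\ref{ssec:FR}: Theorem \ref{thm:mum} becomes a corollary of Theorem \ref{thm:FR} rather than an independent import; what it costs is nothing beyond the Hodge Index Theorem, which the paper has already established.
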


For the proof, see \cite [Theorem (12.1)]{BPHV}. 

\begin{corollary}\label{cor:purj} Let $S$ be a minimal  surface of general type. Then
\begin{equation}\label{eq:plur}
P_n=\chi+\frac {n(n-1)}2 K_S^2\quad \text{for all}\quad n\geq 2.
\end{equation}
\end{corollary}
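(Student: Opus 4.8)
The plan is to combine the Riemann--Roch Theorem with a vanishing statement, in the usual way. First I would record the two facts that make everything run: since $S$ is minimal of general type, $K_S$ is nef --- if it were not, the minimal model programme would produce an extremal contraction, which is either the blow-down of a $(-1)$-curve (contradicting minimality) or a Mori fibre space (forcing $\kappa=-\infty$, contradicting $\kappa=2$) --- and $K_S^2>0$. Hence $K_S$, and therefore $(n-1)K_S$ for every $n\geq 2$, is big and nef.

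Next, by the Riemann--Roch Theorem,
\[
\chi(nK_S)=\chi+\frac{nK_S\cdot(nK_S-K_S)}{2}=\chi+\frac{n(n-1)}{2}K_S^2 ,
\]
so it remains only to prove that $h^1(nK_S)=h^2(nK_S)=0$ for all $n\geq 2$. For $h^2$, Serre duality gives $h^2(nK_S)=h^0\big((1-n)K_S\big)$; were this nonzero, an effective divisor $D$ with $D\sim (1-n)K_S$ would satisfy $0\leq K_S\cdot D=(1-n)K_S^2<0$, since $K_S$ is nef and $K_S^2>0$ --- a contradiction. For $h^1$, I would write $nK_S=K_S+(n-1)K_S$ and apply the Kawamata--Viehweg Vanishing Theorem to the big and nef line bundle $(n-1)K_S$ to get $h^1(nK_S)=0$; alternatively, once one knows $(n-1)K_S$ is effective (which for $n\geq 3$ follows from $P_{n-1}>0$) one may instead invoke Ramanujam--Mumford's Vanishing Theorem. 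Combining, $P_n=h^0(nK_S)=\chi(nK_S)=\chi+\frac{n(n-1)}{2}K_S^2$, which is \eqref{eq:plur}.

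The argument is essentially routine, so there is no real obstacle; the one point that needs attention is the case $n=2$, where $(n-1)K_S=K_S$ need not be effective (we may have $p_g=0$), so one must use the big-and-nef form of the vanishing theorem rather than its effective-divisor version, and one must have already established that $K_S$ is nef for a minimal surface of general type.
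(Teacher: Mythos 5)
Your proof is correct and follows essentially the same route as the paper: Serre duality plus nefness of $K_S$ kills $h^2(nK_S)$, a Kawamata--Viehweg/Ramanujam--Mumford type vanishing kills $h^1(nK_S)$, and Riemann--Roch finishes. Your remark about $n=2$ is well taken: the paper invokes Ramanujam--Mumford's theorem, whose stated hypothesis requires an \emph{effective} nef divisor with positive self--intersection, so your use of the big-and-nef form of the vanishing theorem is the cleaner way to cover the case $p_g=0$.
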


\begin{proof} By Serre duality we have $h^2(nK_S)=h^0((1-n)K_S)=0$, for all $n\geq 2$. Moreover all pluricanonical divisors are nef, with positive self--intersection. Hence, by Ramanujam--Mumford's  Vanishing Theorem, we have $h^1(nK_S)=0$, for all $n\geq 2$. The assertion follows by Riemann--Roch theorem.\end{proof}

\subsection{Connectedness of pluricanonical divisors}\label{ssec:plur}

We consider here a minimal surface $S$ of general type, so that it is also a strong minimal model and therefore $K_S$ is nef. 

\begin{lemma}[Franchetta--Bombieri's Lemma] \label{lem:cp} Let $S$ be a minimal  surface of general type and let $C\equiv nK_S$ with $n\geq 1$ a curve. Then $C$ is 1--connected.
\end{lemma}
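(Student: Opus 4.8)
The plan is to argue by contradiction: suppose $C\equiv nK_S$ admits a decomposition $C = A + B$ with $A, B$ effective, nonzero, and $A\cdot B \leq 0$. The strategy is to extract numerical consequences from the nefness and bigness of $K_S$ (recall $K_S^2 > 0$ since $S$ is a minimal surface of general type) together with the Hodge Index Theorem, and push these until they contradict $A\cdot B\le 0$. First I would record the basic identities: since $C\equiv nK_S$ we have $C^2 = n^2 K_S^2 > 0$ and $C\cdot K_S = nK_S^2 > 0$, and for the pieces $A\cdot C = A\cdot A + A\cdot B$, $B\cdot C = A\cdot B + B\cdot B$, so that $A^2 + B^2 = C^2 - 2(A\cdot B) \geq C^2 > 0$; hence at least one of $A^2, B^2$ is positive, say $A^2 > 0$. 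Also $A\cdot K_S \geq 0$ and $B\cdot K_S \geq 0$ by nefness, and $A\cdot K_S + B\cdot K_S = C\cdot K_S = nK_S^2$.

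The key step is to apply the Hodge Index Theorem in the form stated in Theorem~\ref{thm:hit}: since $A^2 > 0$, for any class $D$ one has $A^2\cdot D^2 \leq (A\cdot D)^2$. Taking $D = B$ gives $A^2 B^2 \leq (A\cdot B)^2$; combined with $A\cdot B \leq 0$ this forces $B^2 \leq \frac{(A\cdot B)^2}{A^2}$, and more importantly, taking $D = K_S$ gives $A^2 K_S^2 \leq (A\cdot K_S)^2$, so $A\cdot K_S > 0$. The heart of the matter is to also show $B\cdot K_S > 0$, or else handle the degenerate case $B\cdot K_S = 0$ separately: if $B\cdot K_S = 0$ then $B$ is a nonzero effective divisor orthogonal to the big-and-nef class $K_S$, so by Hodge Index $B^2 < 0$ (it cannot be numerically proportional to $K_S$ since $K_S^2 > 0 = B\cdot K_S$ would be impossible); but then $A^2 = C^2 - 2(A\cdot B) - B^2 > C^2 > 0$ still holds, and one examines $A = C - B \equiv nK_S - B$, computing $A\cdot K_S = nK_S^2 > 0$. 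The cleanest route is probably: from $A^2 > 0$, Hodge Index with $B$ gives $(A\cdot B)^2 \geq A^2 B^2$; now estimate $A^2$ from below using $A^2 \geq C^2 - B^2$ (valid since $A\cdot B\le 0$) — wait, one must be careful here — and similarly bound $B^2$ using nefness of $K_S$ and Hodge Index applied with $K_S$ itself, namely $B^2 K_S^2 \leq (B\cdot K_S)^2$.

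I expect the main obstacle to be closing off the case where one of the two pieces, say $B$, has $B^2 \leq 0$ — the generic expectation — and showing that even then $A\cdot B \leq 0$ is impossible. Here the leverage should come from the identity $A\cdot K_S + B\cdot K_S = nK_S^2$ with both summands nonnegative integers (after intersecting with $K_S$, these are integers when $K_S$ is a genuine divisor), combined with $2 A\cdot B = C^2 - A^2 - B^2 = n^2K_S^2 - A^2 - B^2$ and the two Hodge inequalities $A^2 \leq (A\cdot K_S)^2/K_S^2$ (if $A^2>0$; otherwise trivially $A^2\le 0 \le (A\cdot K_S)^2/K_S^2$) and likewise for $B$. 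Substituting, $2 A\cdot B \geq n^2 K_S^2 - \frac{(A\cdot K_S)^2 + (B\cdot K_S)^2}{K_S^2}$; writing $a = A\cdot K_S$, $b = B\cdot K_S$ with $a,b\geq 0$ and $a+b = nK_S^2$, the quantity $a^2 + b^2$ is maximized at the endpoints and there $a^2+b^2 = (nK_S^2)^2$, giving $2A\cdot B \geq n^2K_S^2 - \frac{(nK_S^2)^2}{K_S^2} = 0$; strict positivity of $A\cdot B$ then follows by ruling out the endpoint case $a = 0$ (or $b=0$) exactly as in the degenerate analysis above, since $a=0$ means $A\cdot K_S = 0$ with $A$ nonzero effective, forcing $A^2 < 0$ and then the Hodge bound is strict. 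This yields $A\cdot B \geq 1 > 0$, contradicting the assumption and proving $C$ is $1$-connected.
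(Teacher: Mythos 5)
Your final paragraph contains a complete and correct proof: the Hodge-index bounds $A^2\leq (A\cdot K_S)^2/K_S^2$ and $B^2\leq (B\cdot K_S)^2/K_S^2$ (both valid because $K_S^2>0$), together with $A\cdot K_S+B\cdot K_S=nK_S^2$ and $2A\cdot B=n^2K_S^2-A^2-B^2$, give $A\cdot B\geq 0$, and the only possible equality case ($A\cdot K_S=0$ or $B\cdot K_S=0$) is correctly excluded because a nonzero effective divisor orthogonal to the class $K_S$ with $K_S^2>0$ has strictly negative self-intersection, whence $A\cdot B>0$ and, by integrality, $A\cdot B\geq 1$. The paper does not prove the lemma but refers to Bombieri, whose argument is essentially this same computation (phrased via the orthogonal decomposition of $A$ and $B$ along $K_S$), so you have reconstructed the standard proof; the exploratory middle paragraph of your write-up is dispensable and could be cut.
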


For the proof, see \cite [p. 181]{Bom}.

\begin{lemma}\label{lem:ytic}
Let $S$ be a minimal  surface of general type, let $x$ be a point on $S$, let $p: S'\to S$ the blow--up of $S$ at $x$, with exceptional divisor $E$. Let $C$ be a curve on $S'$ with $C\in |p^*(nK_S)-2E|$, with $n\geq 3$. Then $C$ is  1--connected. 
\end{lemma}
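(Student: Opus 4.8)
The plan is to prove $1$--connectedness directly, arguing by contradiction: suppose $C=C_1+C_2$ with $C_1,C_2$ effective and nonzero and $C_1\cdot C_2\le 0$, and derive a contradiction. Note first that $C^2=(p^*(nK_S)-2E)^2=n^2K_S^2-4>0$, since $n\ge 3$ and $K_S^2\ge 1$ ($S$ being of general type and minimal). A preliminary reduction disposes of the case where one of the summands is supported on $E$: if, say, $C_1=mE$ with $m\ge 1$, then $C_2\equiv p^*(nK_S)-(m+2)E$ and
\[
C_1\cdot C_2=mE\cdot\big(p^*(nK_S)-(m+2)E\big)=m(m+2)>0,
\]
a contradiction. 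Hence neither $C_i$ is supported on $E$.

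Next I would push the decomposition down to $S$. Set $D_i:=p_*C_i$; these are effective, nonzero, and $D_1+D_2=p_*C\equiv nK_S$. Writing $C_i\equiv p^*D_i-a_iE$ in $\Pic(S')=p^*\Pic(S)\oplus\ZZ[E]$, one gets $a_1+a_2=2$, and $a_i=\mult_x(D_i)-b_i$ where $b_i\ge 0$ is the multiplicity of $E$ in $C_i$; in particular $a_i\le\mult_x(D_i)$. A direct computation gives $C_1\cdot C_2=D_1\cdot D_2-a_1a_2$. By the Franchetta--Bombieri Lemma \ref{lem:cp} the curve $D_1+D_2\equiv nK_S$ is $1$--connected, so $D_1\cdot D_2\ge 1$; combined with $C_1\cdot C_2\le 0$ this forces $a_1a_2\ge 1$. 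But $a_1+a_2=2$ gives $a_1a_2=1-(a_1-1)^2\le 1$, so $a_1a_2\ge 1$ is possible only if $a_1=a_2=1$. Consequently $D_1\cdot D_2=1$, $C_1\cdot C_2=0$, and $\mult_x(D_i)\ge a_i=1$ for $i=1,2$.

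It remains to exclude this extremal configuration, and this is where the work lies. From $1=D_1\cdot D_2\ge\mult_x(D_1)\cdot\mult_x(D_2)\ge 1$ one reads off that $D_1$ and $D_2$ are smooth at $x$, meet there transversally, and meet nowhere else; hence $b_i=\mult_x(D_i)-a_i=0$, i.e.\ $C_1$ and $C_2$ are the (disjoint!) strict transforms of $D_1$ and $D_2$, and $C$ is disconnected. Since $C_1^2+C_2^2=C^2>0$, exactly one of them, say $C_1$, has $C_1^2>0$, and then the Hodge Index Theorem on $S'$ gives $C_2^2\le 0$; correspondingly $D_1^2=C_1^2+1>0$ and, using $D_1\cdot D_2=1$ and the Hodge Index Theorem on $S$, also $D_2^2\le 0$. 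To finish I would exploit the hypothesis $n\ge 3$: using that $K_S$ is nef with $K_S^2\ge 1$, adjunction on $S$, and repeated application of Lemma \ref{lem:cp} to sub-decompositions of $D_1+D_2\equiv nK_S$ (which effectively upgrades the $1$--connectedness of $nK_S$ to a sharper connectedness once $n\ge 3$ and the base point $x$ is imposed), in order to contradict the coexistence of $D_1\cdot D_2=1$ with both $D_i$ passing through $x$. I expect this last step — the careful bookkeeping of common components of $D_1$ and $D_2$, their multiplicities at $x$, and how these interact with the intersection numbers and with $K_S\cdot D_i$ — to be the main obstacle, and it is here that $n\ge 3$ is genuinely used.
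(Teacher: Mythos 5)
Your reduction is sound as far as it goes: the elimination of components supported on $E$, the decomposition $C_i\equiv p^*D_i-a_iE$ with $a_1+a_2=2$, the identity $C_1\cdot C_2=D_1\cdot D_2-a_1a_2$, and the use of Lemma \ref{lem:cp} to force $a_1=a_2=1$ and $D_1\cdot D_2=1$ are all correct. But the proof is not complete: excluding the residual configuration $D_1+D_2\equiv nK_S$ with $D_1\cdot D_2=1$ is precisely the content of the lemma beyond Franchetta--Bombieri, and you leave it as a declaration of intent. Moreover, the tools you point to are not the ones that close it: further applications of Lemma \ref{lem:cp} to sub-decompositions only ever return $1$--connectedness, and the local geometry of $D_1,D_2$ at $x$ is a red herring --- the contradiction has nothing to do with the base point. (Also, your intermediate claim $1=D_1\cdot D_2\geq \mult_x(D_1)\cdot\mult_x(D_2)$ tacitly assumes that $D_1$ and $D_2$ have no common component, which a decomposition of an effective divisor need not satisfy; fortunately that whole paragraph can be discarded.)

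The case $D_1\cdot D_2=1$ is killed by a purely numerical argument, and this is where $n\geq 3$ genuinely enters. Since $D_1^2+D_2^2=n^2K_S^2-2\geq 7$, not both $D_i^2$ can be positive: otherwise the Hodge Index Theorem gives $D_1^2D_2^2\leq (D_1\cdot D_2)^2=1$, hence $D_1^2=D_2^2=1$ and $D_1\equiv D_2$, so $n^2K_S^2=4$, impossible for $n\geq 3$ (this is exactly the excluded case $n=2$, $K_S^2=1$, $p_g=2$ of Bombieri's theorem). So, say, $D_2^2\leq 0$, and then $nK_S\cdot D_2=(D_1+D_2)\cdot D_2=1+D_2^2\leq 1$, while nefness of $K_S$ forces $nK_S\cdot D_2\in\{0,1\}$. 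The value $1$ is impossible because $K_S\cdot D_2=\frac 1n$ is not an integer, and the value $0$ gives $D_2^2=-1$ and $K_S\cdot D_2=0$, so $D_2\cdot(D_2+K_S)=-1$, contradicting the parity imposed by the adjunction formula. This index-theorem endgame is the heart of the connectedness lemmas in Bombieri's paper, to which the text defers for the proof; without it your argument establishes nothing beyond what Lemma \ref{lem:cp} already gives.
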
 

For the proof, see \cite [p. 183]{Bom}.

\begin{lemma}\label{lem:ytic2} Let $S$ be a minimal  surface of general type, let $x,y$ be distinct points on $S$, let $p: S'\to S$ be the blow--up of $S$ at $x,y$, with exceptional divisor $E_x, E_y$. Let $C$ be a curve on $S'$ with $C\in |p^*(nK_S)-2E_x-2E_y|$, with $n\geq 4$. Then $C$ is  1--connected. 
\end{lemma}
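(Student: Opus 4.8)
The statement to prove is Lemma~\ref{lem:ytic2}: if $S$ is a minimal surface of general type, $x,y$ are distinct points, $p\colon S'\to S$ is the blow-up at both, with exceptional curves $E_x,E_y$, and $C\in|p^*(nK_S)-2E_x-2E_y|$ with $n\ge 4$, then $C$ is $1$-connected. The approach mirrors the proofs of Lemmas~\ref{lem:ytic} and \ref{lem:cp} (Franchetta--Bombieri-type arguments): one argues by contradiction, taking a decomposition $C=A+B$ with $A,B$ effective nonzero and $A\cdot B\le 0$, and derives a contradiction from numerical positivity of $K_S$ together with the Hodge Index Theorem on $S'$ (equivalently, on $S$ after pushing forward). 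The point of requiring $n\ge 4$ is precisely that we are subtracting $2E_x+2E_y$, i.e. we are imposing double points at two distinct points, so we ``lose'' a contribution of $4$ from $K_S^2$ worth of positivity, and $n\ge 4$ is what keeps the relevant intersection numbers on the right side of the inequalities.

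\textbf{Key steps.} First, write $D:=p^*(nK_S)-2E_x-2E_y$, so $D^2=n^2K_S^2-8$ and $D\cdot E_x=D\cdot E_y=2$. Suppose $C=A+B$ is a decomposition into nonzero effective divisors with $A\cdot B\le 0$; since $A\cdot B=\tfrac12(C^2-A^2-B^2)$ and $C^2=D^2$, this forces $A^2+B^2\ge D^2$. Next, decompose each of $A,B$ according to how much of $E_x,E_y$ it contains: write $A=p^*(A_0)-a_xE_x-a_yE_y$ and $B=p^*(B_0)-b_xE_x-b_yE_y$ in $\mathrm{Num}(S')$, where $A_0+B_0\equiv nK_S$ on $S$, $a_x+b_x=2$, $a_y+b_y=2$, and the multiplicities $a_x,a_y,b_x,b_y$ are nonnegative integers (or can be arranged so by the structure of the linear system, using that $A,B$ are actual curves on $S'$). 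Now push forward to $S$: the divisors $p_*A=A_0$ and $p_*B=B_0$ are effective (or zero) on $S$ with $A_0+B_0\equiv nK_S$, and
\[
A_0\cdot B_0 = A\cdot B + a_xb_x + a_yb_y \le a_xb_x+a_yb_y.
\]
Since $a_x+b_x=2$ and $a_y+b_y=2$ with nonnegative integer entries, each of $a_xb_x$ and $a_yb_y$ is at most $1$, so $A_0\cdot B_0\le 2$. If both $A_0$ and $B_0$ are nonzero and effective, this contradicts Franchetta--Bombieri's Lemma~\ref{lem:cp} (which says every curve numerically equivalent to $nK_S$ is $1$-connected) once we rule out the borderline cases; one has to check that $A_0\cdot B_0\ge 1$ already suffices for $1$-connectedness of $nK_S$, and push the estimate to exclude $A_0\cdot B_0\in\{1,2\}$ by examining when $a_xb_x+a_yb_y>0$ — this happens only when the double-point conditions are ``split'' between $A$ and $B$, and in those cases one gains back intersection with $K_S$: indeed $K_S\cdot A_0\ge 1$ and $K_S\cdot B_0\ge 1$ (as $K_S$ is nef and big and $A_0,B_0$ are nonzero effective, so neither can satisfy $K_S\cdot(\cdot)=0$ on a surface of general type), and combining with $(A_0+B_0)\cdot A_0=nK_S\cdot A_0$ plus Hodge Index forces $A_0^2,B_0^2$ to be small, yielding $A_0\cdot B_0\ge n K_S\cdot A_0 - A_0^2 \ge$ something $\ge n-\text{(small)}$, contradicting $A_0\cdot B_0\le 2$ for $n\ge 4$. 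The remaining degenerate possibility is that $A_0$ (say) is zero, i.e. $A=a_xE_x+a_yE_y$ with $a_x,a_y\le 2$ not both zero; then a direct computation gives $A\cdot B=A\cdot(D-A)=2a_x+2a_y-(-a_x^2)-(-a_y^2)\cdot(\text{signs})\dots$ — concretely $A^2=-a_x^2-a_y^2$ and $A\cdot D=2a_x+2a_y$, so $A\cdot B=2a_x+2a_y+a_x^2+a_y^2>0$, again contradicting $A\cdot B\le 0$. This exhausts the cases.

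\textbf{Main obstacle.} The delicate point is the bookkeeping of the exceptional multiplicities and the borderline cases $A_0\cdot B_0\in\{1,2\}$: one must show that the ``gain'' $a_xb_x+a_yb_y\le 2$ on the exceptional side is always strictly outweighed by the $K_S$-positivity on $S$ when $n\ge 4$, i.e. that the inequality chain
\[
0 \ge A\cdot B = A_0\cdot B_0 - a_xb_x - a_yb_y \ge \big(nK_S\cdot A_0 - A_0^2\big) - 2
\]
combined with the Hodge Index bound $A_0^2 \le \dfrac{(K_S\cdot A_0)^2}{K_S^2}$ (when $K_S\cdot A_0>0$) is genuinely contradictory. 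This requires carefully splitting on the value of $K_S\cdot A_0$ and $K_S\cdot B_0$ (both $\ge 1$), and the threshold $n\ge 4$ is exactly where the argument becomes tight — for $n=3$ one would only get $1$-connectedness with a single double point (Lemma~\ref{lem:ytic}), not with two. I expect the proof to be a page of such case analysis, structurally identical to Bombieri's treatment cited as \cite[p.~183]{Bom}, so in the write-up I would state the numerical setup, do the generic case via Lemma~\ref{lem:cp} and Hodge Index, then dispatch the finitely many exceptional-multiplicity configurations by the explicit computation above, and refer to \cite[p.~183]{Bom} for the parallel details.
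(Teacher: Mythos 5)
The paper does not actually prove this lemma --- it just points to \cite[p.~183]{Bom} --- so your argument has to stand on its own. Most of your numerical skeleton is sound and is indeed the Franchetta--Bombieri method: writing $A=p^*(A_0)-a_xE_x-a_yE_y$, $B=p^*(B_0)-b_xE_x-b_yE_y$ with $a_x+b_x=a_y+b_y=2$ gives $A_0\cdot B_0=A\cdot B+a_xb_x+a_yb_y\le 2$ (each product $a_xb_x$, $a_yb_y$ is $\le 1$ for \emph{any} integers summing to $2$, so you do not even need the nonnegativity you worry about); the degenerate case $A_0=0$ is correctly dispatched; and when $\alpha:=K_S\cdot A_0\ge 1$ and $\beta:=K_S\cdot B_0\ge 1$ the Hodge Index Theorem gives $2A_0\cdot B_0=n^2K_S^2-A_0^2-B_0^2\ge\bigl((\alpha+\beta)^2-\alpha^2-\beta^2\bigr)/K_S^2$, i.e. $A_0\cdot B_0\ge\alpha\beta/K_S^2\ge(nK_S^2-1)/K_S^2\ge n-1\ge 3$, a genuine contradiction with $A_0\cdot B_0\le 2$ for $n\ge4$.

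The gap is your parenthetical claim that $K_S\cdot A_0\ge 1$ for every nonzero effective $A_0$ ``on a surface of general type''. That is false: a minimal surface of general type may contain $(-2)$--curves, i.e. irreducible $\Gamma$ with $K_S\cdot\Gamma=0$ and $\Gamma^2=-2$. If $K_S\cdot A_0=0$ the index theorem only yields $A_0^2\le-2$ (negative semidefiniteness on $K_S^{\perp}$, plus parity of $A_0^2+K_S\cdot A_0$), hence $A_0\cdot B_0=nK_S\cdot A_0-A_0^2\ge 2$, which does not beat your upper bound of $2$. And this is not a technicality one can argue away: if $x$ and $y$ both lie on a $(-2)$--curve $\Gamma$ and $B_0\in|nK_S-\Gamma|$ is a general member through $x$ and $y$ (such $B_0$ exist, since $\dim|nK_S-\Gamma|\ge P_n-2$ and $P_n$ is large), then $C=\widetilde\Gamma+\widetilde B_0$ belongs to $|p^*(nK_S)-2E_x-2E_y|$ and satisfies $\widetilde\Gamma\cdot\widetilde B_0=\Gamma\cdot(nK_S-\Gamma)-2=0$, so this member is not $1$--connected. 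The statement therefore implicitly requires that $x$ and $y$ do not both lie on a curve $\Theta$ with $K_S\cdot\Theta=0$ --- harmless in the paper's application, where $x$ is a general point: then any such $\Theta$ misses $x$, so $a_x\le 0$, $a_xb_x\le 0$, and $A_0\cdot B_0\le 1<2\le -A_0^2$ closes the case. You need to add this hypothesis, or this case analysis, for your proof to be complete.
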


For the proof, see again \cite [p. 183]{Bom}.

\subsection{Base point freeness}\label{ssec:bpf}

In this section we prove the:

\begin{thm}\label{thm:bbbb1} Let $S$ be a minimal surface of general type. Then $|nK_S|$ is base point free as soon as $n\geq 5$. 
\end{thm}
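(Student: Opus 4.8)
The plan is to prove that for every $m\ge 5$ and every point $x\in S$, the point $x$ is not a base point of $|mK_S|$. Fix such an $x$, let $p\colon S'\to S$ be the blow-up of $S$ at $x$ with exceptional curve $E$ (so $E\cong\PP^1$, $E^2=-1$, $K_{S'}=p^*(K_S)+E$), and write $m=n+1$ with $n\ge 4$. Since $p^*(mK_S)\cdot E=0$ we have $p^*(mK_S)|_E\cong\O_E$, so there is a short exact sequence
\[
0\to \O_{S'}\big(p^*(mK_S)-E\big)\to \O_{S'}\big(p^*(mK_S)\big)\to \O_E\to 0 .
\]
Taking cohomology and using $H^0\big(S',p^*(mK_S)\big)=H^0(S,mK_S)$ — under which the induced map to $H^0(\O_E)=\CC$ is the evaluation $s\mapsto s(x)$ — one sees that $x\notin\mathrm{Bs}\,|mK_S|$ as soon as $H^1\big(S',p^*(mK_S)-E\big)=0$. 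So the whole statement reduces to this vanishing for all $n\ge 4$.

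The key observation is the identity $p^*(mK_S)-E=K_{S'}+D$, where $D:=p^*(nK_S)-2E$, which follows from $K_{S'}=p^*(K_S)+E$ and $m=n+1$. I would then apply Franchetta--Ramanujam's Theorem \ref{thm:FR} to an effective divisor $C\in|D|$, checking its three hypotheses. First, $|D|\neq\emptyset$: by the projection formula $p_*\O_{S'}(D)=nK_S\otimes\mathfrak m_x^2$, so $h^0(D)=h^0(nK_S\otimes\mathfrak m_x^2)\ge P_n-3$; by Corollary \ref{cor:purj} one has $P_n=\chi+\tfrac{n(n-1)}2K_S^2$, and since a minimal surface of general type satisfies $\chi>0$ and $K_S^2\ge 1$ (Theorem \ref{thm:cdfe}), for $n\ge 4$ this gives $P_n\ge 1+6=7$, hence $h^0(D)\ge 4>0$. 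Second, $C^2>0$: indeed $C^2=D^2=n^2K_S^2-4\ge 16-4>0$ for $n\ge 4$. Third, $C$ is $1$-connected: this is exactly Lemma \ref{lem:ytic}, which applies since $n\ge 4\ge 3$.

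Assembling these: for $n\ge 4$, choose $C\in|D|=|p^*(nK_S)-2E|$; then $C$ is an effective, $1$-connected divisor with $C^2>0$, so Theorem \ref{thm:FR} yields $h^1(K_{S'}+C)=h^1\big(p^*(mK_S)-E\big)=0$; hence $x\notin\mathrm{Bs}\,|mK_S|$, and letting $x$ vary over $S$ and $m=n+1$ range over all integers $\ge 5$ proves the theorem. I do not expect a genuine obstacle in carrying this out, precisely because the two substantial inputs — the $1$-connectedness of a suitable singular pluricanonical divisor (Lemma \ref{lem:ytic}) and the Franchetta--Ramanujam vanishing (Theorem \ref{thm:FR}) — are already available. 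The only delicate point is numerical: when $K_S^2=1$ the requirement $C^2>0$ forces $n\ge 4$ rather than $n\ge 3$, which is exactly why the bound obtained is sharp at $m\ge 5$; the bookkeeping facts $p_*\O_{S'}(-2E)=\mathfrak m_x^2$ and $h^0(nK_S\otimes\mathfrak m_x^2)\ge P_n-3$ need only a line of justification each.
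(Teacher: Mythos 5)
Your argument is correct and is essentially the paper's own proof: the same blow-up at $x$, the same exact sequence reducing base-point-freeness to the vanishing of $h^1\big(p^*(mK_S)-E\big)$, the same identification of that sheaf as $K_{S'}+C$ with $C\in|p^*(nK_S)-2E|$, the same appeal to Lemma \ref{lem:ytic} for $1$-connectedness and to Theorem \ref{thm:FR} for the vanishing, and the same plurigenus count (via Corollary \ref{cor:purj}) to produce $C$. Only your closing aside is slightly off: for $K_S^2=1$ and $n=3$ one still has $C^2=9-4>0$, so the condition $C^2>0$ is not what pins the bound at $m\ge 5$.
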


\begin{proof} Let $x\in S$ and let $p:S'\to S$ be the blow--up of $S$ at $x$, with exceptional divisor $E$. Consider the exact sequence
\[
0\to p^*(nK_S)(-E)\to  p^*(nK_S)\to  p^*(nK_S)_{|E}=\O_E\to 0.
\]
From this it follows that $x$ cannot be a base point for $|nK_S|$ if $h^1(p^*(nK_S)(-E))=0$. Suppose there exists a divisor  $C\in |p^*((n-1)K_S)(-2E))|$. Then $C$ is 1--connected as soon as $n\geq 4$ by Lemma \ref {lem:ytic}. Moreover $C^2=(n-1)^2K_S^2-4\geq 5>0$ if $n\geq 4$. Hence, by Theorem
\ref {thm:FR}, we have that $h^1(K_{S'}+C)=0$. Since $K_{S'}=p^*(K_S)+E$, then
$K_{S'}+C\sim p^*(K_S)+E+p^*((n-1)K_S)(-2E))=p^*(nK_S)(-E)$, hence 
$h^1(p^*(nK_S)(-E))=0$, which proves that $x$ is not a base point for $|nK_S|$.

Finally the existence of $C\in |p^*((n-1)K_S)(-2E))|$ is ensured by the fact that, if $n\geq 5$ one has $\dim(|(n-1)K_S|)\geq \chi+\frac {(n-1)(n-2)}2-1\geq 6$, hence certainly there is
a curve in $|(n-1)K_S|$ which is singular at $x$, because this imposes at most 3 conditions to $|(n-1)K_S|$. \end{proof}

\subsection{Birationality}\label{ssec:bir}

In this section we prove the:

\begin{thm}\label{thm:bbbb1} Let $S$ be a minimal surface of general type. Then the map $\phi_{nK_S}$ is birational onto its image as soon as $n\geq 6$. 
\end{thm}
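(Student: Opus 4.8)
The plan is to mimic the base point freeness argument of Theorem \ref{thm:bbbb1} one level up: instead of separating a single point from the rest, one must separate two distinct points $x,y\in S$, plus separate tangent directions at a general point, so that $\varphi_{nK_S}$ becomes generically injective and an immersion on a dense open set. First I would fix $x\neq y$ general points of $S$ (general meaning, in particular, avoiding the finitely many curves contracted by some fixed pluricanonical map, and lying where $|5K_S|$ already separates them — this is harmless since we only need birationality, i.e.\ separation of a \emph{general} pair of points). Let $p:S'\to S$ be the blow--up of $S$ at $x$ and $y$, with exceptional divisors $E_x,E_y$. Then $\varphi_{nK_S}$ separates $x$ and $y$ provided
\[
H^1\bigl(p^*(nK_S)-E_x-E_y\bigr)=0,
\]
since this makes the restriction map $H^0(p^*(nK_S))\to H^0(\O_{E_x}\oplus\O_{E_y})=\CC^2$ surjective.

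The key step is to realize $p^*(nK_S)-E_x-E_y$ in the form $K_{S'}+C$ with $C$ a curve on $S'$ to which Franchetta--Ramanujam's Theorem \ref{thm:FR} applies. Since $K_{S'}=p^*(K_S)+E_x+E_y$, we want
\[
C\in \bigl|\,p^*((n-1)K_S)-2E_x-2E_y\,\bigr|,
\]
so that $K_{S'}+C\sim p^*(nK_S)-E_x-E_y$. Such a $C$ exists as soon as $|(n-1)K_S|$ contains a divisor singular at both $x$ and $y$; a singular point imposes at most $3$ conditions, so it suffices that $\dim|(n-1)K_S|\geq 6$, i.e.\ $P_{n-1}\geq 7$. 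By Corollary \ref{cor:purj}, $P_{n-1}=\chi+\tfrac{(n-1)(n-2)}{2}K_S^2$, and since $\chi\geq 1$, $K_S^2\geq 1$, this holds for $n\geq 6$ (when $n=6$ one gets $P_5\geq 1+10=11$). It remains to check the two hypotheses of Theorem \ref{thm:FR} for $C$: that $C$ is $1$--connected and that $C^2>0$. The first is exactly Lemma \ref{lem:ytic2}, valid for $n\geq 4$ (hence a fortiori $n\geq 6$). For the second, $C^2=(n-1)^2K_S^2-8\geq 25\cdot 1-8=17>0$ when $n\geq 6$. Thus $h^1(K_{S'}+C)=0$, giving separation of $x$ and $y$.

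To finish, one must also handle separation of infinitely near points: for a general point $x$, $\varphi_{nK_S}$ must separate tangent directions, i.e.\ be an immersion near $x$. This runs in parallel: blow up $S$ at $x$ \emph{twice} (or equivalently at $x$ and an infinitely near point $x'$), or more cleanly blow up once and work with $p^*(nK_S)-2E$, using Lemma \ref{lem:ytic} (valid for $n\geq 3$) for the $1$--connectedness of a curve $C\in |p^*((n-1)K_S)-3E|$, with $K_{S'}+C\sim p^*(nK_S)-2E$; the numerics $C^2=(n-1)^2K_S^2-9>0$ and $\dim|(n-1)K_S|\geq$ (conditions for a point of multiplicity $\geq 3$, i.e.\ $6$ conditions) again hold for $n\geq 6$. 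Combining, $\varphi_{nK_S}$ is generically injective and generically an immersion, hence birational onto its image for $n\geq 6$. The main obstacle I anticipate is purely bookkeeping: verifying that the various $1$--connectedness lemmas (Lemmas \ref{lem:ytic}, \ref{lem:ytic2}) apply in exactly the right range of $n$ and for a sufficiently general choice of points, and matching the numerical thresholds ($P_{n-1}$ large enough, $C^2>0$) so that everything closes at $n=6$ rather than only for larger $n$; the cohomological input (Theorem \ref{thm:FR}) does all the real work once the combinatorial set--up is right.
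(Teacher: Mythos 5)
Your proposal is correct and follows essentially the same route as the paper's proof: blow up at two general points, reduce separation to the vanishing of $h^1(p^*(nK_S)-E_x-E_y)$, write this bundle as $K_{S'}+C$ with $C\in |p^*((n-1)K_S)-2E_x-2E_y|$, produce $C$ by the dimension count on $|(n-1)K_S|$, and apply Franchetta--Ramanujam via the $1$--connectedness lemma and the computation $C^2=(n-1)^2K_S^2-8>0$. The only divergence is your additional step on separating tangent directions, which is superfluous for the statement as given: a morphism that is injective on a dense open set is already birational onto its image in characteristic zero, so separating a general pair of distinct points suffices, and the paper accordingly stops there.
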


\begin{proof} Let $x\in S$ be a general point. Suppose $\phi_{nK_S}$ non--birational. Then there is another point $y\in S$ different from $x$, such that $\phi_{nK_S}(x)=\phi_{nK_S}(y)$, i.e., $x$ and $y$ are not separated by  $\phi_{nK_S}$.

Let $p:S'\to S$ be the blow--up of $S$ at $x$ and $y$ with exceptional divisors $E_x$ and $E_y$. Consider the exact sequence
\[
0\to p^*(nK_S)(-E_x-E_y)\to  p^*(nK_S)\to  p^*(nK_S)_{|E_x\cup E_y}=\O_{E_x}\oplus \O_{E_y}\to 0.
\]
Since $x$ and $y$ are not separated by $\phi_{nK_S}$, then the map
\[
H^0( p^*(nK_S))\to H^0(\O_{E_x}\oplus \O_{E_y})\cong \CC^2
\]
is not surjective, and this implies that $h^1(p^*(nK_S)(-E_x-E_y))>0$. We will see this is a contradiction.

Indeed, suppose there exists a divisor  $C\in |p^*((n-1)K_S)(-2E_x-2E_y)|$. Then $C$ is 1--connected as soon as $n\geq 5$ by Lemma \ref {lem:ytic2}. Moreover $C^2=(n-1)^2K_S^2-8\geq 1>0$ if $n\geq 4$. Hence, by Theorem
\ref {thm:FR}, we have that $h^1(K_{S'}+C)=0$. Since $K_{S'}=p^*(K_S)+E_x+E_y$, then
$K_{S'}+C\sim p^*(K_S)+E_x+E_y+p^*((n-1)K_S)(-2E_x-2E_y)=p^*(nK_S)(-E_x-E_y)$, hence 
$h^1(p^*(nK_S)(-E_x-E_y))=0$, a contradiction.

Finally the existence of $C\in |p^*((n-1)K_S)(-2E_x-2E_y)|$ is ensured by the fact that, if $n\geq 6$, one has $\dim(|(n-1)K_S|)\geq \chi+\frac {(n-1)(n-2)}2-1\geq 10$, hence certainly there is
a curve in $|(n-1)K_S|$ which is singular at $x$ and $y$, because this imposes at most 6 conditions to $|(n-1)K_S|$. \end{proof}

\section{Bagnera--De Franchis' classification of bielliptic surfaces}\label {sec:bdf}

In this section we explain Bagnera--De Franchis' classification of bielliptic surfaces, with $\kappa=0$, $q=1$, $p_g=0$. The main result is as follows:

\begin{thm}[Bagnera--De Franchis' Theorem]\label {thm:bdff} The bielliptic surfaces $S$ are as follows: $E,F$ are smooth elliptic curves, $G$ is a group of translations of $E$ acting on $F$ and $S=E\times F/G$, with:\\
\begin{inparaenum}
\item [(i)] $G=\ZZ_2$ acting on $F$ as $x\in F\to -x\in F$;\\
\item [(ii)] $G=\ZZ^2_2$ acting on $F$ as $x\in F\to -x\in F$ and $x\in F\to x+\epsilon\in F$, where $\epsilon$ is a non--trivial order 2 point on $F$;\\
\item [(iii)] $F=F_i=\CC/\ZZ\oplus i\ZZ$ and $G=\ZZ_4$, acting on $F$ as $x\in F\to ix\in F$;\\
\item [(iv)] $F=F_i$ and $G=\ZZ_4\times \ZZ_2$ acting on $F$ as $x\in F\to ix\in F$ and $x\in F\to x+\frac {1+i}2\in F$;\\
\item [(v)] $F=F_\rho=\CC/\ZZ+\rho\ZZ$, with $\rho$ a non--trivial cubic root of 1, $G=\ZZ_3$ acting on $F$ as
$x\in F\to \rho x\in F$;\\
\item [(vi)]  $F=F_\rho$, $G=\ZZ_3^2$, acting on $F$ as $x\in F\to \rho x\in F$ and $x\in F\to x+\frac {1-\rho}3\in F$;\\
\item [(vii)] $F=F_\rho$, $G=\ZZ_6$, acting on $F$ as $x\in F\to -\rho x\in F$.
\end{inparaenum}

The first trivial pluricanonical bundle for $S$ is:\\
\begin{inparaenum}
\item [(a)] $2K_S$  in cases (i) and (ii);\\
\item  [(b)] $4K_S$ in cases (iii) and (iv);\\
 \item  [(c)] $3K_S$  in cases (v) and (vi);\\
  \item  [(d)] $6K_S$  in case (vii).
\end{inparaenum}
\end{thm}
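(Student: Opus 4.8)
The plan is to combine the structural information already obtained for bielliptic surfaces (Case \ref{case:bdf}, Theorem \ref{thm:biel}) with the group-theoretic analysis of the two elliptic fibrations. First I would set up both pencils: the Albanese fibration $\alpha\colon S\to A$ onto an elliptic curve $A$ with connected fibres (as established at the start of \S\ref{ssec:basic}), and recall from Theorem \ref{thm:biel} that the general fibre $F$ of $\alpha$ has genus $1$ and that some multiple $nK_S$ is trivial. Since $K_S$ is torsion and $\alpha$ has elliptic fibres, the canonical bundle formula (Theorem \ref{thm:canb}) forces $\alpha$ to have no multiple fibres (otherwise $nK_S$ would have a nontrivial fixed part) and $\deg(R^1\alpha_*\O_S)=\chi=0$; hence by Theorem \ref{thm:rcs} applied to $\alpha$ the fibration is isotrivial, i.e.\ all fibres of $\alpha$ are isomorphic to a fixed elliptic curve $F$. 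This is the key reduction: $S$ is an isotrivial elliptic fibre bundle over the elliptic curve $A$.

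Next I would pass to the pull-back along the étale cover associated to the kernel of the monodromy representation $\rho\colon \pi_1(A)\to \Aut(F)$, exactly as in Step 2 of the proof of Theorem \ref{thm:ftc2}: there is an étale cover $E\to A$ with $E$ an elliptic curve and a cartesian diagram exhibiting $S$ as a quotient $(E\times F)/G$, where $G$ is the (finite) monodromy group acting diagonally, freely on $E\times F$ (freeness because $S$ is smooth), acting on $E$ by translations (the cover $E\to A$ being unramified forces $G$ to act on $E$ without fixed points, hence by translations) and on $F$ by automorphisms. Freeness of the diagonal action combined with the fact that the projection $E\times F\to F$ must descend to give the second elliptic pencil on $S$ means the image of $G$ in $\Aut(F)$ must act on $F$ with fixed points compensating exactly the translation part. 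Now I invoke the classical classification of finite automorphism groups of an elliptic curve fixing the origin: the automorphism group of $F$ is cyclic of order $2$ in general, order $4$ if $F=F_i=\CC/(\ZZ\oplus i\ZZ)$, order $6$ if $F=F_\rho=\CC/(\ZZ\oplus\rho\ZZ)$ with $\rho$ a primitive cube root of unity, and order $3$ for the subgroup generated by $x\mapsto \rho x$. The homomorphism $G\to \Aut(F)$ must be surjective onto one of the subgroups $\ZZ_2,\ZZ_3,\ZZ_4,\ZZ_6$ generated by $x\mapsto -x$, $x\mapsto \rho x$, $x\mapsto ix$, $x\mapsto -\rho x$ (a proper subgroup, e.g.\ trivial image, would make $S$ a product, hence not bielliptic since $q$ would be $2$ or the surface would be ruled; and the image cannot be $\ZZ_6$ generated by $-\rho$ without $F=F_\rho$, etc.). The kernel is a group of translations of $F$, and together with the translations on $E$ this pins down $G$ as an extension; running through the possibilities for $G$ (cyclic versus cyclic $\times \ZZ_2$ or $\times \ZZ_3$, according to whether there is an extra translation) yields precisely the seven cases (i)–(vii), where the extra generator in (ii), (iv), (vi) is forced to be a $2$- or $3$-torsion translation making the action on $F$ still free on the product.

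Finally, for the computation of the first trivial pluricanonical bundle: write $\pi\colon E\times F\to S$ for the quotient. Then $\pi^*K_S = K_{E\times F}=\O_{E\times F}$ is trivial (since $E$ and $F$ are elliptic), so $K_S$ is a torsion element of $\Pic(S)$ whose order divides $|G|$. More precisely, $\pi_*\O_{E\times F}=\bigoplus_{\chi}L_\chi$ over characters of $G$, and the order of $K_S$ in $\Pic(S)$ equals the order of the character by which $G$ acts on $H^0(\omega_{E\times F})\cong H^0(\omega_E)\otimes H^0(\omega_F)$; since $G$ acts trivially on $H^0(\omega_E)=\CC\,dz$ (translations act trivially on the invariant differential) and acts on $H^0(\omega_F)=\CC\,dw$ by the \emph{determinant character} of its image in $\Aut(F)$ — i.e.\ by multiplication by $\zeta$ where $x\mapsto \zeta x$ generates the image — the order of $K_S$ is exactly the order of that generator: $2$ in (i),(ii); $4$ in (iii),(iv); $3$ in (v),(vi); $6$ in (vii). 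This gives statements (a)–(d).

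\medskip

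The main obstacle I expect is the careful enumeration in the middle step: establishing that freeness of the diagonal $G$-action on $E\times F$, together with $G$ acting by translations on $E$, restricts the structure of $G$ to exactly the seven listed extensions and no others. One must rule out, for instance, larger groups or other translation subgroups of $F$ that would either fail to act freely (forcing a non-smooth quotient) or would produce $q(S)\neq 1$. Organizing this bookkeeping — essentially a short case analysis over the cyclic images $\ZZ_2,\ZZ_3,\ZZ_4,\ZZ_6$ and the possible translation kernels compatible with freeness — is the genuinely delicate part; everything else follows mechanically from the isotriviality reduction and the projection formula for the quotient map.
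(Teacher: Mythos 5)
Your overall strategy coincides with the paper's: reduce to an isotrivial elliptic fibre bundle over the elliptic curve $A$, realize $S$ as $(E\times F)/G$ with $G$ a finite group of translations of $E$ acting also on $F$, classify the possible $G$ via Theorem \ref{thm:ell} (splitting $G=T\times H$ with $H\subseteq \Aut_0(F)$ non--trivial because $F/G\cong\PP^1$, and $T$ consisting of translations by $H$--fixed points because $G$ is abelian), and read off the order of $K_S$ from the character by which $G$ acts on the generator of $H^0(\omega_{E\times F})$. The enumeration of the seven cases and the computation of (a)--(d) are essentially identical to the paper's.

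The genuine gap is in the middle step, where you pass from the isotrivial fibration to a product $E\times F$ carrying a \emph{diagonal} action of a finite group ``exactly as in Step 2 of the proof of Theorem \ref{thm:ftc2}''. That argument depends on $\Aut(F)$ being finite, which is true for fibres of genus at least $2$ but false here: $\Aut(F)$ contains the one--dimensional group of translations of $F$, so there is no a priori monodromy representation of $\pi_1(A)$ into a finite group whose kernel trivializes the bundle. Even once one has $S\cong(E\times F)/G$ with $G$ the deck group of an \'etale cover $E\to A$ preserving the projection (this is Lemma \ref{lem:bb}), the action of $g\in G$ on the fibre over $x\in E$ has the form $y\mapsto a_g\cdot y\oplus t_{g,x}$, where only the finite part $a_g$ is automatically independent of $x$; the translation part $t_{g}\colon x\mapsto t_{g,x}$ is a morphism $E\to F$, which may well be non--constant since both curves are elliptic. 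Eliminating this dependence is exactly the content of Lemma \ref{lem:patr} --- the technical heart of the proof, which constructs a morphism $r\colon E\to F$ from a $G$--invariant polarization and shows that after a further isogeny $E'\to E$ and conjugation by $(x,y)\mapsto(x,y\ominus r(x))$ the action becomes genuinely diagonal. Without this, the case analysis over the images $\ZZ_2,\ZZ_3,\ZZ_4,\ZZ_6$ never gets off the ground; the delicate point is not the final bookkeeping, which is short, but this diagonalization. (A secondary quibble: applying Theorem \ref{thm:rcs} to deduce isotriviality requires the singular fibres to be reduced and nodal; the paper instead uses $e(S)=0$ and Proposition \ref{prop:cdf} to show $\alpha$ has no singular fibres, and then constancy of the moduli map.)
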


We start with the:

\begin{lemma}\label{lem:bb} A minimal bielliptic surface $S$ is the quotient of the product of two elliptic curves via a free group action.  More precisely there is a commutative diagram
\begin{equation}\label{eq:oop}
\xymatrix{ 
&S'\ar^{g}[r]\ar_{\alpha'}[d]&S\ar^{\alpha}[d] \\
&E\ar^{f}[r] & A\,\,
}
\end{equation}
where $\alpha: S\to A=\Alb(S)$ is the Albanese elliptic isotrivial fibration with elliptic base $A$, $S'=E\times F$, is the product of two smooth elliptic curves, $\alpha'$ is the projection  onto the first factor, and $f,g$ are \'etale, induced by a free group $G$ action.
\end{lemma}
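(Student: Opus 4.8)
The plan is to run the argument used repeatedly for surfaces with $\kappa=0$ and $q=1$: first establish that the Albanese fibration $\alpha\colon S\to A$ is elliptic and isotrivial, then build the cover $S'$ as the fibre product of $\alpha$ with a suitable \'etale cover of $A$, and finally identify $S'$ as a product of two elliptic curves and check that the deck group acts freely.

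First I would recall that, by Theorem \ref{thm:biel} and the discussion opening \S\ref{ssec:basic}, the Albanese map $\alpha\colon S\to A=\Alb(S)$ is a morphism with connected fibres onto an elliptic curve $A$, and (as in the proof of Theorem \ref{thm:biel}) its general fibre $F$ has genus $1$. Next I would show $\alpha$ is \emph{isotrivial}: applying Theorem \ref{thm:rcs} to $\alpha$ with $\omega_{S|A}\cong K_S$ (since $K_A$ is trivial), if $\deg(\alpha_*\omega_{S|A})>0$ then Riemann--Roch for vector bundles on curves gives $h^0(\alpha_*K_S)>0$, i.e.\ $p_g>0$, contradicting $p_g=0$; hence $\deg(\alpha_*\omega_{S|A})\le 0$ and Theorem \ref{thm:rcs} forces all fibres of $\alpha$ to be isomorphic to a fixed elliptic curve $F$. (Here one should also note, as in Lemma \ref{cl:one}, that $\alpha$ is smooth, so there are no multiple fibres; alternatively, $K_S$ being torsion by Theorem \ref{thm:biel} already rules out a canonical-bundle-formula obstruction.)

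Then, exactly as in Step 2 of the proof of Theorem \ref{thm:ftc2}, isotriviality yields a monodromy homomorphism $\rho\colon\pi_1(A)\to\Aut(F)$; since $\Aut(F)$ is finite, $\ker(\rho)$ has finite index and corresponds to an \'etale cover $f\colon E\to A$ with $E$ an elliptic curve. Forming the cartesian square
\begin{equation*}
\xymatrix{
&S'\ar^{g}[r]\ar_{\alpha'}[d]&S\ar^{\alpha}[d] \\
&E\ar^{f}[r] & A\,\,
}
\end{equation*}
the pulled-back fibration $\alpha'\colon S'\to E$ is again isotrivial with fibre $F$, and by construction its monodromy is trivial, so $\alpha'$ is a fibre bundle with trivial structure group, i.e.\ $S'\cong E\times F$ with $\alpha'$ the first projection. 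The map $g$ is \'etale because $f$ is, and its Galois group $G:=\pi_1(A)/\ker(\rho)$ acts on $S'$ making $S=S'/G$; the action is free since $g$ is an \'etale (in particular unramified, finite) cover of the smooth surface $S$.

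The step I expect to be the main obstacle is making precise the passage from "isotrivial fibration with trivial monodromy" to "$S'$ is literally the product $E\times F$ with $\alpha'$ the projection" — one needs that a genus-$1$ fibre bundle over $E$ with reduced fibres, no multiple fibres, and trivial monodromy in $\Aut(F)$ is actually a trivial bundle, not merely a bundle whose fibres are all isomorphic; this requires knowing that the only deformations are via the $\Aut(F)$-action (equivalently, that $\alpha'$, being smooth and isotrivial, is a fibre bundle in the analytic topology, which then trivializes once the monodromy does). This is where I would cite the isotriviality input of Theorem \ref{thm:rcs} together with the standard fact, already used in \S\ref{ssec:s1}, that a cartesian pullback killing the monodromy produces a product. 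Everything else — connectedness of the Albanese fibres, smoothness of $\alpha$, freeness of the $G$-action — is routine and has appeared already in the excerpt.
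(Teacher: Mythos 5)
Your proof is correct, and its second half --- the monodromy homomorphism $\rho:\pi_1(A)\to\Aut(F)$, the \'etale cover $E\to A$ killing the monodromy, the cartesian square, and the identification $S'\cong E\times F$ with the free $G$--action --- coincides with the paper's, which at that point likewise just refers back to Step 2 of the proof of Theorem \ref{thm:ftc2}. Where you genuinely diverge is in how isotriviality of $\alpha$ is established. The paper first excludes multiple fibres (canonical bundle formula, since $K_S$ is torsion) and singular fibres (via $e(S)=0$ and Proposition \ref{prop:cdf}, (ii)), and then observes that the resulting moduli map $A\to\mathcal M_1\cong\mathbb A^1$ must be constant because the source is projective and the target affine. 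You instead invoke Theorem \ref{thm:rcs}: since $K_A$ is trivial, $\alpha_*\omega_{S|A}=\alpha_*(K_S)$, and $p_g=0$ together with Riemann--Roch on the elliptic base forces $\deg(\alpha_*\omega_{S|A})\le 0$, whence isotriviality. This is exactly the tool the paper uses for the genus $>1$ fibres in Step 2 of Theorem \ref{thm:ftc2} and in Theorem \ref{thm:biel}, so your route is the more uniform one across fibre genera; the one thing you must supply --- and do, via Lemma \ref{cl:one} --- is that the hypotheses of Theorem \ref{thm:rcs} on the singular fibres are met, which follows since $\alpha$ is smooth. Note that your parenthetical alternative (``$K_S$ torsion rules out a canonical-bundle-formula obstruction'') only excludes multiple fibres, not singular fibres with worse than nodal singularities, so the appeal to Lemma \ref{cl:one} is the one to keep. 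In exchange, the paper's moduli-map argument avoids relying on Theorem \ref{thm:rcs} in the elliptic-fibre case, where that theorem is most delicate.
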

\begin{proof}
We saw in the proof of Theorem \ref {thm:biel} that a bielliptic surface  $S$ has an elliptic fibration $\alpha: S\to A$, with $A$ elliptic base. This clearly coincides with the Albanese map. Moreover $p_g=0$ whereas there is a multiple of $K_S$ which is trivial. Hence $K_S$ is a non--trivial torsion  point of $\Num(S)$. By the canonical bundle formula for elliptic fibrations there are no multiple fibres of $\alpha$ and $(R^1\alpha_*\O_S)^ \vee$ is a torsion line bundle on $A$. Since $e(S)=0$, by Proposition \ref {prop:cdf}, (ii), there are no singular fibres of $\alpha$. Then we have a morphism $\mu:A\to \mathcal M_1$, where $\mathcal M_1\cong \mathbb A^1$ is the moduli space of curves of genus 1, which assignes to each point $x$ the modulus of the fibre of $\alpha$ over $x$.  Since $A$ is projective, $\mu$ is constant, hence the elliptic fibration $\alpha: S\to A$ is isotrivial with fibre $F$. Then, 
by an argument we already made in Step 2 of the proof of Theorem \ref {thm:ftc2}, 
there is a diagram as \eqref {eq:oop}
where $f,g$ are \'etale, induced by the action of a finite group $G$ acting freely on $S'$ and $E$, so that $E$ is an elliptic curve and $S'=E\times F$. \end{proof}

Note that  the group $G$ in the statement of Lemma \ref {lem:bb} is a finite group of translations of $E$, because  $E/G$ is the elliptic curve $A$. In fact, if $E=\CC/\Lambda$ and $A=\CC/\Lambda'$, then $\Lambda\subseteq \Lambda'$ and $G=\Lambda'/\Lambda$. Then $G$ is abelian.

\begin{lemma}\label{lem:patr} In the above set up, there is a smooth elliptic curve $E'$, an \'etale morphism $h: E'\to E$, and a finite group $G'$ acting on $E'$ and on $F$ (hence acting diagonally on the product $E'\times F$), such that $E'/G'\cong E/G=A$ and $E'\times F/G'\cong E\times F/G=S$.
\end{lemma}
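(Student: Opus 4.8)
The plan is to pin down the precise shape of the $G$-action on $S'=E\times F$ and then to remove its ``twist'' by passing to a curve isogenous to $E$. By Lemma \ref{lem:bb} and the remark following it, $G$ acts on $E$ by translations $e\mapsto e+\tau_\gamma$, with $\tau\colon G\hookrightarrow E$ an injective homomorphism, and the first projection $\mathrm{pr}_1\colon E\times F\to E$ is $G$-equivariant. Hence each $\gamma\in G$ acts as $\gamma\cdot(e,x)=\bigl(e+\tau_\gamma,\ \psi_\gamma(e,x)\bigr)$, where for each fixed $e$ the map $\psi_\gamma(e,-)$ is an automorphism of $F$; this yields a morphism $E\to\Aut(F)$. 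Since $\Aut(F)=\coprod_{\sigma\in\Aut(F,0)}\{t_c\circ\sigma:c\in F\}$ is a finite disjoint union of copies of $F$ and $E$ is connected, the \emph{linear part} of $\psi_\gamma$ is a constant $\sigma_\gamma\in\Aut(F,0)$, while its \emph{translation part} is a morphism $E\to F$, hence of the form $e\mapsto\ell_\gamma(e)+b_\gamma$ with $\ell_\gamma\in\Hom(E,F)$ and $b_\gamma\in F$. Thus
\[
\gamma\cdot(e,x)=\bigl(e+\tau_\gamma,\ \sigma_\gamma(x)+\ell_\gamma(e)+b_\gamma\bigr).
\]
A short computation with the associativity of the action shows that $\sigma\colon G\to\Aut(F,0)$ is a homomorphism and that $\gamma\mapsto\ell_\gamma$ is a $1$-cocycle for the $G$-module $\Hom(E,F)$ with action $\gamma\cdot\varphi=\sigma_\gamma\circ\varphi$; the relation satisfied by the $b_\gamma$ will play no role.

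The action is already of product type precisely when all $\ell_\gamma$ vanish, and conjugating by the automorphism $(e,x)\mapsto(e,x+m(e))$ of $E\times F$, with $m\in\Hom(E,F)$, replaces $\ell_\gamma$ by $\ell_\gamma-(\sigma_\gamma-1)\circ m$. So it would be enough to exhibit $m\in\Hom(E,F)$ with $\ell_\gamma=(\sigma_\gamma-1)\circ m$ for all $\gamma$, i.e. to trivialise the cocycle $\ell$. This is the crux, and it is exactly here that the cover $E'$ is forced in: over $E$ itself $\ell$ need not be a coboundary, only over $E$ after inverting an integer. Precisely, since $G$ is finite, $H^1(G,\Hom(E,F))$ is annihilated by $N:=|G|$, so there is an honest $m\in\Hom(E,F)$ with $N\ell_\gamma=\sigma_\gamma\circ m-m$ for every $\gamma$. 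I would then take $h=[N]\colon E':=E\to E$, which is étale, and pull the situation back along $h\times\mathrm{id}$. As every $\tau_\gamma$ has an $N$-th root in $E$, each $\gamma$ lifts to an automorphism of $E'\times F$ lying over $\gamma$; letting $\widetilde G$ be the group of all such lifts one gets an extension $1\to E[N]\to\widetilde G\to G\to 1$, in which $E[N]$ acts by translations on the $E'$-factor, $\widetilde G$ acts freely, and $(E'\times F)/\widetilde G\cong(E\times F)/G=S$. Every element of $\widetilde G$ has the form $(e',x)\mapsto\bigl(e'+v,\ \bar\sigma(x)+\varphi(e')+c\bigr)$ with $\bar\sigma\in\Aut(F,0)$, $\varphi\in\Hom(E',F)$, $c\in F$; for a lift of $\gamma$ one has $\varphi=N\ell_\gamma$ and $\bar\sigma=\sigma_\gamma$, for a deck transformation $\varphi=0$ and $\bar\sigma=1$, and in general (this is immediate by induction on a product of generators) $\varphi=(\bar\sigma-1)\circ m$ for the $m$ fixed above. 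Consequently, conjugating $\widetilde G$ by the automorphism $\Psi(e',x):=(e',x+m(e'))$ of $E'\times F$ turns every element into a genuine product transformation, so $G':=\Psi\widetilde G\Psi^{-1}$ acts diagonally and freely on $E'\times F$, through homomorphisms $\rho_1\colon G'\to\Aut(E')$ (landing in translations) and $\rho_2\colon G'\to\Aut(F)$, with $(E'\times F)/G'\cong S$.

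It remains to identify $E'/G'$ with $A$. Conjugation by $\Psi$ leaves the first coordinate untouched, so $G'$ acts on $E'$ through the translation subgroup generated by the chosen $N$-th roots of the $\tau_\gamma$ together with $E[N]$, namely $\{v\in E:Nv\in\tau(G)\}$; this is precisely the kernel of $f\circ[N]\colon E'=E\to A$, whence $E'/G'\cong A$. Combining the above with the previous paragraph proves the lemma. The single genuinely non-formal step is the trivialisation of $\ell$: it cannot in general be carried out over $E$, and replacing $E$ by the $N$-isogenous curve $E'$ is exactly the device that converts the a priori only $\QQ$-linear solution $m/N$ of $\ell_\gamma=(\sigma_\gamma-1)(\cdot)$ into an actual homomorphism $E'\to F$; after that, everything is bookkeeping with the two projections of $E'\times F$.
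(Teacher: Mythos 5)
Your argument is correct, and it reaches the diagonalization by a genuinely different device than the paper's. Both proofs begin identically: connectedness of $E$ and finiteness of $\Aut_0(F)$ force the action into the normal form $\gamma\cdot(e,x)=(e+\tau_\gamma,\sigma_\gamma(x)\oplus t_\gamma(e))$, and both untwist by conjugating with a shear $(e,x)\mapsto(e,x\ominus\rho(e))$ after a suitable base change on the first factor; the difference lies in how $\rho$ and the base change are produced. You refine $t_\gamma$ into a homomorphism $\ell_\gamma\in\Hom(E,F)$ plus a constant, recognize $\gamma\mapsto\ell_\gamma$ as a $1$-cocycle for the $G$-module $\Hom(E,F)$, kill $N=|G|$ times its class by the norm argument to get $N\ell_\gamma=(\sigma_\gamma-1)\circ m$, and then use the isogeny $[N]\colon E\to E$ to convert $m$ into an actual coboundary upstairs. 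The paper instead manufactures a morphism $r\colon E\to F$ directly from the $G$-invariant polarization $L=\O_{E\times F}\big(\sum_{g\in G}g^*H\big)$, obtaining $r(g\cdot x)\ominus a_g\cdot r(x)=n\,t_{g,x}$ with $n=\deg L_x$, and extracts an $n$-th root of $r$ by passing to the fibre product $\widetilde E=E\times_{F,[n]}F$, conjugating by $(\tilde x,y)\mapsto(\tilde x,y\ominus\tilde r(\tilde x))$. Your route buys a conceptual identification of the obstruction as a class in $H^1(G,\Hom(E,F))$, which makes it transparent why a cover of $E$ is unavoidable and why the multiplication-by-$N$ isogeny suffices; it also has the minor technical advantage that $E'=E$ is automatically connected, whereas the paper must replace $\widetilde E$ by a connected component and $G'$ by its stabilizer at the very end. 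The paper's route stays entirely inside projective geometry (no group cohomology) and never needs to separate the linear and constant parts of $t_\gamma$. The concluding bookkeeping is the same in both: the enlarged group (an extension of $G$ by $E[N]$ for you, by $F[n]$ for the paper) acts freely because $G$ does, and $E'/G'\cong A$ because the translations induced on $E'$ form exactly the kernel of $E'\to A$.
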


\begin{proof} For every $g\in G$, every $x\in E$ and every $y\in F$ we set $g\cdot (x,y)=(g\cdot x, \phi_{g,x}(y))$, where  $\phi_{g,x}$ is an automorphism of $F$ depending on $x$ and $g$. Let us fix an origin on the elliptic curve $F$ so that $F$ has the structure of an abelian variety. Then $\phi_{g,x}$ has the form
\[
\phi_{g,x}: y\in F\to a_{g,x}\cdot y\oplus t_{g,x}\in F
\]
where $a_{g,x}$ is an automorphism of $F$ preserving the group structure  and $t_{g,x}\in F$ (we denote by $\oplus$ the addition in $F$ and by $\ominus$ the difference). The group $\Aut_0(F)$ of automorphisms of $F$ preserving the group structure is finite (see Theorem \ref {thm:ell} below), hence $a_{g,x}=a_g$ does not depend on $x$. Moreover
\[
t_g: x\in E\to t_{g,x}\in F
\]
is a morphism.  Of course $a_{\rm id}={\rm id}$ and $t_{\rm id}$ is the zero map.

By imposing that $fg(x,y)=f(g(x,y))$ we get $a_{gf}=a_g\circ a_f$ and also
\begin{equation*}\label{eq:klop}
a_f\cdot t_{g,x}+t_{f, g\cdot x}=t_{fg,x},
\end{equation*}
in particular
\begin{equation}\label{eq:klop}
a_{g^{-1}}\cdot t_{g,x}+t_{g^-1, g\cdot x}=0
\end{equation}

Since $a_{gf}=a_g\circ a_f$ the map 
\[
a: g\in G\to a_g\in \Aut_0(F)
\]
is a homomorphism. 

\begin{claim}\label{cl:kop} There is a morphism $r: E\to F$ and a positive integer $n$ such that, for all $x\in E$ and $g\in G$, one has
\begin{equation}\label{eq:hji}
r(g\cdot x)\ominus a_g\cdot r(x)=nt_{g,x}.
\end{equation}
\end{claim}
\begin{proof}[Proof of Claim \ref {cl:kop}] Note that we have a canonical isomorphism
\[
\psi: y\in F\to [y-0]\in \Pic^0(F)
\]
(we denote by $[D]$ the class of a divisor $D$ in $\Pic(F)$) and we denote the sum in $\Pic^0(F)$ still by $\oplus$.
If we have $y_1,\ldots, y_n\in F$, then 
\[
\psi(y_1\oplus\ldots\oplus y_n)=[y_1\oplus\ldots\oplus y_n-0]
\]
and also
\[
\psi(y_1\oplus\ldots\oplus y_n)=\psi(y_1)\oplus\cdots \oplus \psi(y_n)=[y_1-0]\oplus\cdots \oplus [y_n-0]=[y_1+\cdots +y_n-n0]
\]
so that
\[
y_1+\cdots +y_n-n0\sim y_1\oplus\ldots\oplus y_n-0
\]
hence
\begin{equation}\label{eq:lol}
y_1+\cdots +y_n\sim y_1\oplus\ldots\oplus y_n+(n-1)0.
\end{equation}

Let $v$ be an automorphism of $F$ given by $v(y)=a\cdot y\oplus t$, with $a\in \Aut_0(F)$ and $t\in F$. If $D=(n-1)0+y$, we have
\[
v^*(D)=(n-1)v^{-1}(0)+v^{-1}(y)=(n-1) (\ominus a^{-1}\cdot t)+(a^{-1}\cdot y\ominus a^{-1}\cdot t).
\]
By applying \eqref {eq:lol}, we deduce that
\begin{equation}\label{eq:qrt}
\begin{split}
v^*(D)&\sim (n-1) (\ominus a^{-1}\cdot t)+(a^{-1}\cdot y\ominus a^{-1}\cdot t)\\
&\sim (n-2)0+(\ominus (n-1)a^{-1}\cdot t)+(a^{-1}\cdot y\ominus a^{-1}\cdot t)\\
&\sim(n-2)0+0+(a^{-1}y\ominus na^{-1}\cdot t)\\
&\sim (n-1)0+(a^{-1}y\ominus na^{-1}\cdot t).
\end{split} 
\end{equation}

Let $H$ be a very ample line bundle on $E\times F$. The line bundle 
\[
L=\O_{E\times F}(\sum_{g\in G}g^*(H))
\]
is invariant by the $G$ action. For a given $x\in E$, let us set
\[
L_x=L_{|\{x\}\times F}
\]
which is an invertible sheaf of degree $n>0$ on $F$. The invariance of $L$ by the $G$ action implies that for any $x\in E$ and $g\in G$ one has
\[
L_x=\phi_{g,x}^*L_{gx}.
\]

We define the morphism $r: E\to F$ in such a way that 
\[
L_x=\O_F((n-1)0+r(x)).
\]
Then, by applying \eqref {eq:qrt}, we see that \eqref {eq:hji} holds. \end{proof}

Assume first that $n=1$ in Claim \ref {cl:kop}. Let $u$ be the automorphism of $E\times F$ defined by $u(x,y)=(x,y\ominus r(x))$. Then we have
\[
\begin{split}
ugu^{-1} (x,y)&= ug(x,y\oplus r(x))=u(gx, \phi_{g,x}(y\oplus r(x))=u(g\cdot x, a_g\cdot y \oplus  a_g\cdot r(x)\oplus t_{g,x})\\
&=(gx, a_g\cdot y \oplus  a_g\cdot r(g\cdot x)\oplus t_{g,x}\ominus r(x))=(g\cdot x, a_g\cdot y)
\end{split}
\]
the last equality following from \eqref {eq:hji} with $n=1$. So if we set $G'=uGu^{-1}$, then $G'$ acts on $E$ and $F$, and on $E\times F$ with the diagonal action, and $E\times F/G=E\times F/G'$, and the proof is completed in this case.

Consider now the case $n>1$. Consider the cartesian diagram 
\begin{equation}\label{eq:oopl}
\xymatrix{ 
&\widetilde E \ar^{\tilde r}[r]\ar_{\pi}[d]&F\ar^{\cdot n}[d] \\
&E\ar^{r}[r] & F\,\,
}
\end{equation}
where $\pi: \widetilde E\to E$ is a covering, not necessarily connected. We have that $\widetilde E$ is stable for the $G$ action. Indeed, $\widetilde E\subset E\cdot F$ consists of all pairs $(x,y)$ such that $r(x)=ny$. If we have such a pair, then
\[
g\cdot (x,y)=(g\cdot x, a_g\cdot y\oplus t_{g,x})
\]
and, by \eqref {eq:hji}, we have
\[
r(g\cdot x)=a_g\cdot r(x)\oplus n t_{g,x}=a_g\cdot (ny)\oplus nt_{g,x}=n(a_g\cdot y\oplus t_{g,x})
\]
as wanted.

Moreover the group $F[n]$ of points of $F$ of order $n$ also acts on $\widetilde E$ via the action
\[
\varepsilon \cdot (x,y)=(x, y\oplus \varepsilon).
\]
Let $G'$ be the group of automorphism of $\widetilde E$ generated by $G$ and $F[n]$. 
For any $g\in G$ and $\varepsilon\in F[n]$, one has  
\[
\begin{split}
g\varepsilon g^{-1}(x,y)&=g\varepsilon (g^{-1}\cdot x, a_{g}^ {-1}\cdot y\oplus t_{g^{-1},x})\\
&=g(g^{-1}\cdot x, a_{g}^ {-1}\cdot y\oplus t_{g^{-1},x}\oplus \varepsilon)\\
&=(x, y\oplus a_g\cdot \varepsilon \oplus a_g\cdot t_{g^{-1},x}\oplus t_{g,g^{-1}\cdot x})=(x,y\oplus a_g\cdot \varepsilon ),
\end{split}
\]
the last equality holding because of \eqref {eq:klop}. Hence we have $g\varepsilon g^{-1}= a_g\cdot \varepsilon$. This means that $G'$ is the semidirect product of $F[n]$ and $G$, and one has an exact sequence
\[
0\to F[n]\to G'\stackrel{\mathfrak u}\to G\to 1.
\]

We define an operation of $G'$ on $\widetilde E\times F$ as follows
\[
\gamma (\tilde x,y)=(\gamma\cdot \tilde x,\phi_{\mathfrak u(\gamma),\pi(\tilde x)}(y))
\]
with $\gamma\in G'$, $\tilde x\in \widetilde E$ and $y\in F$. Since $\widetilde E/F[n]\cong E$, one has $E/G\cong (\widetilde E/F[n])/G\cong \widetilde E/G'$ and therefore $\widetilde E\times F/G'\cong E\times F/G$. 

Consider now the automorphism $w$ of $\widetilde E\times F$ defined as
\[
w(\tilde x, y)=(\tilde x, y\ominus \tilde r(\tilde x))
\]
with $\tilde x\in \widetilde E, y\in F$. One has
\[
\begin{split}
w\gamma w^{-1} (\tilde x, y)&=w\gamma (\tilde x, y\oplus \tilde r(\tilde x))\\
&=w(\gamma\cdot \tilde x, \phi_{\mathfrak u(\gamma),\pi(\tilde x)}(y\oplus \tilde r(\tilde x)))\\
&=w(\gamma\cdot \tilde x, a_{\mathfrak u(\gamma)}(y\oplus \tilde r(\tilde x))\oplus t_{\mathfrak u(\gamma),\pi(\tilde x)})\\
&=(\gamma\cdot \tilde x, a_{\mathfrak u(\gamma)}(y\oplus \tilde r(\tilde x))\oplus t_{\mathfrak u(\gamma),\pi(\tilde x)}\ominus \tilde r(\gamma(\tilde x)))\\
&=(\gamma\cdot \tilde x, a_{\mathfrak u(\gamma)}\cdot y \oplus \eta_{\gamma,\tilde x})
\end{split}
\]
where
\[
\eta_{\gamma,\tilde x}= a_{\mathfrak u(\gamma)}\cdot \tilde r(\tilde x)\oplus t_{\mathfrak u(\gamma),\pi(\tilde x)}\ominus \tilde r(\gamma(\tilde x)).
\]

By looking at diagram \eqref {eq:oopl} and by \eqref {eq:hji}, we have
\[
\begin{split}
n\eta_{\gamma,\tilde x}&=a_{\mathfrak u(\gamma)}\cdot n\tilde r(\tilde x)\oplus nt_{\mathfrak u(\gamma),\pi(\tilde x)}\ominus n\tilde r(\gamma(\tilde x))\\
&=a_{\mathfrak u(\gamma)}\cdot r(\pi(\tilde x))\ominus r(\pi(\gamma\cdot \tilde x))\oplus nt_{\mathfrak u(\gamma),\pi(\tilde x)}=0,
\end{split}
\]
hence $\eta_{\gamma,\tilde x}\in F[n]$. Since $F[n]$ is finite, $\eta_{\gamma,\tilde x}$ does not depend on $\tilde x$, so we can denote it by $\eta_\gamma$. Thus we have 
\[w\gamma w^{-1} (\tilde x, y)=(\gamma\cdot \tilde x, a_{\mathfrak u(\gamma)}\cdot y \oplus \eta_{\gamma})\]
which implies that the action of $G'$ on $\widetilde E\times F$, after conjugation by $u$, is of the required form. 

If $\widetilde E$ is not connected, one may replace $\widetilde E$ by one of its connected components $E'$ and $G'$ with its subgroup fixing $E'$. \end{proof}

According to the previous lemma, up to replacing $E$ with $E'$, we may and will assume that $G$ is a finite group of translations of $E$ acting also on $F$ and diagonally on  $S'=E\times F$.

Set $C=F/G$  and note that $g(C)\leq 1$. We claim that $g(C)=0$. In fact, we have an obvious morphism $\phi: S\to C$. If $g(C)=1$, $\phi$ would factor through the Albanese map, thus if would factor through $\alpha$, which is  impossible.

Though Theorem \ref {thm:bdff}   tells in detail which pluricanonical systems are trivial for bielliptic surfaces, we show a partial result in this direction in the following lemma: 

\begin{lemma}\label{lem:triv} In the above setting one has that
either $4K_S$ or $6K_S$ is trivial,
hence $12K_S$ is trivial and $P_{12}=1$.
\end{lemma}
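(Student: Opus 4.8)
The plan is to use the explicit description of $S$ obtained in Lemmas \ref{lem:bb} and \ref{lem:patr}: after replacing $E$ by a suitable isogenous elliptic curve we may write $S=(E\times F)/G$, where $G$ is a finite group acting freely and diagonally, by translations on the first factor $E$ and by affine automorphisms of the second factor $F$ whose linear parts form the homomorphism $a\colon g\in G\to a_g\in\Aut_0(F)$ occurring in the proof of Lemma \ref{lem:patr}, and the quotient map $g\colon E\times F\to S$ is étale. Since $K_{E\times F}$ is trivial, being the canonical bundle of a product of two elliptic curves, the torsion of $K_S$ will be governed by the action of $G$ on a generator of $H^0(\Omega^2_{E\times F})$.

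First I would fix a nowhere--vanishing holomorphic $2$--form $\theta$ on $E\times F$, namely the exterior product of the pullbacks of generators $\omega_E\in H^0(\Omega^1_E)$ and $\omega_F\in H^0(\Omega^1_F)$. A translation of $E$ fixes $\omega_E$, and an affine automorphism of $F$ with linear part $a_g$ multiplies $\omega_F$ by the scalar $a_g$; hence $g^*\theta=a_g\,\theta$ for every $g\in G$, so $G$ acts on the line $H^0(\Omega^2_{E\times F})=\CC\,\theta$ through the character $a$. Let $n_0$ be the order of the finite cyclic group $a(G)\subseteq\CC^*$. I claim $n_0>1$: if $a(G)=\{1\}$ then $G$ acts on $F$ purely by translations, so $C=F/G$ would be an elliptic curve, contradicting $g(C)=0$ proved just above. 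Since $\Aut_0(F)$ is cyclic of order $2$, $4$ or $6$ (of order $4$ exactly when $F\cong F_i$ and of order $6$ exactly when $F\cong F_\rho$), every non--trivial subgroup of it has order $2$, $3$, $4$ or $6$, so $n_0\in\{2,3,4,6\}$.

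Next I would observe that $\theta^{\otimes n_0}$ is a $G$--invariant, nowhere--vanishing section of $\omega_{E\times F}^{\otimes n_0}$, because $g^*(\theta^{\otimes n_0})=a_g^{n_0}\,\theta^{\otimes n_0}=\theta^{\otimes n_0}$. Since $g$ is étale, $g^*(n_0K_S)=n_0K_{E\times F}$ by the ramification formula, and since $g$ is finite étale with Galois group $G$ one has $H^0(S,n_0K_S)=H^0(E\times F,n_0K_{E\times F})^G$; hence $\theta^{\otimes n_0}$ descends to a section $\sigma$ of $n_0K_S$ with $g^*\sigma=\theta^{\otimes n_0}$. As $g$ is surjective and $g^*\sigma$ vanishes nowhere, $\sigma$ vanishes nowhere, whence $n_0K_S\cong\O_S$. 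Finally, $n_0\in\{2,4\}$ forces $4K_S\cong\O_S$ while $n_0\in\{3,6\}$ forces $6K_S\cong\O_S$; in every case $12K_S\cong\O_S$, and therefore $P_{12}=h^0(12K_S)=h^0(\O_S)=1$.

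The only step requiring any care is matching the $G$--action on the canonical $2$--form with the character $a$ and checking that $a$ is non--trivial, which is forced by $F/G\cong\PP^1$; both are immediate from the diagonal action recorded in Lemma \ref{lem:patr}, so there is no real obstacle, and the argument even pins down $n_0\in\{2,3,4,6\}$ as the exact order of $K_S$, consistently with Theorem \ref{thm:bdff}.
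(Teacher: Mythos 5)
Your proof is correct, and it takes a genuinely different route from the one in the text. The paper reduces the statement to showing $P_4=1$ or $P_6=1$, identifies $P_n$ with $h^0(\PP^1,L_n)$ for a line bundle $L_n$ on $F/G\cong\PP^1$ whose degree is computed from the ramification indices of $F\to\PP^1$, and then concludes by combining the Riemann--Hurwitz relation $\sum_i(1-1/r_i)=2$ with a case--by--case analysis of the number of branch points and their indices. You instead work entirely upstairs on $E\times F$: the space $H^0(nK_{E\times F})$ is the line spanned by $\theta^{\otimes n}$, on which $G$ acts through the $n$--th power of the character $g\mapsto a_g$, so the smallest $n$ admitting a (nowhere--vanishing) invariant section is the order $n_0$ of $a(G)$, which is non--trivial because $F/G\cong\PP^1$ and lies in $\{2,3,4,6\}$ by the classification of $\Aut_0(F)$ (Theorem \ref{thm:ell}); descending $\theta^{\otimes n_0}$ along the \'etale quotient then gives $n_0K_S\cong\O_S$. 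Your argument is shorter, dispenses with the ramification bookkeeping entirely, and pins down the exact order of $K_S$ in $\Pic(S)$, thereby anticipating the final assertion of Theorem \ref{thm:bdff}; the paper's branch--point computation has the separate merit of setting up the numerical machinery reused later in the proof of the $P_{12}$--Theorem. The only cosmetic liberty you take is identifying $a_g\in\Aut_0(F)$ with the scalar by which it acts on $H^0(\Omega^1_F)$; since that map is an injective homomorphism onto a group of roots of unity, the identification is harmless.
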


\begin{proof} By Theorem \ref {thm:biel} it suffices to prove that either $P_4=1$ or $P_6=1$. Note that $H^ 0((\Omega^1_{E})^ {\otimes n})$ has dimension 1 and is invariant by $G$, since the non--zero  1--form on $E$ is invariant by translations. Hence
\[
\begin{split}
H^0(nK_S)&=H^ 0((\Omega^2_S)^ {\otimes n})=H^ 0((\Omega^2_{S'})^ {\otimes n})^ G\cong \\
&\cong \Big ( H^ 0((\Omega^1_{E})^ {\otimes n}) \otimes H^ 0((\Omega^1_{F})^ {\otimes n}) \Big) ^G\cong \CC\otimes H^ 0((\Omega^1_{F})^ {\otimes n}) ^G
\end{split}
\]
and 
\[
P_n=H^ 0((\Omega^1_{F})^ {\otimes n}) ^G\cong H^ 0(\PP^1, L_n)=\ell_n+1
\]
where $L_n$ is a line bundle of degree $\ell_n$ on $\PP^1$. 

Let $\nu$ be the order of $G$ and consider the $\nu:1$ morphism $\pi: F\to \PP^1=F/G$. Let $p\in \PP^1$ be a branch point. Then $G$ acts transitively on $\pi^ {-1}(p)=\{q_1,\ldots, q_s\}$ and the stabilizers of the points $q_i$ are conjugated in $G$ for $1\leq i\leq s$. Let $r_p$, or simply $r$, be their common order. This is the \emph{ramification index} of $\pi$ at each of the points $q_1,\ldots, q_s$, also called the \emph{branching index} of $\pi$ at $p$, i.e., for $1\leq i\leq s$, there are local coordinates  $y$ at $p$ and $x_i$ at $q_i$ such that $\pi^*y=x_i^r$. Note that $\nu=sr$. 

\begin{claim}\label{cl:ram}
The degree of $L_n$ is
\[
\ell_n=-2n+\sum_{p\in \PP^1} \Big [n \Big (1-\frac 1{r_p}\Big)  \Big] 
\]
where $r_p$ is the ramification index of $\pi: F\to \PP^1$ at $p\in \PP^1$. 
\end{claim}

\begin{proof}[Proof of Claim \ref {cl:ram}] We have to look at rational $n$--tensor forms $\sigma$ on $\PP^1=F/G$ such that $\pi^*\sigma\in H^ 0(\omega_{F}^ {\otimes n})$. With the notation introduced above, we locally write around any branch point $p\in  \PP^1$
\[
\sigma=fy^ {-m}dy^{\otimes n}, \quad\mbox{with}\quad f  \quad\mbox{non--zero and}\quad m\in \ZZ.
\]
Then around any point $q_i\in \pi^ {-1}(p)$, for $1\leqslant i\leqslant h$, we have
\[
\pi^ *\sigma=(\pi^*f) x_i^{-rm}(rx_i^{r-1}dx_i)^ {\otimes n}=f_i x_i^{-rm+(r-1)n}dx_i^ {\otimes n}
\]
where we set $r\pi^*f:=f_i$, which is a non--zero function. So $\pi^ *\sigma$ is holomorphic if and only if $-rm+(r-1)n\geq 0$, i.e., if and only if
\[
m\leq n\Big (1- \frac 1r\Big).
\]
Hence $\sigma$ is a $n$--tensor form with poles of order at most $n\Big (1- \frac 1{r_p}\Big)$ at $p\in \PP^1$. \end{proof}

Now we can finish the proof of the lemma. We have to prove that either $\ell_4\geq 0$ or $\ell_6\geq 0$. Let $h$ be the number of branch points of $\pi$ and let $r_1, \ldots, r_h$ be the corresponding branching indices, with $r_1\leq \ldots\leq r_h$. By the Riemann--Hurwitz formula we have
\[
0=2g(F)-2=-2\nu+\sum_{i=1}^h \nu\Big(1-\frac 1 {r_i}\Big),
\]
hence
\begin{equation}\label{eq:olp}
\sum_{i=1}^h \Big(1- \frac 1 {r_i}\Big)=2.
\end{equation}
Moreover, since $r_p\geq 2$ for any branch point $p\in \PP^1$, we have
\[
2\Big(1- \frac 1 {r_i}\Big)\geq 1.
\]

First we assume $h\geq 4$. Then
\[
P_2=\ell_2+1=-4+\sum_{i=1}^h\Big [ 2\big( 1-\frac 1 {r_i}  \big) \Big] +1 \geq 1, \quad \mbox{hence}\quad P_4=P_6=1,
\]
proving the lemma in this case.

Next notice that $h\leq 2$ is not possible because of \eqref {eq:olp}, so we only have to discuss the case $h=3$. Then \eqref {eq:olp} reads
\begin{equation}\label{eq:sum}
\frac 1{r_1}+\frac 1{r_2}+\frac 1 {r_3}=1.
\end{equation}

If $r_1\geq 3$, we have
\[
3\Big (1- \frac 1{r_i} \Big)\geq 2, \quad \mbox{for}\quad i=1,2,3,
\]
hence 
\[
P_3=\ell_3+1=-6+\sum_{i=1}^3\Big [3 \big( 1-\frac 1 {r_i}  \big) \Big]+1 \geq 1, \quad \mbox{hence}\quad P_6=1,
\]
again proving the lemma in this case. 

If $r_2\geq 4$, by \eqref {eq:sum} we have $r_1=2, r_2=r_4=4$. This implies $\ell_4=0$ hence $P_4=1$. Finally we can only have $r_2=3$, and by \eqref {eq:sum} only the following cases are possible
\[
\begin{split}
&(r_1,r_2,r_3)=(3,3,3) \quad \mbox{hence} \quad \ell_3=0  \quad \mbox{thus} \quad P_3=1\\
&(r_1,r_2,r_3)=(2,3,6) \quad \mbox{hence} \quad \ell_6=0  \quad \mbox{thus} \quad P_6=1
\end{split}
\]
and in both cases $P_6=1$. This ends the proof of the lemma. \end{proof}

For the proof of Theorem \ref {thm:bdff} we need a well known preliminary result (see, e.g., \cite [Chapt. IV, Cor. 4.7]{Hart}):

\begin{thm}\label{thm:ell} Let $E$ be a smooth elliptic curve, which we look at as an abelian variety of dimenion 1. Let $j$ be the $j$--invariant of $E$. Denote by $\Aut(E)$ the group of all automorphisms of $E$ and by $\Aut_0(E)$ the group of automorphism of $E$ as an abelian variety. Then $\Aut_0(E)$ is a finite group and precisely:\\
\begin{inparaenum}
\item [(i)] $\ZZ_2$, generated by the symmetry $x\to -x$ if $j\neq 0, 1728$;\\
\item [(ii)] $\ZZ_4$, generated by the automorphism $x\to ix$, if $j=1728$, i.e., if $E=\CC/\ZZ+i\ZZ$;\\
\item [(iii)] $\ZZ_6$, generated by the automorphism $x\to - \rho x$, with $\rho$ a non--trivial cubic root, if $j=0$, i.e., if $E=\CC/\ZZ+\rho\ZZ$.
\end{inparaenum}

Moreover $\Aut(E)=E \ltimes \Aut_0(E)$, where $E$ is identified with the group of translations of $E$.  
\end{thm}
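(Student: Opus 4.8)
The plan is to prove this over $\CC$ using the uniformization $E = \CC/\Lambda$ of an elliptic curve by a lattice $\Lambda = \ZZ\omega_1 + \ZZ\omega_2$, together with the classical theory of the $j$-invariant. First I would recall that an automorphism of $E$ as an abelian variety (i.e.\ fixing the origin) lifts to a $\CC$-linear map of the universal cover $\CC$, hence is multiplication by some $\mu \in \CC^*$, and this descends to $E$ precisely when $\mu\Lambda \subseteq \Lambda$ — in fact $\mu\Lambda = \Lambda$, since $\mu^{-1}$ must also preserve the lattice. So $\Aut_0(E)$ is identified with the group of \emph{complex multiplications of modulus arbitrary that preserve $\Lambda$}, i.e.\ $\{\mu \in \CC^* : \mu\Lambda = \Lambda\}$. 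The first key step is to observe that such a $\mu$ satisfies $|\mu| = 1$: indeed $\mu$ acts on $\Lambda \cong \ZZ^2$ as an element of $\mathrm{GL}(2,\ZZ)$, so $\det(\mu\text{-action}) = |\mu|^2 \in \{\pm 1\}$, forcing $|\mu| = 1$; moreover $\mu$ is an algebraic integer (a root of the characteristic polynomial of its integer matrix), and together with $|\mu|=1$ and the same for all conjugates this forces $\mu$ to be a root of unity by Kronecker's theorem.

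The second step is to pin down \emph{which} roots of unity can occur. Writing $\mu = e^{2\pi i \theta}$, the $\ZZ$-linear action on $\Lambda$ has trace $t = \mu + \bar\mu = 2\cos 2\pi\theta \in \ZZ$ (the trace is an integer since the matrix is integral), so $t \in \{-2,-1,0,1,2\}$, giving $\mu \in \{\pm 1, \pm i, \pm\zeta_6^{\pm 1}, \pm\zeta_3^{\pm 1}\}$ — i.e.\ $\mu$ is a primitive $n$-th root of unity with $n \in \{1,2,3,4,6\}$. Since $-1$ always preserves every lattice, $\ZZ_2 \subseteq \Aut_0(E)$ always. The group $\Aut_0(E)$ is cyclic (a finite subgroup of $\CC^*$), so it is $\ZZ_2$, $\ZZ_4$, or $\ZZ_6$. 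Then I would determine exactly when the larger groups occur: $\Aut_0(E) \supseteq \langle i \rangle$ forces $i\Lambda = \Lambda$, which (after rescaling so that a shortest vector is $1$) forces $\Lambda = \ZZ + i\ZZ$, the square lattice; similarly $\Aut_0(E) \supseteq \langle -\rho\rangle$ (equivalently $\supseteq \langle\rho\rangle$, since $-1$ is already there and $-\rho$ has order $6$) forces $\rho\Lambda = \Lambda$, hence $\Lambda = \ZZ + \rho\ZZ$, the hexagonal lattice. Conversely these two special lattices visibly admit the claimed automorphisms. Finally I would compute or quote the $j$-invariants: $j(\ZZ + i\ZZ) = 1728$ and $j(\ZZ + \rho\ZZ) = 0$, and recall that $j$ is a complete invariant of $E$; this gives the case distinction (i)--(iii) exactly as stated.

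For the last assertion, $\Aut(E) = E \ltimes \Aut_0(E)$: given $\varphi \in \Aut(E)$, compose with the translation by $-\varphi(0)$ to get an automorphism fixing $0$, which lies in $\Aut_0(E)$; this exhibits every automorphism uniquely as translation composed with a group automorphism, and one checks the translations form a normal subgroup (conjugating a translation by a group automorphism $\mu$ gives translation by $\mu$ applied to the translation vector), giving the semidirect product structure.

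The main obstacle, and the point deserving the most care, is the step showing $\mu$ must be a root of unity of order in $\{1,2,3,4,6\}$: this is really the crystallographic restriction, and the cleanest route is the trace argument above ($2\cos2\pi\theta \in \ZZ$), which I would spell out rather than invoke Kronecker's theorem as a black box. The rest — lifting automorphisms to $\CC$, identifying the special lattices, and the semidirect product decomposition — is routine, and the computation of the two exceptional $j$-values can simply be cited.
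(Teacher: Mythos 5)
Your argument is correct and complete. Note, however, that the paper does not actually prove this theorem: it is stated as a ``well known preliminary result'' with a pointer to Hartshorne, Chapt.~IV, Cor.~4.7, so there is no internal proof to compare against. What you have written is a genuine, self-contained proof in the transcendental setting ($E=\CC/\Lambda$), which is the natural one for this paper since everything is over $\CC$: the lift of an origin-preserving automorphism to multiplication by $\mu$ with $\mu\Lambda=\Lambda$, the determinant argument giving $|\mu|=1$, and the integrality of the trace $\mu+\bar\mu$ forcing $\mu$ to be a root of unity of order $1,2,3,4$ or $6$ (the crystallographic restriction) are all correct, as is the identification of the square and hexagonal lattices as the only ones admitting the extra symmetries and the standard values $j(i\ZZ+\ZZ)=1728$, $j(\rho\ZZ+\ZZ)=0$. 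The reference's route is instead algebraic, via the Weierstrass form $y^2=x^3+ax+b$ and the substitutions $x\mapsto u^2x$, $y\mapsto u^3y$, which forces $u^4=1$ when $b=0$ and $u^6=1$ when $a=0$; that argument has the advantage of working in any characteristic prime to $6$, while yours is cleaner over $\CC$ and simultaneously yields the affine form $z\mapsto \mu z+c$ of every automorphism, from which the semidirect product decomposition is immediate. The only point I would ask you to make explicit is the identification of ``automorphism of $E$ fixing the origin'' with ``group automorphism'': over $\CC$ this follows from your own lifting argument (any holomorphic self-map of $E$ lifts to $\CC$, and the derivative of the lift is $\Lambda$-periodic, hence constant, so the lift is affine), and spelling that out closes the one small gap in the last paragraph.
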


We can finally give the:

\begin{proof}[Proof of Theorem \ref {thm:bdff}] By Lemma \ref {lem:bb} and the subsequent discussion,  a bielliptic  surface $S$ is a quotient $E\times F/G$, where $E,F$ are elliptic curves, $G\subset E$ is a finite (abelian) group of translations (so that $E/G$ is an elliptic curve), and $G$ is also a subgroup of $\Aut(F)$, such that $F/G\cong \PP^1$. By the last assertion of Theorem \ref {thm:ell}, $G$ is the direct product $T\times H$ where $T$ is a finite subgroup of translations of $F$ and $H$ is a subgroup of $\Aut_0(F)$. Since 
 $F/G\cong \PP^1$, then $H\neq 0$, and therefore, by Theorem \ref {thm:ell}, one has $H=\ZZ_p$, with $p=2,3,4,6$. Since $G$ is abelian, the elements of $T$ commute with those of $H$, i.e., they are translations by the fixed points of the action by $H$. It is immediate to find these fixed points:\\
\begin{inparaenum}
\item [(a)] if $H=\ZZ_2$ generated by the symmetry $x\to -x$, the fixed points are the points of order 2 of $F$, whose set we denote by $F[2]$;\\
\item [(b)] if $H=\ZZ_4$, generated by the automorphism $x\to ix$, and $F=\CC/\ZZ+i\ZZ$,
the fixed points are (the classes of) $0$ and $\frac {1+i}2$;\\
\item [(c)] if $H=\ZZ_3$, generated by the automorphism $x\to \rho x$, with $\rho$ a non--trivial cubic root, and  $F=\CC/\ZZ+\rho\ZZ$, the fixed points are (the classes of) $0$ and $\pm \frac {1-\rho}3$;\\
\item [(d)] if $H=\ZZ_6$, generated by the automorphism $x\to  -\rho x$, with $\rho$ a non--trivial cubic root, and $E=\CC/\ZZ+\rho\ZZ$, then the only fixed point is (the class of) $0$.\\
 \end{inparaenum}
 Finally $G=T\times H$ is a subgroup of $E$, which excludes the possibility that $G=F[2]\times \ZZ_2$. The cases (i)--(viii) in the statement are now an easy consequence of the above considerations. 
 
 As for the final assertion of the theorem, let $\omega$ be a non--zero holomorphic $2$--form on $E\times F$. Then the minimum $n$ such that $nK_S$ is trivial, is the minimum $n$ such that $G$ acts trivially on $\omega^n$. It is then clear that the assertion follows.  
\end{proof}

\section{The $P_{12}$--Theorem}\label{sec:p12}

In this section we prove the:

\begin{thm}[The $P_{12}$--Theorem]\label{thm:p12} Let $S$ be a surface. Then:\\
\begin{inparaenum}
\item [(i)] $\kappa=-\infty$ if and only if $P_{12}=0$;\\
\item [(ii)] $\kappa=0$ if and only if $P_{12}=1$;\\
\item [(iii)]  $\kappa=1$ if and only if $P_{12}\geq 2$ and $K_S^2=0$ for $S$ minimal;\\
\item [(iv)]  $\kappa=2$ if and only if $P_{12}\geq 2$ and $K_S^2>0$ for $S$ minimal.\\
\end{inparaenum}
More precisely:\\
\begin{inparaenum}
\item [(a)] $P_{12}=0$ if and only if $S$ is {ruled}, i.e., birationally equivalent to $C\times \PP^1$, where $C$ is a smooth curve of genus $g(C)=q(C)$; if $S$ is minimal, then either $S=\PP^2$ or $S$ is a scroll over a curve;\\
\item [(b)] $P_{12}=1$ if and only if $12K_S$ is trivial, with the following subcases for $S$ minimal:\\
\begin{inparaenum}
\item [(I)] $p_g=1$, $q=2$ if and only if $S$ is an abelian surfaces, in which case $K_S$ is trivial;\\
\item [(II)] $p_g=1$, $q=0$ if and only if $S$ is a K3 surface, in which case $K_S$ is trivial;\\
\item [(III)] $p_g=0$, $q=0$ if and only if $S$ is an Enriques surface, in which case $K_S$ is non--trivial, but $2K_S$ is trivial;\\
\item [(IV)] $p_g=0$, $q=1$ if and only if $S$ is  a bielliptic surface, in which case $K_S$ is non--trivial, but $nK_S$ is trivial, for $n\in \{2,3,4,6\}$.\\
\end{inparaenum}
\item [(c)] $P_{12}\ge 2$ and $K_S^2=0$ for $S$ minimal if and only if $S$ is properly elliptic and $\varphi_{|12K_S|}$ provides an elliptic fibration over a curve (up to eliminating fixed components and up to Stein factorization);\\
\item [(d)] $P_{12}\ge 2$ and $K_S^2>0$ for $S$ minimal if and only if $S$ of general type, in which case $\varphi_{|nK_S|}$ is a morphism which is birational onto its image, as soon as $n\geq 6$.
\end{inparaenum}
\end{thm}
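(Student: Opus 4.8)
The plan is to deduce the statement from the classification already established, the $P_{12}$--Theorem being essentially a bookkeeping consequence of the case analysis of the previous sections together with one genuinely new computation in the properly elliptic case. Since $P_{12}$ is a birational invariant, I may and will replace $S$ by a minimal model whenever $\kappa(S)\geq 0$ and by a Mori fibre space when $\kappa(S)=-\infty$. The heart of the matter is to prove the three implications
\[
\kappa(S)=-\infty\ \Rightarrow\ P_{12}=0,\qquad \kappa(S)=0\ \Rightarrow\ P_{12}=1,\qquad \kappa(S)\geq 1\ \Rightarrow\ P_{12}\geq 2.
\]
Since the three hypotheses are mutually exclusive and exhaustive, reading these implications backwards immediately yields all the ``if and only if'' statements (i), (ii) and the $\kappa\geq 1$ half of (iii), (iv); the remaining distinction between $\kappa=1$ and $\kappa=2$ for minimal $S$ is then exactly Castelnuovo--De Franchis--Enriques' Theorem \ref{thm:cdfe}, which gives $K_S^2>0\iff\kappa=2$ and hence $K_S^2=0\iff\kappa=1$ in the range $\kappa\geq 1$.

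The first implication is immediate: if $\kappa(S)=-\infty$, the minimal model programme ends in a Mori fibre space, i.e.\ $\PP^2$ or a scroll over a curve, so $P_n=0$ for all $n$ (Remark \ref{rem:kk}); Vaccaro's Theorem \ref{thm:vac} refines this to the list $\PP^2,\FF_n,\PP(\mathcal E)$ for minimal $S$, and birational invariance gives the \emph{ruled} description of (a), with $q(S)=g(C)$ because $S$ is birational to $C\times\PP^1$. The second implication is the content of Case \ref{case:K3}--Case \ref{case:pq1}: for minimal $S$ with $\kappa=0$ one has $q\leq 2$, $p_g\leq 1$, the pair $(q,p_g)=(1,1)$ is excluded, and in each of the remaining four cases some $nK_S$ with $n\in\{1,2,3,4,6\}$ is trivial (Theorem \ref{thm:k3}, Enriques' Theorem \ref{thm:ab}, Theorem \ref{thm:biel} together with Lemma \ref{lem:triv}); hence $12K_S\cong\O_S$, $P_{12}=1$, and this proves as well the equivalence ``$P_{12}=1\iff 12K_S$ trivial'' and the subcases (I)--(IV) of (b). For $\kappa=2$ the implication is quick: for minimal $S$ one has $K_S^2>0$ and $\chi\geq 1$ (Theorem \ref{thm:cdfe}), and Corollary \ref{cor:purj} gives $P_{12}=\chi+66K_S^2\geq 67$; the birationality statement in (d) is Theorem \ref{thm:bbbb} (base point freeness for $n\geq5$, birational onto the image for $n\geq6$).

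The main obstacle is the properly elliptic case $\kappa=1$, where I must show $P_{12}\geq 2$. I would take the unique elliptic fibration $f\colon S\to C$ of Theorem \ref{thm:ell} (note that $S$ minimal forces $f$ relatively minimal) and apply the canonical bundle formula, Theorem \ref{thm:canb}: with $m_1F_1,\dots,m_hF_h$ the multiple fibres and $L=K_C\otimes(R^1f_*\O_S)^\vee$ (a line bundle of degree $2g(C)-2+\chi$), one has $12K_S=f^*(12L)\otimes\O_S\big(\sum_i 12(m_i-1)F_i\big)$; pushing forward and using $f_*\O_S(aF_i)=\O_C(\lfloor a/m_i\rfloor p_i)$ on each fibre yields
\[
P_{12}=h^0\!\Big(C,\ 12L\otimes\O_C\big(\textstyle\sum_i\lfloor 12(1-1/m_i)\rfloor\,p_i\big)\Big).
\]
Since $\kappa(S)=1$, the $\QQ$--divisor $\Delta=L+\sum_i(1-1/m_i)p_i$ on $C$ has positive degree, and from this one concludes by a numerical case analysis on $g(C)$ and the $m_i$: when $g(C)\geq 1$ a crude Riemann--Roch estimate already forces $h^0\geq 2$, while when $C\cong\PP^1$ one must show the integral divisor above has degree at least $1$ — this is exactly where $12=\mathrm{lcm}(1,2,3,4,6)$ enters (a multiple fibre of multiplicity $m\in\{2,3,4,6\}$ contributes the integer $12(1-1/m)$, and the other multiplicities are controlled), and where the borderline configurations (few multiple fibres with $\sum_i(1-1/m_i)$ just above $2$) must be disposed of using the structure of isotrivial elliptic fibrations and the constraints on multiple fibres. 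This is the classical argument of Enriques and is the only step that is not a formal consequence of the earlier results. Once $P_{12}\geq 2$ is known, the description in (c) follows from the same formula: the movable part of $|12K_S|$ is the pullback of a base point free linear series of positive degree on $C$ and the fixed part is supported on fibres, so after removing the fixed part and taking Stein factorization $\varphi_{|12K_S|}$ recovers $f\colon S\to C$, and (c) is now equivalent to the already proved (iii).
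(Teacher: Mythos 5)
Your reduction is the same as the paper's: birational invariance, Remark \ref{rem:kk} and Vaccaro's Theorem for $\kappa=-\infty$, the case-by-case analysis of \S\ref{ssec:k1} for $\kappa=0$, Corollary \ref{cor:purj} for $\kappa=2$, and Theorem \ref{thm:cdfe} to separate $\kappa=1$ from $\kappa=2$ by the sign of $K_S^2$. All of that part is correct. The problem is that the one step you yourself identify as ``not a formal consequence of the earlier results'' --- showing $P_{12}\geq 2$ when $\kappa=1$, $g(C)=0$ and $\chi=0$ --- is precisely where the proof lives, and your sketch does not contain the ingredient that makes it work. The canonical bundle formula alone gives $P_n\geq\max\{0,\,1-2n+\sum_i\lfloor n(1-1/m_i)\rfloor\}$, and for the configuration $(m_1,m_2,m_3)=(2,3,7)$ (which satisfies $\sum_i(1-1/m_i)>2$, hence is numerically compatible with $\kappa=1$) this yields $1-24+6+8+10=1$, i.e.\ only $P_{12}\geq 1$. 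The same happens for $(2,4,5)$ and similar triples. So the ``numerical case analysis'' as you describe it does not close; saying ``the other multiplicities are controlled'' begs the question of \emph{how} they are controlled.

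The missing idea is Claim \ref{cl:piff} of the paper: each multiplicity $m_i$ divides the least common multiple of the remaining $m_j$. This is what excludes $(2,3,7)$, $(2,4,5)$, etc., and reduces the worst cases to $(2,2,3,3)$, $(4,4,4)$, $(3,6,6)$, $(3,4,12)$, $(2,6,6)$, $(2,5,10)$, each of which does give $P_{12}\geq 2$. Proving this divisibility is itself nontrivial: the paper uses that in this case $q=1$, so there is a second fibration (the Albanese map $\alpha:S\to E$ with fibres of genus $>1$), restricts $f$ to a general Albanese fibre $D$, and shows via a monodromy argument (Claim \ref{cl:cl:subcl}) that $f|_D:D\to\PP^1$ is an abelian Galois cover branched exactly over the points carrying the multiple fibres with branching indices $m_i$; the relation $\gamma_1\cdots\gamma_h=\mathrm{id}$ in an abelian quotient then forces $m_i\mid\mathrm{lcm}(m_j)_{j\neq i}$. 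You would need to supply this (or an equivalent constraint on the multiple fibres) for your argument to go through; as written, the proposal has a genuine gap at its central point.
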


\begin {proof}

The second part of the theorem follows from the classification we already made. So it suffices to prove (i)--(iv).
The proof follows the one by Catanese--Li in \cite{CL} and by Francia \cite {F} and it is a case by case analysis which follows the classification. 

If $\kappa=-\infty$ then $P_n=0$ for all $n\in \NN$, in particular $P_{12}=0$. Then  (i) follows as soon as we prove (ii)--(iv). If $\kappa=0$, following the classification, we have the cases (I)--(IV) listed in the statement of the Theorem. For all of them we have $P_{12}=1$. Then (ii) follows once we prove (iii) and (iv). On the other hand, if $\kappa=2$ and if $S$ is minimal we have \eqref {eq:plur} and $K^2_S>0$, hence $P_{12}\geq 2$. Conversely, if $S$ is minimal, $P_{12}\geq 2$ and $K^2_S>0$, then $\kappa=2$. Hence we only have to consider case (iii), i.e., the $\kappa=1$ case.

If $P_{12}\geq 2$, then $\kappa\geq 1$. Moreover, if $K_S^2=0$ on a minimal model $S$, then $\kappa<2$, hence we have $\kappa=1$. Thus  the theorem follows from the:

\begin{proposition}\label{prop:k1} If $\kappa=1$ one has:\\
\begin{inparaenum}
\item [(1)] $P_{12}\geq 2$;\\
\item [(2)] there a positive integer  $n\leq 4$ such that $P_n\neq 0$;\\
\item [(3)] there a positive integer  $n\leq 8$ such that $P_n\geq 2$;\\
\item [(4)] for all integers $n\geq 14$ one has $P_n\geq 2$.
\end{inparaenum}
\end{proposition}

\begin{example}\label{rem:pu} The bounds in Proposition \ref {prop:k1}  are sharp, as the following examples show. 

\noindent  {\bf (i) Cases (2) and (4).}  Consider the group $G=\ZZ_2\oplus \ZZ_6$, which is a subgroup of translations of any elliptic curve $C_1$, in such a way that $C_1/G$ is still elliptic. 
 
 By Riemann's Existence Theorem \ref {thm:ret2}  there is a curve $C_2$ with a $G$ action, such that $C_2/G=\PP^1$ and the quotient morphism $f: C_2\to \PP^1$ of degree $m=12$ is branched at three points $p_1,p_2,p_3$ with local monodromies
 \[
 g_1=(1,0), \quad g_2=(0,1), \quad g_3=(1,5)
 \]
 with respective orders $m_1=2$, $m_2=6$, $m_3=6$. Thus over $p_1$ there are 6 points with simple ramification, and over $p_2$ and $p_3$ there are 2 points with ramification index 6. By Riemann--Hurwitz formula one has 
 \[
 22+2g(C_2)=2m+2g(C_2)-2=26\quad \mbox{hence}\quad g(C_2)=2.
 \]
 
 Now $G$ acts diagonally on $C_1\times C_2$ and let $S=C_1\times C_2/G$. Then we have the quotient map
 $S\to  C_2/G=\PP^1$ with elliptic fibres and with exactly 3 multiple fibres with multiplicities $2, 6, 6$. 
 Moreover $K^2_S=e(S)=0$. Hence, according to the canonical bundle formula for elliptic fibrations, we have
 \[
 P_n(S)=\max \{0, -2n+1+\Big [ \frac n2\Big] +2 \Big [ \frac {5n}6 \Big]\}
 \]
 thus 
 \[
 P_1=P_2=P_3=0, P_4=P_5=1, P_6=2, P_{13}=1.
 \]
 
\noindent  {\bf (ii) Cases (2) and (3).}  Here we set $G=\ZZ_{10}$ which acts on any elliptic curve $C_1$ with elliptic quotient $C_1/G$. By Riemann's Existence Thoerem, we find a curve $C_2$ with a $G$ action such that $C_2/G=\PP^1$ and the degree 10 quotient map $ C_2\to \PP^1$ is ramified at 3 points $p_1,p_2,p_3$ with local monodromies 
\[
g_1=5, \quad g_2=4, \quad g_3= 1
\]
 with respective orders $m_1=2$, $m_2=5$, $m_3=10$. Thus over $p_1$ there are 5 points with simple ramification, over $p_2$ there are 2 points with ramification index 5 and over $p_3$ there is one point with total ramification index. Riemann--Hurwitz formula
 says 
 \[
 18+2g(C_2)=2m+2g(C_2)-2=22\quad \mbox{hence}\quad g(C_2)=2.
 \]
 As above,  $G$ acts diagonally on $C_1\times C_2$ and we set $S=C_1\times C_2/G$. Then we have an obvious map
 $S\to  C_2/G=\PP^1$ with elliptic fibres and with exactly 3 multiple fibres with multiplicities $2, 5, 10$. 
 Again $K^2_S=e(S)=0$ and therefore
 \[
 P_n(S)=\max \{0, -2n+1+\Big [ \frac n2\Big] + \Big [ \frac {4n}5 \Big]+ \Big [ \frac {9n}{10} \Big]\}
 \]
 thus 
 \[
 P_1=P_2=P_3=0, P_4=P_5=P_6=P_7=1, P_8=P_9=2, P_{10}=3, P_{11}=1, P_{12}=P_{13}=2.
  \]
\end{example} 

\begin{proof} [Proof of Proposition \ref {prop:k1}] By Castelnuovo--De Franchis--Enriques' Theorem, we have $\chi\geq 0$ and $K_S^2=0$ if $S$ is a minimal model. Suppose we have an elliptic fibration $f: S\to C$, with $g=g(C)$. 

First suppose $\chi\geq 1$ and $g\geq 1$. Then  the canonical bundle formula for elliptic fibrations implies that
\[
P_n\geq n(2g-2+\chi)-g+1=n\chi+(g-1)(2n-1)\geq n
\]
proving the assertion. Next assume $\chi=0$ and $g\geq 2$. By the same computation we get $P_n\geq 2n-1$, again proving the assertion. If $\chi=0, g=1$, then there are multiple fibres of the elliptic fibration, otherwise  the canonical bundle formula for elliptic fibrations would imply that $K_S\equiv 0$, which is not possible. Then we have
\begin {equation}\label{eq:lof}
nK_S  \sim  \eta+ n\sum_{i=1}^h(m_i-1)F_i\equiv \eta+\sum_{i=1}^h \Big [ n\frac {m_i-1}{m_i}\Big ] \cdot F+D
\end{equation}
where $F$ is a general fibre of $f$,  $h>0$, $\eta$ is the pull--back to $S$ via $f$ of a degree 0 line bundle on $C$ and $D$ is an effective, integral divisor. Hence
\begin{equation}\label{eq:pirt}
P_n\geq \sum_{i=1}^h \Big [ n\frac {m_i-1}{m_i}\Big ] \geq \Big [\frac n2\Big]
\end{equation}
whence the assertion follows. 

So we are left with the case $g=0$.  If $\chi\geq 3$, then again 
\[
P_n\geq n(\chi-2)+1\geq n+1
\] 
which proves the assertion. If $\chi=2$ the same argument as above tells us that \eqref {eq:lof} and \eqref {eq:pirt} hold, proving the assertion again. So we are left with $g=0$ and $0\leq \chi\leq 1$. In both cases the 
canonical bundle formula for elliptic fibrations implies that $p_g=0$. 

\begin{case}\label{case:uno} $g=0$, $\chi=1$
\end{case}
We have
\[
K_S\sim -F+\sum_{i=1}^h (m_i-1)F_i.
\]
Since $\kappa=1$, then some multiple of  $K_S$  is equivalent to an effective divisor. Hence for any ample divisor $A$ on $S$, we have
\begin{equation}\label{eq:ret}
0<A\cdot K_S=\Big (-1+\sum _{i=1}^h \frac {m_i-1}{m_i}\Big)F\cdot A \quad \mbox{hence}\quad h-1 >\sum _{i=1}^h\frac 1{m_i}.
\end{equation}
This implies $h\geq 2$. If $h\geq 3$ we have
\[
nK_S\sim -nF+\sum_{i=1}^h\Big [ n\frac {m_i-1}{m_i} \Big] F+D
\]
where $D$ is an effective, integral divisor. On the right hand side the coefficient of $F$ is
\[
\begin{split}
&-n+\Big [ n \Big (1-\frac 1{m_1}\Big)\Big ] +\cdots +\Big [ n \Big (1-\frac 1{m_h}\Big)\Big ]\geq \\
&\geq -n+h \Big [ \frac n2 \Big ]\geq \Big [ \frac n2 \Big ]
\end{split}
\]
proving the assertion. If $h=2$, we must have $m_i\geq 3$ for at least an $i\in \{1,2\}$. Hence
\[
P_n\geq  \Big [ \frac n2 \Big ]+  \Big [ \frac {2n}3 \Big ]-n+1
\]
hence $P_n\geq 1$ for $n\geq 3$ and $P_n\geq 2$ for $n\geq 8$, proving again the assertion.

\begin{case}\label{case:due} $g=0$, $\chi=0$
\end{case}

Now we have
\begin{equation}\label{eq:hot}
K_S\sim -2F+\sum_{i=1}^ h(m_i-1)F_i
\end{equation}
where we assume $m_1\leq\ldots\leq m_h$. Recall that
\begin{equation}\label{eq:orc}
P_n\geq \max\Big \{ 0, 1-2n+\sum_{i=1}^h \Big [  n\frac {m_i-1}{m_i}\Big]\Big \}.
\end{equation}

If $h\geq 5$, we have
\[
P_n\geq 1-2n+5\Big [  \frac n2\Big].
\]
If $n=2m+\epsilon$, with $0\leq \epsilon\leq 1$, we get
\[
P_n\geq 1-4m+5m-2\epsilon=1+m-2\epsilon\geq \begin{cases} 1 &\quad\mbox{for}\quad n\geq 4\\
2& \quad \mbox {for}\quad n\geq 6 \end{cases}
\]
whence the assertion follows.

Next we assume $h\leq 4$. Similarly to \eqref {eq:ret}, we get
\begin{equation}\label{eq:tru}
 h-2 >\sum _{i=1}^h\frac 1{m_i},
\end{equation}
which implies $h\geq 3$. Furthermore we make the following:

\begin{claim}\label{cl:piff}
For each $i\in\{1,\ldots, h\}$, $m_i$ divides the least common multiple of $m_1,\ldots$, $m_{i-1},m_{i+1},\dots, m_h$.
\end{claim}

Let us take this claim for granted for the time being. Assume first $h=4$. Observe that the right hand side of \eqref {eq:orc} is increasing in the $m_i$s. The worst case for the sequence of the $m_i$s is  $(2,2,3,3)$, because $(2,2,2,2)$ is not a solution of \eqref {eq:tru} and $(2,2,2,3)$ does not verify Claim \ref {cl:piff}. For $(2,2,3,3)$ we obtain
\[
P_n\geq 1-2n+2\Big [  \frac n2\Big]+2 \Big [  \frac {2n}3\Big]=1+\Big(2\Big [  \frac n2\Big]-n  \Big)+\Big(2 \Big [  \frac {2n}3\Big]-n  \Big).
\]
If $n=2k$ we get 
\[
P_n=1+2\Big [  \frac k3 \Big]\geq \begin{cases} 1\quad \mbox{if} \quad k\geq 1\\
3\quad \mbox{if}\quad k\geq 3
\end{cases}
\]
If $n=2k+1$ we get
\[
P_n=2\Big [  \frac {4k+2} 3 \Big]-2k-1 = 2\Big [  \frac {k+2} 3 \Big]-1
\geq \begin{cases} 1\quad \mbox{if} \quad k\geq 1\\
3\quad \mbox{if}\quad k\geq 4
\end{cases}
\]
proving the assertion. 

Finally we assume $h=3$. Suppose that $m_1\geq 4$. Then, as above, the worst case for the sequence of the $m_i$s is $(4,4,4)$. Then set $n=4k+\epsilon$, with $0\leq \epsilon\leq 3$. Hence
\[
\begin{split}
&P_n\geq 3\Big [  \frac {3n} 4 \Big]-2n+1=3\Big ( 3k+  \Big [  \frac {3\epsilon} 4 \Big]\Big)-8k-2\epsilon+1=\\
&=1+k+3\Big [  \frac {3\epsilon} 4 \Big]-2\epsilon=\begin{cases} k+1 \quad \mbox {for}\quad \epsilon = 0,3\\ 
k  \quad\quad\,\,\,\, \mbox {for}\quad \epsilon = 2\\ k-1 \quad \mbox {for}\quad \epsilon = 1. \end{cases}
\end{split}
\]
Thus $P_3=1, P_4=2, P_n\geq 2$ for $n\geq 10$, proving the assertion. 

Assume next $m_1=3$. Since $3$ divides the least common multiple of $m_2$ and $m_3$, then either $3$ divides both $m_2$ and $m_3$ or it divides only one of the two. So we have the following possibilities for $m_2,m_3$:\\
\begin{inparaenum}
\item [$\bullet$]  $(m_2,m_3)=(3a,3b)$: then $a$ divides $b$ and $b$ divides $a$, thus $a=b$ and the worst case is $(3,6,6)$;\\
\item [$\bullet$] $\{m_2,m_3\}=\{c,3b\}$, with $c$ not divisible by $3$: then $3b$ divides $3c$, hence $b$ divides $c$ and moreover $c$ divides $b$, thus $b=c$ and the worst case is $(3,4,12)$.
\end{inparaenum}
Then we have:
\[
\mbox{in the case}\quad (3,6,6): P_n=\max\{0, F(n)\}, \quad \mbox{with}\quad F(n):= 1-2n +\Big [  \frac {2n} 3 \Big]+2\Big [  \frac {5n} 6 \Big]
\]
and 
\[
\mbox{in the case}\quad (3,4,12): P_n=\max\{0, G(n)\}, \quad \mbox{with}\quad G(n):= 1-2n +\Big [  \frac {2n} 3 \Big]+\Big [  \frac {3n} 4 \Big]+\Big [  \frac {11n} {12} \Big].
\]

In the former case, write $n=6k+\epsilon$, with $0\leq \epsilon\leq 5$. We get
\[
F(n)=2k+F(\epsilon), \quad \mbox{and}\quad F(\epsilon)=1,-1, 0, 1,1,2 \quad \mbox{for}\quad  \epsilon=0,\ldots, 5,
\]
thus $P_3\geq 1$, $P_5\geq 2$ and $P_n\geq 2$ for $n\geq 8$, proving the assertion.

In the latter case,  write $n=12k+\epsilon$, with $0\leq \epsilon\leq 11$. We get
\[
G(n)=4k+G(\epsilon),  \quad \mbox{and}\quad G(\epsilon)=1,-1, 0, 1,1,1, 2,2 , 3,3,3, 4 \quad \mbox{for}\quad  \epsilon=0,\ldots, 11,
\]
thus $P_n\geq 1$ for $n\geq 3$ and $P_n\geq 2$ for $n\geq 6$, proving the assertion.

Finally, we assume $m_1=2$. Then one of $m_2,m_3$ is even.  In case $m_2=2a,m_3=2b$, we have $a=b$ and the worst case is $(2,6,6)$  which was considered in Example \ref {rem:pu}, (i). If $\{m_2,m_3\}=\{c, 2b\}$, with $c$ odd, then $c$ has to divide $b$ and $2b$ divides $2c$, hence $b$ divides $c$, thus $b=c$. The worst case is then $(2,5,10)$ which was considered in Example \ref {rem:pu}, (ii).

\medskip
In conclusion we have to give the:

\begin{proof}[Proof of Claim \ref {cl:piff}] First, note that by \eqref {eq:hot}, we have $p_g=0$. Since $\chi=0$, we have $q=1$, hence the Albanese map is a morphism $\alpha: S\to E$, with $E$ an elliptic curve, with fibres of genus $g>1$, because, $S$ having $\kappa=1$, it cannot have two different elliptic fibrations. Note also that, since all fibres of the elliptic pencil $f: S\to \PP^1$ map to $E$ via $\alpha$, no fibre of $f$ can be rational, hence all fibres of $f$ are smooth, except the multiple fibres. 
We denote by $D$ the general fibre of the Albanese map and, as usual, by $F$ the general fibre of $f$. More specifically, if $y\in E$ [resp., $x\in \PP^1$] , we denote by $D_y$ [resp., by $F_x$] the fibre of $\alpha$ [resp., of $f$] over $y$ [resp., over $x$]. 

Note that the general Albanese fibre $D$ maps to $\PP^1$ via $f_{|D}$. 
For each point $p\in D$ we have that $p$ is a ramification point of order $\nu$ for $f_{|D}: D\to \PP^1$ if and only if the intersection multiplicity of $D$ and $F_x$ is $\nu$, with $x=f(p)$. Let now $F_x$ be reduced.
The morphism $\alpha_{|F_x}:F_x\to E$ cannot be ramified at $p$, because it is the map between two elliptic curves. Thus the only ramification points of $f_{|D}: D\to \PP^1$ are the intersections of $D$ with the multiple fibres $m_iF_i$ of $f$, and their ramification order is exactly $m_i$, with $1\leq i\leq h$. Let $x_1,\ldots, x_h\in \PP^1$ the points such that $F_{x_i}=m_iF_i$, for $1\leq i\leq h$. 

\begin{claim}\label{cl:cl:subcl}   The cover $f_{|D}: D\to \PP^1$ is Galois, with an abelian monodromy group.\end{claim}

\begin{proof} [Proof of Claim \ref {cl:cl:subcl}] For $x\neq x_i$, with $1\leq i\leq h$, consider the smooth elliptic curve $F_x$ and the \'etale cover $\alpha_{|F_x}: F_x\to E$, which has degree $n=D\cdot F$ and is Galois with abelian monodromy group, because $\pi_1(E)$ is abelian. We abuse notation and denote $\alpha_{|F_x}$ simply by $\alpha$. 

First we show that the subgroup $\alpha_*(\pi_1(F_x))$ of $\pi_1(E)$ does not depend on $x$. Indeed, if $x'\neq x$, choose a path $\gamma: [0,1]\to \PP^1-\{x_1,\ldots, x_h\}$ such that $\gamma(0)=x$ and $\gamma(1)=x'$. Consider the map
\[
q\in f^{-1}(\gamma)\to (\alpha(q), f(q))\in E\times \gamma. 
\]
If $c$ is a loop in  $\alpha_*(\pi_1(F_x))$, let $\sigma(c\times \gamma)$ be a  lifting on $f^{-1}(\gamma)$ of the cycle $c\times \gamma$. Since $\sigma(c\times \gamma(0))$ is a closed path, the same happens for its homotopic path $\sigma(c\times \gamma(1))$, and this gives us a map
$\alpha_*(\pi_1(F_x))\to \alpha_*(\pi_1(F_{x'}))$ which is clearly an identification. 

Consider then the group
\[
G=\frac {\pi_1(E)}{\alpha_*(\pi_1(F_x))}
\]
which does not depend on $x\neq x_i$, for $1\leq i\leq h$. This group acts on the fibres $F_x$ in the following way
\[
(g,q)\in G\times F_x\mapsto \sigma_x(g(1))\in F_x.
\]
where $g:[0,1]\to E$ is a loop on $E$ and  $\sigma_x(g)$ is the lifting on $F_x$ of the loop $g$ such that $\sigma_x(g(0))=q$. In particular we obtain an action of $G$ on the fibres of the \'etale cover 
\[f: D-\{f^{-1}(x_1),\ldots, f^{-1}(x_h)\}\to \PP^1-\{x_1,\ldots, x_h\}.\]
 Since $G$ acts freely, because of the way we defined the action, we get an injective homomorphism
\begin{equation}\label{eq:sprof}
G\hookrightarrow \Aut\Big ( D-\{f^{-1}(x_1),\ldots, f^{-1}(x_h)\} \Big).
\end{equation}
Since the curves $F_x$ are irreducible, $G$ acts transitively on the fibres, hence also
$\Aut\Big ( D-\{f^{-1}(x_1),\ldots, f^{-1}(x_h)\} \Big)$ acts transitively, i.e., 
\[f: D-\{f^{-1}(x_1),\ldots, f^{-1}(x_h)\}\to \PP^1-\{x_1,\ldots, x_h\}\]
 is Galois with group $G$.\end{proof}

As a consequence of the claim, the homorphism in \eqref {eq:sprof} is an isomorphism. Moreover we have
\[
 \Aut\Big ( D-\{f^{-1}(x_1),\ldots, f^{-1}(x_h)\} \Big)\cong \frac {\pi_1( \PP^1-\{x_1,\ldots, x_h\})}{f_*\pi_1\Big(  D-\{f^{-1}(x_1),\ldots, f^{-1}(x_h)\}\Big)}.
\]
Hence finally
\[
G\cong \frac {\pi_1( \PP^1-\{x_1,\ldots, x_h\})}{f_*\pi_1\Big(  D-\{f^{-1}(x_1),\ldots, f^{-1}(x_h)\}\Big)}
\]
and since $G$ is abelian, the same happens for the group on the right hand side. 

Let us look at this group more closely. First of all, $\pi_1( \PP^1-\{x_1,\ldots, x_h\})$ is the free group generated by $h$ loops $\gamma_1,\ldots, \gamma_h$ around the points $x_1,\ldots, x_h$ respectively, with the only relation $\gamma_1\cdots\gamma_h={\rm id}$. 
Moreover, since the ramification order at $x_i$ is $m_i$, we have that $\gamma_i^r\in f_*\pi_1\Big( D- \{ f^{-1}(x_1),\ldots, f^{-1}(x_h) \} \Big)$  if and only if $r\equiv 0$ mod. $m_i$. Hence in  the group
\[
\frac {\pi_1( \PP^1-\{x_1,\ldots, x_h\})}{f_*\pi_1\Big(  D-\{f^{-1}(x_1),\ldots, f^{-1}(x_h)\}\Big)}
\]  
we have the relations
\[
\gamma_1\dots\gamma_h={\rm id}, \quad \gamma_i^{m_i}={\rm id}, \quad  \text {for}\quad 1\leq i\leq h. 
\]
Since the group in question is abelian, we have
\[
\gamma_i^{-1}=\gamma_1\cdots \gamma_{i-1}\gamma_{i+1}\cdots\gamma_h,  \quad  \text {for}\quad 1\leq i\leq h.
\]
If $M_i$ is the least common multiple of $m_1,\ldots m_{i-1}, m_{i+1}\cdots m_h$ we have
\[
(\gamma_i^{-1})^{M_i}=(\gamma_1\cdots \gamma_{i-1}\gamma_{i+1}\cdots\gamma_h)^ {M_i}=\text {id},
\]
hence $m_i$ has to divide $M_i$, for all $1\leq i\leq h$.\end{proof}

This ends the proof of Proposition \ref {prop:k1}.  \end{proof} 

Hence also the proof of Theorem  \ref {thm:p12} is finished.\end{proof}

\section{The Sarkisov's Programme}\label{sec:sark}
This section is devoted to describe the birational maps between Mori fibre spaces in the rational case. This goes under the name of Sarkisov's Programme. It is an essential complement of the minimal model programme, since it clarifies the relation between the various outcomes of the programme when they are Mori fibre spaces.

\subsection{Sarkisov's links}\label{ssec:sarklink} The Sarkisov's links are diagrams of maps, which are the basic bricks for the construction of birational maps between Mori fibre spaces. They are as follows:\medskip

{\bf Type I}: 
\[
\xymatrix{ 
&\PP^2 \ar[d]&\mathbb F_1\ar[l]\ar[d] \\
&p&\ar[l]  \PP^1
}
\]
where $p$ is a point, $\mathbb F_1\to \PP^2$ is the blow--up of a point and $\mathbb F_1\to \PP^1$ is the obvious map which makes $\mathbb F_1$ a scroll over $\PP^1$. \medskip

{\bf Type III} (inverse of type I):
\[
\xymatrix{ 
&\mathbb F_1 \ar[d]\ar[r]&\PP^2\ar[d] \\
& \PP^1\ar[r]&  p
}
\]
with the same meaning of the maps as above.\medskip

{\bf Type II}:
\[
\xymatrix{ 
&S \ar[d]&Z\ar[r]\ar[l]&S'\ar[d] \\
& C&=&  C
}
\]
where $S\to C$ and $S'\to C$ are $\PP^1$--bundles, and $S\leftarrow Z\rightarrow S'$ is an elementary transformation (see Example \ref {ex:2}). \medskip

{\bf Type IV}:
\[
\xymatrix{ 
&\mathbb F_0=\PP^1\times \PP^1\ar[d]_{p_1}&=&\PP^1\times \PP^1=\mathbb F_0\ar[d]^{p_2} \\
& \PP^1\ar[r]& p&\ar[l]  \PP^1
}
\]
where $p$ is a point, and $p_1$ and $p_2$ are the projections onto the first and second factor of the product.

\subsection{Noether--Castelnuovo's Theorem: statement}
The main objective of this section is to prove the following:

\begin{thm}[Noether--Castelnuovo's Theorem]\label{thm:NC} Suppose we have
\begin{equation}\label{eq:star}
\xymatrix{ 
&S\ar[d]_{p} \ar@{-->}[r]^{\phi}&S'\ar[d]^{p'} \\
& C& C'
}
\end{equation}
where $S\to C$ and $S'\to C'$ are Mori fibre spaces and $\phi: S\dasharrow S'$ is a birational map. Then $\phi$ can be factored through a finite sequence of Sarkisov's links. 
\end{thm}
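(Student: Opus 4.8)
The strategy is the classical one of ``untwisting'' a birational map via numerical invariants, adapted to the surface case where everything is explicit. Given the diagram \eqref{eq:star}, I would attach to $\phi$ a pair of invariants measuring ``how far $\phi$ is from being an isomorphism of Mori fibre spaces.'' The first is the \emph{degree} $d(\phi)$: fix on $S'$ a suitably chosen very ample (or ample) divisor class $H'$ — for instance, $H' = -K_{S'}$ if $S' = \PP^2$ or a generator of the movable part coming from the fibration $p'$ together with a multiple of $-K_{S'}$ in the scroll case — and look at the movable linear system $\H = \phi_*^{-1}|H'|$ on $S$. Write $\H \subseteq |dp^*(\text{pt}) + \text{(fibre-type corrections)}|$, or more uniformly express the class of $\H$ in $\NS(S)$ in terms of the basis adapted to $p : S \to C$; the ``degree'' $d = d(\phi)$ is the coefficient that controls the size of $\H$. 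The second invariant is $(c,e)$ where $c$ records the maximal multiplicity of $\H$ at points (or infinitely near points) of $S$ normalized by the canonical threshold — concretely, $c$ is determined so that $K_S + c^{-1}\H$ fails to be canonical — and $e$ is the number of such maximal centers. One orders triples $(d,c,e)$ or pairs $(d,e)$ lexicographically. The heart of the argument is a \emph{Noether--Fano-type inequality}: if $\phi$ is not already an isomorphism of Mori fibre spaces (so that $(d,c,e)$ is not minimal), then $K_S + c^{-1}\H$ is \emph{not} nef, i.e.\ there is a base point of $\H$ of sufficiently high multiplicity, or $\H$ has positive-dimensional behavior forcing $p$ itself to be ``too positive.'' This is where the minimal model programme machinery already developed — the Cone Theorem (Theorem~\ref{thm:cone}), the Extremal Contraction Theorem (Theorem~\ref{thm:ect}), and the structure of Mori fibre spaces on surfaces (scrolls $\FF_n$ and $\PP^2$, Vaccaro's Theorem~\ref{thm:vac}) — does the real work.

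\textbf{The inductive step via a Sarkisov link.} Once the Noether--Fano inequality produces a maximal center $x$ (a point of $S$, possibly infinitely near), I would run the following: blow up $x$ to obtain $Z \to S$ with exceptional curve $E$; this makes $K_Z + c^{-1}\widetilde\H$ closer to canonical along $E$. Then, using the Extremal Contraction Theorem on $Z$ (whose canonical bundle is now non-nef in a controlled way), contract the ``other'' extremal ray to obtain $Z \to S'_1$, producing a new Mori fibre space $S'_1 \to C_1$ and a factorization of part of $\phi$ through one of the four Sarkisov links of types I, II, III, IV listed in \S\ref{ssec:sarklink}. The case division is exactly the surface-geometry bookkeeping: if $x \in \PP^2$ we get a link of type I to $\FF_1 \to \PP^1$; if $x$ lies on a scroll $S \to C$ we perform an elementary transformation (Example~\ref{ex:2}), i.e.\ a link of type II, replacing $\FF_n$ by $\FF_{n\pm 1}$ over the same $C$; links of type III are the inverses, collapsing $\FF_1 \to \PP^2$; and links of type IV handle the two rulings on $\FF_0 = \PP^1 \times \PP^1$, switching which projection is the Mori fibration. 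After performing the link one checks the invariant $(d,c,e)$ (or $(d,e)$) has strictly decreased in the lexicographic order; by descent, after finitely many links one reaches $(d,c,e)$ minimal.

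\textbf{Termination and the final identification.} When the invariant is minimal, I would argue that $\phi$ (composed with the links performed so far) is an isomorphism of Mori fibre spaces, or at worst an automorphism over the base that itself is a composite of type-IV links (the $\PP^1 \times \PP^1$ symmetry) or is trivial — this uses the classification of scrolls over $\PP^1$ and the fact (Theorem~\ref{thm:plus}) that a $\PP^1$-bundle is determined by its vector bundle up to twist, so that two minimal Mori fibre spaces birational via a degree-minimal map differ by an honest isomorphism. One must also handle the degenerate bookkeeping cases, e.g.\ when $C$ or $C'$ is $\PP^1$ versus a curve of positive genus: if $q(S) = q(C) > 0$ then the Albanese map rigidifies the fibration (the fibration $p$ is the Stein factorization of the Albanese), so any birational $\phi$ respects it up to isomorphism of the base, and the links needed are only of type II; the genuinely rich case with many links is the rational one, $C = C' = \PP^1$ (and the sporadic jumps to/from $\PP^2$).

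\textbf{Main obstacle.} The delicate point is proving termination — that the lexicographic invariant strictly drops at every link and cannot decrease forever. The strict-drop requires a careful Noether--Fano inequality: one must show that when $K_S + c^{-1}\H$ is non-nef, the particular extremal contraction chosen after blowing up the maximal center actually \emph{lowers} $d$ (or keeps $d$ fixed but lowers $e$), rather than shuffling the multiplicities around inertly. On surfaces this is tractable because $\NS$ has small rank and the possible extremal rays are completely classified by the Cone Theorem, but the case analysis — keeping track of which of the finitely many fibre-type and $(-1)$-curve classes in $\NS(Z)$ is contracted, and verifying the intersection-number inequalities that force the descent — is the part that needs genuine care rather than routine calculation. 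A secondary subtlety is the treatment of infinitely near base points of $\H$, which forces one to work not just on $S$ but on a resolution and to define $c$ and $e$ via the discrepancies there; this is exactly the point where the classical Noether--Castelnuovo argument (following Calabri's notes \cite{Cal}) and Matsuki's Mori-theoretic treatment \cite{Matz} must be reconciled, and I would follow whichever gives the cleanest descent.
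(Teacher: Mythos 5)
Your plan reproduces the architecture of the paper's own proof: the triple you call $(d,c,e)$ is the Sarkisov degree $(\mu,\lambda,\ell)$ of \S\ref{ssec:sdeg} (quasi--effective threshold, maximal multiplicity, number of crepant exceptional divisors), your ``Noether--Fano-type inequality'' is Theorem \ref{thm:NFI}, and the untwisting by links with lexicographic descent is the algorithm of \S\ref{ssec:sarkalg}. The problem is that the two steps you defer are not routine and are exactly where the proof lives. First, the termination criterion is not reduced to anything you have proved: saying ``this is where the MMP machinery already developed does the real work'' is inaccurate, because the proof of Theorem \ref{thm:NFI} is a separate, self-contained argument --- one shows $\mu=\mu'$, then establishes the invariance of adjoints $\sigma^*(K_S+\frac 1\mu\L)=\sigma'^*(K_{S'}+\frac 1\mu\L')$ on a common resolution by intersecting with the contracted curves and applying the Negativity Lemma --- and none of this follows from the Cone Theorem or the classification of scrolls. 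Your proposed shortcut for the endgame (``two minimal Mori fibre spaces birational via a degree-minimal map differ by an honest isomorphism'' via Theorem \ref{thm:plus}) does not work as stated: two $\PP^1$--bundles over the same curve are always birational (elementary transformations) without being isomorphic, so minimality of the invariant must be shown to force $\phi$ to be a morphism compatible with the fibrations, and that is precisely what the invariance of adjoints delivers.

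Second, the descent itself is asserted, not proved, and your case structure for producing the links is slightly wrong. The mechanism ``blow up the maximal center, then contract the other extremal ray'' only yields links of types I and II, and it applies only in the case $\lambda>\mu$; links of types III and IV arise in the complementary case $\lambda\leq\mu$ with $K_S+\frac 1\mu\L$ not nef, where there is no maximal center to blow up and one instead passes to the \emph{second} extremal contraction of $S$ itself ($\FF_1\to\PP^2$, or the other ruling of $\FF_0$). The strict drop of the invariant at each link is a concrete intersection computation in $\Num(S)\cong\ZZ^2$ (e.g.\ $-3+\frac a\mu<0$ after a type I or III link, and $\mu$ fixed but $\lambda$ or $\ell$ dropping after a type II link), and it must actually be carried out. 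Finally, ``by descent one reaches a minimal triple'' needs a discreteness statement to terminate: the paper uses that $\mu\in\frac 16\ZZ$ (because the contracted extremal curve $\ell$ has $K_S\cdot\ell\in\{-2,-3\}$) together with $\lambda,\ell$ being non-negative integers; a lexicographic order on real-valued triples does not terminate by itself. You correctly identify all of these as the delicate points, but identifying them is not the same as closing them.
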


The rest of the section is devoted to prove this theorem.

\subsection{The Sarkisov's degree}\label{ssec:sdeg} Consider a situation as in \eqref {eq:star}. 

\begin{lemma}\label{lem:pac}
There is an ample divisor $A'$ on $C'$ such that $-K_{S'}+p'^*(A')$ is ample on $S'$.
\end{lemma}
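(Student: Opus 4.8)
The plan is to produce $A'$ as a sufficiently positive multiple of any fixed ample divisor on $C'$, exploiting that the relative anticanonical $-K_{S'}$ is $p'$-ample because $S' \to C'$ is a Mori fibre space. First I would treat the two cases for $\dim(C')$. If $\dim(C') = 0$, then $C'$ is a point, $S' = \PP^2$, and $-K_{S'} = 3L$ is already ample, so there is nothing to prove; one just ignores the $p'^*(A')$ term (or takes $A'$ trivial). So the substantive case is $\dim(C') = 1$, where $p': S' \to C'$ is a $\PP^1$-bundle over the smooth curve $C'$.

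In the $\PP^1$-bundle case, I would first observe that $-K_{S'}$ restricts to each fibre $F'$ of $p'$ as $\mathcal O_{\PP^1}(2)$, since $K_{S'} \cdot F' = -2$ by the adjunction formula (as $F'^2 = 0$). Hence $-K_{S'}$ is $p'$-ample. Now fix any ample divisor $A_0$ on $C'$. The standard fact that relative ampleness plus a large pullback of an ample class gives ampleness (this is proved, e.g., via Nakai--Moishezon together with properness of the cone of curves restricted to fibres, or can be taken as a routine consequence of the relative Serre vanishing / ampleness criterion) gives that $-K_{S'} + p'^*(mA_0)$ is ample for $m \gg 0$. Concretely, I would check Nakai--Moishezon directly: write $H_m := -K_{S'} + m\,p'^*(A_0)$. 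For the self-intersection, $H_m^2 = K_{S'}^2 - 2m\,K_{S'}\cdot p'^*(A_0) + m^2 (p'^*A_0)^2 = K_{S'}^2 - 2m\,\deg(A_0)\,(K_{S'}\cdot F') = K_{S'}^2 + 4m\deg(A_0)$, which is positive for $m \gg 0$. For a curve $D \subset S'$: either $D$ is a fibre, and $H_m \cdot D = -K_{S'}\cdot F' = 2 > 0$; or $D$ maps onto $C'$ with degree $d = D \cdot F' \geq 1$, and then $H_m \cdot D = -K_{S'}\cdot D + m\,d\,\deg(A_0)$, which is positive once $m > \tfrac{K_{S'}\cdot D}{d\,\deg(A_0)}$. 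The only worry is that $K_{S'}\cdot D$ is unbounded as $D$ ranges over all such curves with $d = 1$, but numerically the classes of sections of $p'$ differ by multiples of the fibre class in $\Num(S')$ (which has rank $2$, generated by a section and a fibre), so $K_{S'}\cdot D$ for a section is determined once we fix the section class modulo fibres; more simply, for $m \gg 0$ one has $H_m = -K_{S'} + p'^*(A_0) + (m-1)p'^*(A_0)$ where $-K_{S'} + p'^*(A_0)$ can already be arranged nef (again by boundedness of the finitely many extremal rays via the Cone Theorem \ref{thm:cone}), and adding a large pullback of an ample then makes it ample by Kleiman's Criterion \ref{thm:Kl}. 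Taking $A' := mA_0$ finishes the proof.

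The main obstacle, as indicated, is the uniformity in the Nakai--Moishezon verification: a priori one must rule out a sequence of curves $D_j$ with $D_j \cdot F' = 1$ and $K_{S'}\cdot D_j \to +\infty$ forcing $m$ to grow without bound. This is handled cleanly by invoking the structure of $\Num(S')$ for a $\PP^1$-bundle (rank $2$, with $\NE(S')$ a closed cone generated by two rays, one being the fibre class and the other a negative section or a second fibre ruling) so that the infimum of $-\,K_{S'}\cdot D \,/\, (D\cdot F')$ over effective $D$ not contracted by $p'$ is attained and finite; equivalently one applies the Cone Theorem to get that $\NE(S')$ is polyhedral in the relevant half-space. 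Alternatively — and this is probably the slickest route for the write-up — one simply cites the general principle (used already in this text, e.g. in the discussion after Kleiman's Criterion: ``if $N$ is ample and $L$ is nef then $L+N$ is ample'') applied to $L = -K_{S'} + p'^*(A_0)$, after first checking $L$ is nef by the fibrewise computation plus pulling back ampleness from $C'$ on horizontal curves; then $A' = 2A_0$ works since $-K_{S'} + p'^*(2A_0) = L + p'^*(A_0)$ is nef-plus-ample, hence ample.
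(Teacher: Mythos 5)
Your main argument is correct and is essentially the paper's: the paper also splits into the cases $C'$ a point and $C'$ a curve, and in the latter case uses that $\Num(S')\cong \ZZ^2$ is generated by (the classes of) a section $\Gamma$ and a fibre $F$, that $\NE(S')$ has exactly two extremal rays with one spanned by $F$, and that $K_{S'}\equiv -2\Gamma+xF$, so that it suffices to check positivity of $yF+2\Gamma$ against the two extremal rays for $y\gg 0$ and conclude by Kleiman's Criterion. That is precisely your resolution of the uniformity problem in the Nakai--Moishezon verification, so the two proofs coincide in substance. One caveat: your concluding ``slickest route'' is not correct as stated, because $p'^*(A_0)$ is \emph{not} ample on $S'$ (it has degree $0$ on every fibre of $p'$), so the principle ``nef plus ample is ample'' does not apply to $L+p'^*(A_0)$. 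The conclusion can still be rescued, but only by observing that $p'^*(A_0)$ is strictly positive on every class of $\NE(S')-\{0\}$ off the ray of $F$, while $L$ is strictly positive on that ray --- which is again the two-extremal-ray argument, not an application of nef-plus-ample.
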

\begin{proof}
If $C'$ is a point, then $S'=\PP^2$ and the assertion is clear. Suppose that $C'$ is a curve, so that $S'$ is a $\PP^1$--bundle over $C'$. Then $\Num(S')\cong \mathbb Z^2$, generated by the classes of a section $\Gamma$ of $S'\to C'$ and of a fibre $F$ of   $S'\to C'$. The class of $F$ generates an extremal ray of $\overline {\rm NE}(S')$, and the other extremal ray of $\overline {\rm NE}(S')$ is generated by the class of $a\Gamma-hF$, with $a,h$ suitable positive integers.  

We have $K_{S'}\equiv -2\Gamma+xF$, with $x\in \mathbb Z$, and therefore the assertion is proved if we prove that there is a $y\gg 0$ such that $yF+2\Gamma$ is ample. Now
$F\cdot (yF+2\Gamma)=2$ and $(yF+2\Gamma)\cdot (a\Gamma-hF)=ay-2a\Gamma^2-2h$, which is positive for $y\gg 0$. Hence $yF+2\Gamma$ is positive on  $\overline {\rm NE}(S')$ for $y\gg 0$, which proves the assertion.
\end{proof}

We fix once and for all a very ample line bundle $H'$ of the form 
\[H'=-\mu'K_{S'}+p'^*(A'),\quad \text{ with} \quad \mu'\gg 0\]
and $A'$ very ample on $S'$ (see Lemma \ref {lem:pac}), and we set $\L'=|H'|$. Next, consider a resolution of indeterminacies 
\begin{equation}\label{eq:per}
\xymatrix{ 
&X\ar_{\sigma}[d]\ar^{\sigma'}[dr] &\\
&S\ar@{-->}^{\phi}[r] &  S'&
}
\end{equation}
We may assume that $\sigma'$ is composed with the minimal number of blow--ups. In particular, if $\sigma'$ is not the identity there is a curve on $X$ contracted by $\sigma'$ but not by $\sigma$.

For each divisor $D'\in \L'$ we define its \emph{transformed} via $\phi$ to be $D:=\sigma_*\sigma'^*(D')$. As $D'$ varies in $\L'$, $D$ varies in a linear system $\L$, called the \emph{transformed} linear system of $\L'$. Note that:\\
\begin{inparaenum}
\item[$\bullet$] $\L$ can be incomplete;\\
\item[$\bullet$] $\L$ depends only on $\phi$ and not on the resolution of the indeterminacies \eqref {eq:per};\\ 
\item[$\bullet$] $\L$ has no  base curve, hence it is nef.
\end{inparaenum} 

Next we define the \emph{quasi--effective threshold} of $\phi$ to be the real number $\mu$ such that for every curve $F$ contracted by $p$ one has 
\[
(\mu K_S+D)\cdot F=0.
\]
Note that:\\
\begin{inparaenum}
\item[$\bullet$] $\mu$ is rational and it is positive because $K_S\cdot F<0$ and $D$ is nef;\\
\item[$\bullet$] $\mu$ depends only on $\L$ and not on the  curve $D\in \L$;\\
\item[$\bullet$] since $F$ generates an extremal ray which is also generated by the class of a smooth rational curve $\ell$ such that $-3\leq \ell\cdot K_S\leq -2$, $\mu$ is defined by the relation
\[
(\mu K_S+D)\cdot \ell=0 \quad \text{hence}\quad \mu=-\frac {D\cdot \ell}{K_S\cdot \ell}\in \frac 16 \mathbb Z. 
\]
\end{inparaenum}

Then we define the \emph{maximal multiplicity} $\lambda$ of $\L$ as follows. We have
\begin{equation}\label{eq:pur1}
K_X=\sigma^*(K_S)+\sum_{k=1}^n a_kE_k,
\end{equation}
where $E_1,\ldots,E_n$ are the distinct curves contracted by $\sigma$ and $a_1,\dots, a_n$ are positive integers. Moreover
\begin{equation}\label{eq:pur2}
\L_X:=\sigma'^*(\L')=\sigma^*(\L)-\sum_{k=1}^n b_kE_k,
\end{equation}
where $b_1,\dots, b_n$ are non--negative integers. We define
\[
\lambda:=\max \Big \{\frac {b_k}{a_k}\Big \}_{k=1,\ldots,n}.
\]
One has:\\
\begin{inparaenum}
\item[$\bullet$] $\lambda$ is attained at \emph{proper points},
which implies that if $\lambda=\frac {b_i}{a_i}$, for $1\leq i\leq n$,  we may assume that $a_i=1$ and therefore $\lambda$ can be also defined as the maximum multiplicity of a base point of $\L$. 

In fact, suppose we have a proper point $p_1$ and a sequence $p_2,\ldots, p_j$ of infinitely near points to $p_1$, each infinitely near to the other (see \S \ref {ssec:inp} below). Let $E_1,\ldots, E_j$ be the proper transform on $X$ of the exceptional divisors of the blow--ups at $p_1,\ldots, p_j$. Then the total transforms of the exceptional divisors are of the form
\[
E_1+\ldots+r_1E_j, \quad E_2+\ldots+r_2E_j, \quad \ldots, \quad E_j, \quad \text{with}\quad r_1,r_2, \ldots, \quad \text {positive integers}.
\]
Hence $a_1=1$ and 
\[
a_j=r_1+r_2+\cdots+r_{j-1}+1.
\]
If $\L$ has multiplicities $m_1,\ldots, m_j$ at $p_1,\ldots, p_j$, one has $m_1\geq m_2\geq \cdots\geq m_j$,  $b_1=m_1$ and
\[
b_j=r_1m_1+r_2m_2+\cdots+ r_{j-1}m_{j-1} +m_j.
\]
Hence $\frac {b_1}{a_1}=m_1$, whereas
\[
\frac {b_j}{a_j}=\frac {r_1m_1+r_2m_2+\cdots+ r_{j-1}m_{j-1} +m_j}{r_1+r_2+\cdots+r_{j-1}+1}\leq \frac {r_1m_1+r_2m_1+\cdots+ r_{j-1}m_1 +m_1}{r_1+r_2+\cdots+r_{j-1}+1}=m_1.
\]
\item[$\bullet$] in particular $\lambda=0$ if and only if $\L$ is base point free;\\
\item[$\bullet$] if $\lambda >0$, one has
\begin{equation*}\label{eq:purr}
K_X+\frac 1\lambda \L_X=\sigma^*(K_S+\frac 1\lambda \L)+\sum_{k=1}^n \big ( a_k -\frac 1\lambda b_k\big ) E_k
\end{equation*}
and 
\[
a_k -\frac 1\lambda b_k\geq 0, \quad \text {and the equality holds for some}\quad k\in \{1,\ldots, n\}. 
\]
\end{inparaenum}

Finally we introduce the number $\ell$ of \emph{crepant exceptional divisors}. When $\lambda=0$ then $\ell$ is undetermined. If $\lambda>0$ then we set
\[
\ell:= \quad \text{the order of the set}\quad \Big\{ k: \lambda =\frac {b_k}{a_k}\Big \}.
\]

The triple $(\mu,\lambda, \ell)$ is, by definition, the \emph{Sarkisov's degree} of $\phi$.

\subsection{The Noether--Fano--Iskovkikh Theorem}\label{ssec:NFI}

We keep the above notation and prove the:

\begin{thm}[Noether--Fano--Iskovkikh's Theorem]\label{thm:NFI} Suppose that $\lambda\leq \mu$ and that $K_S+\frac 1\mu\L$ is nef. Then $\phi$ is an isomorphism. 
\end{thm}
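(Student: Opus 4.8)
The plan is to show that under the hypotheses $\lambda\le\mu$ and $K_S+\frac1\mu\L$ nef, the birational map $\phi$ cannot genuinely blow anything up or down, so it must be an isomorphism. First I would record the two fundamental identities on the resolution $X$ of \eqref{eq:per}: from \eqref{eq:pur1} and \eqref{eq:pur2},
\[
K_X+\frac1\mu\L_X=\sigma^*\Bigl(K_S+\frac1\mu\L\Bigr)+\sum_{k=1}^n\Bigl(a_k-\frac{b_k}\mu\Bigr)E_k,
\]
and the analogous expression on the $S'$-side,
\[
K_X+\frac1{\mu'}\L_X=\sigma'^*\Bigl(K_{S'}+\frac1{\mu'}\L'\Bigr)+\sum_{k}\Bigl(a'_k-\frac{b'_k}{\mu'}\Bigr)E'_k,
\]
where on the right $K_{S'}+\frac1{\mu'}\L'\equiv\frac1{\mu'}p'^*(A')$ by the very choice $H'=-\mu'K_{S'}+p'^*(A')$, hence is the pullback of an ample class on $C'$, in particular nef. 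The coefficients $a_k-\frac{b_k}\mu$ are all $\ge0$ because $\lambda=\max\{b_k/a_k\}\le\mu$; likewise $a'_k-\frac{b'_k}{\mu'}\ge0$ since $\mu'\gg0$ and the $b'_k$ are bounded.

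The heart of the argument is a negativity/positivity comparison. Consider $D:=K_S+\frac1\mu\L$ on $S$, which is nef by hypothesis, and $D':=K_{S'}+\frac1{\mu'}\L'$ on $S'$, the pullback of an ample class from $C'$. On $X$ we have the two decompositions above with the \emph{same} $\L_X$ but a priori different multiples of the canonical class; I would reconcile this by noting that the quasi-effective threshold $\mu$ is precisely calibrated so that $\mu K_S+D$ is trivial on fibres of $p$ (with $D$ the transformed system), and symmetrically on the $S'$-side. The plan is then: take the divisor $E$ to be the exceptional locus of $\sigma$ that is \emph{not} contracted by $\sigma'$ (this exists and is nonempty whenever $\sigma'$ is not an isomorphism, by minimality of the resolution), pick an irreducible component $\Gamma$ of it, and compute $\Gamma\cdot\bigl(K_X+\frac1\mu\L_X\bigr)$ two ways. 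On one hand it equals $\Gamma\cdot\sigma^*(D)+\sum(a_k-\frac{b_k}\mu)\,\Gamma\cdot E_k$; on the other it equals $\Gamma\cdot\sigma'^*(D')+\sum(a'_k-\frac{b'_k}{\mu'}+(\tfrac1\mu-\tfrac1{\mu'}))\cdots$ — I would set this up carefully so that the left side is forced to be $\le0$ (because $\sigma^*(D)$ is nef, $\Gamma$ is $\sigma$-exceptional hence $\sigma^*(D)\cdot\Gamma=0$, and the exceptional correction terms, paired against a $\sigma$-exceptional curve, produce a negative-definite contribution unless all relevant coefficients vanish), while the right side, using that $\Gamma$ is \emph{not} $\sigma'$-exceptional so $\sigma'^*(D')\cdot\Gamma\ge0$ with the correction terms again controlled, is forced to be $\ge0$. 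Equality throughout then forces $a_k=\frac{b_k}\mu$ for every $k$ with $\Gamma\cdot E_k\ne0$ and ultimately that $\Gamma$ meets no exceptional divisor nontrivially in a way compatible with being non-$\sigma'$-exceptional, contradicting the existence of $\Gamma$.

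From the conclusion that $\sigma'$ is an isomorphism, $\phi=\sigma'\circ\sigma^{-1}$ is a birational \emph{morphism} $S\dashrightarrow S'$ that is actually $\sigma\circ(\sigma')^{-1}$ read the other way, i.e. a composition of blow-downs; symmetrically, running the same argument with the roles of $S$ and $S'$ exchanged (the hypotheses are not literally symmetric, but once $\sigma'$ is an isomorphism we get $\phi^{-1}=\sigma\circ(\sigma')^{-1}$ is a morphism too), so $\phi$ is a birational morphism with birational-morphism inverse, hence an isomorphism. I would close by remarking this also matches the Sarkisov-degree bookkeeping: $\lambda\le\mu$ together with $K_S+\frac1\mu\L$ nef means the "maximal multiplicity" correction never dominates the nef part, which is exactly the Noether--Fano obstruction to $\phi$ being decomposable further.

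The main obstacle I expect is making the negativity comparison fully rigorous: one must handle the possibly different normalizing constants $\mu$ versus $\mu'$ on the two sides, keep track of which exceptional divisors are common to $\sigma$ and $\sigma'$ and which are not, and invoke the correct form of the negativity lemma (a $\sigma$-exceptional effective divisor has negative-definite self-intersection pairing) to conclude that the only way the chain of inequalities can hold is with $\sigma'$ an isomorphism. A clean way to package this is to intersect $K_X+\frac1\mu\L_X$ against a general very ample pullback from $S$ restricted to the exceptional locus, reducing to a statement about a $1$-dimensional negative-definite intersection form; but care is needed that the "$\lambda\le\mu$" hypothesis is used in exactly the right place, namely to guarantee $K_X+\frac1\mu\L_X$ is $\sigma$-nef.
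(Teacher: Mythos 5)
Your overall strategy (compare the two discrepancy decompositions of $K_X+\frac1\mu\L_X$ coming from $\sigma$ and $\sigma'$, then use a negativity argument) is the right family of ideas, but three steps are genuinely missing or wrong. First, you never prove $\mu=\mu'$, and without it the two decompositions cannot be compared, since they involve $\frac1\mu\L_X$ and $\frac1{\mu'}\L_X$ respectively. The paper's Step 1 is devoted to exactly this: $\mu\geq\mu'$ follows from the ampleness of $\frac1tH'+K_{S'}$ for $0<t<\mu'$, while $\mu\leq\mu'$ is precisely where the hypothesis that $K_S+\frac1\mu\L$ is nef enters (one transforms a fibre $F'$ of $p'$, on which $\frac1tH'+K_{S'}$ is negative for $t>\mu'$). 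Your "calibration" remark does not substitute for this computation. Second, your choice of test curve is backwards: minimality of $\sigma'$ produces a curve contracted by $\sigma'$ but \emph{not} by $\sigma$ (the exceptional curve of the last blow-up in $\sigma'$); it does not produce a $\sigma$-exceptional, non-$\sigma'$-exceptional curve, and no such curve need exist when $\sigma'$ is not an isomorphism (take $\phi$ a non-isomorphic morphism, so that one may resolve with $\sigma=\mathrm{id}$). The correct outcome of the negativity step is the identity $\sum_k r_kE_k=\sum_h r'_hE'_h$ between the two discrepancy divisors, obtained by applying the Negativity Lemma to both differences and using the three-way partition of exceptional divisors into those contracted by $\sigma$ only, by $\sigma'$ only, and by both; since every $r'_h>0$, this forces each $\sigma'$-exceptional divisor to be $\sigma$-exceptional, whence $\sigma'=\mathrm{id}$ by minimality.

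Third, even granting $\sigma'=\mathrm{id}$, you only obtain that $\phi^{-1}$ is a morphism, and your appeal to symmetry to upgrade this to an isomorphism does not work: the hypotheses are not symmetric in $S$ and $S'$ (the system $\L'$ is very ample by construction, whereas on $S$ one only knows $K_S+\frac1\mu\L$ is nef). The missing argument uses the Mori fibre space structure together with the invariance of the adjoints: a general fibre $\Gamma$ of $p\circ\phi^{-1}\colon S'\to C$ satisfies $\Gamma\cdot\bigl(K_{S'}+\frac1\mu\L'\bigr)=\frac1\mu\,\Gamma\cdot p'^*(A')=0$, so $\Gamma$ is contracted by $p'$ and one gets a morphism $g\colon C\to C'$ over which $\phi^{-1}$ lives; a curve contracted by $\phi^{-1}$ would then be contracted by $p'$, which is impossible because all curves in fibres of the Mori fibre space $p'$ are numerically equivalent to a movable class. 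Without these three pieces the proof does not close.
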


\begin{proof} The proof will be divided in various steps. 

\subsection{Step 1: one has  $\mu=\mu'$}

Recall that, as in Lemma \ref {lem:pac}, we have $H'+\mu'K_{S'}=p'^ *(A')$, with $H'$ and $A'$ very ample. Hence, for all $\epsilon >0$
\[
(1+\epsilon)H'+\mu' K_{S'}=\epsilon H'+H'+\mu' K_{S'}=\epsilon H'+p'^ *(A')
\]
is  ample. Therefore
\[
\frac 1t H'+K_{S'} \quad \text{is  ample for}\quad 0<t<\mu'.
\]
Hence its transformed system via $\phi$ intersects positively any movable curve, in particular the curves $F$ contracted to points by $p: S\to C$. 

One has
\[
\begin{split}
&\sigma'^*(\frac 1t H'+K_{S'})=\frac 1t \sigma'^*(H')+(K_X-E)=\\
& =\frac 1t \sigma'^*(H')+(\sigma^*(K_S)+\sum_{k=1}^na_kE_k-E)
\end{split}
\]
where $E$ is an effective divisor. Hence
\[
\sigma_*(\sigma'^*(\frac 1t H'+K_{S'}))=\frac 1t H+ K_S- \sigma_*(E)\quad \text{where}\quad H:=\sigma_*(\sigma'^*(H')).
\]
So, if $F$ is a (movable, hence nef)  curve contracted to a point by $p: S\to C$, and if $ 0<t<\mu'$, we have
\[
(\frac 1t H+K_S)\cdot F=\sigma_*(\sigma'^*(\frac 1t H'+K_{S'}))\cdot F+ \sigma_*(E)\cdot F\geq \sigma_*(\sigma'^*(\frac 1t H'+K_{S'}))\cdot F>0
\]
hence $\mu\geq \mu'$. 

It is worth noticing that so far we did not use that $K_S+\frac 1\mu H$ is nef. This will enter soon into play. In fact we have
\[
(\frac 1t H'+K_{S'})\cdot F'<0 \quad \text {for}\quad t>\mu'
\]
where $F'$ is a curve contracted to a point by $p': S'\to C'$. Indeed, for $t>\mu'$ we have
\[
F'\cdot (H'+tK_{S'})=F'\cdot (p'^*(A')+(t-\mu')K_{S'})=(t-\mu') F'\cdot K_{S'}<0.
\]
Then transforming by $\phi$ we get that $\frac 1t H+K_S-\sigma_*(E)$ is not nef for $t>\mu'$. Actually, if $Z$ is the transform of $F'$, we have $F'\cdot \sigma'_*(E)=0$, hence $Z\cdot \sigma_*(E)=0$, thus 
\[
(\frac 1t H+K_S)\cdot Z=(\frac 1t H+K_S-\sigma_*(E))\cdot Z<0 \quad \text {for}\quad t>\mu'. 
\]
Since $K_S+\frac 1\mu H$ is nef, we must have $\mu\leq \mu'$, thus $\mu=\mu'$ as wanted.

\subsection{Step 2: invariance of the adjoints} By \eqref {eq:pur1} and \eqref {eq:pur2}, and since $\lambda\leq \mu$, we have
\[
K_X+\frac 1\mu \L_X=\sigma^*(K_S+\frac 1\mu \L)+\sum_{k=1}^n r_kE_k, \quad \text{with} \quad r_k=a_k-\frac 1\mu b_k\geq a_k-\frac 1\lambda b_k\geq 0
\]
and 
\[
\sum_{k=1}^n r_kE_k\leq R:=\sum_{k=1}^n a_kE_k
\]
where $R$ is the ramification divisor of $\sigma: X\to S$. 

On the other hand
\[
K_X+\frac 1\mu \L_X=\sigma'^*(K_{S'}+\frac 1\mu \L')+R', \quad \text {with} \quad R'=\sum_{h=1}^m r'_hE'_h
\]
where $R'$ is the ramification divisor of $\sigma'$, with $E'_1,\ldots, E'_m$ the irreducible curves contracted to points by $\sigma'$ and $r'_1,\ldots, r'_m$ are positive integers. 

The aim of this step is to prove that
\begin{equation}\label{eq:invar}
\sum_{k=1}^n r_kE_k=\sum_{h=1}^m r'_hE'_h
\end{equation} 
which implies that
\begin{equation}\label{eq:invad}
\sigma^*(K_S+\frac 1\mu \L)=\sigma'^*(K_{S'}+\frac 1\mu\L')
\end{equation}
an equality which goes under the name of \emph{invariance of the adjoints}.  

To prove \eqref {eq:invar}, we introduce the notation $F_\ell$ to denote either the curves $E_1,\ldots, E_n$ or the curves $E'_1,\ldots, E'_m$, with the convention that the index $\ell$ varies in the following sets:\\
\begin{inparaenum}
\item [$\bullet$] $\ell\in S_\sigma$ if $F_\ell$ is contracted  to a point by $\sigma$ but not by $\sigma'$;\\
\item [$\bullet$] $\ell\in S_{\sigma'}$ if $F_\ell$ is contracted  to a point by $\sigma'$ but not by $\sigma$;\\
\item [$\bullet$] $\ell\in S_{\sigma,\sigma'}$ if $F_\ell$ is contracted  to a point by both $\sigma$ and  $\sigma'$.
\end{inparaenum}

Then
\[
\begin{split}
\sum_{k=1}^nr_kE_k=\sum_{\ell\in S_\sigma}r_\ell F_\ell+ \sum_{\ell\in S_{\sigma,\sigma'}}r_\ell F_\ell,\\
\sum_{h=1}^m r'_hE'_h=\sum_{\ell\in S_{\sigma'}}r'_\ell F_\ell+ \sum_{\ell\in S_{\sigma,\sigma'}}r'_\ell F_\ell.
\end{split}
\]

We have
\[
\sum_{k=1}^nr_kE_k+\sigma^*(K_S+\frac 1\mu\L)=\sigma'^*(K_{S'}+\frac 1\mu\L')+\sum_{h=1}^m r'_hE'_h,
\]
hence
\begin{equation}\label{eq:cc1}
\sum_{\ell\in S_\sigma}r_\ell F_\ell+\sum_{\ell\in S_{\sigma,\sigma'}}(r_\ell-r'_\ell) F_\ell=\sigma'^*(K_{S'}+\frac 1\mu\L')-\sigma^*(K_S+\frac 1\mu\L)+\sum_{\ell\in S_{\sigma'}}r'_\ell F_\ell
\end{equation}
and similarly
\begin{equation}\label{eq:cc2}
\sum_{\ell\in S_{\sigma'}}r'_\ell F_\ell+\sum_{\ell\in S_{\sigma,\sigma'}}(r'_\ell-r_\ell) F_\ell=
\sigma^*(K_S+\frac 1\mu\L)-\sigma'^*(K_{S'}+\frac 1\mu\L')+\sum_{\ell\in S_{\sigma}}r_\ell F_\ell.
\end{equation}

At this point we need the following:

\begin{lemma}[Negativity Lemma]\label{lem:neg} Let $f: V\to T$ be a birational morphism between two surfaces $V$ and $T$. Suppose $E=\sum_{i=1}^na_iE_i$ is an effective divisor contracted to points by $f$. Assume that for all $i\in \{1,\ldots, n\}$, one has $E\cdot E_i\geq 0$. Then $a_i\leq 0$,  for all $i\in \{1,\ldots, n\}$.
\end{lemma}

\begin{proof} Suppose that there is a $j\in \{1,\ldots, n\}$ such that $a_j>0$. We have
\[
\begin{split}
&0\leq E\cdot (\sum_{a_i>0}a_iE_i)=(\sum_{a_i\leq 0}a_iE_i+\sum_{a_i>0}a_iE_i)\cdot (\sum_{a_i>0}a_iE_i)=\\
&=(\sum_{a_i\leq 0}a_iE_i)\cdot (\sum_{a_i>0}a_iE_i))+(\sum_{a_i>0}a_iE_i)^2<0
\end{split}
\]
because $\sum_{a_i>0}a_iE_i$ is contracted to points, so $(\sum_{a_i>0}a_iE_i)^2<0$ and $(\sum_{a_i\leq 0}a_iE_i)\cdot (\sum_{a_i>0}a_iE_i))\leq 0$. Hence we have a contradiction, which proves the assertion.
\end{proof}

To apply this lemma, intersect both sides of \eqref {eq:cc1} with $F_s$, with $s\in S_\sigma\cup S_{\sigma, \sigma'}$. We get
\[
(\sum_{\ell\in S_\sigma}r_\ell F_\ell+\sum_{\ell\in S_{\sigma,\sigma'}}(r_\ell-r'_\ell) F_\ell)\cdot F_s=\sigma'^*(K_{S'}+\frac 1\mu \L')\cdot F_s+\sigma^*(K_{S}+\frac 1\mu\L)\cdot F_s+(\sum_{\ell\in S_{\sigma'}}r'_\ell F_\ell)\cdot F_s.
\]
The first summand on the right hand side is non--negative, because $K_S+\frac 1\mu\L'=\frac 1\mu p'^*(A')$ is nef. The second summand is clearly 0. The third summand is  non--negative. Hence
\[
(\sum_{\ell\in S_\sigma}r_\ell F_\ell+\sum_{\ell\in S_{\sigma,\sigma'}}(r_\ell-r'_\ell) F_\ell)\cdot F_s\geq 0, \quad \text{for all}\quad s\in S_\sigma\cup S_{\sigma, \sigma'}.
\]
By the Negativity Lemma  we deduce that 
\[
\begin{split}
&r_\ell\leq 0 \quad \text {if}\quad \ell \in S_\sigma,\\
& r_\ell\leq r'_\ell  \quad \text {if}\quad \ell \in S_{\sigma, \sigma'}.
\end{split}
\]
Arguing in the same way on \eqref {eq:cc2}, we get
\[
\begin{split}
& r'_\ell\leq r_\ell  \quad \text {if}\quad \ell \in S_{\sigma, \sigma'},\\
&r'_\ell\leq 0 \quad \text {if}\quad \ell \in S_{\sigma'}.
\end{split}
\]
In conclusion
\[
\begin{split}
&r_\ell= 0 \quad \text {if}\quad \ell \in S_\sigma,\\
&r'_\ell= 0 \quad \text {if}\quad \ell \in S_{\sigma'},\\
& r_\ell= r'_\ell  \quad \text {if}\quad \ell \in S_{\sigma, \sigma'}.
\end{split}
\]
which proves \eqref {eq:invar}. 

\subsection{Step 3: conclusion} We have 
\[
R\geq \sum_{k=1}^nr_kE_k= \sum_{h=1}^mr'_hE'_h=R'
\]
and all the divisors contracted to points by $\sigma'$ are also contracted to points by $\sigma$. By the minimality assumption on $\sigma'$, we have that $\sigma'$ is the identity, hence $X=S'$ and $\sigma=\phi^{-1}$. 

Now consider the composite map $S'\stackrel {\sigma}\to S\stackrel {p}\to C$ and let $\Gamma$ be a general fibre of it. 
Note that, since the general fibre of $p:S\to C$ is connected, then the same happens for $\Gamma$. 
By taking into account the definition of the quasi effective threshold,  and by \eqref {eq:invad} proved in Step 2, we have
\[
0= \Gamma\cdot \sigma^*(K_S+\frac 1\mu \L)=\Gamma\cdot (K_{S'}+\frac 1\mu\L')=\frac 1\mu \Gamma \cdot p'^*(A')\]
This tells us that $\Gamma$ is contracted to a point by $p'$. This proves that there is a morphism $g: C\to C'$ which makes the following diagram commutative
\[
\xymatrix{ 
&S \ar[d]_{p}&S'\ar[l]_{\phi^ {-1}}\ar[d]^{p'} \\
&C\ar[r]^{g}& C'
}
\]
Now note that $\phi^ {-1}$ is a birational morphism. If it is not an isomorphism, there is some curve $E$ in $S'$ contracted to a point by $\phi^ {-1}$. But then $E$ is contracted to a point by $p'$. Since the curves in fibres of $p'$ are all numerically equivalent, we clearly get a contradiction. This proves that $\phi^ {-1}$, and hence $\phi$ is an isomorphism, as wanted. \end{proof}

\subsection{Sarkisov's algorithm}\label{ssec:sarkalg}

In this section we prove the Noether--Castelnuovo's theorem. The proof consists in performing an algorithm which lowers the Sarkisov's degree, which is supposed to be lexicographically ordered. This algorithm is called \emph{Sarkisov's algorithm} or also the \emph{untwisting process}. 

We start with a diagram like \eqref {eq:star}, where the vertical arrows are Mori fibre spaces and $\phi$ is a birational map,  with Sarkisov degree $(\mu, \lambda, \ell)$. We have the dichotomy:\\
\begin{inparaenum}
\item [Case 1:] $\lambda \leq \mu$,\\
\item [Case 2:] $\lambda >\mu$.
\end{inparaenum}

In case 1 we have the further dichotomy:\\
\begin{inparaenum}
\item [Case 1.1:] $K_S+\frac 1\mu\L$ nef,\\
\item [Case 1.2:] $K_S+\frac 1\mu\L$ non--nef.
\end{inparaenum}

In case 1.1, by Noether--Fano--Iskovskih Theorem we conclude that $\phi$ is an isomorphism, and the algorithm ends here.

In case 1.2, $p: S\to C$ does not coincide with $\PP^2\to p$, where $p$ is a point, because on $\PP^2$ we would clearly have $K_{\PP^2}+\frac 1\mu\L\equiv 0$, hence $K_{\PP^2}+\frac 1\mu\L$ would be nef, a contradiction. Hence $p: S\to C$ is a $\PP^1$--bundle over the curve $C$ and $\Num(S)\cong \mathbb Z^ 2$ generated by the classes $H$ of a section of $p: S\to C$, existing by the Noether--Enriques' theorem, and $F$ of a fibre of $p: S\to C$. Then 
\[
(K_S+\frac 1\mu \L)\cdot F=0.
\]
Since $K_S+\frac 1\mu \L$ is not nef, then 
there is a class $D$ of a curve generating an extremal ray, such that 
\begin{equation}\label{eq:opl}
(K_S+\frac 1\mu \L)\cdot D<0, \quad \text{hence} \quad K_S\cdot D<0.
\end{equation}
So there is another extremal contraction, the one contracting the curves in $R$, which we denote by $q: S\to T$. Since $S\neq \PP^2$, then either $q: S\to T$ is the contraction of a $(-1)$--curve or it is another $\PP^1$--bundle. \medskip

Subcase 1.2.1: $q: S\to T$ is the contraction of a $(-1)$--curve. In this case $T$ is a surface and $\Num(T)\cong \mathbb Z$. Consequently $T\cong \PP^2$ and 
\[
\xymatrix{ 
&S=\mathbb F_1 \ar[d]_{p}\ar[r]^q&T=\PP^2:=S_1\ar[d]^{p_1} \\
& C= \PP^1\ar[r]&  x:=C_1
}
\]
is a link of type III. \medskip

Subcase 1.2.2: $q: S\to T$ is a $\PP^1$ bundle different from $p: S\to C$. Then, since $T$ is dominated by the fibres of $p: S\to C$, then $T\cong \PP^1$. For the same reason $C\cong \PP^1$. Moreover $D$ is the class of a fibre of $q: S\to \PP^1$, hence $D^2=0$. 

\begin{claim}\label{cl:111} One has $S= \PP^1\times \PP^1$ and 
\[
\xymatrix{ 
&S=\PP^1\times \PP^1\ar[d]_{p}&=&\PP^1\times \PP^1=S:=S_1\ar[d]^{q:=p_1} \\
&C= \PP^1\ar[r]& x&\ar[l]  \PP^1:=C_1
}
\]
is a link of type IV. 
\end{claim}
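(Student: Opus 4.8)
The plan is to identify $S$ explicitly as $\PP^1\times\PP^1$ and then recognize the displayed diagram as the model link of type IV. Since we have already seen that $C\cong\PP^1$ and $p\colon S\to C$ is a $\PP^1$--bundle, Corollary \ref{tmg:bp1} forces $S\cong\FF_n$ for some $n\geq 0$. In particular $\rho(S)=2$, so $\NE(S)$ is a strictly convex cone spanning $N_1(S)\cong\RR^2$ and therefore has exactly two extremal rays; one of them is $\RR^+[F]$, spanned by the class of a fibre $F$ of $p$ (since fibres of $p$ are irreducible with $F^2=0$ and $F_1+F_2\in\RR^+[F]$).

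Next I would locate the ray $R=\RR^+[D]$ contracted by $q$. By construction in the Sarkisov algorithm, $R$ is an extremal ray of $\NE(S)$ (the curve $D$ of \eqref{eq:opl} generates it), and $R\neq\RR^+[F]$: otherwise $q$ and $p$ would contract exactly the same curves and, both having connected fibres onto curves, would coincide up to an isomorphism of the base, contradicting the standing hypothesis of Subcase 1.2.2 that $q\neq p$. Hence $[D]$ spans the \emph{second} extremal ray of $\NE(\FF_n)$.

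The crux of the argument is to rule out $n\geq 1$. For $n\geq 1$ the second extremal ray of $\NE(\FF_n)$ is spanned by the class of the negative section $C_0$, the unique irreducible curve on $\FF_n$ with $C_0^2=-n<0$ (Corollary \ref{tmg:bp1} and the remark following it). Then $[D]$, spanning that ray, is a positive multiple of $[C_0]$, so $D^2<0$; but $D$ is a fibre of the $\PP^1$--bundle $q$, so $D^2=0$, a contradiction. Therefore $n=0$ and $S\cong\FF_0=\PP^1\times\PP^1$. I expect this short self--intersection computation to be essentially the only content of the proof; checking $R\neq\RR^+[F]$ and the final identification are routine.

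To finish, on $\PP^1\times\PP^1$ the projections $p_1,p_2$ are the only two $\PP^1$--bundle structures (a fibre of such a structure is an irreducible curve of self--intersection $0$ whose class lies on an extremal ray of $\NE(\FF_0)$, and the arithmetic--genus bound $p_a\geq 0$ forces that class to be one of the two ruling classes, not a higher multiple). Hence, after relabelling, $p=p_1$ and $q=p_2$, with $C=C_1\cong\PP^1$, and the diagram in the statement is precisely a Sarkisov link of type IV as in \S\ref{ssec:sarklink}. This proves the claim and completes the analysis of Subcase 1.2.2.
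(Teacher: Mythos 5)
Your proof is correct and follows essentially the same route as the paper: both arguments reduce to observing that if $S=\FF_n$ with $n\geq 1$, the second extremal ray (besides $\RR^+[F]$) is spanned by the negative section, which is incompatible with $D$ being a fibre of $q$ with $D^2=0$. The extra details you supply (why $\RR^+[D]\neq\RR^+[F]$, and the identification of the two rulings of $\FF_0$) are routine verifications the paper leaves implicit.
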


\begin{proof} The final assertion follows once we prove that  $S= \PP^1\times \PP^1$. To prove this, assume, to the contrary, that $S=\mathbb F_a$, with $a> 0$. Then $S$ has a unique section $E$ of $p: S\to \PP^1$  such that $E^2=-a<0$. This is an extremal ray as well as $F$, hence, we should have that the ray spanned by $E$ is the same as the ray spanned by $D$, hence $E^2=D^2=0$, a contradiction. \end{proof}

\begin{claim}\label {cl:112} In both subcases 1.2.1 and 1.2.2, we have a new Mori fibre space $p_1: S_1\to C_1$ for which $\mu_1<\mu$.
\end{claim}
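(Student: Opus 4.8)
The plan is to handle Subcases 1.2.1 and 1.2.2 in parallel: in each, pass to the birational map $\phi_1\colon S_1\dasharrow S'$ attached to the new Mori fibre space $p_1\colon S_1\to C_1$, compute its quasi--effective threshold $\mu_1$, and check directly that $\mu_1<\mu$. First I would observe that no new resolution is needed. In Subcase 1.2.2 one has $S_1=S$ and $\phi_1=\phi$, only the fibre structure changing, so the transformed system is unchanged, $\L_1=\L$. In Subcase 1.2.1 one has $\phi_1=\phi\circ q^{-1}$, which is resolved by $\sigma_1:=q\circ\sigma\colon X\to S_1$ and the same $\sigma'\colon X\to S'$, so $\L_1=(\sigma_1)_*\sigma'^*(\L')=q_*\L$. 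In both cases $\L_1$ is nef, without base curve and nontrivial, so $\mu_1$ is a well--defined positive rational number.

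For Subcase 1.2.2, recall $S=S_1=\PP^1\times\PP^1$, $q=p_1$ is the other ruling, $C_1=\PP^1$, and the class $D$ spanning the extremal ray $R$ is that of a fibre $\Gamma$ of $q$. Writing $F$ for a fibre of $p$ and setting $d=\L\cdot F$, $d'=\L\cdot\Gamma$ for a general member of $\L$, and using $K_S\cdot F=K_S\cdot\Gamma=-2$, the defining relations of the two thresholds give $\mu=d/2$ and $\mu_1=d'/2$. The Case 1.2 hypothesis $(K_S+\tfrac1\mu\L)\cdot\Gamma<0$ reads $-2+d'/\mu<0$, i.e.\ $d'/2<\mu$, which is exactly $\mu_1<\mu$.

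For Subcase 1.2.1, recall $S=\FF_1$, $q\colon\FF_1\to\PP^2$ is the contraction of the $(-1)$--curve $e$ (which spans $R$), $S_1=\PP^2$ and $C_1$ is a point. Since $\Num(\PP^2)\cong\ZZ$ and every curve on $\PP^2$ is contracted by $p_1$, the threshold $\mu_1$ is determined by $\mu_1K_{\PP^2}+D_1\equiv 0$ with $D_1=q_*D$; writing $D_1\equiv d_1\ell$ for $\ell$ a line and $K_{\PP^2}=-3\ell$ gives $\mu_1=d_1/3$, with $d_1\geq 1$ as $\L_1$ is nontrivial. To compare with $\mu$ I would use, in $\Num(\FF_1)$, the identities $D\equiv q^*(d_1\ell)-m\,e$ with $m:=\L\cdot e\ge 0$ (here $e$ is not a component of $\L$, which has no base curve, so $\L\cdot e\geq 0$), together with $F\equiv q^*\ell-e$ and $K_S\equiv q^*(-3\ell)+e$; then $e^2=-1$, $K_S\cdot F=-2$ and $D\cdot F=d_1-m$, so $(\mu K_S+D)\cdot F=0$ yields $\mu=(d_1-m)/2$, that is $d_1=2\mu+m$. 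On the other hand the Case 1.2 hypothesis $(K_S+\tfrac1\mu\L)\cdot e<0$, with $K_S\cdot e=-1$ and $\L\cdot e=m$, says precisely $m<\mu$. Hence $d_1=2\mu+m<3\mu$, i.e.\ $\mu_1=d_1/3<\mu$, as wanted.

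All of these are routine intersection--theoretic computations; the only substance of the claim is the observation that, evaluated against the curve spanning the new extremal ray, the defining inequality of Case 1.2 is, after rescaling, equivalent to the desired strict inequality $\mu_1<\mu$. The point needing a little care is the bookkeeping of classes in $\Num(\FF_1)$ in Subcase 1.2.1. I would also record that $\mu_1$ is again a rational number with bounded denominator (denominator $2$ or $3$ here, in any case in $\tfrac16\ZZ$), so that the new Sarkisov degree $(\mu_1,\lambda_1,\ell_1)$ still lies in a well--ordered set, which is what allows the untwisting algorithm to terminate.
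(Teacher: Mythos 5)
Your proposal is correct and follows essentially the same route as the paper: in both subcases one evaluates the Case~1.2 inequality $(K_S+\frac 1\mu\L)\cdot D<0$ against the generator of the newly contracted extremal ray and, combined with $(\mu K_S+\L)\cdot F=0$, this translates directly into $\mu_1<\mu$ (your $d_1=2\mu+m<3\mu$ is exactly the paper's $-3+\frac a\mu=\frac b\mu-1<0$ with $a=d_1$, $b=m$). The only difference is presentational: you solve explicitly for $\mu$ and $\mu_1$ as ratios of intersection numbers, while the paper checks negativity of $(K_{S_1}+\frac 1\mu\L_1)\cdot F_1$ directly.
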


\begin{proof} Let $\L_1$ be the transformed linear system on $S_1$ and let $F_1$ be the generator of the extremal ray contracted by $p_1$. 

In subcase 1.2.1, $q:S= \mathbb F_1\to \PP^2=S_1$ is the contraction of a $(-1)$--curve $E$. Let $L$ be the class of a line in $\PP^2$, then the class of a fibre of $p:S\to \PP^1$ is
$F=q^*(L)-E$ and $K_S=q^*(-3L)+E$, whereas $\L$ is of the form $\L=q^*(aL)-bE$, with $a,b$ non--negative integers. We have
\[
K_S+\frac 1\mu\L=q^*\Big ((-3+\frac a \mu)L\Big )+(1-\frac b\mu)E \quad \text{hence}\quad 
(K_S+\frac 1\mu\L)\cdot E=\frac b\mu -1<0,
\]
the last inequality holding because of \eqref {eq:opl}. Then 
\[
0=(K_S+\frac 1\mu\L)\cdot F=\Big (q^*\Big ((-3+\frac a \mu)L\Big )+(1-\frac b\mu)E\Big)\cdot (q^*(L)-E)=-3+\frac a\mu+1-\frac b\mu>-3+\frac a\mu,
\]
hence
\[
(K_{S_1}+\frac 1\mu \L_1)\cdot F_1=(-3+\frac a \mu)L\cdot L=-3+\frac a \mu<0
\]
which implies that $\mu_1<\mu$.

In subcase 1.2.2, we have $S=S_1=\PP^1\times \PP^1$ and $F$ and $F_1$ are the two rulings.  By \eqref {eq:opl} we have
\[
(K_S+\frac 1\mu\L)\cdot F_1<0
\]
which again implies that $\mu_1<\mu$. \end{proof}

In conclusion, in case 1, either the algorithm ends or, by making a Sarkisov's link, we drop the Sarkisov's degree. 

Next, we have to discuss case 2, in which $\lambda > \mu$. Here we have two subcases:\\
\begin{inparaenum}
\item [Case 2.1:] $S\cong \PP^2$;\\
\item [Case 2.2:] $p:S\to C$ is a $\PP^1$--bundle.
\end{inparaenum}

Let $x$ be a (proper) point of $S$ which realizes the maximal multiplicity $\lambda$ of $\L$. 

In case 2.1, blow--up $x$ with exceptional divisor $E$, so that we have a type I Sarkisov's link
\[
\xymatrix{ 
&S=\PP^2\ar[d] \ar[r]^{f}&\mathbb F_1=S_1 \ar[d]_{p_1}\\
&y &\PP^1\ar[l]
}
\]
Then, if $L$ is the class of a line in $\PP^2$, we have $\L=aL$, with $a$ a positive integer, and, with the usual notation, we have 
\[
\begin{split}
& K_{S_1}=f^*(K_S)+E=f^*(-3L)+E.\\
&\L_1=f^*(\L)-bE=f^*(aL)-bE, \quad \text{with}\quad b=\lambda>0,\\
&F_1=f^*(L)-E.
\end{split}
\]
Hence
\[
K_{S_1}+\frac 1\mu \L_1=f^*(-3L)+E+\frac 1\mu\Big (f^*(aL)-bE\Big)=(-3+\frac a\mu)f^*(L)+(1-\frac b\mu)E
\]
and therefore
\[
(K_{S_1}+\frac 1\mu \L_1)\cdot F_1=\Big ((-3+\frac a\mu)f^*(L)+(1-\frac b\mu)E\Big)\cdot (f^*(L)-E)=-3+\frac a\mu-\frac b\mu+1<0
\]
because
\[
0=(K_S+\frac 1\mu\L)\cdot L=(-3+\frac a\mu)L\cdot L=-3+\frac a\mu
\]
and $b=\lambda>\mu$. So, in this case too, we have $\mu_1<\mu$. 

In case 2.2, we perform an elementary transformation at $x$
\[
\xymatrix{ 
&S \ar[d]_{p} \ar@{-->}[r]^f&S_1\ar[d]^{p_1}\\
&C& C_1
}
\]
Note the commutative diagram
\[
\xymatrix{ 
&Z\ar_{\alpha}[d]\ar^{\beta}[dr] &\\
&S\ar@{-->}[r]^f &  S_1&
}
\]
If $F$ denotes the fibre of $p$ such that $x\in F$, the total transform on $Z$ of $F$ is of the form $F'+E$, with $F'$ the strict transform of $F$ and $E$ the exceptional divisor. Then on $S_1$, $F'$ is contracted to a point $x_1$ and the image $F_1$ of $E$ is the fibre of $p_1$ through $x_1$. 

Note that
\[
0=(K_S+\frac 1\mu \L)\cdot F=-2+\frac 1\mu \L\cdot F, \quad \text{thus}\quad \L\cdot F=2\mu.
\] 
Hence $x_1$ is a point of multiplicity $2\mu-\lambda<\mu<\lambda$. So $\lambda_1\leq \lambda$ and if equality holds, we decreased $\ell$. On the other hand we claim that $\mu_1=\mu$. Indeed, if we set $G=\alpha^*(F)=\beta^*(F_1)$,  with obvious meaning of the notation, we have
\[
\begin{split}
&K_Z=\alpha^*(K_S)+E,\\
&\L_Z=\alpha^*(\L)-\lambda E,\\
&K_Z+\frac 1\mu \L_Z=\alpha^*(K_S+\frac 1\mu \L)+(1-\frac \lambda \mu)E,\\
\end{split}
\]
hence
\[
(K_Z+\frac 1\mu \L_Z)\cdot G= \alpha^*(K_S+\frac 1\mu \L)\cdot \alpha^*(F)=(K_S+\frac 1\mu \L)\cdot F=0
\]
and, by the same computation
\[
0=(K_Z+\frac 1\mu \L_Z)\cdot G=\beta^*(K_{S_1}+\frac 1\mu \L_1)\cdot \beta^*(F_1)=
(K_{S_1}+\frac 1\mu \L_1)\cdot F_1
\]
which proves that $\mu_1=\mu$, as wanted. 

So, also in this case we lower the Sarkisov's degree. The final observation is that the Sarkisov's algorithm ends, because $\lambda$ and $\ell$ are non--negative integers and $\mu\in \frac 16\mathbb Z$.

In conclusion Sarkisov's algorithm proves the Noether--Castelnuovo's theorem, as wanted.

\begin{example}\label{ex:quad} 
Consider the \emph{standard quadratic transformation}
\[\varphi: [x,y,z]\in \PP^2\dasharrow  [yz,xz,xy]\in \PP^2.\]
We want to apply to $\varphi$ the Sarkisov's algorithm. 

Let $L'$ be the class of a line in the target $\PP^2$ and $L$ the class of a line in the source $\PP^2$. The transformed system of $|L'|$ is the linear system $\L$ of conics through the fundamental points $A=[1,0,0], B=[0,1,0], C=[0,0,1]$. Clearly we have $\lambda =1$ and $\ell=3$, and $\mu$ is computed by
\[
0=(K_S+\frac 1\mu \L)\cdot L=(-3L+\frac 1\mu \cdot 2L)\cdot L=-3+\frac 2\mu, \quad \text{hence}\quad \mu=\frac 23.
\]
So we have $\lambda>\mu$.\medskip

\noindent {\bf Step 1: blow--up $A$}. This is a type I link
\[
\xymatrix{ 
&S=\PP^2 \ar[d]_{p} &\mathbb F_1=S_1\ar[l] \ar[d]^{p_1}\\
&x& \PP^1
}
\]
If $L_1$ is the class of the pull back of a line of $\PP^2$, and $F_1$ a fibre of $p_1$, we have that $\L_1$ consists of the curves in $|F_1+L_1|$ passing through the points $B_1$ and $C_1$ the images of $B$ and $C$ on $S_1$, so $\L_1\cdot F_1=1$ hence
\[
(K_{S_1}+2\L_1)\cdot F_1=0,
\] 
thus $\mu_1=\frac 12<\frac 23=\mu$, so we lowered the Sarkisov's degree. Note that $\lambda_1=1$ and $\ell_1=2$. \medskip 

\noindent {\bf Step 2: make an elementary transformation at $B_1$}. This is a type II link
\[
\xymatrix{ 
&S_1=\mathbb F_1 \ar[d]_{p_1} \ar@{-->}[r]&\mathbb F_0=S_2 \ar[d]^{p_2}\\
&\PP^1& \PP^1
}
\]
Let $F_2$ be the fibre of $p_2$ and $G$ the other ruling of $\mathbb F_0$. The transform $\L_2$ on $S_2$ of $\L_1$ is now the linear system of the curves in $|F_2+G|$ passing through the point $C_2$ image of $C_1$ on $S_2$. Hence we see that the new Sarkisov's degree is $(\frac 12,1,1)$, which has been lowered again. \medskip

\noindent {\bf Step 3: make an elementary transformation at $C_2$}. This is a type II link
\[
\xymatrix{ 
&S_2=\mathbb F_0 \ar[d]_{p_2} \ar@{-->}[r]&\mathbb F_1=S_3 \ar[d]^{p_3}\\
&\PP^1& \PP^1
}
\]
If $L_3$ is the pull--back of a line in $\PP^2$ to $\mathbb F_1$, we have that $\L_3=|L_3|$ and the Sarkisov's degree now becomes $(\frac 12, 0, *)$, where $*$ stays for undeterminate.\medskip 

\noindent {\bf Step 4: blow down the $(-1)$--curve in $S_3=\mathbb F_1$}. This is a type III link
\[
\xymatrix{ 
&S_3=\mathbb F_1 \ar[d]_{p_3} \ar@{-->}[r]&\PP^2=S_4 \ar[d]^{p_4}\\
&\PP^1& x
}
\]
and the linear system $\L_4$ is the linear system of the lines in $\PP^2$. Then the Sarkisov's degree is now $(\frac 13, 0, *)$. 
\end{example}

\section{The classical Noether--Castelnuovo's theorem}\label{sec:class}

The classical Noether--Castelnuovo's theorem is as follows:

\begin{thm}[Classical Noether--Castelnuovo's theorem]\label{thm:classNC}
The group of birational transformations of $\PP^2$ is generated by the projective transformations and by the standard quadratic transformation.
\end{thm}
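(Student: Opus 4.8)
The plan is to deduce the classical Noether--Castelnuovo theorem from the Sarkisov-theoretic version, Theorem~\ref{thm:NC}, together with the explicit classification of Sarkisov's links in \S\ref{ssec:sarklink}. The point is that a birational transformation $\phi:\PP^2\dasharrow \PP^2$ is a birational map between two copies of the Mori fibre space $\PP^2\to \{\mathrm{pt}\}$, so Theorem~\ref{thm:NC} tells us $\phi$ factors as a composite of finitely many Sarkisov's links. The links are of types I, II, III, IV, and each one separately has to be realized as a composite of projective transformations and the standard quadratic transformation $\varphi$. Since the subgroup $\Gamma\subset \mathrm{Bir}(\PP^2)$ generated by $\mathrm{PGL}_3$ and $\varphi$ contains the inverse of $\varphi$ (as $\varphi$ is an involution) and is closed under the relevant compositions, it then follows that $\phi\in\Gamma$.

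First I would set up the bookkeeping. Given $\phi:\PP^2\dasharrow\PP^2$, apply the Sarkisov algorithm from \S\ref{ssec:sarkalg}: it produces a chain
\[
\PP^2 = S_0 \dashrightarrow S_1 \dashrightarrow \cdots \dashrightarrow S_N = \PP^2
\]
of Mori fibre spaces $S_i\to C_i$, with each $S_i\dashrightarrow S_{i+1}$ one of the four types of links, and with $C_0=C_N=\{\mathrm{pt}\}$. Each intermediate $S_i$ is either $\PP^2$ (with $C_i$ a point) or a $\PP^1$-bundle $\FF_n$ over $\PP^1$ (with $C_i=\PP^1$). The idea is to ``straighten'' this chain: I want to find, for each link, an identification of its source and target with standard surfaces and a dictionary expressing the link in terms of $\mathrm{PGL}_3$ and $\varphi$. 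The key auxiliary observation is the one already used implicitly in Example~\ref{ex:quad}: the standard quadratic transformation $\varphi$ itself factors through the links of types I, II, II, III in that order (blow up $A$, elementary transformation at $B_1$, elementary transformation at $C_2$, blow down), so conversely each ``elementary-transformation'' building block is accessible from $\varphi$ and $\mathrm{PGL}_3$.

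The substantive work is the following sequence of lemmas, each a direct computation. (1) A link of type I ($\PP^2\leftarrow\FF_1\to\PP^1$) composed with the subsequent links until we return to $\PP^2$ must, up to projective transformations at the two ends, be either the identity or (a conjugate of) $\varphi$; the crux is that the only $\PP^1$-bundle occurring between an ``up'' link (type I) and the matching ``down'' link (type III) is controlled, via type II/IV links which are elementary transformations of scrolls, and one tracks how a point blown up on $\PP^2$ together with the elementary transformations moves the three base points — exactly the picture of $\varphi$ when three non-collinear base points are involved. (2) A composite $\PP^2\dashrightarrow\FF_1\dashrightarrow\cdots\dashrightarrow\FF_m\dashrightarrow\PP^2$ in which all the intermediate elementary transformations are centered at points lying over a single point of $\PP^1$, resp.\ in ``general position'', reduces after projective changes of coordinates to a product of standard quadratic transformations; this is where one uses that any de Jonqui\`eres-type map is a product of quadratic maps, but here it comes for free from the link decomposition. (3) Type IV links ($\FF_0=\PP^1\times\PP^1$, swapping the two rulings) correspond, after contracting and re-expanding, to composing with a projective transformation and one quadratic map. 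Assembling (1)--(3) along the chain $S_0\dashrightarrow\cdots\dashrightarrow S_N$ and using that $\mathrm{PGL}_3$ is a group yields $\phi\in\Gamma$.

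The main obstacle I anticipate is (1)--(2): bridging from ``Sarkisov links'' (which live on scrolls $\FF_n$ and involve blow-ups/elementary transformations of points that may be infinitely near) to the very rigid statement ``generated by $\mathrm{PGL}_3$ and the \emph{single} standard quadratic transformation.'' One must show that whenever the algorithm blows up an infinitely near point, or produces an $\FF_n$ with $n\geq 2$, the corresponding portion of the chain can be reorganized (by inserting pairs of mutually inverse projective transformations, and by the braid-like relations among elementary transformations on scrolls) so that only $\FF_0,\FF_1$ and only proper base points appear, and so that each ``quadratic-type'' block is honestly conjugate to $\varphi$ by a projectivity. This reorganization is essentially the content of Calabri's notes \cite{Cal} referred to in the introduction, and I would follow that source for the case analysis; the remaining steps above are then formal consequences of Theorem~\ref{thm:NC} and the link classification.
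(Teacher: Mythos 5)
Your route is genuinely different from the paper's: you propose to deduce the classical statement from the Sarkisov-theoretic Theorem~\ref{thm:NC} by converting the chain of links into standard quadratic maps. The paper explicitly mentions this strategy at the start of \S\ref{sec:class} and then declines to follow it, giving instead a direct proof by descending induction on the \emph{simplicity} $(k_\L,h_\L,s_\L)$ of the homaloidal net: Noether's inequality $d<m_1+m_2+m_3$ produces a quadratic transformation (possibly with infinitely near base points) lowering the simplicity, and Claim~\ref{cl:quad} then reduces quadratic maps of types II and III to products of type~I ones, i.e.\ to projectivities composed with the standard quadratic transformation.

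The problem is that, as written, your proposal does not close the argument; the entire difficulty is concentrated in your steps (1)--(2) and is neither carried out nor correctly delegated. Concretely: step (1) is false as stated. A De Jonqui\`eres transformation of degree $d$ (Example~\ref{ex:DJ}) decomposes as one type~I link $\PP^2\leftarrow\FF_1$, then $2d-2$ type~II links (elementary transformations), then one type~III link back to $\PP^2$; for $d\geq 3$ this composite is neither the identity nor projectively conjugate to $\varphi$, so a single ``up--\dots--down'' block between two occurrences of $\PP^2$ need not be a quadratic map. Showing that such a block is a \emph{product} of quadratic maps is exactly the content of the classical theorem restricted to De Jonqui\`eres maps, and it does not ``come for free from the link decomposition'': one must actually regroup the elementary transformations in pairs, handle centers lying on the negative section (which push the chain through $\FF_n$ with $n\geq 2$), and handle centers that become infinitely near when pushed down to $\PP^2$ --- the commutation and regrouping relations needed for this are nontrivial and are the real theorem. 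Finally, the delegation to \cite{Cal} is misplaced: Calabri's notes contain the direct proof via simplicity that the paper reproduces, not the reorganization of Sarkisov links you would need. So either supply the regrouping argument in full, or switch to the paper's direct induction on the simplicity of the homaloidal net.
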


This theorem can be proved as a consequence of Noether--Castelnuovo's theorem \ref {thm:NC}. The idea is that elementary transformations appearing in the sequence of Sarkisov's links composing a given birational transformation of $\PP^2$ can be rearranged in such a way as to give quadratic transformations as in Example \ref {ex:quad}. However we will not do this, but we will give a direct proof of Theorem \ref {thm:classNC} which goes back to Castelnuovo and Alexander. This proof requires a number of preliminaries which we will now introduce.

\subsection{Infinitely near points}\label{ssec:inp}

Let $\sigma: S'\to S$ a birational morphism of smooth, irreducible, projective surfaces. Then $\sigma$ is the composition of finitely many blow--ups
\[
\sigma: S'=S_n\stackrel{\sigma_n}\to S_{n-1} \stackrel{\sigma_{n-1}}\to\ldots \stackrel{\sigma_2}\to S_1\stackrel{\sigma_1}\to S_0=S
\] 
at points $p_i\in S_i$, for $0\leq i\leq n-1$.
A point $p\in S$ will be called a \emph{proper point}. The point $q\in S_i$ is called an \emph{infinitely near point to $p$ of order $i$}, and we write $q>^ip$, if there are points $p_j\in S_j$ such that $\sigma (p_j)=p_{j-1}$ for $0\leqslant j\leq n$ and $p_0=p$ and $p_j=q$.

For $1\leq i\leq n$, let $E_i=\sigma_i^ {-1}(p_{i-1})$ be the exceptional divisor of the blow-up $\sigma_i: S_i\to S_{i-1}$. We denote by $E'_{i,j}$ the proper transform of $E_i$ on $S_j$, for $1\leq i\leq j\leq n$. 

We say that $q$ is \emph{proximate} to $p$, and we write $q\to p$, if either $q>^1p$ or $q>^ip$, with $i>1$ and $q\in E_{1,j}$. In this later case we say that $q$ is \emph{satellite} to $p$ and we write $q\odot p$. 

If $C$ is a curve on $S$, we say that $C$ \emph{passes through} the infinitely near point $q>^ip$ if the strict transform of $C$ on $S_i$ contains $q$. If $C$ passes with multiplicity $m$ through $p$ and multiplicity $m_i$ through a point $p_i\to p$, one has the obvious
 \emph{proximity inequality}
 \[
 m\geq \sum_{p_i\to p} m_j.
 \]
 
 Moreover, if a curve $C$ on $S$ passes through $p$ and through a satellite point $q\odot p$, then $C$ is singular at $p$. For details, see \cite {Cal}. 
  
 \subsection{Homalodail nets}\label {ssec:hom}
 
 Let $p_1,\ldots, p_n$ be points in the plane, which can be proper or infinitely near. Fix positive integers $m_1,\ldots, m_n$, and consider the linear system 
 \[
 \L:=\L(d;p_1^{m_1},\ldots, p_n^{m_r})
 \] 
of plane curves of degree $d$ having multiplicity at least $m_i$ at $p_i$, for $1\leq i\leq r$.
If the points are understood, we may simply write
\[
\L=\L(d;{m_1},\ldots, {m_r})
\] 
and if some multiplities are repeated we may sometimes use the exponential notation, i.e. $m_i^j$ stays for $j$ times $m_i$. 
Since the \emph{base points} can be infinitely near, we can understand the linear system $\L$ as living on a suitable birational model $\sigma: S\to \PP^2$. 

Assume now $\dim(\L)=2$, $\L$ with  no fixed components, and the map
\[
\phi_\L: \PP^2\dasharrow \PP^2
\]
determined by $\L$ to be birational, i.e., a \emph{Cremona transformation}. Then we call $\L$ a \emph{homaloidal net} and $\L$ is the transform on the source $\PP^2$ of the linear system of lines on the target $\PP^2$. For a homaloidal net we have
\[
\begin{split}
& d^2-\sum_{i=1}^r m_i^2=1\\
& {{d-1}\choose 2} - \sum_{i=1}^r {{m_i}\choose 2}=0
\end{split}
\]
where the first equality means that the curves in $\L$ meet in one variable point off the base points and the second means that the curves in $\L$ have geometric  genus 0. The above relations also read
\begin{equation}\label{eq:rel}
\begin{split}
& d^2-\sum_{i=1}^r m_i^2=1\\
& 3(d-1)=\sum_{i=1}^r m_i
\end{split}
\end{equation}

\begin{example}\label{ex:DJ} For any $d\geq 2$, the linear system
\[
\L=\L(d; d-1, 1^ {2d-2})
\]
is a homaloidal net called a \emph{De Jonqui\`eres net} and $\phi_\L$ is called a \emph{De Jonqui\`eres transformation}. Note that such a map sends the pencil of lines through the point of multiplicity $d-1$ to a pencil of lines. For $d=2$ this is a \emph{quadratic transformation}.
\end{example}

\subsection{The simplicity}\label{ssec:simpl}

Let $\L=\L(d;m_0,\ldots, m_r)$ be a homaloidal net, where we assume $m_0\geq \ldots \geq  m_r$. If there is no base point, we set $r=-1$. Moreover we set $m_{-1}=\infty$ and $m_s=0$ for all integer $s>r$. 

The \emph{simplicity} of $\L$ is the triplet $(k_\L,h_\L,s_\L)$ of integers defined as follows:
\[
\begin{split}
&k_\L:= d-m_0;\\ 
& m_{h_\L}>  \frac{k_\L}2 \geq m_{h_\L+1};\\
&s_\L= \quad \text{the number of satellite points among}\quad p_0,\ldots , p_{h_\L}.
\end{split}
\]

If $\phi=\phi_\L:\PP^2\dasharrow \PP^2$ is a Cremona transformation, the \emph{simplicity} of $\phi$ is, by definition, the simplicity of $\L$.

One says that $\L'$ [resp., the Cremona transformation $\phi'$] is \emph{simpler} than $\L$ [resp., than $\phi$]  if the simplicity of $\L'$ [resp., of $\phi'$] is lexicographically 
smaller than the simplicity of $\L$ [resp., of $\phi$]. 

\begin{example}\label{ex:DJ2} If $\L(d;d-1, 1^ {2d-2})$ is a De Jonqui\`eres net, then its simplicity is $(1, 2d-2, s)$, where $s$ is the number of satellite points among the points of multiplicity 1 proximate to the point of multiplicity $d-1$. By the proximity inequality, one has $s\leq d-1$.  \end{example}

\subsection{The proof of the classical Noether--Castelnuovo's theorem} \label{ssec:NCT}

We can now give the:

\begin{proof}[Proof of Theorem \ref {thm:classNC}]
 Let $\phi=\phi_\L:\PP^2 \dasharrow\PP^2$ be a Cremona transformation. Assume it is neither a linear map nor a quadratic transformation. We will prove that there is a quadratic transformation $\gamma: \PP^2\dasharrow \PP^2$ such that $\gamma\circ \phi$ is simpler that $\phi$. This will prove the theorem.
 
 Let 
 \[
 \L=\L(\delta; p_0^{\alpha_0},\ldots, p_r^{\alpha_r}) \quad \text{with}\quad \alpha_0\geq \ldots \geq \alpha_r. 
 \]
 By \eqref {eq:rel} we have
 \[
 \begin{split}
 &0\geq \sum_{i=0}^r \alpha_i(\alpha_i-\alpha _0)= \sum_{i=0}^r \alpha_i^2-\alpha_0\sum_{i=0}^r \alpha_i\\
 &=\delta^2-1-3\alpha_0(\delta-1)=(\delta-1)(\delta+1-3\alpha_0),
 \end{split}
 \]
 hence
 \[
 \begin{split}
 &\delta<3\alpha_0, \quad \text {i.e.,}\quad k_\L=\delta-\alpha_0<2\alpha_0,\quad \text {i.e.,} \\
& \alpha_0>\frac {k_\L}2, \quad \text{hence}\quad h_\L\geq 0.
 \end{split}
 \]
 Set $m:=\frac {k_\L}2$. Then we have
 \begin{equation}\label{eq:ut}
 \begin{split}
 &(\delta-1)(\delta-3m+1)=\delta(\delta-3m)+3m-1=\\
 &=\alpha_0(\alpha_0-m)+\sum_{i=1}^r\alpha_i(\alpha_i-m).
  \end{split}
 \end{equation}
 
 Indeed, by subtracting term by term
 \[
 (\delta-1)(\delta-2)=\sum_{i=0}^r\alpha_i(\alpha_i-1)
 \]
 from
 \begin{equation}\label{eq:pal1}
 \delta^2-1=\sum_{i=0}^r\alpha_i^2
 \end{equation}
 we obtain
 \begin{equation}\label{eq:pal}
 3(\delta-1)=\sum_{i=0}^r\alpha_i.
 \end{equation}
 Then subtracting term by term \eqref {eq:pal} multiplied by $m$ form \eqref {eq:pal1}, we obtain \eqref {eq:ut}.

 By \eqref{eq:ut} and since $2m=\delta-\alpha_0$, we have
 \[
 2m(\alpha_0-m)=(\delta-\alpha_0)(\alpha_0-m)=\sum_{i=1}^r\alpha_i(\alpha_i-m)-3m+1.
 \]
 Since $m=\frac {k_\L}2\geq \frac 12$, we have
 \[
 \begin{split}
& 2m(\alpha_0-m)= (\delta-\alpha_0)(\alpha_0-m)<\sum_{i=1}^r\alpha_i(\alpha_i-m)\leq \\
&\leq \sum_{i=1}^{h_\L}\alpha_i(\alpha_i-m)\leq 2m \sum_{i=1}^{h_\L}(\alpha_i-m)
\end{split}
 \]
 where the last inequality follows from the fact that $\alpha_i\leq 2m=\delta-\alpha_0$, which is equivalent to $\delta\geq \alpha_0+\alpha _i$, which in turn holds because otherwise the line through $p_0$ and $p_i$ would split off $\L$, which is impossible, because the general curve of $\L$ is irreducible.  In conclusion we have
 \begin{equation}\label{eq:ppp}
 \alpha_0-m<\sum_{i=1}^{h_\L}(\alpha_i-m)
 \end{equation}
 and this implies that:\\
 \begin{inparaenum}
 \item [$\bullet$] $h_\L\geq 2$;\\
  \item [$\bullet$] $p_1,\ldots, p_{h_\L}$  cannot be all proximate to $p_0$, otherwise we would have
 \[
 \alpha_0\geq \sum_{i=1}^{h_\L}\alpha_i
 \]
 which contradicts \eqref {eq:ppp}. 
 \end{inparaenum}
 
 Note also that for $0<i<j\leq h_\L$ the points $p_0, p_i,p_j$ cannot be on (the strict transform) of a line, otherwise, if this happens, since
 \[
 \alpha_0+\alpha_i+\alpha_j>\alpha_0+2m=\alpha_0+\delta-\alpha_0=\delta,
 \]
the line would split off $\L$, a contradiction. \medskip

{\bf Case 1:} there are two points $p_i,p_j$ among $p_1,\ldots, p_{h_\L}$ such that there is an irreducible conic through $p_0, p_i,p_j$. This means that none of $p_i,p_j$ is satellite. Then we can consider the quadratic transformation \emph{based} at $p_0,p_1,p_0$, i.e., determined by the homalodal net $\L(2;p_0,p_i,p_j)$. Then $\psi:=\gamma\circ \phi$ is determined by the homaloidal net $\L'$ transformed of $\L$ via $\gamma$. This is a linear system of degree 
\[
\delta-\epsilon \quad \text {with}\quad \epsilon=\alpha_0+\alpha_i+\alpha_j-\delta=\alpha_i+\alpha_j-2m>0
\]  
since $\alpha_i, \alpha_j>m$. Then $\L'$ has the following multiplicities at the points $p'_0,p'_i,p'_j$, where the lines $p_ip_j, p_0p_j, p_0p_i$ are contracted by $\gamma$:
\[
\begin{split}
&\delta-\alpha_i-\alpha_j=\alpha_0-\epsilon<\alpha_0\\
&\delta-\alpha_0-\alpha_j=2m-\alpha_j\leq m\\
&\delta-\alpha_0-\alpha_i=2m-\alpha_i\leq m,
\end{split}
\]
whereas all the other multiplicities stay the same. 

If $p'_0$ is still the point with the highest multiplicity, then $k_{\L'}=k_{\L}$, but we see that $h_{\L'}=h_{\L}-2$, so the simplicity went down. If $p'_0$ is no longer the point with the highest multiplicity, then the highest multiplicity is $\mu>\alpha_0-\epsilon$, then
\[
k_{\L'}=(\delta-\epsilon)-\mu<(\delta-\epsilon)-(\alpha_0-\epsilon)=\delta-\alpha_0=k_{\L},
\]
and again the simplicity went down.\medskip

{\bf Case 2:} for any two points  $p_i,p_j$ among $p_1,\ldots, p_{h_\L}$  there is no  irreducible conic through $p_0, p_i,p_j$. This means that $p_i>p_0$, for all $1\leq i\leq h_\L$, and moreover we can find $1\leq i<j\leq h_\L$ such that 
\[
p_j>^1p_i>^1p_0 \quad \text{and} \quad p_j\odot p_0.
\]
Choose $q\in \PP^2$ general, and consider the quadratic transformation $\gamma$ based at $p_0, p_i, q$. Set $\psi=\gamma\circ \phi$. This is determined by the homalodail net $\L'$ which is transformed of $\L$ by $\gamma$. Then $\L'$ has degree 
\[
2\delta-(\alpha_0+\alpha_i)=\delta+(\delta-\alpha_0-\alpha_i)=\delta+(2m-\alpha_i)=\delta+\epsilon
\]
where $\epsilon=\delta-\alpha_0-\alpha_i=2m-\alpha_i$ and $0\leq \epsilon\leq m$, because, as we saw, $\alpha_i\leq 2m$ and $\alpha_i>m$. The multiplicities at the points $p'_0,p'_i,q'$, where the lines $p_iq, p_0q, p_0p_i$ are contracted by $\gamma$  are:
\[
\begin{split}
&\delta -\alpha_i=\alpha_0+\epsilon\geq \alpha_0\\
&\delta-\alpha_0=2m\\
&\delta-\alpha_0-\alpha_i=\epsilon< m,
\end{split}
\]
whereas all the other multiplicities stay the same. One has $\alpha_0+\epsilon\geq 2m$ which is equivalent to $\alpha_0\geq \alpha_i$. Hence $k_\L=k_{\L'}$, $m$ stays the same and also $h_\L=h_{\L'}$. However $p_j$ is no longer satellite, therefore $s_{\L'}=s_\L-1$, and the simplicity went down. 

To finish the proof, we have to show that all quadratic transformation are composed by projective transformations and by the standard quadratic transformations. To do this, we remark that the quadratic transformations, corresponding to a homaloidal net of the form $\L=\L_2(1^3)$, are of three types:\\
\begin{inparaenum}
\item [(Type I)] the three base points $p_0,p_1,p_2$ of $\L$ are  proper and  distinct;\\
\item [(Type II)] $p_0$ and $p_2$ are proper and distinct and $p_1>^1p_0$;\\
\item [(Type III)] $p_0$ is proper and $p_2>^1p_1>^1p_0$, and $p_2$ is not satellite of $p_0$.
\end{inparaenum}

The quadratic transformations of type I are clearly composed of a projective transformation and of the standard quadratic transformation. 

\begin{claim}\label{cl:quad} A quadratic transformation of type II [resp. of type III] is the product of two [resp. of four] quadratic transformations of type I.
\end{claim}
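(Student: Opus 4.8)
The plan is to peel off the infinitely near base points of a quadratic transformation one level at a time, by composing it on one side with a suitably chosen auxiliary quadratic transformation whose base locus consists of some of the given base points together with sufficiently general \emph{proper} points. Since, as noted just above, a transformation of type I is a projective transformation composed with the standard quadratic transformation (and its inverse), it suffices to realise a type II transformation as a product of two type I transformations and a type III transformation as a product of four.

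For a type II transformation $\tau=\phi_{\mathcal L}$, with $\mathcal L=\mathcal L(2;p_0,p_1,p_2)$, $p_1>^1p_0$ and $p_0,p_2$ proper, I would take $\gamma$ to be the quadratic transformation based at $p_0$, $p_2$ and a general proper point $A$; for general $A$ these three points are proper and non--collinear, so $\gamma$ is of type I. The claim to prove is that $\psi:=\tau\circ\gamma^{-1}$ is again of type I. Its homaloidal net is $\gamma_*(\mathcal L)$, which by the rule for transforming a linear system under a quadratic transformation (\S\ref{ssec:hom}) still has degree $2\cdot2-1-1-0=2$; moreover $\gamma$ blows up $p_0$, so the base point $p_1>^1p_0$ of $\mathcal L$ is carried to a \emph{proper} base point $p_1'$ of $\gamma_*(\mathcal L)$, lying on the line onto which the exceptional divisor over $p_0$ is blown down, while the other two base points of $\gamma_*(\mathcal L)$ are the proper images of the two $\gamma$--contracted lines through $p_0$ and through $p_2$. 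For general $A$ these three points are non--collinear, so $\psi$ is of type I and $\tau=\psi\circ\gamma$ is a product of two type I transformations.

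For a type III transformation $\tau$, with $\mathcal L=\mathcal L(2;p_0,p_1,p_2)$, $p_2>^1p_1>^1p_0$ and $p_2$ not satellite, I plan a two--stage reduction. First compose with a type I transformation $\gamma$ based at $p_0$ and two general proper points; this raises the degree to $2\cdot2-1=3$, so $\psi:=\tau\circ\gamma^{-1}$ is a De Jonqui\`eres cubic with net $\mathcal L(3;q^{\,2},q_1,q_2,p_1',p_2')$, where $q$ is the double point (the image of the line joining the two general points), $q_1,q_2$ are the images of the two lines through $p_0$, and, because $\gamma$ blows up $p_0$, the point $p_1>^1p_0$ has become a proper point $p_1'$ while $p_2>^1p_1$ has become $p_2'>^1p_1'$. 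Second, compose $\psi$ with the quadratic transformation $\gamma_1$ based at $q$, $p_1'$, $p_2'$, which is of type II (the points $q,p_1'$ proper, $p_2'>^1p_1'$). A degree count gives $\deg\bigl((\gamma_1)_*(\mathcal L_\psi)\bigr)=2\cdot3-2-1-1=2$, and since $\gamma_1$ blows up both $p_1'$ and $p_2'$ the infinitely near relation between them is resolved, so $\psi\circ\gamma_1^{-1}$ is a quadratic transformation whose three base points — the images of $q_1,q_2$ (which, for general choices, avoid the curves contracted by $\gamma_1$) together with the image of one $\gamma_1$--contracted curve — are all proper and non--collinear, i.e.\ of type I. Hence $\tau=(\psi\circ\gamma_1^{-1})\circ\gamma_1\circ\gamma$ is a composition of a type I, a type II and a type I transformation; expanding the type II factor by the previous paragraph gives $\tau$ as a product of four type I transformations, and this completes the proof of the classical Noether--Castelnuovo theorem.

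The delicate part — and the only real content — is the bookkeeping that makes each step work: one must check that the transformed homaloidal net has exactly the asserted degree (this rests on the homaloidal relations \eqref{eq:rel}), that blowing up the proper point underlying an infinitely near base point genuinely promotes that point to a proper one and that blowing up an infinitely near point resolves the nesting above it, and that the generality of the auxiliary proper points keeps the successor transformation in general position (non--collinear base points, no base point on a contracted curve, no unwanted satellite). These are exactly the situations where the naive transformation formula for quadratic transformations fails, so the verification is a short but careful case analysis carried out in \cite{Cal}; I would follow it there.
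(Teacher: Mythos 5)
Your proposal is correct. For the type II case it is essentially the paper's own argument: both you and the paper compose with the type I transformation based at $p_0$, $p_2$ and a general proper point, and observe that the transformed net of conics has three distinct, proper, non--collinear simple base points (the images of the two contracted lines through the general point, plus $p_1$ promoted to a proper point on the image of the exceptional divisor over $p_0$). For the type III case you take a genuinely different route. The paper composes once with a \emph{type II} auxiliary transformation based at $p_0$, $p_1$ and a general point, checks that the transform is again a net of conics of type II, and gets the count $4=2+2$ from the type II case. You instead resolve the chain at the bottom: a type I auxiliary based at $p_0$ and two general points raises the degree to $3$, producing the De Jonqui\`eres net $\mathcal L(3;q^2,q_1,q_2,p_1',p_2')$ in which only $p_2'>^1p_1'$ remains infinitely near; a second auxiliary of type II based at $q,p_1',p_2'$ brings the degree back to $2$, and the count is $4=1+2+1$. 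Your degree and multiplicity bookkeeping is right — in the second step the unique new base point is the image of the line through $p_1'$ in the direction of $p_2'$, which is indeed a proper point of the target, and collinearity of the three resulting base points is excluded automatically since the net is homaloidal. The paper's version is more economical (one auxiliary map, everything stays quadratic, immediate reduction to the type II case), while yours makes the mechanism of promoting infinitely near points more explicit at the cost of a cubic intermediate and a few extra generality checks (that $q$ avoids the line through $p_1'$ with direction $p_2'$, and that $q_1,q_2$ avoid the curves contracted by the second auxiliary), which you correctly identify and defer to \cite{Cal}, just as the paper itself leaves the analogous details to the reader.
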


\begin{proof} [Proof of the Claim \ref {cl:quad}] Consider the quadratic transformation $\alpha$ of type II, with base points as $p_0,p_1,p_2$ with $p_1>^1p_0$. Consider the quadratic transformation $\gamma$ of type I with base points $p_0,p_2, q$ where $q$ is a general point of the plane. The homaloidal net determining $\alpha$ is then transformed by $\gamma$ in a homaloidal net $\beta$ of conics with three distinct proper points: the points in which the lines $qp_0$ and $qp_2$ are contracted by $\gamma$ and the point corresponding to $p_1$ which is now a proper point on the line which is contracted to $p_0$ by the inverse transformation of $\gamma$. This proves the claim in this case.

Similarly,  consider the quadratic transformation $\alpha$ of type III, with base points $p_0,p_1,p_2$ with $p_2>^1p_1>^1p_0$. Consider the quadratic transformation $\gamma$ of type II, with base points $p_0,p_1,q$, where $q$ is a general point in the plane. As above one sees that the homaloidal net determining $\alpha$ is  transformed by $\gamma$ in a homaloidal net $\beta$ of conics with two distinct proper points and one point infinitely near to one of the two (we leave the details to the reader). Since the claim holds for type II transformations, this prove that transformations of type III are the product of four transformations of type I. \end{proof}

The classical Noether--Castelnuovo's theorem is now completely proved.\end{proof}

\section{Examples}\label{sec:ex}
In this section we collect a number of interesting examples which illustrate various phenomena concerning the Kleiman--Mori cone.

\subsection {Negative curves} \label{ssec:neg}

\begin{lemma}\label{lem:neg} let $S$ be a smooth surface.\\
\begin{inparaenum}
\item [(i)] If $R=\mathbb R^+[\alpha]$ is an extremal ray of $\NE(S)$, then $\alpha^2\leq 0$;\\
\item [(ii)] if $C$ is an irreducible curve on $S$ such that $C^2<0$, then $\mathbb R^+[C]$ is an extremal ray of $\NE(S)$. 
\end{inparaenum}
\end{lemma}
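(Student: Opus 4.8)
The plan is to deduce both statements by elementary convex geometry on the Kleiman--Mori cone $\NE(S)\subset N_1(S)$, using Kleiman's Criterion (Theorem~\ref{thm:Kl}), the Hodge Index Theorem (Theorem~\ref{thm:hit}) and Riemann--Roch.

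For (i) I would argue by contradiction. Suppose $R=\RR^+[\alpha]$ is an extremal ray, i.e. $R=\NE(S)\cap L^\perp$ for a class $L$ with $L\cdot Z\geq 0$ for all $Z\in\NE(S)$ and with $L^\perp$ a genuine hyperplane, and suppose $\alpha^2>0$. Fix an ample class $A$; then $\alpha\cdot A>0$ by Kleiman's Criterion. The key point is that the open cone $P:=\{\gamma\in N^1(S):\gamma^2>0,\ \gamma\cdot A>0\}$ is contained in $\NE(S)$: a rational $\gamma\in P$ has a positive integral multiple $D$, and since $(K_S-nD)\cdot A<0$ for $n\gg 0$ one gets $h^2(nD)=h^0(K_S-nD)=0$, so by Riemann--Roch $h^0(nD)\geq\chi(nD)\sim\tfrac{n^2}{2}D^2\to\infty$; hence $nD$ is effective for some $n$ and $\gamma\in\NE(S)$. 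As the rational points are dense in $P$ and $\NE(S)$ is closed, $P\subseteq\NE(S)$. Thus $\alpha\in P$ is an \emph{interior} point of $\NE(S)$, so the functional $L\cdot(-)$ is $\geq 0$ on a neighbourhood of $\alpha$ and zero at $\alpha$, hence vanishes on that neighbourhood and therefore identically --- contradicting that $L^\perp$ is a hyperplane. (This is why a line in $\PP^2$, where $\rho=1$, is no counterexample: there $\NE(S)$ has no face which is a ray, so there are no extremal rays at all.)

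For (ii) the crucial step is the decomposition
\[
\NE(S)=\NE(S)_{C\geq 0}+\RR^+[C],\qquad \NE(S)_{C\geq 0}:=\{Z\in\NE(S):Z\cdot C\geq 0\}.
\]
Given $Z\in\NE(S)$, write $Z=\lim_n Z_n$ where each $Z_n=\sum_j a_{n,j}[D_{n,j}]$ is a finite nonnegative combination of irreducible curves; peeling off the (nonnegative) coefficient $b_n$ of $C$ gives $Z_n=b_nC+Z'_n$ with $Z'_n$ a nonnegative combination of irreducible curves $\neq C$, so $Z'_n\in\NE(S)$ and $Z'_n\cdot C\geq 0$. Intersecting with $A$ shows $(b_n)_n$ and $(A\cdot Z'_n)_n$ are bounded; by Kleiman's Criterion the cross-sections $\{W\in\NE(S):A\cdot W\leq M\}$ are compact, so after passing to a subsequence $b_n\to s\geq 0$ and $Z'_n\to Z_0\in\NE(S)$ with $Z_0\cdot C\geq 0$, whence $Z=sC+Z_0$. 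Now put $\lambda:=-\tfrac{A\cdot C}{C^2}>0$ and $L:=A+\lambda C$. Then $L\cdot C=0$, while $L\cdot D=A\cdot D+\lambda(C\cdot D)>0$ for every irreducible $D\neq C$, so $L$ is nef and nonzero and $L^\perp$ is a supporting hyperplane. Clearly $\RR^+[C]\subseteq\NE(S)\cap L^\perp$; conversely, if $Z\in\NE(S)$ and $L\cdot Z=0$, write $Z=Z_0+sC$ as above, so that $0=L\cdot Z=L\cdot Z_0=A\cdot Z_0+\lambda(C\cdot Z_0)$ is a sum of two nonnegative numbers, forcing $A\cdot Z_0=0$, hence $Z_0=0$ and $Z\in\RR^+[C]$. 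Therefore $\RR^+[C]=\NE(S)\cap L^\perp$ is a face, i.e. an extremal ray.

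The step I expect to require the most care is the compactness/limit argument in the decomposition for (ii): a priori neither the coefficient $b_n$ of $C$ nor the remainder $Z'_n$ is controlled along a single approximating sequence, and one must verify that the limiting remainder $Z_0$ is a genuine element of $\NE(S)$ still satisfying $Z_0\cdot C\geq 0$. Everything else is formal once this, Kleiman's Criterion, and the Hodge-index/Riemann--Roch bigness input in (i) are in place.
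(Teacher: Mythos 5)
Your argument is correct, and while part (i) coincides with the paper's proof (both show via Riemann--Roch and Serre duality that the positive cone $\{\gamma^2>0,\ \gamma\cdot A>0\}$ lies inside $\NE(S)$, so $\alpha$ would be interior), your part (ii) takes a genuinely different route. The paper writes the class $[C]$ as a finite sum $\sum_i\beta_i$ of generators of extremal rays, extracts a $\beta_j$ with $C\cdot\beta_j<0$, and then uses the extremality property ($x+y\in R\Rightarrow x,y\in R$) to force $\RR^+[\beta_j]=\RR^+[C]$; this is shorter but silently invokes a Krein--Milman/Minkowski-type statement that $\NE(S)$ is generated by its extremal rays, and it blurs the distinction between extremal rays as faces (the paper's official definition in \S 2.9) and as extreme rays in the convex-geometry sense. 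Your proof instead establishes the decomposition $\NE(S)=\NE(S)_{C\geq 0}+\RR^+[C]$ by a compactness argument on the slices $\{A\cdot W\leq M\}$ (legitimate, since $\NE(S)$ is closed and strictly convex and $A$ is positive on it by Kleiman) and then exhibits the explicit nef class $L=A-\frac{A\cdot C}{C^2}\,C$ with $\NE(S)\cap L^{\perp}=\RR^+[C]$. This buys you two things: the conclusion is delivered exactly in the form the paper's definition and the Cone Theorem require (a face cut out by a nef class), and the only nontrivial convexity input is the compactness of cross-sections, which you verify rather than assume. The price is the extra limiting argument, which you carry out correctly (the key points --- $Z'_n\cdot C\geq 0$ because distinct irreducible curves meet nonnegatively, boundedness of $b_n$ and $A\cdot Z'_n$, and closedness of $\NE(S)_{C\geq0}$ under the limit --- are all in place).
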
 
\begin{proof} (i) Assume, by contradiction, that $\alpha^2>0$. If $H$ is ample, we have $H\cdot \alpha>0$ by Kleiman's criterion of ampleness. So there is an open neighborhood $U$ of $\alpha$ in $\Num(S)\otimes \mathbb R$ such that for all $\beta\in U$, one has $\beta^2>0$ and $H\cdot \beta>0$. For all rational $\beta\in U$, we have a suitable multiple $k\beta$ of $\beta$ such that $k\beta=D$ is a divisor and for $m\gg 0$ we have
\[
h^ 0(mD)+h^ 0(K_S-mD)\geq \chi(mD)=\chi(\mathcal O_S)+\frac {(mD)\cdot (mD-K_S)}2>0.
\]
But $h^ 0(K_S-mD)=0$ because $H\cdot (K_S-mD)=H\cdot K_S-mkH\cdot \beta<0$   for $m\gg 0$, so $mD$ is effective which implies that $\alpha$ is in the interior of $\NE(S)$.\medskip

\noindent (ii) Write $[C]= \sum_{i=1}^n \beta_i$, where $\mathbb R^+[\beta_i]$ are extremal rays for $1\leq i\leq n$. We have
\[
0>C^ 2=C\cdot \sum_{i=1}^n \beta_i,
\]
hence there is some $j\in \{1,\ldots, n\}$ such that $C\cdot \beta_j<0$. Set $\beta=\beta_j$.
Then $\beta=\lim_{n\to \infty} \beta_n$, with $\beta_n$ in the interior of $\NE(S)$, and we have $C\cdot \beta_n<0$ for $n\gg 0$. Hence for $n\gg 0$ we have $\beta_n=\lambda_nC+\beta'_n$, with $\lambda _n>0$  and $C\cdot \beta'_n\geq 0$. Thus $\beta=\lambda C+\beta'$, with $\lambda=\lim_{n\to \infty} \lambda _n$, $\beta'=\lim_{n\to \infty} \beta' _n$, and $C\cdot \beta'\geq 0$. We claim that $\beta'=0$. If not, 
since $\mathbb R^+[\beta]$ is an extremal ray, we have that 
$\mathbb R^+[\beta]=\mathbb R^+[\beta']=\mathbb R^+[C]$, which is impossible, because 
$C\cdot \beta<0$ whereas $C\cdot \beta'\geq 0$. Hence $\beta'=0$ and 
$\mathbb R^+[\beta]=\mathbb R^+[C]$ as wanted. 
\end{proof}

\begin{example}\label{ex:rns} Consider the $\PP^1$--bundle $S=\mathbb F_n\to \PP^1$. One has $\Num(S)\cong \mathbb Z^2$, generated by the classes $f$ of a fibre $F$ and the class $e$ of the section $E$ that $E^2=-n$. This section is unique if $n\neq 0$ and in this case it generates an extremal ray. The other extremal ray is generated by $f$. In fact, for every irreducible curve $C\neq F$ on $S$, we have $C\sim \lambda F+\beta E$, and $\beta\geq 0$, otherwise $C\cdot F<0$, which is impossible. We claim that also $\lambda \geq 0$. Otherwise, $C\cdot E=\lambda-n\beta<0$, so we can write $C=\alpha E+ C'$, with $C'$ effective not containing $E$, hence $E\cdot C'\geq 0$. But then $C'\sim \lambda F+(\beta-\alpha) E$ and $\beta\geq \alpha$ because $C'$ is effective. So $E\cdot C'=\lambda-n(\beta-\alpha)<0$, a contradiction. This proves that $\NE(\mathbb F_n)=\mathbb R^+[f]+
\mathbb R^+[e]$ if $n\neq 0$.

If $n=0$, one sees that for all curves $C$ on $\mathbb F_0$, different from $E$ and $F$, one has $C^2>0$, which again implies that $f$ and $e$ are the two extremal rays. 
\end{example}

\begin{example}\label{ex:pp} Let $\pi: X\to \PP^2$ be the blow--up of the plane at three distinct points $p_1,p_2,p_3$  sitting on a line $\ell$. On $X$ we have the three $(-1)$--curves $E_1,E_2,E_3$ corresponding to  $p_1,p_2,p_3$ respectively. There is also the strict transform $C$ of the line $\ell$, which has $C^2=-2$. These four curves are extremal rays of $\NE(X)$. 

The anticanonical system $|-K_X|$ is the strict transform on $X$ of the $6$--dimensional linear system of cubics through $p_1,p_2,p_3$, hence it is nef with positive self--intersection equal to $6$,  and it is base point free. So all $\alpha\in \NE(X)$ have $\alpha\cdot K_X\leq 0$.  

The curve $C$ is the only irreducible curve such that $C\cdot K_X=0$, i.e., $\mathbb R^+[C]$ is the intersection of $\NE(X)$ with the hyperplane $K_X^\perp$. Indeed, suppose there is some other irreducible curve $D$ such that $D\cdot K_X=0$, thus $D^2\geq -2$.
Suppose the image of $D$ on $\PP^2$  has degree $d>0$ and multiplicities $m_1,m_2,m_3$ at $p_1,p_2,p_3$. Then $D\cdot K_X=0$
reads 
\begin{equation}\label{eq:kotz1}
3d-(m_1+m_2+m_3)=0.
\end{equation}
Moreover $(D+C)\cdot K_X=0$, then by the Hodge index theorem, one has 
\[
0>(D+C)^2=D^2+C^2+2C\cdot D\geq-4+2C\cdot D,
\]
which implies $0\leq C\cdot D\leq 1$, that in turn reads
\begin{equation}\label{eq:katz}
0\leq d-(m_1+m_2+m_3)\leq 1
\end{equation}
which is incompatible with \eqref {eq:kotz1}. 

Moreover there is no other $(-1)$--curve $E$ on $X$ besides $E_1,E_2,E_3$. In fact the image of $E$ on $\PP^2$ would have degree $d>0$ and multiplicities $m_1,m_2,m_3$ at 
$p_1,p_2,p_3$. Since $E\cdot K_X=-1$, we have 
\begin{equation}\label{eq:kotz}
3d-(m_1+m_2+m_3)=1.
\end{equation}
Moreover  $|-K_X|$ defines a morphism $\phi_{|-K_X|}$ which contracts $C$ to a double point of the image, and $E$ is mapped to a line. Therefore $0\leq C\cdot E \leq 1$, which reads again \eqref {eq:katz},
which is incompatible with \eqref {eq:kotz}. 

If $D$ is any irreducible curve on $X$, different from $E$ and $E_1,E_2,E_3$, then $D^2\geq 0$. In fact, one has $D\cdot K_X< 0$. If $D^2<0$, we would have  $D\cdot K_X=D^2=-1$, which, as we saw, is impossible. 

Suppose $D^2=0$ and
the image of $D$ on $\PP^2$ has degree $d>0$ and multiplicities $m_1,m_2,m_3$. Then $D\cdot K_X=-2$, i.e.,
\begin{equation}\label{eq:kotz2}
3d-(m_1+m_2+m_3)=2,
\end{equation}
and $|D|$ is a pencil of rational curves, which is mapped by  $\phi_{|-K_X|}$ to a pencil of conics. Then, as above, we  have   $0\leq C\cdot D \leq 1$, i.e., \eqref {eq:katz} holds. Then by \eqref{eq:kotz2}, we see that the only possibility is $d=1$, and $m_i=1$, $m_j=m_k=0$, with $\{i,j,k\}=\{1,2,3\}$. This implies that $D\sim C+E_j+E_k$, so that $D$ cannot span an extremal ray.

In conclusion  $\NE(X)$ is the polyhedral cone spanned by the extremal rays $\mathbb R^+[C]$, $\mathbb R^+[E_1]$, $\mathbb R^+[E_2]$, $\mathbb R^+[E_3]$.
\end{example}

\subsection{The blown--up plane}\label{ssec:BUP}

Consider $\pi: X_n\to \PP^2$ the blow--up of the plane at $n$ general points $p_1\ldots,p_n$, with $(-1)$--curves $E_1,\ldots, E_n$ corresponding to $p_1\ldots,p_n$ respectively. A linear system $\L$ on $X_n$ of the form
\[
|dL-\sum_{i=1}^nm_iE_i|
\]
where $L$ is the transform on $X_n$ of a general line of $\PP^2$, is mapped to the plane to a linear system of the type  $\L(d;m_1,\ldots,m_n)$.
The integer $d$ is called the \emph{degree} of $\L$. 

Note that the homaloidal nets with base points at $p_1\ldots,p_n$ determine 
Cremona transformations which form a group acting on $\Pic(X)$. We extend this group with the permutations over $p_1\ldots,p_n$ and call this group $K_n$.  

If we have $\L=\L(d;m_1,\ldots,m_n)$ we set
\[
\begin{split}
&\nu_\L=d^2-\sum_{i=1}^nm_i^2 \quad \text{called the self--intersection of}\quad \L;\\
&g_\L={{d-1}\choose 2}-\sum_{i=1}^n {{m_i}\choose 2}\quad \text{called the arithmetic genus of}\quad \L.
\end{split}
\]
we often write $\nu$ and $g$ instead of $\nu_\L$ and $g_\L$ if there is no danger of confusion. Note that $\nu$ and $g$, as well as $\dim(\L)$ are invariant for the $K_n$--action.

\begin{lemma}\label{lem:profl} Assume $n\geq 9$ and $\L$ with  $\nu\geq 2g-2$. The the $K_n$--orbit of $\L$ is infinite (hence $K_n$ is infinite), unless $n=9$, $d=3m$, $m_1=\cdots=m_9=m$.
\end{lemma}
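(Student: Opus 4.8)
## Proof strategy for Lemma 13.30 (the $K_n$-orbit of $\L$ is infinite)

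The plan is to exploit the action of $K_n$ on $\operatorname{Pic}(X_n)$, which preserves the intersection form, the canonical class $K_{X_n} = -3L + \sum_i E_i$, and hence the numerical quantities $\nu_\L = (\text{class of }\L)^2$ and $2g_\L - 2 = \L\cdot(\L + K_{X_n})$. The key observation is that $K_n$ is (a quotient of, or closely related to) the Weyl group $W_n$ of the root lattice $E_n$ sitting inside $K_{X_n}^\perp \subset \operatorname{Pic}(X_n)$, which for $n \geq 9$ is infinite and in fact, for $n \geq 10$, of indefinite (hyperbolic) signature. The quadratic Cremona transformations based at triples of the $p_i$ act as the reflections in the roots $E_i - E_j - E_k$ (together with $L - E_i - E_j - E_k$), while the permutations give the reflections in the $E_i - E_j$. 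So the whole question reduces to understanding orbits of $W_n$ on the hyperbolic lattice $\operatorname{Pic}(X_n) \cong \mathbb{Z}^{1,n}$.

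First I would set up the numerology: write the class of $\L$ as $v = dL - \sum m_i E_i$, so that $v^2 = \nu$, $v\cdot K_{X_n} = -3d + \sum m_i = -(2g_\L - 2) - \nu + \nu$… more precisely $\L\cdot(\L+K) = 2g-2$ gives $v\cdot K = (2g-2) - \nu$. The hypothesis $\nu \geq 2g-2$ then reads $v\cdot K \leq 0$, i.e. $v$ lies in the half-space where $-K$ is non-negative; combined with $\dim(\L)\geq 2$ (a homaloidal-net-type positivity, or at least effectivity) one gets that $v$ is an effective class with $v\cdot(-K_{X_n}) \geq 0$. Next I would show the orbit $W_n\cdot v$ is finite if and only if $v$ is fixed (up to finite stabilizer) by a finite-index subgroup — equivalently $v$ is orthogonal to enough roots — and that in a hyperbolic lattice a vector with $v\cdot K \leq 0$ has finite $W_n$-orbit precisely when it is proportional to $K_{X_n}$ itself (or to a class in the "parabolic" locus), which for $n=9$ is the class $-K_{X_9} = 3L - \sum_{i=1}^9 E_i$ — exactly the exceptional case $d = 3m$, $m_1=\cdots=m_9=m$ in the statement. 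For $n \geq 10$ the lattice $K_{X_n}^\perp$ is itself hyperbolic, $K_{X_n}^2 = 9-n < 0$, and there is no $W_n$-invariant class besides $0$, so every nonzero effective $v$ has infinite orbit.

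The cleanest way to run the argument concretely: consider the element $\gamma \in K_n$ given by a quadratic transformation based at $p_1,p_2,p_3$ followed by a cyclic permutation, or better the composite of three quadratic transformations as in Hudson/Coble, which acts on $\operatorname{Pic}$ as an infinite-order isometry whenever $n \geq 9$ and $v$ is not the distinguished class. I would compute the action of $\gamma$ (or of the reflection $s = s_{L-E_1-E_2-E_3}$ together with $s_{E_1-E_2}$) on $v$ explicitly — $s_\alpha(v) = v - (v\cdot\alpha)\alpha$ — and show that iterating it produces infinitely many distinct classes unless the relevant root products $v\cdot\alpha$ all vanish. Then a short case analysis using $\nu \geq 2g-2$ and $n\geq 9$ shows that simultaneous vanishing of all these products forces (for $n=9$) $v \parallel K_{X_9}$ and hence $d=3m$, $m_1 = \cdots = m_9 = m$, and (for $n \geq 10$) $v = 0$, contradicting $\dim(\L)\geq 2$.

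The main obstacle I expect is the lattice-theoretic core: proving that in the hyperbolic lattice $\mathbb{Z}^{1,n}$ a Weyl-group orbit of an "effective, $-K$-nonnegative" vector is finite only in the stated degenerate case. This is where one needs the Vinberg/arithmetic-groups input — that the Weyl group $W_n$ acts with finite covolume on hyperbolic space for $n \geq 10$ and that the only finite orbits correspond to the parabolic direction $-K_{X_9}$ when $n=9$. A self-contained route avoiding heavy machinery is: directly exhibit, for each $v$ not of the special form, an explicit infinite sequence $\gamma^k(v)$ with strictly increasing "size" (e.g. strictly increasing degree $d$, using that each non-trivial quadratic transformation based at three points of multiplicities summing to more than $d$ raises the degree, and $\nu \geq 2g-2$ guarantees that such a triple can be found after finitely many permutations) — this is essentially the degree-reduction argument of the classical Noether–Castelnuovo proof run in reverse, and it is the step I would spend the most care on.
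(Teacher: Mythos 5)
Your high-level picture is right — the hypothesis $\nu\geq 2g-2$ is exactly $v\cdot K_{X_n}\leq 0$, the exceptional case is the anticanonical direction on $X_9$, and the mechanism for producing an infinite orbit is to drive the degree to infinity by quadratic transformations — but the proof is not actually carried out, and the step you defer is the entire content of the lemma. Your primary route (Weyl group of $E_n$ acting on the hyperbolic lattice, finiteness of orbits only in the parabolic direction) is left resting on an unproven ``Vinberg/arithmetic-groups input'' that you yourself flag as the main obstacle; as written this is a plan, not an argument. Your fallback elementary route contains a sign error that would derail it: a quadratic transformation based at three points whose multiplicities sum to \emph{more} than $d$ has new degree $2d-m_i-m_j-m_k<d$, i.e.\ it \emph{lowers} the degree. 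To raise the degree you need a triple summing to \emph{less} than $d$, and the point of the lemma is that the three \emph{smallest} multiplicities always form such a triple outside the exceptional case.

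The missing computation is short and you should supply it. Order $m_1\geq\cdots\geq m_n$ and note $3d-\sum_{i=1}^n m_i=\nu-(2g-2)\geq 0$. If $m_{n-2}+m_{n-1}+m_n\geq d$, then every triple of the $m_i$ sums to at least $d$, so using $n\geq 9$ one gets $\sum_{i=1}^n m_i\geq\sum_{i=1}^9 m_i\geq 3d$; combined with the displayed inequality this forces $\nu=2g-2$, $n=9$, all $m_i$ equal to some $m$, and $d=3m$. Outside that case the quadratic transformation based at the three points of smallest multiplicity strictly increases $d$ while preserving $\nu$ and $g$, and iterating gives infinitely many distinct members of the $K_n$--orbit. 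This is the paper's proof; without it (or a genuine proof of the lattice-theoretic finiteness claim) your proposal has a gap precisely where the work lies.
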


\begin{proof} Assume $m_1\geq \cdots \geq m_n$. We claim that
\begin{equation}\label{eq:fot}
m_n+m_{n-1}+m_{n-2}<d.
\end{equation}
If so, the quadratic transformation based at $p_n,p_{n-1},p_{n-2}$ transforms $\L$ in a linear system with the same $\nu$ and $g$ and higher degree. By iterating this, we see the conclusion holds. 

To prove \eqref {eq:fot}, we remark that
\[
0\leq \nu-(2g-2)=3d-\sum_{i=1}^n m_i.
\]
If \eqref {eq:fot} does not hold, then
\[
m_n+m_{n-1}+m_{n-2}\geq d
\]
and, since $n\geq 9$, we have $\sum_{i=1}^n m_i\geq 3d$, hence
\[
0\leq \nu-(2g-2)=3d-\sum_{i=1}^n m_i\leq 0,
\]
so that
\[
\nu=2g-2, n=9, m=m_1\ldots, =m_9, d=3m,
\]
proving the assertion. 
\end{proof}

\begin{corollary}\label{cor:fot} If $n\geq 9$ there are on $X_n$ infinitely many $(-1)$--curves.
\end{corollary}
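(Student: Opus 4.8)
The plan is to produce infinitely many $(-1)$-curves on $X_n$ by letting $K_n$ act on the class of one explicit $(-1)$-curve and applying Lemma \ref{lem:profl}. First I would record the numerical signature of a $(-1)$-curve. If $E$ is a $(-1)$-curve on $X_n$ and we write its class as $dL-\sum_{i=1}^n m_iE_i$, then, since $K_{X_n}=-3L+\sum_{i=1}^nE_i$, the two equalities $E^2=-1$ and $E\cdot K_{X_n}=-1$ (equivalent data for a $(-1)$-curve by Castelnuovo's Theorem \ref{thm:cct}) read exactly $\nu=d^2-\sum_i m_i^2=-1$ and $g={{d-1}\choose 2}-\sum_i{{m_i}\choose 2}=0$. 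Conversely, any irreducible curve $C$ on $X_n$ with $C^2=-1$ and $C\cdot K_{X_n}=-1$ has $p_a(C)=1+\tfrac12(C^2+C\cdot K_{X_n})=0$ by adjunction, hence $C\cong\PP^1$ and $C$ is a $(-1)$-curve. So ``$(-1)$-curve'' and ``irreducible curve whose class has $\nu=-1$, $g=0$'' mean the same thing, and such classes satisfy $\nu-(2g-2)=1>0$.

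Now take $C_0=L-E_1-E_2$, the strict transform on $X_n$ of the line through $p_1$ and $p_2$; it is irreducible (the points being general) and, by the computation above, a $(-1)$-curve. Since its class has $\nu-(2g-2)=1\neq 0$, it is not of the exceptional type ``$n=9$, $d=3m$, $m_1=\cdots=m_9=m$'' of Lemma \ref{lem:profl} (that type has $\nu=0$). Applying Lemma \ref{lem:profl}, the $K_n$-orbit of the class of $C_0$ is infinite; more precisely, unwinding the proof of that lemma, one obtains from $C_0$ an infinite sequence of classes $c_0,c_1,c_2,\dots$ of strictly increasing degree, each with $\nu=-1$ and $g=0$, where $c_{t+1}$ is the transform of $c_t$ under the quadratic transformation based at the three points $p_i$ having the smallest multiplicities in $c_t$ (the inequality $m_n+m_{n-1}+m_{n-2}<d$ from the proof of Lemma \ref{lem:profl} being available because $n\geq 9$ and $\nu-(2g-2)=1>0$).

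Finally I would upgrade this to curves. Set $C_{t+1}$ to be the image of $C_t$ under the quadratic transformation used to pass from $c_t$ to $c_{t+1}$. A quadratic transformation based at three of the points $p_i$ contracts only the three lines joining them, which have degree $1$; since $C_0$ has degree $1$ but is not one of the three lines joining $p_{n-2},p_{n-1},p_n$ (here $n\geq 9$ forces those indices to be $>2$), and every $C_t$ with $t\geq 1$ has degree $\geq 2$, no $C_t$ is ever contracted. Hence each $C_{t+1}$ is again an irreducible curve with class $c_{t+1}$, so $C_{t+1}^2=-1$ and $C_{t+1}\cdot K_{X_n}=-1$, i.e., $C_{t+1}$ is a $(-1)$-curve. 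The classes $c_t$ being pairwise distinct, so are the curves $C_t$, and $X_n$ carries infinitely many $(-1)$-curves.

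The main obstacle is precisely this last passage from divisor classes to irreducible curves: Lemma \ref{lem:profl} controls the $K_n$-orbit of a \emph{linear system}, so one must still guarantee that every class produced is represented by an honest irreducible $(-1)$-curve, not merely by some effective divisor. Tracking the explicit curve $C_0$ through the quadratic transformations, as above, circumvents this — the key point being that once the degree is $\geq 2$ nothing remains for a quadratic transformation to contract — but it uses the (standard, for general points) fact that the ``multiplicities'' occurring in the class of the strict transform of an irreducible plane curve are its genuine multiplicities at the $p_i$, so that the class really transforms by the formula used in the proof of Lemma \ref{lem:profl}.
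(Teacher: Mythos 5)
Your proof is correct and follows the paper's own route: the paper's entire argument is the observation that a $(-1)$--curve has $\nu=-1>-2=2g-2$ followed by an appeal to Lemma \ref{lem:profl}. Your additional paragraph tracking the explicit curve $L-E_1-E_2$ through the quadratic transformations addresses the passage from infinitely many \emph{classes} in the $K_n$--orbit to infinitely many irreducible \emph{curves}, a point the paper leaves implicit (and which, strictly speaking, also requires using generality of the points to identify the blow--up at the transformed configuration with $X_n$ itself).
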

\begin{proof} For any $(-1)$--curve one has $\nu=-1>-2=2g-2$. The assertion follows by applying Lemma \ref {lem:profl}. 
\end{proof}

\begin{remark}\label{rem:otto} If $n\leq 8$ there are only finitely many $(-1)$--curves on $X_n$. Indeed, the anticanonical system is $|-K_{X_n}|=\L_3(1^8)$ and it has dimension $9-n\geq 1$. If $E$ is a $(-1)$--curve on $X_n$, one has $-K_n\cdot E=1$. If $n\leq 7$, then $\dim(|-K_{X_n}|)\geq 2$, so the curve $E$ is contained in a curve of $|-K_{X_n}|$. Hence the degree of the image of a $(-1)$--curve on $\PP^2$ is bounded by $3$ and we are done. If $n=8$, note that  $|-2K_{X_n}|=\L_6(2^8)$, which has dimension at least 3. Again any $(-1)$--curve is contained in a curve of $|-2K_{X_n}|$ and we conclude as before.
\end{remark}

Consider now $\L=\L(d;m_1,\ldots, m_n)$ with $m_1\geq \ldots \geq m_n$, and $m_3\geq 1$. 

\begin{lemma}\label{lem:ccrr}
Let $\L$ be a linear system as above. If
\[
(m_3-1)\nu\geq 2m_3(g-1),
\]
and either the strict inequality holds or $m_3<m_1$, then there is a Cremona transformation in $K_n$ which lowers the degree of $\L$. If the equality holds and $m_1=m_2=m_3=m$ the same is true, unless $d=3m$.
\end{lemma}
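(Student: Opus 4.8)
The plan is to apply to $\L$ the quadratic Cremona transformation $\gamma\in K_n$ based at the three points $p_1,p_2,p_3$ of highest multiplicity. As in the proof of Lemma~\ref{lem:profl}, such a $\gamma$ leaves $\nu$ and $g$ unchanged and sends $\L=\L(d;m_1,\ldots,m_n)$ to a linear system of degree $2d-m_1-m_2-m_3$; thus it lowers the degree exactly when $m_1+m_2+m_3>d$, and the lemma reduces to proving this inequality in each of the stated cases.

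The computational heart is the identity, obtained from $\sum_i m_i^2=d^2-\nu$ together with the familiar relation $\nu-(2g-2)=3d-\sum_i m_i$ (so that $\sum_i m_i=3d-\nu+2(g-1)$), and analogous to \eqref{eq:ut}:
\[
\sum_{i=1}^n m_i(m_i-m_3)=d(d-3m_3)+(m_3-1)\nu-2m_3(g-1).
\]
Since $2m_3(g-1)=m_3(2g-2)$, the hypothesis $(m_3-1)\nu\geq 2m_3(g-1)$ says precisely that the last two summands on the right are $\geq 0$, hence $\sum_{i=1}^n m_i(m_i-m_3)\geq d(d-3m_3)$.

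Now I would argue by contradiction: assume $m_1+m_2+m_3\leq d$. Then $d-3m_3\geq(m_1-m_3)+(m_2-m_3)\geq 0$; moreover $m_i(m_i-m_3)\leq 0$ for $i\geq 3$ (the term $i=3$ vanishing) and $d\geq m_1,m_2$, which yields the chain
\[
d(d-3m_3)\geq d\big((m_1-m_3)+(m_2-m_3)\big)\geq m_1(m_1-m_3)+m_2(m_2-m_3)\geq\sum_{i=1}^n m_i(m_i-m_3)\geq d(d-3m_3).
\]
So every inequality is an equality. Unwinding them --- using $m_3\geq 1$, so that $d=m_1+m_2+m_3>m_1$, whence the vanishing of the two nonnegative terms in $(d-m_1)(m_1-m_3)+(d-m_2)(m_2-m_3)=0$ forces $m_1=m_3$ and $m_2=m_3$ --- we are pushed into the configuration $m_1=m_2=m_3=m$, $d=3m$, with equality in the hypothesis. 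This is exactly the single configuration excluded in the statement; therefore, whenever the strict inequality $(m_3-1)\nu>2m_3(g-1)$ holds, or $m_3<m_1$, or equality holds with $m_1=m_2=m_3=m$ but $d\neq 3m$, we must have $m_1+m_2+m_3>d$, and $\gamma$ lowers the degree of $\L$.

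The whole argument is elementary --- a direct variant of the proof of Lemma~\ref{lem:profl} and of the simplicity--reduction step in the classical Noether--Castelnuovo theorem --- so the only point demanding a little care, and the main thing to get right, is the precise bookkeeping of which inequality in the chain degenerates, together with the elementary observation $d>m_1$.
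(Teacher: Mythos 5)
Your proof is correct and follows essentially the same route as the paper: both reduce the lemma to showing $m_1+m_2+m_3>d$, derive the same identity from the two numerical relations for $\nu$ and $g$ (your displayed identity is just a rearrangement of the one in the paper's proof, with the terms $i=1,2$ folded into the sum), and then argue by contradiction. Your explicit chain of inequalities and the careful unwinding of the equality case, which isolates the exceptional configuration $m_1=m_2=m_3=m$, $d=3m$, is if anything a slightly cleaner piece of bookkeeping than the paper's appeal to the monotonicity of $d(3m_3-d)$.
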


\begin{proof} We want to prove that
\begin{equation}\label{eq:lop}
d<m_1+m_2+m_3,
\end{equation}
so that the quadratic transformation based at $p_1,p_2,p_3$ lowers the degree of $\L$. 

Consider the two equalities
\[
\begin{split}
&d(d-1)-\sum_{i=1}^nm_i(m_i-1)-(2g-2)=0\\
&d^2-\sum_{i=1}^nm_i^2-\nu=0.
\end{split}
\]
by multiplying the first by $-m_3$, the second by $m_3-1$, and then adding, we find the identity
\[
d(3m_3-d)+m_1^2+m_2^2-m_3(m_1+m_2)-\Big [ (m_3-1)\nu -2m_3(g-1)   \Big ]=
m_3 \sum _{i=4}^n m_i-  \sum _{i=4}^n m^2_i
\]
where the right hand side is clearly non--negative. The term $d(3m_3-d)$ reaches its maximum for $d=\frac {3m_3}2$, then monotonically decreases for larger $d$. Hence if  
\eqref {eq:lop} does not hold, we have
\[
(m_1+m_2+m_3)(2m_3-m_1-m_2)+m_1^2+m_2^2-m_3(m_1+m_2)-\Big [ (m_3-1)\nu -2m_3(g-1)   \Big ]\geq 0
\]
hence
\[
2(m_3^2-m_1m_2)-\Big [ (m_3-1)\nu -2m_3(g-1)   \Big ]\geq 0
\]
whence we get a contradiction. \end{proof}

\begin{corollary}\label{cor:prof} The action of $K_n$ on $(-1)$--curves is transitive.
\end{corollary}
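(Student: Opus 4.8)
The plan is a descent on the degree, fuelled by Lemma~\ref{lem:ccrr}. Let $E$ be a $(-1)$--curve on $X_n$ and write its class as $dL-\sum_{i=1}^n m_iE_i$. The conditions $E^2=-1$ and $K_{X_n}\cdot E=-1$ translate into $\nu_E:=d^2-\sum m_i^2=-1$ and $3d-\sum m_i=1$, and these two relations force the arithmetic genus $g_E:=\binom{d-1}{2}-\sum\binom{m_i}{2}$ to vanish. I would argue by induction on $d\ge 0$ that every $(-1)$--curve lies in the $K_n$--orbit of $E_1$; here one may assume $n\ge 3$ (for $n\le 1$ the statement is vacuous or trivial), so that quadratic transformations based at three of the $p_i$'s are available in $K_n$. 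If $E$ is not one of $E_1,\dots,E_n$, its image in $\PP^2$ is an honest irreducible plane curve, hence $d\ge 1$ and all $m_i\ge 0$; after a permutation of the points (an element of $K_n$) we may take $m_1\ge m_2\ge\cdots\ge m_n\ge 0$.

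For the base and sporadic cases: if $d=0$ the two relations give $\sum m_i^2=1$ and $\sum m_i=-1$, so exactly one $m_j$ equals $-1$ and the rest vanish, i.e.\ $E=E_j$, which a transposition in $K_n$ carries to $E_1$. If $d=1$ then $E$ is a line, so each $m_i\le 1$ while $\sum m_i=3d-1=2$, whence $E=L-E_i-E_j$ for two distinct indices; choosing a third index $k$ and applying the quadratic transformation based at $p_i,p_j,p_k$ sends the class $L-E_i-E_j$ to $E_k$, reducing us to the case $d=0$.

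For the inductive step, suppose $d\ge 2$. Then the line through $p_1$ and $p_2$ is not a component of the irreducible, non--linear curve $E$, so $m_1+m_2\le d$; if $m_3=0$ this would force $m_1+m_2=\sum m_i=3d-1>d$, a contradiction, so $m_3\ge 1$. Since $\nu_E=-1$ and $g_E=0$,
\[
(m_3-1)\nu_E-2m_3(g_E-1)=-(m_3-1)+2m_3=m_3+1>0,
\]
so the hypotheses of Lemma~\ref{lem:ccrr} hold in their strict--inequality form, and there is a Cremona transformation $\gamma\in K_n$ which lowers the degree, i.e.\ transforms the class of $E$ into one of degree strictly less than $d$. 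As $\gamma$ is birational, $\gamma(E)$ is again an irreducible $(-1)$--curve, of smaller degree, hence $K_n$--equivalent to $E_1$ by the inductive hypothesis; therefore so is $E$.

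The one genuine obstacle is that Lemma~\ref{lem:ccrr} needs three base points of positive multiplicity, so one must isolate by hand the $(-1)$--curves with at most two positive multiplicities; the computation in the previous paragraph shows these are exactly the $E_i$ and the lines $L-E_i-E_j$, both of which are disposed of directly. One should also check that the descent is well founded: Lemma~\ref{lem:ccrr} strictly decreases the degree at each step, and the degree stays nonnegative (the transformed curve is either some $E_\ell$ of degree $0$ or an honest plane curve of positive degree), so the process terminates after finitely many steps at an $E_\ell$, which is manifestly in the orbit of $E_1$. This completes the proof.
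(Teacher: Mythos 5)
Your proof is correct and follows essentially the same route as the paper: descent on the degree via Lemma~\ref{lem:ccrr}, with the curves $E_i$ and $E_{ij}=L-E_i-E_j$ handled separately at degrees $0$ and $1$. You additionally spell out details the paper leaves implicit (that $g_E=0$, that $m_3\geq 1$ when $d\geq 2$, and the explicit verification that $(m_3-1)\nu_E-2m_3(g_E-1)=m_3+1>0$), which is a welcome but not substantively different elaboration.
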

\begin{proof} If $E$ is a curve of $X_n$ different from one of the curves $E_i$, for $1\leq i\leq n$, and also different from one of the curves $E_{ij}=\L_1(p_i^1,p_j^1)$,  for 
$1\leq i<j\leq n$, then $E=\L(d;m_1,\dots,m_n)$ with $m_1\geq \ldots \geq m_n$, and $m_3\geq 1$. Then we can apply Lemma \ref {lem:ccrr} to $E$ and lower its degree $d$. Apply this repeatedly till we arrive at degree 0 or 1. In the degree 0 case, we have one of the curves $E_i$, for $1\leq i\leq n$. In the degree 1 case $E$ is one of the curves $E_{ij}$ as above, whose degree can be lowered to 0 by the quadratic transformation based at $p_i,p_j, p_k$, with $k\neq i,j$. The assertion follows.\end{proof}

\subsection{Products of elliptic curves}\label{ssec:prod}

Let $E$ be a general curve of genus 1. Set $X=E\times E$. Let $f_1,f_2,\delta$ be the classes of the fibres of the projections of $X$ to the two factors and of the diagonal. One has the intersection numbers
\[
f_1^2=f_2^2=\delta^2=0, f_1\cdot f_2=f_1\cdot \delta=f_2\cdot \delta=1
\]
hence the intersection matrix has determinant 2. So $f_1,f_2,\delta$ are independent and it is a classical fact that they span $\Num(X)$ (e.g., this follows from \cite [Lemma 2.2]{CG}). 

The surface $X$ is an abelian surface, and $K_X$ is trivial.
For any irreducible curve on $X$, one has $C^2 \geq 0$, because $X$ is abelian and the translations move any curve in a 2--dimensional algebraic family. Moreover $C^2=0$ if and only if $C$ has genus 1. There are infinitely many such curves, e.g., the graphs of all multiplication maps $E\stackrel{\cdot n}\rightarrow E$.  The divisors $C$ such that $C^2>0$ and $C\cdot H>0$, with $H$ a fixed ample divisor (e.g., $H=f_1+f_2$), are effective. Hence $\NE(X)$ is the cone spanned by curves $C$ such that
\[
C\cdot H>0 \quad \text {and}\quad C^2\geq 0.
\]   
It has infinitely many extremal rays, actually uncountably many, corresponding to classes $\alpha$ with $\alpha\cdot H>0$  and $\alpha^2=0$, and only countably many of them are rational and provide contractions $E\times E\to E$.

%
%

\end{document}